\documentclass[11pt,reqno,tbtags]{amsart}

\usepackage{amsthm,amssymb,amsfonts,amsmath}
\usepackage{amscd} 
\usepackage{mathrsfs}
\usepackage[numbers, sort&compress]{natbib} 
\usepackage{subcaption, graphicx}
\usepackage{vmargin}
\usepackage{setspace}
\usepackage{paralist}
\usepackage{color}
\usepackage[normalem]{ulem}
\usepackage{stmaryrd} 
\usepackage[refpage,noprefix]{nomencl} 
\usepackage{tcolorbox}
\usepackage{bbm}
\usepackage{thm-restate}
\usepackage[pagebackref=true]{hyperref}
\usepackage[nameinlink,capitalise]{cleveref}
\usepackage[linesnumbered]{algorithm2e}

\usepackage{microtype}

\usepackage{tikz}
\usetikzlibrary{
  arrows.meta,
  positioning,
  calc
}

\definecolor{treegreen}{RGB}{40,170,90}

\providecommand{\R}{}
\providecommand{\Z}{}
\providecommand{\N}{}

\renewcommand{\R}{\mathbb{R}}
\renewcommand{\Z}{\mathbb{Z}}
\renewcommand{\N}{{\mathbb N}}

\newcommand{\one}{\mathbbm{1}}

\newcommand{\E}[1]{{\mathbf E}\left[#1\right]}	
\renewcommand{\deg}{\mathrm{deg}}
\newcommand{\e}{{\mathbf E}}

\renewcommand{\P}[1]{{\mathbf P}\left\{#1\right\}}
\newcommand{\p}{\mathbf P}

\newcommand{\I}[1]{{\mathbf 1}_{[#1]}}
\newcommand{\set}[1]{\left\{ #1 \right\}}
\newcommand{\Cprob}[2]{\mathbf{P}\set{\left. #1 \; \right| \; #2}} 
\newcommand{\probC}[2]{\mathbf{P}\set{#1 \; \left|  \; #2 \right. }}
\newcommand{\Ce}[2]{\mathbf{E}\left[\left. #1 \; \right| \; #2\right]}

\newcommand\cA{\mathcal A}
\newcommand\cB{\mathcal B}
\newcommand\cC{\mathcal C}

\newcommand\cF{\mathcal F}

\newcommand\cL{{\mathcal L}}
\newcommand\cM{\mathcal M}

\newcommand\cP{\mathcal P}
\newcommand\cQ{\mathcal Q}
\newcommand\cR{{\mathcal R}}
\newcommand\cS{{\mathcal S}}
\newcommand\cT{{\mathcal T}}

\newcommand\cX{{\mathcal X}}

\newcommand\cZ{{\mathcal Z}}

\newcommand{\bA}{\mathbf{A}}

\newcommand{\bM}{\mathbf{M}}

\newcommand{\be}{\mathbf{e}} 

\newcommand{\bl}{\mathbf{l}} 
 
\newcommand{\bn}{\mathfrak{n}}

\newcommand{\bt}{\mathbf{t}}

\newcommand{\bx}{\mathbf{x}} 
 
\newcommand{\bz}{\mathbf{z}}

\newcommand{\fC}{\mathfrak{C}}

\newcommand{\fG}{\mathfrak{G}} 
 
\newcommand{\fI}{\mathfrak{I}}

\newcommand{\fT}{\mathfrak{T}}

\newcommand{\ff}{\mathfrak{f}}

\newcommand{\fm}{\mathfrak{m}}

\newcommand{\fs}{\mathfrak{s}}

\newcommand{\eqdist}{\ensuremath{\stackrel{\mathrm{d}}{=}}}
\newcommand{\convd}{\ensuremath{\stackrel{\mathrm{d}}{\rightarrow}}}
\newcommand{\convp}{\ensuremath{\stackrel{\mathrm{p}}{\rightarrow}}}

\newcommand{\pran}[1]{\left(#1\right)}
\providecommand{\eps}{}
\renewcommand{\eps}{\varepsilon}
\providecommand{\ora}[1]{}
\renewcommand{\ora}[1]{\overrightarrow{#1}}
\newcommand\urladdrx[1]{{\urladdr{\def~{{\tiny$\sim$}}#1}}} 
\DeclareRobustCommand{\SkipTocEntry}[5]{} 

\newcommand*\tageq{\refstepcounter{equation}\tag{\theequation}} 

\newcommand{\height}{\mathrm{ht}}
\newcommand{\rtree}{\mathrm{T}}

\newcommand{\rforest}{\mathrm{F}}
\newcommand{\dstat}{\mathbf{n}}
\newcommand{\bdstat}{\mathbf{b}}
\newcommand{\dsTree}{\boldsymbol{\mathrm{T}}}
\newcommand{\dsForest}{\boldsymbol{\mathrm{F}}}
\newcommand{\rf}{\mathrm{f}}

\newcommand{\prob}{\mathbf{P}}
\newcommand{\Span}{\mathrm{span}} 
\newcommand{\branch}{\mathbf{br}}
\newcommand{\T}{\mathbb{T}}
\newcommand{\dgh}{\ensuremath{d_{\mathrm{GH}}}}
\newcommand{\dghp}{\ensuremath{d_{\mathrm{GHP}}}}
\newcommand{\dis}{\ensuremath{\mathrm{dis}}}
\newcommand{\type}{\bt}
\newcommand{\skel}{S^\lambda_n}
\newcommand{\skelr}{\cS^\lambda_n}
\newcommand{\cskel}{\cS^{\lambda,\delta}}
\newcommand{\fforest}{F_n^\lambda} 
\newcommand{\pack}{\mathrm{Pack}}

\reversemarginpar
\newcommand{\Exp}{\operatorname{Exp}}

\definecolor{cser}{RGB}{0,145,135}

\definecolor{csim}{RGB}{255,105,180}

\definecolor{cela}{RGB}{255, 140, 0}

\newtheorem{thm}{Theorem}
\newtheorem{lem}[thm]{Lemma}
\newtheorem{prop}[thm]{Proposition}
\newtheorem{cor}[thm]{Corollary}
\newtheorem{dfn}[thm]{Definition}

\newtheorem{fact}[thm]{Fact}

\newtheorem{rem}[thm]{Remark}
\numberwithin{thm}{section}
\numberwithin{prop}{section}
\numberwithin{cor}{section}
\numberwithin{lem}{section}
\numberwithin{dfn}{section}

\makenomenclature										
\graphicspath{ {./images/} }
\usepackage{wrapfig}

\begin{document}

\title[Packing and covering random trees]{Packing and covering random trees}


\author[Caelan Atamanchuk]{Caelan Atamanchuk}
\address{Department of Mathematics and Statistics, McGill University, Montr\'eal, Canada}
\email{caelan.atamanchuk@mcgill.ca}
\urladdrx{https://www.caelanatamanchuk.com/}
\date{\today} 

\author[Simon Briend]{Simon Briend}
\address{Department of Mathematics, Unidistance Suisse, Brig, Switzerland}
\email{simon.briend@unidistance.ch}
\urladdrx{https://simon-briend.github.io/}

\author[Serte Donderwinkel]{Serte Donderwinkel}
\address{Bernoulli Institute and CogniGron, University of Groningen, Groningen, The Netherlands}
\email{s.a.donderwinkel@rug.nl}
\urladdrx{https://www.sertedonderwinkel.com/}

\author[Elaine Herrera]{Elaine Herrera}
\address{Bernoulli Institute, University of Groningen, Groningen, The Netherlands}
\email{m.e.herrera@rug.nl}

\keywords{Random trees, Bienaym\'e trees, Galton--Watson trees, packing number, covering number, Brownian continuum random tree, burning number}
\subjclass[2020]{60C05,60J80,05C05,05C70,05C12,05C80} 

\begin{abstract} 
 We study the sizes of optimal packings and coverings of conditioned branching processes by balls with growing radii. We find a universal limiting constant, and thereby establish universality of packing and covering numbers across different scales and different offspring distributions.

To be precise, for $\mu$ an offspring distribution with mean $1$ and finite variance $\sigma^2$ we let $T_n$ be a Bienaymé tree with offspring distribution $\mu$ conditioned to have size $n$. We prove that the packing number (resp. covering number) of $T_n$ with balls of radii $1\ll \gamma_n \ll n^{1/2}$ is asymptotically equivalent to $\mathfrak{C} n /(\sigma^2 \gamma_n^2)$ (resp.\ $\mathfrak{C} n /(4\sigma^2 \gamma_n^2)$), for a universal constant $\mathfrak{C}$. 

We also show that the order of limits can be exchanged and that the same limit occurs when we first consider the limiting covering number of $T_n$ by balls of fixed radius $R$, and thereafter let $R$ tend to $\infty$. 

We then extend these results to the Brownian continuum random tree conditioned to have mass $1$. For this model, we prove that the  packing (resp.\ covering) number with balls of radii $\eps$ are asymptotically equivalent to $\mathfrak{C}/\eps^2$ (resp. $\mathfrak{C}/(4\eps^2)$), for the same constant $\mathfrak{C}$, in contrast with a conjecture by Aldous. 

Finally, our results answer a question by Devroye, Eide and Pra\l at on the asymptotic burning number of mean $1$ finite variance Bienaymé trees of size $n$, by showing that it lies between $(c+o(1))n^{1/3}$ and $(2c+o(1))n^{1/3}$ with high probability, for a constant $c$ that only depends on the offspring variance.
\end{abstract}

\maketitle

\setcounter{tocdepth}{1}
\tableofcontents

\newpage

\section{Introduction}

In any metric space $(\cX,d)$ we define coverings and packings as follows.

\begin{dfn}[Covering]\label{def:covering}
    For a metric space $(\cX,d)$ and radius $r>0$, the subset $C$ of $\cX$ is said to be a $r$-covering (or $r$-net) if for any $x\in \cX, \ d(x,C) < r$.
\end{dfn}

\begin{dfn}[Packing]\label{def:packing}
    For a metric space $(\cX,d)$ and radius $r>0$, the subset $P$ of $\cX$ is said to be a $r$-packing (or $r$-separated set) if for any distinct $u,v\in P, \ d(u,v)\ge r$.
\end{dfn}

We call the individual elements of a packing and covering their \emph{centres}. If $\cX$ is compact, the minimum size of a covering and maximum size of a packing are well defined, and denoted by $\cC_r(\cX)$ and  $\cP_r(\cX)$. These are the \textit{covering number} and \textit{packing number}. These two quantities have been extensively studied for all sorts of metric spaces, and can be viewed as an `internal fractal dimension'. In this paper we focus on random trees, whether combinatorial trees, $\R$-trees or their scaling limits. \footnote{We make the distinction between combinatorial trees and $\R$-trees as the associated metric space is not the same. In a combinatorial tree, only nodes can be in a covering or packing, while in an $\R$-tree, points on edges can be included.}

\subsection{Our contribution}

Our main contribution is a law of large numbers for packing and covering numbers throughout the mesoscopic regime. We prove that, uniformly over arbitrary sequences $1\ll \gamma_n\ll n^{1/2}$, these quantities have deterministic first-order asymptotics of order $n/\gamma_n^2$. The leading constants depend on the offspring distribution only through its variance. We also show that the same constant is obtained by first taking the large-tree limit at fixed radius and then sending the radius to infinity. Finally, we transfer the result to the Brownian continuum random tree (CRT) and obtain exact small-radius asymptotics for its packing and covering numbers, where again the same universal constant appears. 

\begin{thm}\label{thm:bien}
For any sequence of radii $(\gamma_n)_{n\in\N}$ such that $\gamma_n \to \infty$ and $n^{1/2}/\gamma_n \to \infty$, and any offspring distribution $\mu$ with mean $1$ and finite variance $\sigma^2$, if $T_n$ is a $\mu$-Bienaymé tree conditioned to have size $n$, then, as $n\to\infty$

\begin{align*}
\frac{\sigma^2\gamma^2_n}{n}\mathcal{P}_{\gamma_n}(T_n)&\convp \fC \text{ and}\\
4\frac{\sigma^2\gamma^2_n}{n}\mathcal{C}_{\gamma_n}(T_n)&\convp \fC,
\end{align*}
where $\fC$ is a constant defined in \cref{subsec:def_const}.
\end{thm}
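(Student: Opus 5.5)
Since $1\ll\gamma_n\ll n^{1/2}$, the plan is to decompose $T_n$ into many roughly independent pieces at an intermediate scale and prove a law of large numbers over the pieces. The constant $\fC$ should come from a local, scale-invariant object, not from the global CRT geometry.

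\textbf{Step 1: decomposition.} Choose an auxiliary scale $L_n=K\gamma_n$, with $K$ large and fixed and eventually $K\to\infty$. Form the skeleton $\skel$ of $T_n$ spanned by the root and by vertices at heights that are multiples of $L_n$ having descendants at distance at least $L_n$ below them. Removing the skeleton leaves a forest of fringe subtrees. By the usual decomposition of conditioned Bienaymé trees, these fringe subtrees are close to i.i.d. (unconditioned or size-biased) $\mu$-Bienaymé trees. The number of relevant pieces is of order $n/L_n^2$, since each piece has height about $L_n$ and size about $L_n^2$.

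\textbf{Step 2: sandwich.} Bound the global quantities by sums over pieces, up to boundary errors. For packing, the union of packings of pieces that stay at distance at least $\gamma_n$ from the cut points is a packing. Conversely, any packing restricted to a piece is a packing of that piece. For covering, add covers of the pieces and absorb the centres near the cuts. The defects near cut points cost $O(1)$ balls per piece. Summed over the $n/L_n^2$ pieces this gives $O(n/(K^2\gamma_n^2))$, which is $o(n/\gamma_n^2)$ after sending $K\to\infty$.

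\textbf{Step 3: law of large numbers within one piece.} For a single piece, rescaled by $\gamma_n$, the piece converges in the GHP sense to a Brownian-type tree of height scale $K/\sigma$, such as a Brownian forest or Aldous' self-similar CRT. The packing and covering numbers at radius $1$ converge under this scaling. The main care is that packing and covering are only semicontinuous in GH, so one needs the standard perturbation trick of radii $1\pm\eps$. By uniform integrability, since the counts are bounded by the number of height-$\gamma_n$ subtrees, the per-piece expectations converge. Taking $K\to\infty$ then defines the limit of the expected packing number per unit mass of this local Brownian object, which is how I would identify $\fC$ from \cref{subsec:def_const}. Variance bounds from the near-independence of the pieces give concentration, by Chebyshev.

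\textbf{Step 4: the covering constant and the main obstacle.} The covering constant should be $\fC/4$. For radius-$\gamma$ coverings versus radius-$2\gamma$ packings in trees, the relation $\cC_\gamma=\cP_{2\gamma}$ holds essentially exactly by the greedy leaf-based algorithm on trees. Combined with $\gamma^{-2}$ scaling this yields the factor $4$, which I would prove on the combinatorial tree up to $O(1)$ lattice effects. The main obstacle I expect is Step 2's independence and boundary control uniformly in the scale $\gamma_n$. This covers both $\gamma_n$ close to $n^{1/2}$, where few pieces exist and one must choose $K\to\infty$ slowly, and $\gamma_n$ close to $1$, where lattice effects on the tree must vanish. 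I would handle it with local limit estimates for the size of the fringe trees.
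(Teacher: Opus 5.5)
\textbf{Main gap: the value of the constant.} Your decomposition-and-sandwich scheme is sound as far as it goes. Cutting at the $O(n/(K\gamma_n)^2)$ skeleton branch points changes $\cP_{\gamma_n}$ by $O(1)$ per cut, by the same merger bound as in \cref{lem:recursive_tap}. If Step 3 were completed, the $O(K^{-2})$ sandwich error would even make the per-$K$ constants Cauchy, so the $K\to\infty$ limit would exist for free. The semicontinuity issue can also be sidestepped by choosing $K$ outside a Lebesgue-null set, using Fubini and scaling. But all this yields is convergence to \emph{some} universal constant $\fC'/\sigma^2$, with $\fC'$ defined as a limit of expected packing numbers per unit mass of a Brownian-type piece. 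The statement asserts convergence to the specific constant $\fC=\tfrac12\exp\bigl(2-4\int_0^{1/2}\p\{v<\mathbf{t}_1(\cT^{(1/2)})<1-v\}\,dv\bigr)$. Writing ``this is how I would identify $\fC$'' replaces that definition rather than proving the identity, and nothing in a Gromov--Hausdorff continuity argument evaluates the limit.

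This evaluation is the core of the paper, and it is done in the opposite regime from yours. The paper uses a \emph{dense} skeleton spanned by $\lfloor\lambda n/\gamma_n^2\rfloor$ uniform leaves, whose edges have length of order $\gamma_n/(\sigma\sqrt{2\lambda})\ll\gamma_n$, so the fringe costs only $\eps n/\gamma_n^2$ balls (\cref{prop:fringe}). The canonical type-attaining packing, built from mergers and edge extensions, makes the packing number a sum over edges of a functional that depends only on a bounded neighbourhood (\cref{lem:packing_centres_fringe}). The law of large numbers then follows from local convergence of the skeleton to the binary Kesten tree with $\Exp(1)$ edge lengths (\cref{prop:skeleton_local_limit,prop:packing_is_local}). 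On that tree, the expected number $m_s$ of centres on the root edge has an exact formula (\cref{lem:expectation_Ms}). Finally, $s^2m_s\to\fC$ comes from three ingredients:
\begin{itemize}
\item a Poisson thinning along the spine with rate $\p\{\mathbf{t}_s(B)\in(t,s-t)\}$;
\item the explicit law $\p\{\height(B)>t\}=2/(2+t)$ (\cref{lem:height_continuous_Bienayme_tree});
\item almost sure continuity of the type at $\cT^{(1/2)}$ (\cref{prop:convergence_of_the_type}).
\end{itemize}

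\textbf{Secondary gap: uniform integrability.} Your justification does not work as stated. In a piece of mass about $K^2\gamma_n^2$, the number of vertices whose subtree has height at least $\gamma_n$ is of order $K^2\gamma_n$. That is a factor $\gamma_n$ larger than the $O(K^2)$ packing count, so it cannot dominate it in the right normalisation. You first need the refinement $\cP_r(t)\le 1+k^{-1}\#\{v:\height(t^{\uparrow v})\ge k\}$ with $k=\lfloor r/4\rfloor$. This follows by mapping each centre to its ancestor in a fixed residue class of heights mod $k$ and averaging over residues. More importantly, you then need a tail estimate for this count that holds uniformly over all pieces of the \emph{conditioned} tree. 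This is exactly the global control that the star example in the paper's remark shows cannot be extracted from convergence in distribution of a typical piece. In the paper this control is \cref{lem:prop_tall_trees}, proved via stochastic domination over degree statistics and subcritical height bounds. It is needed only on the fringe, because on the dense skeleton uniform integrability of the edge lengths suffices.
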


In the previous theorem we let the size of the tree and the radius grow simultaneously. We retrieve the same constant when we first let the tree size tend to infinity and then let the size of the balls grow.

\begin{thm}\label{thm:bien_reversed_limit}
For any $r>0$, any offspring distribution $\mu$ with mean $1$ and finite variance $\sigma^2$, if $T_n$ is a $\mu$-Bienaymé tree conditioned to have size $n$, then there is a constant $C_{\mu,r}$ such that as $n\to\infty$

\begin{align*}
\frac{1}{n}\mathcal{P}_{r}(T_n)&\convp C_{\mu,r}\text{ and}\\
\frac{1}{n}\mathcal{C}_{r}(T_n)&\convp C_{\mu,2r}.
\end{align*}
Moreover, as $r\to \infty$, \[r^2 \sigma^2 C_{\mu,r}\to \fC\] for  $\fC$ as in \cref{thm:bien}. 
\end{thm}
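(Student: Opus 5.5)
We split the statement into two parts: a law of large numbers at fixed radius $r$, and the asymptotics of $C_{\mu,r}$ as $r\to\infty$.

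\textbf{Fixed radius.} We use local weak convergence. The conditioned tree $T_n$, seen from a uniform vertex, converges locally to Kesten's size-biased tree $T_\infty$ (the sin-tree), with the ancestral line going upward. Moreover, the empirical distribution of the neighbourhoods $B_R(v)$, $v\in T_n$, concentrates on the law of $B_R$ in $T_\infty$ (Aldous; Stufler).

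The difficulty is that packing and covering numbers are global optimisation problems. We therefore first show that they are approximately local, i.e.\ determined up to $o(n)$ by a local algorithm. The plan is as follows.
\begin{itemize}
\item Cut $T_n$ into "fringe" pieces of bounded depth $K$, using vertices at heights divisible by $K$ (with a random offset).
\item Solve the packing problem optimally in each piece, with a boundary buffer of width $r$. Gluing the pieces gives a valid packing.
\item Compare with the optimum: the loss is at most a constant times the number of boundary vertices within distance $r$ of a cut. With a random offset this is $O(rn/K)$ in expectation.
\item For the upper bound, restrict an optimal packing to the pieces. It induces packings of the pieces whose total size is at least $\mathcal P_r(T_n)-O(rn/K)$.
\end{itemize}
The number of vertices that are "bad" for the decomposition must also be controlled.

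Hence $\mathcal P_r(T_n)/n$ is within $O(r/K)$ of $\frac1n\sum_v f_K(\text{fringe data at } v)$ for a bounded local functional $f_K$. This sum converges in probability by the fringe law of large numbers for conditioned trees (Janson). Letting $K\to\infty$ gives a limit $C_{\mu,r}$; the limits are Cauchy in $K$ because of the $O(r/K)$ error.

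For covering, we use that in a tree (a geodesic space) the open ball condition $d(x,C)<r$ together with integer distances makes the covering problem dual to the packing problem at radius $2r$. On trees, the classical equality of packing and covering numbers for ball systems holds (trees are strongly chordal / the hypergraph of balls is totally balanced). This gives $\mathcal C_r(T_n)=\mathcal P_{2r}(T_n)$ exactly, or up to $o(n)$. If exact equality fails because of the strict inequality convention, we run the same local argument for covering directly and identify the limit.

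\textbf{Large radius.} We show $r^2\sigma^2 C_{\mu,r}\to\fC$ by sandwiching, using \cref{thm:bien}.
\begin{itemize}
\item Take $\gamma_n=r$ fixed but large. \cref{thm:bien} is for $\gamma_n\to\infty$, so we argue via its proof: the constant $\fC$ presumably arises as the limit of a CRT/Brownian functional.
\item A diagonal argument is cleaner. Suppose $r^2\sigma^2C_{\mu,r}$ had a subsequential limit $L\ne\fC$ along $r_k$.
\item Since $\mathcal P_r(T_n)/n\to C_{\mu,r}$ in probability, we can choose $n_k$ growing fast enough that $\mathcal P_{r_k}(T_{n_k})/n_k$ is within $1/(k r_k^2)$ of $C_{\mu,r_k}$ with probability at least $1-1/k$.
\item Then $\gamma_{n}:=r_k$ for $n\in[n_k,n_{k+1})$ satisfies $1\ll\gamma_n\ll n^{1/2}$, provided $n_k\gg r_k^2$.
\item Along $n=n_k$, \cref{thm:bien} forces $\sigma^2r_k^2\mathcal P_{r_k}(T_{n_k})/n_k\to\fC$, contradicting $L\ne\fC$.
\item Boundedness of $r^2C_{\mu,r}$ follows from the same argument or from crude bounds, so subsequential limits exist.
\end{itemize}
This step is essentially free once \cref{thm:bien} holds uniformly over sequences.

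\textbf{Main obstacle.} The main obstacle is the locality step: showing that optimal packings are local up to $o(n)$ errors. In particular, high-degree vertices and long spines could make boundary corrections non-negligible. Finite variance keeps the fringe distribution tight, which should control this.
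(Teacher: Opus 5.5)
\textbf{Covering: a genuine gap.} Your covering step rests on $\mathcal{C}_r(T_n)=\mathcal{P}_{2r}(T_n)$ ``exactly, or up to $o(n)$''. That identity is false under the conventions in force for $T_n$: open balls (\cref{def:covering}), packings with $d\ge r$ (\cref{def:packing}), and centres at vertices only. It does not even hold up to $o(n)$.
\begin{itemize}
\item For $r=1$, the condition $d(x,C)<1$ forces $C=V(T_n)$, so $\mathcal{C}_1(T_n)=n$.
\item By contrast, $\mathcal{P}_2(T_n)$ is the independence number. An independent set misses an endpoint of every edge of a matching. Matching each vertex that has a leaf child to one such child gives a matching of size $\Theta(n)$ w.h.p. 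Hence $\frac1n\mathcal{P}_2(T_n)\to C_{\mu,2}<1$.
\end{itemize}
Your own duality heuristic gives the correct shift. Two vertices lie in a common closed ball of radius $r-1$ iff their distance is at most $2r-2$. So Meir--Moon (or total balancedness) yields $\mathcal{C}_r(T)=\mathcal{P}_{2r-1}(T)$ for integer $r$, i.e.\ $\mathcal{C}_r=\mathcal{P}_{2\lceil r\rceil-1}$ in general. The version with $2r$ is the $\R$-tree identity \cref{prop:R_tree_packink_is_covering}, where a centre may sit at the midpoint of an odd path.

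Consequently your fallback (run the local argument for covering directly and identify the limit) can only produce $C_{\mu,2\lceil r\rceil-1}$. This equals $C_{\mu,2r}$ when $\lceil 2r\rceil=2\lceil r\rceil-1$, but not at $r=1$, where the second display is false as stated. The paper's argument has the same off-by-one, since it obtains the covering display from \cref{prop:packink_is_covering}. The error is harmless in \cref{thm:bien}, where $\gamma_n\to\infty$, but not at fixed $r$. The display becomes correct if $T_n$ is read as an $\R$-tree with unit edge lengths, or if $C_{\mu,2r}$ is replaced by $C_{\mu,2\lceil r\rceil-1}$.

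\textbf{Packing at fixed $r$: a different, valid route.} The paper obtains locality from the canonical type-attaining packing of \cref{sec:packing_number_local}. By \cref{lem:packing_centres_fringe}, $\mathcal{P}_r(t)=\sum_v\mathfrak{f}_r(t,p_v,\ell,v)$, with each summand determined by a bounded fringe. A unit-edge version of \cref{thm:packing_is_local}, combined with Janson's local convergence in probability, then gives the limit. It also gives an explicit formula: $C_{\mu,r}$ is the expected number of centres on the root edge of the canonical packing of Kesten's tree, and the same machinery drives \cref{thm:bien}. Your slab decomposition needs neither the canonical packing nor the sin-tree, only the fringe law of large numbers. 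The cost is a less explicit constant, $C_{\mu,r}=\lim_K K^{-1}\mathbf{E}[\mathcal{P}_r(T|_{<K})]$ with $T$ the unconditioned tree.

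The details are also easier than you fear.
\begin{itemize}
\item The upper bound $\mathcal{P}_r(T_n)\le\sum_{\text{pieces}}\mathcal{P}_r(\text{piece})$ is exact, because pieces are subtrees.
\item For the lower bound a one-sided buffer suffices: discard the first $\lceil r\rceil$ generations of each piece. This works because every path between two pieces passes through the root of one of them.
\item The loss is then deterministically at most the number of vertices in those generations. Degrees and long spines play no role.
\item Do not sample a single offset; average the sandwich over all $K$ offsets. The generation of $v$ modulo $K$ is not a fringe quantity, but the averaged slab sum is exactly $\frac1K\sum_v\mathcal{P}_r(T_n^{\uparrow v}|_{<K})$. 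The total error is at most $\lceil r\rceil n/K+\#\{v:\mathrm{gen}(v)<K\}$.
\item The summand is unbounded, but it is dominated by $\#T_n^{\uparrow v}|_{<K}$. The empirical mean of the latter tends to $K=\mathbf{E}[\#T|_{<K}]$, which supplies the uniform integrability the fringe law of large numbers requires.
\end{itemize}

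\textbf{Large $r$.} Your diagonal argument is exactly the paper's \cref{fact:basic_prob}. Your choice $n_k\gg r_k^2$ correctly secures $\gamma_n\ll n^{1/2}$. Boundedness of $r^2C_{\mu,r}$ is not needed: run the argument along a sequence $r_k$ with $|\sigma^2r_k^2C_{\mu,r_k}-\fC|\ge\eps$.
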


\begin{rem}
    We prove a more general version of the convergence in \cref{thm:bien_reversed_limit} for the integer radii case via a slightly more straightforward proof in a companion paper \cite{atamanchuk2026packingcoveringnumbertrees}. 
\end{rem}


The asymptotics given in the two previous theorems depend on $\mu$ only through its variance. It shows universality of our results across scales and offspring distributions. The universality of $\fC$ is further illustrated in our next theorem, where the same constant appears in the asymptotics of the packing and covering number of the Brownian CRT in the small ball limit.

\begin{thm}\label{thm:crt}
Let $\cT$ be the Brownian CRT conditioned to have mass $1$. Then, as $\delta\to 0$, 
\begin{align*}
\delta^2\mathcal{P}_{\delta}(\cT)&\convp  \fC
\text{ and}\\
4\delta^2\mathcal{C}_{\delta}(\cT)&\convp \fC,
\end{align*}
for $\fC$ as in \cref{thm:bien}.
\end{thm}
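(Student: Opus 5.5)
We plan to deduce \cref{thm:crt} from \cref{thm:bien}, using the convergence of conditioned Bienaymé trees to the Brownian CRT together with the way packing and covering numbers behave under Gromov--Hausdorff approximation. The cleanest choice is critical geometric offspring distribution ($\sigma^2=2$), or any $\mu$ with finite variance. By Aldous' theorem, $(T_n,\sigma n^{-1/2} d_{T_n}/2)\to\cT$ in distribution for the Gromov--Hausdorff topology. Using Skorokhod's representation, we realise all these spaces on one probability space. Then there exist correspondences $\mathcal R_n$ between the rescaled $T_n$ and $\cT$ with distortion $\eta_n\to0$ almost surely.

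The deterministic comparison step is the following. If two compact spaces $X,Y$ admit a correspondence of distortion $\eta$, then for all $r>\eta$
\[
\cP_{r+\eta}(X)\le \cP_r(Y)\le \cP_{r-\eta}(X),\qquad \cC_{r+\eta}(X)\le \cC_r(Y)\le \cC_{r-\eta}(X).
\]
To see this, an $r$-packing of $Y$ maps to an $(r-\eta)$-packing of $X$, and an $r$-covering of $X$ maps to an $(r+\eta)$-covering of $Y$. One subtlety is that combinatorial trees only allow vertices as centres, whereas $\cT$ allows any point. This causes no issue, since the correspondence carries points to vertices automatically; edge points simply never appear on the discrete side.

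Now fix $\delta$ small and $\epsilon>0$. Choose $\gamma_n=2\delta n^{1/2}/\sigma$, so the rescaled radius is $\delta$. This choice is not in the mesoscopic range, because $\gamma_n\asymp n^{1/2}$, so \cref{thm:bien} cannot be applied directly. Instead we use a two-scale argument.

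We would first prove that the function $\delta\mapsto \delta^2\cP_\delta(\cT)$ concentrates, by discretising through $T_n$ at radius $\gamma_n=2\delta_n n^{1/2}/\sigma$ with $\delta_n\to0$ slowly, so that $1\ll\gamma_n\ll n^{1/2}$. The target quantity is $\cP_{\delta_n}(\cT)$, which is squeezed between $\cP_{\gamma_n(1\pm\eta_n/\delta_n)}(T_n)$. If $\eta_n/\delta_n\to0$, then \cref{thm:bien} applied to the radii $\gamma_n(1\pm o(1))$ gives
\[
\delta_n^2\cP_{\delta_n}(\cT)\to \fC\cdot\frac{\sigma^2}{4}\cdot\frac{4}{\sigma^2}=\fC
\]
in probability. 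This holds with the matching normalisation, where the factor $2$ in the metric scaling produces exactly $\delta^2$ rather than $\sigma^2\gamma^2/n$. The same computation, with the factor $4$, handles coverings.

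Hence the statement holds along any sequence $\delta_n\to0$ for which we can guarantee $\eta_n=o(\delta_n)$. Since $\eta_n\to0$ almost surely, for any deterministic sequence $\delta_k\to0$ we may pick $n_k$ with $\mathbf P(\eta_{n_k}>\delta_k^2)<2^{-k}$. This yields convergence along every sequence $\delta_k\to0$, and therefore convergence as $\delta\to0$ in probability. Monotonicity of $\cP_\delta$ in $\delta$ is not even needed.

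The main obstacle is this coupling step: the distortion must be small relative to $\delta$, and the radius sequence must lie in the window where \cref{thm:bien} applies. In addition, the convergence in \cref{thm:bien} must be robust to perturbing $\gamma_n$ by a factor $1\pm o(1)$. The robustness follows by monotonicity: $\cP_\gamma$ is nonincreasing in $\gamma$, so sandwiching between $\gamma_n(1-\epsilon)$ and $\gamma_n(1+\epsilon)$ and then letting $\epsilon\to0$ works. If Skorokhod coupling is awkward, we can use the explicit contour-function coupling, with the distortion bounded by the sup-norm distance between the rescaled contour and the Brownian excursion.
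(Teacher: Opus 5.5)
Your argument is correct in substance, but it takes a genuinely different route from the paper.

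The paper proves \cref{thm:crt} by rerunning the skeleton strategy inside $\cT$ itself:
\begin{itemize}
\item \cref{prop:fringe_CRT} shows that the span $\cskel$ of $\lfloor\lambda\delta^{-2}\rfloor$ uniform points carries the packing number up to an error $\eps\delta^{-2}$. It is obtained by pushing the discrete fringe bound \cref{lem:prop_tall_trees} through marked Gromov--Hausdorff convergence of $\tfrac{1}{\sqrt n}T_n$ and lower semicontinuity of the counting function $\fs_\delta$.
\item \cref{prop:crt_local_limit} identifies the local limit of $\cskel$ as $T^*$, using Aldous' density for $\cskel$ plus uniform spacings.
\item \cref{prop:packing_is_local} and \cref{prop:asymp_m_s} then conclude exactly as for \cref{thm:bien}.
\end{itemize}

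You instead make \cref{thm:crt} a soft corollary of \cref{thm:bien}. Because the latter holds for \emph{every} mesoscopic sequence, you can let $\gamma_n/\sqrt n\to0$ slowly enough that the Gromov--Hausdorff distortion is $o(\delta_n)$. The rest then closes the argument:
\begin{itemize}
\item the two-sided comparison $\cP_{r+\eta}(X)\le\cP_r(Y)\le\cP_{r-\eta}(X)$, which is right, and which handles vertices versus edge points correctly because $X$ is the vertex set;
\item monotonicity in the radius;
\item a diagonal choice of $n_k$ for each sequence $\delta_k\to0$.
\end{itemize}
This bypasses the whole section on packing the CRT and explains directly why the same $\fC$ appears. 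The paper's route produces the continuum skeleton approximation and the local limit of $\cskel$ as by-products. It is not more self-contained, though, since it too imports the discrete fringe lemma and Aldous' invariance principle.

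There is one bookkeeping error to fix. The tree in \cref{thm:crt} is Aldous' CRT coded by $2\mathbf{e}$: it is built by stick-breaking with intensity $t\,dt$, and the paper uses $\tfrac{1}{\sqrt n}T_n\to\cT$ when $\sigma=1$. So $\tfrac{\sigma}{\sqrt n}T_n\to\cT$, while your scaling $\tfrac{\sigma}{2\sqrt n}T_n$ converges to $\tfrac12\cT$.

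Your constant is also off. With your choice $\gamma_n=2\delta_n\sqrt n/\sigma$ one has $\sigma^2\gamma_n^2/n=4\delta_n^2$, so \cref{thm:bien} gives $\delta_n^2\cP_{\gamma_n}(T_n)\to\fC/4$, not $\fC$. The product $\fC\cdot\frac{\sigma^2}{4}\cdot\frac{4}{\sigma^2}$ does not come from that substitution. The two slips cancel, so your final claim is right, but the computation as written is not.

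To fix it, use the scaling $\sigma/\sqrt n$ and $\gamma_n=\delta_n\sqrt n/\sigma$, so that $\delta_n^2=\sigma^2\gamma_n^2/n$ exactly. Coverings then follow identically, or from $\cC_\delta(\cT)=\cP_{2\delta}(\cT)$.

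Finally, \cref{thm:bien} is stated for full sequences, so extend your radii off the subsequence. For example, set $\gamma_n=\delta_k\sqrt n/\sigma$ for $n_k\le n<n_{k+1}$, choosing $n_k$ so that also $\delta_k\sqrt{n_k}\to\infty$.
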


In \cite[Proposition 4]{aldousContinuumRandomTree1991}, Aldous conjectures that $\cC_{\delta}(\cT)\sim C\delta^{-2}\log(1/\delta)$. This was later disproved by \citet[Proposition 5.2]{duquesne2005probabilistic} who showed that $\cC_{\delta}(\cT)\lesssim C\delta^{-2}$. \cref{thm:crt} identifies the correct first-order behaviour, $\cC_{\delta}(\cT)\sim \fC\delta^{-2}$. 

It is remarkable that the same constant $\fC$ appears in the three theorems, highlighting the universality of our results. First because this constant does not depend on the scale $\gamma_n$ chosen, second because it does not depend on the offspring distribution $\mu$ and last because \cref{thm:bien_reversed_limit} and \cref{thm:crt} show that the same limit occurs when the radii and tree do not grow simultaneously. Thus, we may conclude that $\fC$ describes the universal internal fractal dimension of critical finite variance branching processes. 

\textbf{Application to graph burning} A direct Corollary of \cref{thm:bien} improves on the results of \citet{burnRTxf} on asymptotic burning number of size-conditioned Bienaymé trees. Graph burning is a discrete-time process on a graph, where vertices are either burning or unburned. In each round, a burning vertex causes all of its neighbours to burn and a new fire source is chosen: a previously unburned vertex is ignited. The process runs until all vertices are burning. The quantity of interest is the minimum number of rounds required to burn all vertices of the graph, denoted by $\cB(G)$. Using the vocabulary of our paper, the burning number is the smallest $k$ such that with $k$ balls of respective radii $1,\ 2,\dots,k$ it is possible to cover the graph. This problem has received increasing attention, in particular with regard to the Burning Number Conjecture, which states that it takes at most $\lceil \sqrt{n} \rceil$ steps to burn a connected graph on $n$ vertices (see for example \citet{land2016upper,bastide2021improved,norin2024burning}). More recently, it was proven by \citet{burnRTxf} that the burning number of size-conditioned Bienaymé tree with finite variance critical offspring distribution is of order $n^{1/3}$. Our result improves on \cite{burnRTxf} and answers one of their questions, proving that the burning number lies between $c n^{1/3}$ and $2c n^{1/3}$ with probability tending to $1$ for an explicit constant $c$. 

As recorded in \cite[Observation 2.1]{burnRTxf}, if a graph cannot be covered with $k$ balls of radius $k$, then its burning number is at least $k$. On the other hand, if it can be covered by $k$ balls of radius $k$, its burning number is at most $2k$. These facts and \cref{thm:bien} directly give the following corollary.

\begin{cor}\label{cor:burning_number}
    For any offspring distribution $\mu$ with mean $1$ and finite variance $\sigma^2$, if $T_n$ is a size-conditioned Bienaymé tree with offspring distribution $\mu$ then, as $n\to\infty$, with high probability,

    $$
    (1+o(1))\left(\frac{\fC}{4\sigma^2}\right)^{1/3}n^{1/3}
    \leq
    \cB(T_{n})
    \leq
    (1+o(1))\left(\frac{2\fC}{\sigma^2}\right)^{1/3}n^{1/3}.
    $$
\end{cor}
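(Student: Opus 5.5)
The plan is to apply the covering part of \cref{thm:bien} with radii $\gamma_n$ of order $n^{1/3}$, combined with the two facts from \cite[Observation 2.1]{burnRTxf} quoted just before the statement. Set $c:=(\fC/(4\sigma^2))^{1/3}$. Fix $\eps\in(0,1)$ and put $k_n^{\pm}:=\lfloor(1\pm\eps)c\,n^{1/3}\rfloor$. Both sequences satisfy $k_n^\pm\to\infty$ and $n^{1/2}/k_n^\pm\to\infty$, so \cref{thm:bien} applies to them. The same holds for the shifted sequences $k_n^\pm\pm 1$, which I will use to absorb any off-by-one discrepancy between the strict inequality in \cref{def:covering} and the closed balls of the burning process.

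\textbf{Lower bound.} Take $\gamma_n=k_n^-+1$. By \cref{thm:bien},
\[
\cC_{\gamma_n}(T_n)=(1+o_p(1))\frac{\fC n}{4\sigma^2\gamma_n^2}=(1+o_p(1))(1-\eps)^{-3}c\,n^{1/3}=(1+o_p(1))(1-\eps)^{-3}k_n^- .
\]
Since $(1-\eps)^{-3}>1$, this exceeds $k_n^-$ with high probability. Any family of $k_n^-$ closed balls of radius $k_n^-$ is contained in the corresponding open balls of radius $k_n^-+1$. Hence, with high probability, $T_n$ cannot be covered by $k_n^-$ balls of radius $k_n^-$, and the observation gives $\cB(T_n)\ge k_n^-$.

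\textbf{Upper bound.} Take $\gamma_n=k_n^+$. In the same way,
\[
\cC_{k_n^+}(T_n)=(1+o_p(1))(1+\eps)^{-3}k_n^+<k_n^+
\]
with high probability. We may add arbitrary extra centres, so $T_n$ is covered by exactly $k_n^+$ balls of radius $k_n^+$; open balls are contained in closed balls of the same radius. The observation then gives
\[
\cB(T_n)\le 2k_n^+\le 2(1+\eps)c\,n^{1/3}=(1+\eps)\Big(\frac{2\fC}{\sigma^2}\Big)^{1/3}n^{1/3},
\]
using $2(\fC/(4\sigma^2))^{1/3}=(2\fC/\sigma^2)^{1/3}$.

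\textbf{Conclusion.} For every fixed $\eps>0$ we have shown
\[
\mathbf P\Big\{(1-\eps)c\,n^{1/3}-1\le \cB(T_n)\le (1+\eps)(2\fC/\sigma^2)^{1/3}n^{1/3}\Big\}\to 1 .
\]
A standard diagonal argument, letting $\eps=\eps_n\to 0$ slowly enough, turns this into the statement with $1+o(1)$ factors.

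There is no real obstacle, since everything is inherited from \cref{thm:bien}. The only points needing care are the open-versus-closed ball bookkeeping, handled by the $\pm1$ shifts, and the check that the chosen radii lie in the mesoscopic window $1\ll\gamma_n\ll n^{1/2}$.
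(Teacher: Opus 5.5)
Your proposal is correct and is essentially the paper's argument: the paper obtains the corollary directly from the covering statement of \cref{thm:bien} at radii of order $n^{1/3}$, combined with the two directions of \cite[Observation 2.1]{burnRTxf}. Your $(1\pm\eps)$ perturbation of the critical radius $(\fC n/(4\sigma^2))^{1/3}$, the $+1$ shift that reconciles the open balls of \cref{def:covering} with closed burning balls, and the final diagonal argument make explicit the bookkeeping that the paper leaves implicit.
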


\subsection{Outline of the paper}

We now give an outline of the paper, as well as a sketch of proof for \cref{thm:bien}. First of all, we note that there is a direct, explicit relation between the packing and covering number of a tree. It was first established by \citet{meir1975relations} for combinatorial trees.

\begin{prop}[cf Moon Meir \cite{meir1975relations}]\label{prop:packink_is_covering}
    For any tree $T$ and positive integer $r$, it holds that $\mathcal{C}_{r}(T)=\mathcal{P}_{2r} (T)$. 
\end{prop}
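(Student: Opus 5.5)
We prove the two inequalities $\mathcal{P}_{2r}(T)\le \mathcal{C}_r(T)$ and $\mathcal{C}_r(T)\le \mathcal{P}_{2r}(T)$ separately, using the graph distance on the vertices of the finite tree $T$. With the strict-inequality convention of \cref{def:covering}, an $r$-covering means every vertex is within distance $r-1$ of a centre. A $2r$-packing means distinct centres are at distance at least $2r$.

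\emph{Packing is at most covering.} This direction is general and holds in any metric space. Let $P$ be a $2r$-packing and $C$ an $r$-covering. Map each $p\in P$ to some $c(p)\in C$ with $d(p,c(p))<r$. If $c(p)=c(p')$ for distinct $p,p'$, the triangle inequality gives $d(p,p')<2r$, a contradiction. So the map is injective and $|P|\le |C|$.

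\emph{Covering is at most packing.} This is where the tree structure is needed, and it is the main obstacle. We construct, greedily, a set $S$ that is simultaneously a $2r$-packing and an $r$-covering.
\begin{itemize}
\item Root $T$ at an arbitrary vertex $\rho$.
\item Repeatedly take an uncovered vertex $v$ of maximal depth, that is, one at distance at least $r$ from all current centres.
\item Let $u$ be the ancestor of $v$ at distance $r-1$ above it, or $u=\rho$ if $v$ is shallower than that, and add $u$ to $S$.
\end{itemize}

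\emph{Covering property.} Each step covers $v$, so the procedure terminates with $S$ an $r$-covering.

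\emph{Packing property.} We check that a new centre $u$ is at distance at least $2r$ from every earlier centre $s$. The earlier centre $s$ was chosen from a vertex $w$ with $\mathrm{depth}(w)\ge \mathrm{depth}(v)$, since $v$ was still uncovered and we always pick maximal depth.

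The key claim is that every vertex of the subtree rooted at $u$ is within distance $r-1$ of $u$. If this failed, some descendant $x$ of $u$ would have depth exceeding $\mathrm{depth}(v)$, so it would have been processed before $v$. This uses the fact that uncovered status only shrinks, so $x$ would have been covered earlier, by some centre. Here a careful argument is needed: process vertices in nonincreasing depth order and argue that any centre placed from a deeper vertex inside the subtree of $u$ would already cover $v$. In fact, such a centre lies on the path from that vertex toward $u$, or above $u$, at distance at most $r-1$ from $u$'s depth level. One has to handle this case analysis precisely.

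With the claim in hand, suppose $d(u,s)<2r$ and let $a$ be the meeting point of the paths from $v$ and from $w$. Comparing depths, with $w$ at least as deep as $v$, shows $d(s,v)\le r-1$. This contradicts $v$ being uncovered when it was chosen.

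I expect the cleanest rigorous route to be induction on $|T|$. Take a deepest leaf $v$ and its ancestor $u$ at distance $r-1$, or the root if $v$ is shallower. All vertices in the subtree of $u$ are then within $r-1$ of $u$. Delete the ball $B(u,2r-1)$'s contribution appropriately, or more simply delete the subtree of $u$ together with the vertices covered by $u$, and apply induction to the remaining forest, where distances are preserved in the tree. This gives $\mathcal{C}_r(T)\le 1+\mathcal{C}_r(T')$, and any $2r$-packing of $T'$ avoiding $B(u,r)$ extends by adding $v$. The delicate point is choosing the deleted set $T'$ so that both the covering recursion and the packing extension hold. Removing exactly the vertices within distance $r-1$ of $u$ makes the covering step immediate. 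For the packing step, one verifies that every vertex of $T'$ is at distance at least $2r$ from $v$, using that $v$ is a deepest vertex and that paths from $v$ leave the subtree of $u$ through $u$.
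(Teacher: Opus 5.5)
Your first direction, $\mathcal{P}_{2r}(T)\le\mathcal{C}_r(T)$, is correct. The second direction fails, and the trouble starts with its goal: no set $S$ with $|S|\ge 2$ can be both an $r$-covering and a $2r$-packing of a connected tree. To see this, assign each vertex a nearest centre, so that each $s\in S$ is assigned to itself. By connectivity some edge $xy$ has its endpoints assigned to distinct centres $s\ne s'$, and then $d(s,s')\le (r-1)+1+(r-1)=2r-1<2r$. Your greedy runs into exactly this on the path $0,1,\dots,2r-1$ rooted at $0$. It selects $v=2r-1$, $u=r$, then $v=0$, $u=\rho=0$, giving $S=\{r,0\}$ with $d(r,0)=r$. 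Two steps break here. The ``key claim'' fails, since for $u=\rho$ the subtree of $u$ is all of $T$, and the deeper vertex $2r-1$ was processed earlier without its centre covering $v$. The deduction $d(s,v)\le r-1$ also fails, since here $d(s,v)=r$.

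The covering and the packing have to be different sets. Keep the centres $u_i$ as the covering and the selected deepest vertices $v_i$ as the packing. For $i<j$, $v_j$ was uncovered at step $i$, so $\mathrm{depth}(v_j)\le\mathrm{depth}(v_i)$ and $d(v_j,u_i)\ge r$. This forces the lowest common ancestor of $v_i,v_j$ to lie strictly above $u_i$; otherwise $d(v_j,u_i)\le\mathrm{depth}(v_i)-\mathrm{depth}(u_i)\le r-1$. Hence $d(v_i,v_j)=d(v_i,u_i)+d(u_i,v_j)\ge (r-1)+r$. This yields $2r-1$, not $2r$. The same happens in your induction sketch: a vertex of $T'$ at distance exactly $r$ from $u$ is at distance $2r-1$ from $v$, not $\ge 2r$ as you claim.

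No repair can do better. With integer graph distances and the conventions of \cref{def:covering} and \cref{def:packing}, the inequality $\mathcal{C}_r(T)\le\mathcal{P}_{2r}(T)$ is false. On the path with $2r$ vertices, every ball $\{y: d(x,y)<r\}$ has at most $2r-1$ vertices, so $\mathcal{C}_r=2$. The diameter is $2r-1$, so $\mathcal{P}_{2r}=1$. For $r=1$, a single edge already has $\mathcal{C}_1=2\ne 1=\mathcal{P}_2$. What your two-set argument proves is $\mathcal{C}_r(T)=\mathcal{P}_{2r-1}(T)$. This is the Meir--Moon identity (closed $k$-balls versus pairwise distances exceeding $2k$) rewritten in these conventions.

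The identity with $2r$ is exact only for $\mathbb{R}$-trees, and that is what the paper actually proves in \cref{app:covering=packing}. It repeatedly cuts away (roughly) the open $r$-ball around the point at distance $r$ from the endpoint $a_0$ of a longest path. At each step the covering gains one centre near that point and the packing gains $a_0$. The paper only uses the discrete identity to leading order with radii $\gamma_n\to\infty$, so the off-by-one is harmless for its results.
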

We extend their result to $\R$-trees in \cref{app:covering=packing}, which allows us to exchange between studying the packing and covering number throughout the paper.

\begin{prop}\label{prop:R_tree_packink_is_covering}
    For any compact $\R$-tree $\cT$, it holds that, for any $r\geq 0$, $\mathcal{C}_{r}(\cT)=\mathcal{P}_{2r} (\cT)$. 
\end{prop}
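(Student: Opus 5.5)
The inequality $\mathcal{P}_{2r}(\cT)\le \mathcal{C}_r(\cT)$ is elementary and holds in any metric space. Let $C$ be an $r$-covering and $P$ a $2r$-packing. Each $p\in P$ satisfies $d(p,c)<r$ for some $c\in C$. If two distinct $p,p'\in P$ shared the same $c$, the triangle inequality would give $d(p,p')<2r$, contradicting the packing condition. So $p\mapsto c$ is injective and $|P|\le |C|$. The case $r=0$ is degenerate and will be handled by convention, so all the work is in $\mathcal{C}_r(\cT)\le\mathcal{P}_{2r}(\cT)$. For that I would construct simultaneously a covering and a packing of the same cardinality, using the greedy "deepest leaf" algorithm of Meir--Moon adapted to $\R$-trees.

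Because balls are open and the packing inequality is non-strict, I will first pass to closed variants where the greedy step is clean. For $s>0$ let $\mathcal{C}'_s$ be the minimal number of closed balls of radius $s$ needed to cover $\cT$. Let $\mathcal{P}'_{2s}$ be the maximal size of a set whose pairwise distances are all $>2s$. I would prove two reductions.

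First, $\mathcal{C}_r(\cT)=\min_{s<r}\mathcal{C}'_s(\cT)$. Closed balls of radius $s<r$ are contained in open balls of radius $r$, which gives one inequality. Conversely, given a finite $r$-covering $C$, the function $x\mapsto d(x,C)$ is continuous on the compact space $\cT$, so its maximum $s$ is attained and satisfies $s<r$.

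Second, $\mathcal{P}_{2r}(\cT)=\lim_{s\uparrow r}\mathcal{P}'_{2s}(\cT)$. The packing numbers are finite by total boundedness and monotone in $s$. A finite set with all pairwise distances $\ge 2r$ is $>2s$-separated for every $s<r$. Conversely, a set that is $>2s$-separated for $s$ arbitrarily close to $r$ has finitely many pairwise distances, so it can be taken fixed along a sequence $s\uparrow r$ since the cardinalities are bounded, and its distances are then $\ge 2r$.

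Given these reductions, it suffices to show $\mathcal{C}'_s\le \mathcal{P}'_{2s'}$ for all $s'<s$. The slack between $s'$ and $s$ absorbs the boundary problems. Root $\cT$ at an arbitrary point $\rho$ and run the following iteration on the current uncovered set $U$, starting from $U=\cT$.
\begin{enumerate}
\item Pick $u\in \overline{U}$ of (near) maximal height in $\overline{U}$.
\item Let $w$ be the ancestor of $u$ at distance $s$, or $w=\rho$ if $\mathrm{ht}(u)<s$.
\item Add $w$ to the covering and $u$ to the packing.
\item Replace $U$ by $U\setminus \bar B(w,s)$.
\end{enumerate}
The key geometric claim concerns a point $x$ left uncovered. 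If $x$ lies in the subtree above $w$, its height is at most $\mathrm{ht}(u)$ up to a small error, so $d(x,w)\le s$ up to that error. Otherwise the geodesic from $u$ to $x$ passes through $w$, giving $d(u,x)=s+d(w,x)>2s$. So every later packing point is far from $u$, and the centres chosen so far cover what has been removed.

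The main obstacle is making this rigorous without exact maxima. The set $U$ is relatively open, and $\sup_{U}\mathrm{ht}$ need not be attained. Points at distance exactly $s$ from $w$ would give separation exactly $2s$ rather than $>2s$. I would handle this by choosing $u\in\overline U$ with height within $\eta=(s-s')/2$ of the supremum (compactness of $\overline U$ even gives exact attainment). I would take $w$ at distance $s-\eta$ from $u$ and use closed balls of radius $s$ around $w$. Then points above $w$ are within $s$ of $w$, so the ball covers them. Every other uncovered point $x$ satisfies $d(u,x)=(s-\eta)+d(w,x)>2s-\eta>2s'$, since uncovered means $d(w,x)>s$. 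The packing points are therefore pairwise $>2s'$-separated, so the number of iterations is at most $\mathcal{P}'_{2s'}<\infty$. Hence the procedure terminates with $U=\emptyset$, producing a covering by at most $\mathcal{P}'_{2s'}$ closed $s$-balls. Combining this with the two reductions and letting $s'\uparrow s\uparrow r$ yields $\mathcal{C}_r(\cT)\le\mathcal{P}_{2r}(\cT)$.
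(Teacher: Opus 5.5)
Your argument is correct, and it takes a genuinely different route from the paper. The paper proves $\mathcal{C}_r\le\mathcal{P}_{2r}$ by an inductive trimming.
\begin{itemize}
\item It picks a longest path $\llbracket a_0,b_0\rrbracket$ and lets $x^{(0)}_r$ be the point at distance $r$ from $a_0$.
\item It passes to the closed subtree $T^{(1)}$ of points at distance (essentially) at least $r$ from $x^{(0)}_r$.
\item It shows $\mathcal{C}_r(T)\le\mathcal{C}_r(T^{(1)})+1$ by adding the centre $x^{(0)}_{r-\eps}$ for small $\eps$, with a compactness argument supplying the slack.
\item It shows $\mathcal{P}_{2r}(T)\ge\mathcal{P}_{2r}(T^{(1)})+1$ by an exchange argument: some maximum $2r$-packing of $T^{(1)}$ stays at distance at least $2r$ from $a_0$, so $a_0$ can be added.
\item It recurses until the diameter drops below $2r$.
\end{itemize}
You instead fix a root once and run a single greedy pass that builds the covering and the packing simultaneously. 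Your observation that the geodesic from $u$ to any point left uncovered passes through $w$ gives the separation of all later packing points from $u$ directly. So you never compare with a maximum packing of a trimmed subtree, and you need no induction. The price is that the mismatch between open balls and the non-strict packing inequality must be handled globally, through closed balls, strict separation and the slack $s'<s<r$. The paper handles it locally at each trimming step. Only the easy half of your first reduction, $\mathcal{C}_r\le\mathcal{C}'_s$ for $s<r$, is actually used.

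One step does not work as written: your justification of $\lim_{s\uparrow r}\mathcal{P}'_{2s}\le\mathcal{P}_{2r}$. The optimal $>2s$-separated sets depend on $s$, and bounded cardinality does not let you fix a single set along $s\uparrow r$. A fixed set with minimum distance $\delta<2r$ stops being admissible once $2s\ge\delta$.

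Use compactness instead:
\begin{itemize}
\item $\mathcal{P}'_{2s}$ is integer valued, non-increasing and bounded below, hence equal to some $m$ for all $s$ close to $r$.
\item Choose $s_k\uparrow r$ and $>2s_k$-separated sets $\{q^1_k,\dots,q^m_k\}$.
\item Extract a subsequence along which $q^i_k\to q^i$ in $\cT$ for each $i$.
\item The limits $q^i$ have pairwise distances at least $2r$, so they are distinct (as $r>0$) and form a $2r$-packing of size $m$.
\end{itemize}
This is the same compactness argument the paper uses to prove upper semi-continuity of $\mathcal{P}_r$. With it, your proof is complete.
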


Our second observation is that for $\eps$ arbitrarily small, the $\gamma_n$-packing number of $T_n$ is not much larger than the $(1-\eps)\gamma_n$-packing number of a much smaller subtree, which we introduce here. For $\lambda>0$, let $\skel$ be a subtree of $T_n$ spanned by the root and $\lfloor \lambda n/\gamma_n^2\rfloor $ uniformly random leaves. We call $\skel$ the \emph{skeleton} of $T_n$. 

\begin{prop}\label{prop:fringe}
Let $1\ll\gamma_n\leq 2\sqrt{n}$. For any $\eps>0$ it holds that \[\lim_{\lambda\to \infty}\limsup_{n\to \infty}\P{\frac{\gamma_n^2}{n}\left(\cP_{\gamma_n}(T_n)-\cP_{(1-\eps)\gamma_n}(\skel)\right)>\eps} =0.\]
\end{prop}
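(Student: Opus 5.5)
Given an optimal $\gamma_n$-packing $P$ of $T_n$, the plan is to map its centres to nearby points of $\skel$. This produces a $(1-\eps)\gamma_n$-packing of $\skel$ that is not much smaller than $P$. Since $\skel$ is a subtree of $T_n$, we project each vertex $v$ to its closest vertex $\pi(v)\in\skel$, which is the point where the path from $v$ to the root first meets $\skel$. The vertices of $T_n$ outside $\skel$ form a forest of fringe trees $\fforest$ hanging off $\skel$. Let $h(v)=d(v,\pi(v))$ be the height of $v$ within its fringe tree. By the triangle inequality,
\[ d(\pi(u),\pi(v)) \ge d(u,v) - h(u)-h(v). \]
Hence if $h(u),h(v)\le \eps\gamma_n/2$, the projections of two $\gamma_n$-separated centres are $(1-\eps)\gamma_n$-separated. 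The centres with small $h$ therefore project injectively onto a $(1-\eps)\gamma_n$-packing of $\skel$. It then suffices to show that, with high probability as $\lambda\to\infty$, at most $\eps n/\gamma_n^2$ centres of any $\gamma_n$-packing have $h(v)>\eps\gamma_n/2$.

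To bound the number of such "deep" centres, I would use the fact that centres of a $\gamma_n$-packing are pairwise at distance $\ge\gamma_n$. Consider a fringe tree $\tau$ attached at $x\in\skel$, and the packing centres lying in it at height $>\eps\gamma_n/2$. Each such centre $v$ has a path back toward $x$ of length $\ge \eps\gamma_n/2$. Take the segment of length $\eps\gamma_n/4$ starting at $v$ going towards $x$, and the $\gamma_n/2$-ball around $v$ in $\tau$. These balls are disjoint for distinct centres. Each ball contains at least $\eps\gamma_n/4$ vertices, and typically of order $\gamma_n^2$ vertices, since by the local limit it contains a size-biased Bienaymé-type subtree of height $\asymp\gamma_n$. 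A crude count would be (number of deep centres)$\cdot c\eps^2\gamma_n^2 \le$ (total size of fringe subtrees of height $\ge\eps\gamma_n/2$). A cleaner route is to bound the number of deep centres by $\sum_{\tau}\cP_{\gamma_n}(\tau\setminus B(x_\tau,\eps\gamma_n/2))$. Since every point of $\tau$ at height $>\eps\gamma_n/2$ must have a descendant path of length about $\eps\gamma_n/4$ or so, we can bound this by $C\sum_\tau |\{w\in\tau:\height(\tau_w)\ge c\eps\gamma_n\}|/(\eps\gamma_n)$, counting ancestors of deep centres.

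The key probabilistic input is the structure of $\fforest$. Conditionally, the skeleton spanned by $k=\lfloor\lambda n/\gamma_n^2\rfloor$ random leaves has total length of order $\sqrt{kn}\cdot$const, and between consecutive branch points it has typical spacing $n/(\gamma_n\sqrt{\lambda})\cdot$... More to the point, the largest fringe tree has height $o_\lambda(1)\gamma_n$. The random leaves form a dense net: by the scaling limit of $T_n$ to the CRT at scale $\sqrt n$ when $\gamma_n\asymp\sqrt n$, and by local-limit/Kesten-tree estimates at mesoscopic scales, every vertex lies within distance $C\gamma_n/\sqrt{\lambda}$ of the skeleton with high probability. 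I would prove this via a union bound. A vertex $v$ has a subtree of height $\ge \eps\gamma_n/2$ avoiding all $k$ sampled leaves. Such a subtree contains $\asymp\eps^2\gamma_n^2$ leaves with high probability, and so it is missed by all $k$ samples with probability about $\exp(-c\lambda\eps^2)$. The expected number of vertices with fringe height $\ge \eps\gamma_n/2$ is $\asymp n/(\eps\gamma_n)$. Combining these gives that the expected number of deep centres is at most $C n\, e^{-c\lambda\eps^2}/\gamma_n^2$ up to $\eps$-dependent constants. Markov's inequality then finishes the proof after taking $\lambda\to\infty$.

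The main obstacle is making this union bound uniform over the full range $1\ll\gamma_n\le2\sqrt n$. This requires a tail estimate saying that a subtree of $T_n$ of height $h$ has $\gtrsim h^2$ leaves except with small probability, uniformly in the conditioning on size $n$. I would try to obtain it via the Lukasiewicz path and a cyclic-shift/absolute-continuity argument, comparing to an unconditioned Bienaymé forest.
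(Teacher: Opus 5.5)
Your argument works, and it takes a genuinely different route from the paper.

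\textbf{Comparison of the two routes.} The paper proves the covering version \cref{prop:fringe_cover} and converts it using $\cC_r=\cP_{2r}$ (\cref{prop:packink_is_covering}). Its deterministic step is a pigeonhole over the residues of $d(v,\skel)$ modulo $\lfloor\eps\gamma_n/2\rfloor$. Its probabilistic step, \cref{lem:prop_tall_trees}, analyses $\fforest$ conditionally on its degree statistic: stochastic domination to a sub-binary statistic, collapsing out-degree-one vertices, then a cyclic shift and change of measure to a slightly subcritical binary Bienaym\'e tree. There, tall subtrees are rare because $\fforest$ has about $\sigma\sqrt{2\lambda}\,n/\gamma_n$ roots.

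You instead project the packing onto $\skel$ directly. You then bound the number of deep centres by $C\,\#\{w\in\fforest:\height(\fforest^{\uparrow w})\ge\eps\gamma_n/4\}/(\eps\gamma_n)$. This holds because the ancestors of a deep centre at distance in $[\eps\gamma_n/4,\eps\gamma_n/2)$ from it lie in $\fforest$, have tall subtrees, and sit inside the pairwise disjoint open balls of radius $\gamma_n/2$ around the centres. Rarity then comes from the sampling itself: $w\in\fforest$ exactly when $T_n^{\uparrow w}$ contains no sampled leaf. Given $T_n$, this has probability at most $\exp(-kL_w/n)$, where $L_w$ is the number of leaves of $T_n^{\uparrow w}$.

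This replaces the degree-statistic machinery with two estimates for the fringe subtree at a uniform vertex $V$ of $T_n$:
\begin{itemize}
\item $\mathbf{P}(\height(T_n^{\uparrow V})\ge h)\le C/h$;
\item $\sup_{n\ge h^2}h\,\mathbf{P}(\height(T_n^{\uparrow V})\ge h,\ L_V\le\eta h^2)\le f(\eta)$ for $h\ge h_0(\eta)$, with $f(\eta)\to0$.
\end{itemize}
Both follow from the cycle-lemma change of measure to an unconditioned critical tree, plus positivity of the mass of the height-conditioned CRT (the same scaling input used in \cref{lem:integral_convergence}). Your route is shorter and has a direct continuum analogue. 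The paper's route gives a structural description of $\fforest$, and it yields the sequence-uniform \cref{lem:prop_tall_trees} that is reused for the CRT in \cref{prop:cover_fringe_CRT}.

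\textbf{Points to fix.}
\begin{enumerate}
\item Drop the claim that the largest fringe tree has height $o_\lambda(1)\gamma_n$, i.e.\ that every vertex lies within $C\gamma_n/\sqrt\lambda$ of $\skel$. For $\gamma_n\ll\sqrt n$ it is false: $\fforest$ behaves like a forest of about $\sigma\sqrt{2\lambda}\,n/\gamma_n$ Bienaym\'e trees with offspring mean $1-\Theta(\sqrt\lambda/\gamma_n)$, so it contains trees of height of order $\gamma_n\log(n/\gamma_n^2)/\sqrt\lambda$. Your union bound also does not give it, since the expected number of tall fringe vertices is of order $\frac{n}{\eps\gamma_n}e^{-c\lambda\eps^2}\to\infty$. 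Only this first-moment count is needed, and it suffices.
\item The statement that tall subtrees carry $\gtrsim h^2$ leaves holds only outside an exceptional fraction $f(\eta)$ that is small in $\eta$, not in $n$. So with $D_n$ the number of deep centres, the correct bound is $\mathbf{E}[D_n]\le\frac{Cn}{\eps^2\gamma_n^2}\big(e^{-c\lambda\eta\eps^2}+f(\eta)\big)$. You must send $\lambda\to\infty$ before $\eta\to0$.
\item The change of measure only controls $T_n^{\uparrow V}$ on $\{\#T_n^{\uparrow V}\le(1-\theta)n\}$. The remaining vertices lie on a single ancestral path, so there are at most $\height(T_n)+1$ of them. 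When $\gamma_n\asymp\sqrt n$ this is comparable to $n/h$, so they must be treated separately. This is easy: their subtrees contain all but $\theta n$ vertices, hence a linear number of leaves with high probability, so they are not exceptional.
\item In the deterministic count, it is the ancestors of a deep centre, not its descendants, that carry the tall subtrees.
\end{enumerate}
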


Analogously, for $\delta>0$ small, the $\delta$-packing number of the Brownian CRT $\cT$, is well-approximated by the $(1-\eps)\delta$-packing number of a much smaller subtree. Let $\cskel$ be the  sub-$\R$-tree of $\cT$ spanned by $\lfloor \lambda \delta^{-2}\rfloor $ uniformly random samples from the mass measure of $\cT$.

\begin{prop}\label{prop:fringe_CRT}
For any $\eps>0$ it holds that \[\lim_{\lambda\to \infty}\limsup_{\delta\to 0
}\P{\delta^2\left(\cP_{\delta}(\cT)-\cP_{(1-\eps)\delta}(\cskel)\right)>\eps} =0.\]
\end{prop}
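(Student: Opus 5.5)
The approach is to take an optimal $\delta$-packing $P$ of $\cT$ and produce from it a $(1-\eps)\delta$-packing of $\cskel$ that is almost as large. The only packing points that can be lost are those that are not near the skeleton. For each $x\in P$, let $\pi(x)$ denote the projection of $x$ onto $\cskel$, i.e.\ the closest point of the skeleton. We call $x$ \emph{good} if $d(x,\pi(x))\le \eps\delta/2$. For good points $x,y\in P$,
\[
d(\pi(x),\pi(y))\ge d(x,y)-\eps\delta\ge(1-\eps)\delta,
\]
so $\{\pi(x): x \text{ good}\}$ is a $(1-\eps)\delta$-packing of $\cskel$ of the same cardinality. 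It therefore suffices to show that, with probability close to one for $\lambda$ large, the number of bad points is at most $\eps\delta^{-2}$.

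Now bound the bad points. Each connected component of $\cT\setminus\cskel$ is a subtree hanging off the skeleton. Fix such a fringe tree $F$ with mass $m(F)$ and height $h(F)$. If $h(F)\le \eps\delta/2$, it contains no bad point. Otherwise, the bad points in $F$ form a $\delta$-packing of $F$, so their number is at most $\cP_\delta(F)$. Using \cref{prop:R_tree_packink_is_covering}, and the fact that a tree of height $h$ and mass $m$ can be covered greedily, I expect a deterministic bound of the form $\cP_\delta(F)\le 1+ C\,\ell(F)/\delta$, where $\ell(F)$ is the length of a suitable $\delta/2$-trimmed subtree of $F$. Simpler still, the distinct points of a $\delta$-packing in $F$ that are at distance $\ge \eps\delta/2$ from the skeleton each own disjoint balls of radius $\eps\delta/4$. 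Since fringe trees are small, each such ball carries mass comparable to a CRT ball of radius $\eps\delta$, which is about $(\eps\delta)^2$. This heuristic gives
\[
\#\{\text{bad}\}\lesssim \frac{m(\text{fringe at depth}\ge \eps\delta/2)}{(\eps\delta)^2}.
\]
That is not good enough on its own, because we need the count to be $o_\lambda(1)\,\delta^{-2}$. I would instead use the self-similar description of the fringe: conditionally on $\cskel$ with $k=\lfloor\lambda\delta^{-2}\rfloor$ leaves, the trees hanging off it are rescaled Brownian CRTs with masses given by a Poisson–Dirichlet type partition (Aldous' line-breaking / stick-breaking construction). The total length of $\cskel$ is of order $\sqrt{k}$, and the subtrees off it have masses of order $1/k=\delta^2/\lambda$, hence heights of order $\delta/\sqrt\lambda\ll\eps\delta$.

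The expected number of fringe trees with height exceeding $\eps\delta/2$ is then controlled by the expected number of excursions of height $\ge\eps\delta/2$ hanging from a skeleton of length $L\approx\sqrt{k}$ while avoiding the next sampled leaf. By the Itô excursion measure, the rate of excursions of height $\ge h$ is $\asymp 1/h$. Thinning by the requirement that the excursion carries none of the $k$ samples gives a factor like $e^{-c k h^2}$. For $h=\eps\delta/2$ we have $kh^2\asymp\lambda\eps^2$, so the expected count is at most
\[
\frac{\sqrt{k}}{\eps\delta}\,e^{-c\lambda\eps^2}\asymp \frac{\sqrt{\lambda}}{\eps\delta^{2}}\,e^{-c\lambda\eps^2}.
\]
Each such tall fringe tree contributes at most $\cP_\delta(F)$ bad points, and by Brownian scaling $\E{\cP_\delta(F)\mid h(F)\ge \eps\delta/2}$ is bounded by a constant depending on $\eps$ (using Duquesne's bound $\cC_\delta(\cT)\lesssim\delta^{-2}$ with scaling). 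Hence $\E{\delta^2\#\text{bad}}\le C_\eps\sqrt\lambda e^{-c\lambda\eps^2}\to0$, and Markov's inequality concludes uniformly in small $\delta$.

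The main obstacle is making the fringe description and the exponential thinning rigorous uniformly in $\delta$. In particular, one must control the expected packing number of a fringe tree conditioned to be tall but not to contain samples. I would first try to do this via Aldous' stick-breaking construction, where sample-free subtrees correspond to Brownian excursions conditioned to avoid points of a Poisson process of intensity $k$ times mass. Scaling reduces everything to a single $\delta$-free computation.
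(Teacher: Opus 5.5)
Your first step is correct, and it starts differently from the paper. Projecting the centres of a maximum $\delta$-packing that lie within $\eps\delta/2$ of $\cskel$ gives $\cP_{(1-\eps)\delta}(\cskel)\ge \cP_\delta(\cT)-\#\{\text{bad}\}$ directly, without passing through coverings and \cref{prop:R_tree_packink_is_covering}. The gap is in the second step. The bound $\delta^2\E{\#\{\text{bad}\}}=o_\lambda(1)$, uniformly in small $\delta$, is only argued heuristically, and two of the steps you propose would fail as stated.

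\emph{The thinning factor is wrong.} The sample-free factor is not $e^{-ckh^2}$. For the It\^o measure normalised as in the paper ($\mathfrak n(H\ge a)=1/a$), decomposing at the hitting time of level $h$ gives
\[
\mathfrak n\bigl(e^{-kR}\mid H\ge h\bigr)=\frac{\sqrt{2k}\,h\,e^{-\sqrt{2k}\,h}}{\sinh(\sqrt{2k}\,h)}\asymp \sqrt{k}\,h\,e^{-2\sqrt{2k}\,h},
\]
up to constants from the normalisation of the metric and of the sample intensity. The reason is that the cheapest way for a fringe tree to be tall and sample-free is to be thin, with mass of order $h/\sqrt{k}$ rather than $h^2$. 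With $k=\lambda\delta^{-2}$ and $h=\eps\delta/2$ one has $\sqrt{k}\,h\asymp\sqrt{\lambda}\,\eps$. The corrected expected number of tall fringe trees is therefore of order $\lambda e^{-c\sqrt{\lambda}\eps}\delta^{-2}$. This still vanishes after multiplying by $\delta^2$ and letting $\lambda\to\infty$, so the strategy survives, but your displayed bound is false.

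\emph{The packing-number bound fails under height conditioning.} You claim that Brownian scaling bounds $\E{\cP_\delta(F)\mid \height(F)\ge\eps\delta/2}$ by a constant. For an excursion tree conditioned only on its height, $\mathfrak n(R>\sigma\mid H\ge h)\asymp h\sigma^{-1/2}$ for $\sigma\gg h^2$, so the mass has infinite mean. Since $\cP_\delta(F)$ is at least of order $R/\delta^2$ when $R\gg\delta^2$, this conditional expectation is infinite.

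\emph{What would be needed.} The packing bound has to be proved under the sample-free (tilted) law, where tall fringe trees have height $h+O(k^{-1/2})$ and mass $O(h/\sqrt{k})$, so $\cP_\delta(F)$ is typically $1$. Everything must also be done for the mass-one CRT with exactly $k$ samples. One way is to describe the masses hanging off the reduced tree as the jumps of a $\tfrac12$-stable subordinator run for the skeleton length and conditioned to sum to $1$. You would then need an asymptotic computation as $k=\lambda\delta^{-2}\to\infty$; scaling alone does not remove $\delta$, because the number of samples grows.

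\emph{How the paper handles this.} The paper avoids all such continuum fringe computations. It proves the covering version, \cref{prop:cover_fringe_CRT}, by transferring the discrete, $\gamma_n$-uniform estimate \cref{lem:prop_tall_trees} to $\cT$. The transfer uses convergence of rescaled conditioned Bienaym\'e trees with marked uniform vertices, together with lower semicontinuity of $\fs_{\eps\delta/2}(\theta,\cdot)$. It then covers the far part of $\cT$ by $A_{\theta^*,\cT}$ for a good $\theta^*$ and converts coverings to packings. Counting points at depth $\theta$ modulo $\eps\delta/2$ whose subtree is tall controls both the number of tall fringe trees and their internal size at once. That is exactly the part you would still have to do by hand. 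Completed, your route would be self-contained in the continuum and would give an explicit rate in $\lambda$, but as written its central estimate is missing.
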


\cref{prop:fringe} is proven in \cref{sec:covering_fringe} and \cref{prop:fringe_CRT} is proven in \cref{sec:covering_CRT_fringe}.

We denote by $\skelr$ the tree with edge lengths obtained from $\skel$ by replacing each maximal path $(v_0,v_1,\ldots,v_k)$ in $\skel$ all of whose internal vertices have degree two by a single edge $e$ joining $v_0$ and $v_k$, of length $\ell(e)=k$.  By a {\em tree with edge lengths} we mean a tree $T=(V,E)$ together with a function $\ell:E \to [0,\infty)$. Its coverings and packings are defined as in the corresponding $\R$-tree: we viewing edge $e$ as an isometric line segment of length $\ell(e)$ and allow for points on edges to be included in a packing or covering. Doing so does not change the packing and covering number on the leading order, indeed for any $r$

\begin{equation}\label{eq:pack_R_tree_vs_tree}\cP_r(\skel)\le \cP_r(\skelr) \le \cP_{r+1}(\skel).\end{equation}
We also view $\cskel$ as a tree with edge lengths in the natural way. We let $\cP_r(\skelr)$, $\cC_r(\skelr)$ and $\cP_r(\cskel)$, $\cC_r(\cskel)$ be the packing and covering numbers of $\skelr$ and $\cskel$ when viewed as trees with edge lengths, and we write $\#\skelr$, $\#\cskel$ for their respective number of vertices. 

Next, we aim to control the packing number of the skeletons $\skelr$ and $\cskel$. We start by proving that the skeletons have local limits, and later prove that the asymptotic packing number only depends on said local limit. It turns out that both $\skelr$ and $\cskel$ have the same local limit when appropriately rescaled, explaining the occurrance of the same constant in \cref{thm:bien} and \cref{thm:crt}. The precise definition of the marked local topology is discussed in \cref{sec:formalism}.

\begin{dfn}\label{dfn:kesten_tree}
  We use $T^*:=(T^*,P^*,\ell^*,o^*)$ to denote the binary Kesten tree with independent $\Exp(1)$ edge lengths, which we construct as follows. The marked vertex $o^*$ is the root, and $P^*$ is a one-ended infinite path starting at $o^*$. At each of the vertices of $P^*$ except $o^*$, we graft an independent planted Bienaymé tree with offspring distribution $\mu=\tfrac{1}{2}\delta_0+\tfrac{1}{2}\delta_2$. At $o^*$ we graft an independent unplanted Bienaym\'{e} tree with the same offspring distribution. Finally, the edge length function $l^*$ assigns each edge and independent $\Exp(1)$ random variable. We call the first edge on $P^*$, denoted by $\{o^*,v_1\}$, the root edge.
\end{dfn}

The local convergence of $\skelr$ is proven in \cref{sec:local_skeleton} while that of $\cskel$ is proven in \cref{sec:covering_CRT_fringe}.

\begin{restatable}{prop}{skelLocalLim}
\label{prop:skeleton_local_limit}
For any $\lambda>0$, as $n\to \infty$,
    \[\frac{\skelr}{\gamma_n/(\sigma \sqrt{2\lambda})}\xrightarrow{P} T^* \]
    in the marked local topology defined in \cref{subsec:def_marked_local_topology}, where $T^*$ is the binary Kesten tree with independent $\Exp(1)$ edge lengths. Moreover, $\frac{\gamma_n^2}{n}\#\skelr\convp 2\lambda$ as $n\to \infty$.
\end{restatable}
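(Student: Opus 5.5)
We would deduce \cref{prop:skeleton_local_limit} from the convergence of the reduced skeleton to its continuum analogue, and then use that the subtree of the Brownian CRT spanned by many uniform points, seen from a typical vertex, looks like a binary Kesten tree with exponential edges. Write $k=k_n=\lfloor \lambda n/\gamma_n^2\rfloor\to\infty$. By Aldous' theorem, $\frac{\sigma}{2\sqrt n}T_n\to\cT$. Since $k\to\infty$, however, a fixed-$k$ Gromov--Hausdorff statement does not suffice. Instead we use an exact description of $\skelr$: conditionally on $T_n$, the subtree spanned by the root and $k$ uniform leaves is a binary tree (with high probability, since branch points of multiplicity $\ge3$ are rare at this scale), with $2k-1$ edges. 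After rescaling by $2\sqrt n/\sigma$, its edge lengths are close to those of the CRT reduced tree $\cR_k$ spanned by $k$ uniform mass points. The law of $\cR_k$ is explicit (Aldous): the shape is a uniform binary tree with $k$ leaves, and the $2k-1$ edge lengths have joint density proportional to $s\,e^{-s^2/2}$ with $s$ the total length. In particular the edge lengths are exchangeable, with total length $\approx\sqrt{2k}$, so each edge has length $\approx \Exp(1)\cdot \sqrt{2k}/(2k)=\Exp(1)/\sqrt{2k}$. In $T_n$ units this is $\frac{2\sqrt n}{\sigma}\cdot\frac{1}{\sqrt{2k}}=\frac{2\gamma_n}{\sigma\sqrt{2\lambda}}$ up to the factor convention; we would fix the constant by careful bookkeeping of Aldous' normalization so that it matches $\gamma_n/(\sigma\sqrt{2\lambda})$. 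The vertex count $\#\skelr=2k+O(1)$ then gives $\frac{\gamma_n^2}{n}\#\skelr\to2\lambda$ directly.

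For the discrete side we would avoid passing through the CRT and instead use the exact spanning-tree decomposition of conditioned Bienaymé trees with marked leaves: a size-biasing / Pitman-type bijection expressing $T_n$ with $k$ marked vertices as a skeleton with edge lengths plus forests. This yields that, given the shape, the edge lengths $(L_e)$ have a law which is, after rescaling by $\gamma_n/(\sigma\sqrt{2\lambda})$, asymptotically i.i.d.\ $\Exp(1)$ conditioned on a total sum constraint. Concretely, with $\sum_e L_e=\Lambda$, the vector behaves like multinomial spacings, and the scaled total $\sigma\sqrt{2\lambda}\Lambda/\gamma_n\approx 2k$ concentrates. By de Finetti/Poisson-process arguments, any bounded number of exchangeable coordinates, normalized by the mean, converges to i.i.d.\ $\Exp(1)$. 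Uniform leaves versus uniform vertices differ only by a constant density factor, and we would handle this by a leaf-counting concentration in the fringe forests.

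The local limit then follows in three steps. First, the shape of $\skelr$ is a uniform binary (Catalan) tree with $k$ leaves, rooted; the root lies at the marked vertex $o^*$, which we take to be the root of $T_n$ (as the planted structure in \cref{dfn:kesten_tree} suggests). Second, the uniform random binary tree viewed from the root converges locally to the binary Kesten tree, namely the critical $\frac12\delta_0+\frac12\delta_2$ tree conditioned to survive with a spine. Third, we show joint convergence of the shape in a ball of bounded graph radius together with the edge lengths inside it, using exchangeability and the fact that a bounded ball uses $O(1)$ edges. Since $\Exp(1)$ edge lengths are a.s.\ positive, metric balls of bounded radius contain finitely many edges with high probability (tightness), so convergence in graph balls upgrades to the marked local topology. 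Convergence in probability, rather than in distribution, is meant in the sense appropriate to the marked local topology, namely as an annealed statement about the rooted object; we would phrase it accordingly.

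The main obstacle is the second paragraph: establishing, uniformly as $k\to\infty$ with $k=o(n)$, that the discrete edge lengths of the skeleton are asymptotically exchangeable exponentials at the correct scale $\gamma_n/(\sigma\sqrt{2\lambda})$, independent of the shape. We would first try a local limit theorem for the joint law of the edge lengths via the cycle lemma / Kemperman formula for forests with given sizes.
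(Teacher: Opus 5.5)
The main gap is in your third paragraph: you prove a different local limit from the one in the statement. In \cref{subsec:def_marked_local_topology}, the base point of the limit corresponds to a \emph{uniformly random vertex} $O_n$ of the finite tree. The marked path is the path from $O_n$ \emph{towards} the root $\rho_n$, so $\rho_n$ disappears at the far end of $P^*$. Convergence in probability means that two independent uniform vertices $O_n,V_n$, each with its path to $\rho_n$, converge jointly to two \emph{independent} copies of $T^*$; this is a quenched statement about the empirical fringe distribution. You identify $o^*$ with the root of $T_n$ and use that a uniform binary tree seen from its root converges to the Kesten tree conditioned to survive. That gives the wrong object and the wrong mode of convergence. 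The root of the conditioned-to-survive tree always has two children. In $T^*$, the tree $F(o^*)$ grafted at $o^*$ is an unconditioned Bienaym\'e tree, so $o^*$ is a leaf with probability $1/2$, which is exactly the event that drives $m_s$ later. An annealed statement at the root is also useless downstream. \cref{thm:packing_is_local} writes $\cP_r(t)=\sum_v\ff(t,p_v,\ell,v)$ over all vertices $v$, with $p_v$ pointing to the root, and its Chebyshev step needs $\E{\cP_r(T_n)^2}=n^2\E{\ff(T_n,O_n)\ff(T_n,V_n)}$. What is missing is a quenched fringe limit for a uniform tree with prescribed degree statistic. The paper gets it in \cref{lem:loc_convergence_shape} by computing exactly the first and second moments of the number $N(t,T_\dstat)$ of vertices whose $k$-fringe equals $t$, which gives $N(t,T_\dstat)/|\dstat|\to2^{-\#t}$ in probability. \cref{lem:steps_weak_convergence} then attaches the edge lengths, using that the fringes of $O_n$ and $V_n$ are disjoint with high probability.

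Two further points. First, the skeleton need not be binary with high probability. If $\mu$ charges $\{3,4,\dots\}$, one expects of order $n/\gamma_n^3$ skeleton vertices of out-degree at least three, which diverges when $\gamma_n\ll n^{1/3}$. So the counts $2k-1$ and $\#\skelr=2k+O(1)$ are not available. What holds, and suffices, is $\dstat_0/\dstat_2\to1$ and $(|\dstat|-\dstat_1)/\dstat_0\to2$ in probability, with $\#\skelr=2k(1+o_p(1))$.

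Second, what you call the main obstacle largely dissolves once you use the exact law instead of approximating it. Since $\skel$ is uniform given its degree statistic (\cref{lem:weak_limit_in_steps}), the shape of $\skelr$ is uniform with degree statistic $(\dstat_0,0,\dstat_2,\dots)$. Independently, the edge lengths are a uniform weak composition of $\dstat_1$ into the edges. \cref{lem:limit_composition} then gives i.i.d.\ $\Exp(1)$ limits at scale $\dstat_1/\#\{\text{edges}\}$; it is this exact composition structure, not exchangeability or de Finetti, that forces exponential limits. The only quantitative input left is the concentration of $\dstat_1(\skel)$ and $\dstat_2(\skel)$. The paper imports this as \cref{lem:size_skel} from prior work; your proposal asserts it without derivation. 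It identifies the scale as exactly $\gamma_n/(\sigma\sqrt{2\lambda})$, so no constant needs to be ``fixed by bookkeeping''. Your factor $2$ comes from combining $\frac{\sigma}{2\sqrt n}T_n\to\mathcal T_{\mathbf e}$ with Aldous' density $se^{-s^2/2}$, which belongs to the tree coded by $2\mathbf e$.
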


\begin{prop}\label{prop:crt_local_limit}
     For any $\lambda>0, $ as $\delta\to 0$,
    \[\frac{\cskel}{\delta/\sqrt{2\lambda}}\convp T^* \]
    in the local topology on trees with edge lengths, where $T^*$ is the binary Kesten tree with independent $\Exp(1)$ edge lengths. Moreover, $\#\cskel=2\lfloor \lambda\delta^{-2}\rfloor -1$ almost surely.
\end{prop}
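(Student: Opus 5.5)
The approach is to use the exact description of the subtree of the Brownian CRT spanned by $k$ uniform points. By Aldous' theory, $\cskel$ with $k=\lfloor\lambda\delta^{-2}\rfloor$ samples (the root being one of them, or the root together with $k$ samples, depending on the convention) is distributed as follows. Its shape is a uniform rooted binary tree with $k$ labelled leaves and $k-1$ internal vertices. Conditionally on the shape, the $2k-1$ edge lengths are distributed with joint density proportional to $s\,e^{-s^2/2}$, where $s$ is the total length. Equivalently, they are $L_k$ times an independent uniform point of the simplex, with $L_k$ of density proportional to $s^{2k-2}e^{-s^2/2}$. The vertex count $\#\cskel=2k-1$ follows at once: there are $k$ leaves, $k-1$ branch points, and almost surely no further vertices. (Here I count the root-leaf convention so that the total is $2k-1$; I would fix the convention to match.)

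For the local limit, I would first handle the lengths. The variable $L_k$ concentrates at $\sqrt{2k}$, since its density is a chi law with $2k-1$ degrees of freedom. A uniform point of the simplex in dimension $2k-1$, multiplied by $2k-1$, has any fixed finite set of coordinates converging to i.i.d.\ $\Exp(1)$ variables. Hence each edge length rescaled by $\sqrt{2k}/(2k-1)\approx 1/\sqrt{2k}\approx \delta/\sqrt{2\lambda}$ is asymptotically $\Exp(1)$, jointly over any bounded number of edges. This rescaling is exactly the one in the statement.

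Next, the shape. The marked vertex should be the root, or equivalently a uniform vertex, so I would check against \cref{subsec:def_marked_local_topology} which is meant. I would show that a uniform binary tree with $k$ leaves, viewed from its root (or a uniform leaf), converges locally to the binary Kesten tree of \cref{dfn:kesten_tree}. This is the classical Kesten limit for critical Bienaymé trees with offspring law $\tfrac12\delta_0+\tfrac12\delta_2$ conditioned on size $2k-1$. Rerooting invariance of uniform binary trees handles the uniform-vertex version: rooting at a uniform leaf gives the one-ended spine with grafted unconditioned critical binary trees, as in Aldous' local limit of conditioned trees at a uniform vertex. Independence between shape and normalized lengths, together with the asymptotic i.i.d.\ exponential law on the finitely many edges in any ball, gives convergence in distribution to $T^*$.

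The main obstacle is the independence-and-approximation step on a random ball. The ball of radius $R$ in the rescaled tree involves a random but tight number of edges, and I must show that, conditionally on the shape of a large combinatorial neighbourhood, the lengths are asymptotically i.i.d.\ $\Exp(1)$. I would do this by truncating to combinatorial depth $M$, which is tight by the local limit, then applying the finite-dimensional simplex limit to the edges inside it, and finally letting $M\to\infty$ using that $T^*$ has no accumulation of short edges in bounded balls. Finally, convergence in probability (as opposed to in distribution) to the deterministic law $T^*$ should be read in the sense of the formalism, namely convergence of the empirical neighbourhood law at a uniform vertex. For that I would add a second-moment argument: two independent uniform vertices have asymptotically independent neighbourhoods, because their distance diverges.
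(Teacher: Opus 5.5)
Your route is the paper's. Aldous' density $se^{-s^2/2}$ makes the shape a uniform planted binary tree, independent of exchangeable edge lengths. The total length concentrates at $\sqrt{2k}$, and uniform-simplex spacings give i.i.d.\ $\Exp(1)$ limits after multiplying by $\sqrt{2\lambda}/\delta$. Your truncation at combinatorial depth plus non-explosion of the spine is exactly \cref{lem:steps_weak_convergence}. Two harmless slips: the total length has density proportional to $s^{2k-1}e^{-s^2/2}$, i.e.\ $L^2/2\sim\mathrm{Gamma}(k)$, a chi law with $2k$ degrees of freedom. And with the root included the planted tree has $2k$ vertices, so $2k-1$ counts edges (non-root vertices).

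The shape step, however, is justified incorrectly. In the marked local topology the marked vertex is a uniform vertex $O_n$, and the marked path runs from $O_n$ towards the root. $T^*$ is the corresponding fringe (sin-tree) limit, in which $o^*$ carries an unplanted Bienaym\'e tree and so is a leaf only with probability $1/2$. Viewing the tree from its planted root gives Kesten's size-biased tree, with the spine running away from the root. Viewing it from a uniform leaf gives the same picture, by exchangeability of the root and the leaves. In both cases the limit is $T^*$ conditioned on $o^*$ being a leaf, not $T^*$. This is not cosmetic: that probability $1/2$ is what produces the factor $\tfrac12$ in $\fC$. So leaf-rerooting invariance does not give the uniform-vertex statement. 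You need the extended-fringe limit at a uniform vertex, and in its quenched form. The paper proves this in \cref{lem:loc_convergence_shape}: it computes $\E{N(t,T_\dstat)}$ and the expected number of pairs of vertices with prescribed disjoint fringes exactly, and Stirling then shows the empirical fringe law concentrates. Alternatively, cite Aldous' fringe theorem in Janson's in-probability form. Relatedly, two uniform vertices being far apart does not by itself make their neighbourhoods asymptotically independent. Your second-moment input must be this concentration of fringe counts.
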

Then, we establish that the asymptotic $\gamma_n$-packing number of $\skelr$ and the asymptotic $\delta$-packing number of $\cskel$ are functions of the local limit. We establish the following proposition in \cref{sec:covering_local_limit}, which is a specific case of a more general theorem from \cref{sec:packing_number_local} (see \cref{thm:packing_is_local}).

\begin{restatable}{prop}{packinglocal}\label{prop:packing_is_local}
    Let $(t_m)$ be any sequence of trees with edge lengths and $(\beta_m)$ any sequence of positive real numbers. For any $s>0$, there is $m_s\ge 0$ such that if $ \tfrac{1}{\beta_m}t_m\overset{\mathbb{P}}\to T^* $ in the local topology on trees with edge lengths, then 
\[\frac{1}{\#t_m}\mathcal{P}_{s\beta_m}(t_m)\convp m_s.\]
\end{restatable}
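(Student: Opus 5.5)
We prove \cref{prop:packing_is_local} by showing that the packing number is approximately a sum of local contributions. By scaling we may take $\beta_m=1$, since $\mathcal{P}_{s\beta_m}(t_m)=\mathcal{P}_s(t_m/\beta_m)$; write $t=t_m/\beta_m$ and $N=\#t_m$. The natural candidate is
\[
m_s=\lim_{R\to\infty}\E{\text{(contribution of $o^*$ to an optimal $s$-packing of the $R$-ball around $o^*$ in $T^*$)}},
\]
made precise below. The plan has three steps.

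\textbf{Step 1: a local rule.} Fix $R\gg s$. From a packing of each ball $B_R(v)$ we build a packing of the whole tree. Choose a greedy leaf-to-root packing algorithm whose decisions near $v$ depend only on $B_R(v)$, up to an error event. The standard optimal greedy for trees places a centre as far from already-placed centres as possible, processing from the leaves of the subtree hanging below each point. On trees with edge lengths this is the usual "place a centre at distance $s$ from the deepest uncovered point" rule.

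The key deterministic fact is that this greedy is optimal for packing in $\R$-trees. Its decisions propagate upward, so whether the edge at $v$ receives a centre, and where, depends only on the subtree below $v$. In $T^*$ the part below a vertex is a finite binary Bienaym\'e tree, so the decision there is a.s.\ determined by a finite neighbourhood. The exception is what comes from the infinite spine. Let $f(v,t)\in\{0,1,2,\dots\}$ count the centres placed on the edge from $v$ to its parent, and set $m_s=\E{f(o^*,T^*)}$ (the root-edge count). Since the local limit is rooted at a uniform vertex, we get $\mathcal{P}_s(t)=\sum_v f(v,t)$ exactly.

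\textbf{Step 2: approximating $f$ by a local function.} Define $f_R(v,t)$ as the greedy output computed only from the fringe subtree of $v$ intersected with $B_R(v)$. We need $\P{f_R\ne f}\to0$ as $R\to\infty$, uniformly for a uniform vertex of $t$. The greedy's state at $v$ is the distance from $v$ to the nearest chosen centre below it, capped at $s$. Deep parts of the subtree affect this state only through a chain of "forced" placements, and a renewal or coalescence argument should show that the influence of depth beyond $R$ is lost with high probability.

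In $T^*$ the fringe subtree of $o^*$ is a.s.\ finite. Hence $f_R(o^*,T^*)=f(o^*,T^*)$ for $R$ large a.s., and $f$ is bounded by $1+\ell(e)/s$, which is integrable. For $t_m$, local convergence in probability gives
\[
\frac{1}{N}\sum_v f_R(v,t)\convp \E{f_R(o^*,T^*)}
\]
after truncating the values of $f_R$, with a uniform integrability control coming from the total length. It remains to bound $\frac1N\sum_v|f-f_R|$. Here $f\ne f_R$ requires the fringe subtree of $v$ to reach depth $R$, and under local convergence the fraction of such vertices tends to $\P{\text{height of fringe of }o^* \ge R}\to0$. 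Note that the fringe subtree in $T^*$ is the unplanted Bienaym\'e tree at $o^*$ plus the root edge.

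\textbf{Main obstacle.} The difficulty is making "rooted at a uniform vertex, fringe below $v$" compatible with the marked local topology. The mark $P^*$ encodes the direction towards the root, and $f$ must be a function of the marked neighbourhood. This requires checking that the greedy is genuinely bottom-up with a bounded-error output, and that the edge lengths give $\frac1N\sum_v \ell(e_v)$ convergence, for uniform integrability. If exact optimality of the bottom-up greedy is delicate on $\R$-trees (points on edges), I would first try \cref{prop:R_tree_packink_is_covering} to pass to coverings, where the leaf-first greedy optimality is classical (Meir--Moon), and then translate back.
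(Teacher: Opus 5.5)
The overall architecture matches the paper: you write $\mathcal{P}_s(t)$ as a sum of per-edge counts and average them using local convergence. However, your Step~1 rests on a locality claim that is false. The number of centres an optimal leaf-to-root packing places on the edge $e_v$ from $v$ to its parent is \emph{not} determined by the subtree hanging from $v$. When partial packings of sibling subtrees are merged at an ancestor $u$ of $v$, the centres closest to $u$ may be at distance less than $s$ from each other, and some of them must be deleted (this is the removal built into the merger $\mathbf{M}_s$ of \cref{lem:recursive_tap}). Whether the deleted centre is the one on $e_v$ depends on the subtrees grafted at ancestors of $v$ within distance $s$, and these are not in the subtree of $v$. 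The simplest instance is two leaves attached to a common parent by edges of length less than $s/2$: they cannot both be centres. In $T^*$ this is exactly the deletion event $\{\exists k:\type_s(B_k)\in(X_k,s-X_k)\}$ of \cref{lem:expectation_Ms}, which involves the trees grafted along the spine $P^*$. Far from being an ``exception'', it is the dominant effect: for large $s$ the centre survives only with probability of order $s^{-2}$, and this is where $\fC$ comes from. Both readings of your $f$ therefore break down. If $f$ is computed from the subtree of $v$ alone, then $\sum_v f(v,t)\neq\mathcal{P}_s(t)$ and your constant is wrong. With a leaf-first rule, $o^*$ is a centre of the packing of its own subtree whenever it is a leaf, which has probability $1/2$, so $\E{f(o^*,T^*)}\ge 1/2$ for every $s$, whereas $s^2m_s\to\fC$ by \cref{prop:asymp_m_s}. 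If instead $f$ is the count in the final packing, then your Step~2 estimate ``$f\neq f_R$ only if the subtree of $v$ reaches depth $R$'' fails. Indeed $f\neq f_R$ whenever such a deletion occurs, which happens for a fraction of vertices bounded away from zero uniformly in $R$.

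The missing idea is that the correct locality points toward the root and is cut off deterministically at distance $s$. \cref{lem:packing_centres_fringe} shows that the canonical type-attaining packing restricted to $t^{\uparrow v}$ coincides with that of $t^{\uparrow u}$, where $u$ is the lowest ancestor of $v$ with $d(u,v)\le s$. This holds because a merger only deletes centres within distance $s$ of the merge point. Hence the count on $e_v$ is a function of the marked fringe $F_r(v,t)$ for any continuity point $r>s+\ell(e_v)$. In the marked local topology this fringe already contains all of $t^{\uparrow v}$ together with the subtrees grafted along the first stretch of the path to the root, so no truncation or renewal argument on the leaf side is needed. A subtree-only decomposition does exist if you change what is attributed to $v$. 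Take the indicator that $v$ is a leaf, plus the number of centres created when extending along $e_v$, minus the number deleted in the merger at $v$; this depends only on $t^{\uparrow v}$ and $\ell(e_v)$ and sums to $\mathcal{P}_s(t)$, but it is not the quantity you defined. Two further ingredients are also missing. First, local convergence only gives approximate edge lengths, and the count jumps when an extended type hits a multiple of $s$ or two merged types sum to exactly $s$. So you must show that $T^*$ is almost surely a continuity point, which the paper does through atomlessness of types under i.i.d.\ atomless lengths (\cref{lem:type_continuity_for_finite_trees} and \cref{lem:continuity_local_top}). Second, uniform integrability of the edge length at a uniform vertex does not follow from local convergence. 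Lengthening a single uniformly chosen leaf edge of $t_m$ enormously leaves the local limit unchanged but makes $\mathcal{P}_{s\beta_m}(t_m)/\#t_m$ blow up. This must therefore be imposed as a hypothesis, as the paper does in \cref{thm:packing_is_local}.
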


Then, we study the asymptotic behaviour of $m_s$ as $s \to \infty$.

\begin{restatable}{prop}{proplocalpacking}\label{prop:asymp_m_s}
    It holds that 
    \[s^2m_{s}\to \fC\]
    as $s\to \infty$, for $\fC$ as in \cref{thm:bien}.
\end{restatable}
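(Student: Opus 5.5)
The plan is to get the existence of $\lim_{s\to\infty}s^2m_s$ from a sandwich argument on the Brownian CRT. The idea is that $s^2m_s$ is, up to small errors, a single random quantity $\delta^2\cP_\delta(\cT)$ computed at different values of $\lambda$, so it cannot oscillate. After that, I would identify the limit with $\fC$ using its definition in \cref{subsec:def_const}.

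\textbf{Step 1 (skeleton packing in terms of $m_s$).} Fix $\lambda>0$ and $\eps\in(0,1)$. By \cref{prop:crt_local_limit}, $\cskel/\beta_\delta\convp T^*$ with $\beta_\delta=\delta/\sqrt{2\lambda}$, and $\#\cskel=2\lfloor\lambda\delta^{-2}\rfloor-1$. Apply \cref{prop:packing_is_local} along any sequence $\delta_m\to0$, once with $s=\sqrt{2\lambda}$ and once with $s=(1-\eps)\sqrt{2\lambda}$. This gives
\[
\delta^2\cP_{\delta}(\cskel)\convp 2\lambda\, m_{\sqrt{2\lambda}},\qquad \delta^2\cP_{(1-\eps)\delta}(\cskel)\convp 2\lambda\, m_{(1-\eps)\sqrt{2\lambda}}.
\]

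\textbf{Step 2 (sandwich).} The skeleton $\cskel$ is a geodesically embedded subtree of $\cT$, so any $\delta$-packing of $\cskel$ is a $\delta$-packing of $\cT$. Hence $\cP_\delta(\cskel)\le\cP_\delta(\cT)$, which gives the lower bound. For the upper bound, \cref{prop:fringe_CRT} gives $\lambda_0(\eps)$ such that for every $\lambda'\ge\lambda_0$ and all small $\delta$,
\[
\delta^2\cP_\delta(\cT)\le \delta^2\cP_{(1-\eps)\delta}(\mathcal S^{\lambda',\delta})+\eps
\]
with probability at least $1-\eps$. Now take arbitrary $\lambda$ and $\lambda'\ge\lambda_0$ and combine both bounds with Step 1 on a common event of probability bounded away from $0$. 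Since the limits are deterministic, this yields
\[
2\lambda\, m_{\sqrt{2\lambda}}\;\le\; 2\lambda'\, m_{(1-\eps)\sqrt{2\lambda'}}+2\eps .
\]
Write $s=\sqrt{2\lambda}$ and $t=(1-\eps)\sqrt{2\lambda'}$. Then for every $s$ and every $t\ge t_0(\eps)$,
\[
s^2m_s\le (1-\eps)^{-2}t^2m_t+2\eps .
\]
Taking $\limsup_s$ and then $\liminf_t$, and finally letting $\eps\to0$, shows that $\lim_{s\to\infty}s^2m_s$ exists. The same inequality with a fixed $t$ shows the limit is finite, so it lies in $[0,\infty)$. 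I expect positivity to be automatic from whatever nondegeneracy the definition of $\fC$ carries, or from a trivial lower bound on packings of $T^*$.

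\textbf{Step 3 (identification with $\fC$).} This is the step I expect to be the main obstacle, because it depends on how $\fC$ is defined in \cref{subsec:def_const}.

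If $\fC$ is defined as $\lim_s s^2m_s$, which the above shows is well posed, there is nothing more to do.

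If instead it is defined through a scale-invariant object, such as a packing density of the rescaled Kesten tree $T^*/s$ as $s\to\infty$ (a Brownian-type infinite tree), then I would argue differently. First, $T^*/s$ converges locally to that object, with edges of length $\Exp$ of mean $1/s$ filling in a continuum tree. Second, the normalized packing density $s^2m_s$ converges to the corresponding density for the limiting object; to show this I would again use the locality principle \cref{thm:packing_is_local}, now at unit radius. The delicate point in that route is continuity of the packing density in the radius. The sandwich of Step 2 already supplies this through the $(1-\eps)$ slack, since it shows $s^2m_s$ changes by at most a factor $(1-\eps)^{-2}$ under the perturbation $s\mapsto(1-\eps)s$.
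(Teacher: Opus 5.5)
There is a genuine gap. At best, your argument shows that $\lim_{s\to\infty}s^2m_s$ exists. The statement, however, is that this limit equals a specific constant, and $\fC$ is \emph{not} defined as that limit. In \cref{subsec:def_const} it is the explicit number
\[
\fC=\tfrac12\exp\Bigl(2-4\int_0^{1/2}\mathbf{P}\{v<\type_1(\cT^{(1/2)})<1-v\}\,dv\Bigr),
\]
so your Step~3 is the entire content of the proposition, and it is not carried out.

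Your Steps~1--2 look sound given \cref{prop:fringe_CRT}, \cref{prop:crt_local_limit} and \cref{prop:packing_is_local}. They even give positivity for free, since letting $t\to\infty$ and then $\eps\to0$ in your inequality yields $s^2m_s\le\lim_t t^2m_t$ for every $s$. But this sandwich only compares $m$ at different radii with itself, so it can never produce the value above.

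Your fallback route does not close the gap either. \cref{thm:packing_is_local} applies to one-ended discrete trees with i.i.d.\ atomless edge lengths, not to a continuum limit of $T^*/s$. Even if some version applied, it would express the constant as a packing density of a self-similar continuum tree, not as the formula involving $\cT^{(1/2)}$.

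What is missing is an actual computation of $m_s$ through the spine of $T^*$.
\begin{itemize}
\item \cref{lem:expectation_Ms} writes $m_s$ exactly as a product of two factors. The first is the probability of $\{\type_s(B)=0\}\cup\{E>s-\type_s(B)\}$, which tends to $1/2$ (essentially the probability that $o^*$ is a leaf). The second is, up to an $O(e^{-s})$ term, the probability that no tree $B_k$ grafted at spine distance $X_k$ has type in $(X_k,s-X_k)$.
\item The grafting points form a rate-$1$ Poisson process, so this second probability is $e^{-\Gamma_s}$ with $\Gamma_s=\int_0^{s/2}\mathbf{P}\{\type_s(B)\in(v,s-v)\}\,dv$.
\item Split $\Gamma_s$ according to whether $\height(B)<s/2$, in which case the type equals the height. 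The exact tail $\mathbf{P}\{\height(B)>t\}=2/(2+t)$ from \cref{lem:height_continuous_Bienayme_tree} then contributes $2\log s-2$, which is exactly where the $s^{-2}$ decay and part of the constant come from.
\item The remaining part converges to $4\int_0^{1/2}\mathbf{P}\{v<\type_1(\cT^{(1/2)})<1-v\}\,dv$. This uses the scaling limit of height-conditioned trees with exponential edge lengths and the almost sure continuity of the type at $\cT^{(1/2)}$ (\cref{prop:convergence_of_the_type}).
\end{itemize}

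Without some explicit analysis of this kind you cannot identify the limit. Your sandwich would suffice for \cref{thm:bien} and \cref{thm:crt} if one were content to define $\fC$ implicitly as $\lim_s s^2m_s$. That is strictly weaker than the proposition.
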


The proofs of \cref{thm:bien} and \cref{thm:crt} follow from the above propositions.

\begin{proof}[Proof of \cref{thm:bien}]

Fix $\eps>0$ small enough such that 
\begin{equation}\label{eq:pf_1}\frac{\fC}{(1-\eps)^2\sigma^2}-\frac{\fC}{ \sigma^2}<\sqrt{\eps}.\end{equation}
Observe that $\cP_{\gamma_n}(\skel)\le \cP_{\gamma_n}(T_n) $ because $\skel$ is a subtree of $T_n$. Combined with \cref{prop:asymp_m_s} and \cref{prop:fringe}, we may choose $\lambda$ large enough such that \begin{equation}\label{eq:pf_2}\left|2\lambda m_{(1-\eps)\sigma \sqrt{2\lambda}}- \frac{\fC}{(1-\eps)^{2}\sigma^{2}}\right|<\sqrt{\eps}\quad\text{and}\quad \left|2\lambda m_{\sigma \sqrt{2\lambda}}- \frac{\fC}{\sigma^{2}}\right|<\sqrt{\eps}\end{equation} and such that for all $n$ large enough, 

\begin{equation}\label{eq:pf_3}
\P{\cP_{\gamma_n}(\skel)\le \cP_{\gamma_n}(T_n)\le   \cP_{(1-\eps)\gamma_n}(\skel)+\frac{\sqrt{\eps}n}{\gamma_n^2}}>1-\eps.\end{equation}
By \cref{prop:skeleton_local_limit}, we may apply \cref{prop:packing_is_local} to $\skelr$, with $\#\skelr=(1+o_p(1))2\lambda n/\gamma_n^2$, $\beta_n=\gamma_n/(\sigma\sqrt{2\lambda})$ and $  s=(1-\eps)\sigma \sqrt{2\lambda}$ or $s= \sigma \sqrt{2\lambda}$, to obtain that 
\[\frac{\gamma_n^2}{2\lambda n}\mathcal{P}_{(1-\eps)\gamma_n}(\skelr) \convp m_{(1-\eps)\sigma \sqrt{2\lambda}}\quad\text{and}\quad \frac{\gamma_n^2}{2\lambda n}\mathcal{P}_{\gamma_n}(\skelr) \convp m_{\sigma \sqrt{2\lambda}}
\]
as $n\to \infty$. By \eqref{eq:pack_R_tree_vs_tree} this also holds when we replace $\skelr$ by $\skel$.

Thus, for all $n$ large enough,

\begin{equation}\label{eq:pf_4}\P{\left|\frac{\gamma_n^2}{ n}\mathcal{P}_{(1-\eps)\gamma_n}(\skelr)-2m_{(1-\eps)\sigma \sqrt{2\lambda}}\right|\le \sqrt{\eps}\text{ and }\left|\frac{\gamma_n^2}{ n}\mathcal{P}_{\gamma_n}(\skelr)-2m_{\sigma \sqrt{2\lambda}}\right|\le \sqrt{\eps}}>1-\eps\end{equation}

Combining \eqref{eq:pf_1}, \eqref{eq:pf_2}, \eqref{eq:pf_3} and \eqref{eq:pf_4}, the union bound implies that for all $n$ large enough, 
\[\P{\left|\frac{\gamma_n^2}{n} \cP_{\gamma_n}(T_n)-\frac{\fC}{\sigma^2 }\right|\le 6\sqrt{\eps}}>1-2\eps,\]
which proves the theorem, because $\eps$ can be chosen arbitrarily small. 

\end{proof}

\begin{proof}[Proof for \cref{thm:crt}] 
The proof goes analogous to the proof of \cref{thm:bien} by applying \cref{prop:fringe_CRT} and \cref{prop:crt_local_limit} instead of \cref{prop:fringe} and \cref{prop:skeleton_local_limit} respectively.





\end{proof}

The proof of \cref{thm:bien_reversed_limit} follows from \cref{thm:bien} and a simple probabilistic argument presented in \cref{app:consistency_limit}.

\subsection{Packings in the Kesten tree and the limiting constant}\label{subsec:def_const}

In this section we give a heuristic explanation for the constant $\fC$. In \cref{prop:fringe} we prove that $\lambda$ can be chosen so large that the packing number of the size-conditioned Bienaymé tree is approximately the packing number of its skeleton $\skel$. When rescaled by $\gamma_n/(\sigma \sqrt{2\lambda})$, the skeleton converges locally to $T^*$. \cref{prop:packing_is_local} gives that the packing number is a local property by defining a canonical maximum packing. As balls of radii $\gamma_n$ rescaled by $\gamma_n/(\sigma \sqrt{2\lambda})$ have radii $\sigma\sqrt{2\lambda}$, the packing number is approximated with 
\begin{equation}\label{eq:sketch_universal_constant}
    \cP_{\gamma_n}(T_n)=\#\skelr \cdot m_{\sigma\sqrt{2\lambda}}(1+o(1)),
\end{equation}
where $m_s$ is the expected number of packing centres on the root edge in a canonical $s$-packing of $T^*$ that we define rigorously in \cref{sec:covering_local_limit}. To understand the definition of $\fC$, we need a heuristic understanding this canonical maximum $s$-packing, and how it behaves for large $s$.

Before looking close at $T^*$, let us look at finite trees. Informally, the canonical maximal $s$-packing of a finite tree is the one where all packing centres are placed as far away from the root as possible: one can imagine that all centres are pushed up in the tree towards the leaves, while respecting that they are all at least distance $s$ apart (in this paper we will view finite trees as growing up from the root). To understand why the packing number is local, it is key to observe that the canonical packing behaves well under merging trees at their roots: given two rooted trees and their canonical $s$-packings, if both trees are merged at their roots, then either the union of their packings is a canonical $s$-packing of the merged tree or we first need to remove the packing centre closest to the root. Furthermore, if we attach an edge below the root, we can construct a new canonical $s$-packing from the old one without moving the packing centres in the original tree. Thus, if we step-by-step construct the neighbourhood of an edge by merging trees on its ancestral line, then what happens at distance $s$ or more below the edge does not change its number of packing centres. These observations form the core of the proof of \cref{prop:packing_is_local}, and they are developed rigorously in \cref{sec:packing_number_local}. The canonical $s$-packing we just described can be algorithmically generated, and for the reader interested in getting a clearer picture, \cref{fig:placeholder} offers a preview of what they look like.

Returning to $T^*$, we can think of the canonical $s$-packing of $T^*$ as being derived by placing packing centres as far away as possible from some ``point at infinity'' which lies at the end of $P^*$ (in a sequence of finite trees converging locally to $T^*$, the root ``dissapears'' from view when we focus on a uniform vertex's neighbourhood). We can also take the same iterative perspective on its construction, except now instead of recursing along the ancestral line of a vertex, we do so along the path $P^*$. Because edge lengths are $\operatorname{Exp}(1)$ random variables but we consider packings of large radii $s$, it is unlikely that the root edge has more than $1$ packing centre on it. Moreover, if the tree grafted at $o^*$ is non-empty, then, since we push the packing centres upwards, the root edge will not have a packing centre unless the grafted tree is very large. This is unlikely for large $s$. Hence, the only event meaningfully contributing to $m_s$ is when $o^*$ is a leaf and a packing centre. Therefore, the expectation of $m_s$ is approximately the probability that $o^*$ is a leaf and a packing centre. On the event that $o^*$ is a leaf, which happens with probability $1/2$, $o^*$ is also a packing centre if none of the trees grafted at the vertices of $P^*$ have packing centres that would interfere with $o^*$ being a packing centre. Since the edge lengths are $\operatorname{Exp}(1)$, we can view the grafted subtrees as arriving according to a rate $1$ Poisson process. A tree grafted at distance $t$ from $o^*$ interferes with $o^*$ if and only if it has a packing centre at distance $(t,s-t)$ from its root  (distance larger than $t$ so that it is higher than $o^*$, and thus more preferable from the perspective of wanting to push packing centres as far away from infinity as possible; smaller than $s-t$ so that its distance to $o^*$ is too small for both centres to be in an $s$-packing together).
Therefore, trees with a packing centre that interferes with $o^*$ appear at the position-dependent rate $r(t)$, where $r(t)$ is the probability
$$r(t)=\P{\type_s(B)\in (t,s-t)}$$
for $B$ be a planted $\mu = \frac{1}{2}\delta_0 + \frac{1}{2}\delta_2$ offspring distribution Bienaym\'{e} tree with $\operatorname{Exp}(1)$ edge lengths, and $\type_s(B)$ the distance from its root to the nearest packing centre in its canonical maximal $s$-packing. Hence,  we get the approximation

$$m_s\approx \frac{1}{2}\exp\left( -\int_0^{s/2}  r(t)dt\right).$$

We compute the asymptotics of the above in \cref{sec:covering_local_limit}. We use that the canonical packing of a tree of height less than $s/2$ consists of a single packing centre at a height-attaining vertex. Hence $\type_s(B) = \height(B)$ when $\height(B) < s/2$. On the other hand, if we condition on $\height(B) \geq s/2$, $s^{-1}B$ converges to a random tree encoded by a Brownian excursion conditioned to have height at least $1/2$, denoted by $\cT^{(1/2)}$. Using that $\type_s(\cdot)$ is almost surely continuous at $\cT^{(1/2)}$ as a function on the set of compact rooted metric spaces (see \cref{prop:convergence_of_the_type}), we get that 

\begin{equation}
\begin{aligned}
    r(t)=&
    \P{\height(B)\geq s/2}
    \P{\tfrac{t}{s}< \frac{1}{s}\type_s(B)<1-\tfrac{t}{s} \ \mid \ \height(B)>s/2}\\
    &+
    \P{t<\height(B)<s/2}\\
    \approx&
    \frac{2}{2+s/2}\P{\tfrac{t}{s}<\type_1(\mathcal{T}^{(1/2)})<1-\tfrac{t}{s}}+
    \frac{2}{2+t}-\frac{2}{2+s/2}
\end{aligned}
\end{equation}
Integrating over $t\in(0,s/2)$ yields

$$s^2m_s\approx \frac{1}{2}\exp\left(2-4\int_0^{1/2} \prob\{v<\type_1(\mathcal{T}^{(1/2)})<1-v)\} \ dv\right)=:\fC.$$
This explains the origin of the constant $\fC$. \cref{thm:bien} is retrieved by replacing $s$ by $\sigma\sqrt{2\lambda}$ and $\#\skelr$ by $2\lambda n/\gamma_n^2$ in \eqref{eq:sketch_universal_constant}.

\begin{rem}
   Our use of local convergence and convergence under rescaling to the CRT, as well as the universality of our limiting constant, may raise the question whether convergence of the packing and covering numbers is simply a consequence of convergence of $T_n$ in the local Gromov--Hausdorff topology, when distances are rescaled by $\gamma_n$ and the tree is rooted at a uniformly random vertex. However, viewing the tree from the perspective of a uniformly random vertex is not enough to control a global quantity like the $\gamma_n$-covering number. For example, one may artificially append a star with $\eps_n n/\gamma_n$ arms of length $\gamma_n+1 $ somewhere in the tree, for $\eps_n\to0$ arbitrarily slow. This star contains a negligible proportion of the vertices and does not affect the limit under rescaling in the local Gromov--Hausdorff topology. (Its arms are also too short to affect the tree's global limit in the Gromov--Hausdorff--Prokhorov topology.) However, the star needs $\Theta(\eps_n n/\gamma_n)$ additional covering centres, which, for $\eps_n\gg 1/\gamma_n$, is much larger than the number of covering centres needed for the rest of the tree. This illustrates a subtlety of our problem: we need global control over the behaviour of local $\gamma_n$-neighbourhoods, and our combination of techniques and perspectives is needed to obtain this.
\end{rem}

\subsection*{AI statement}
We used a large language model to search the literature for relevant previous work and to assist with the proofs of the technical lemmas \cref{lem:steps_weak_convergence} and \cref{lem:height_continuous_Bienayme_tree}. All further mathematical reasoning and all writing are our own. 

\subsection*{Acknowledgements}

We would like to thank Louigi Addario-Berry for his continuous support and for fruitful discussions which greatly improved the quality of the paper.

CA and EH acknowledge the financial support of the NSERC Canada Graduate Scholarship -- Doctoral. SD acknowledges the financial support of the CogniGron research center and the Ubbo Emmius Funds (University of Groningen). Her research was also supported by the Marie Sklodowska-Curie grant GraPhTra (Universality in phase transitions in random graphs), grant agreement ID 101211705. This material is further based upon work supported by the National Science Foundation under Grant No.~DMS-1928930, while SD and SB were in residence at the Simons Laufer Mathematical Sciences Institute in Berkeley, California, during the spring semester of 2025. SD and SB also acknowledge the support of the Swiss National Foundation under grant No.~IZSEZ0\_238260 that supported a research visit in summer 2025.

\section{Tree formalisms and topology}\label{sec:formalism}

\subsection{Plane trees and \L ukasiewicz encodings}
In this paper, by \emph{tree}, we will mean plane tree. We follow Ulam--Harris' formalism to define such. 

Let $\mathcal{U}$ be the set of labels defined by
    \[\mathcal{U}=\bigcup_{k=0}^\infty\N^k,\]
where $\N=\set{1,2,3,\ldots}$ and $\N^0=\set{\emptyset}$. We endow $\mathcal{U}$ with the lexicographic order, and call $\mathcal{U}$ the \emph{Ulam--Harris tree}. We call the elements of $\mathcal{U}$ \emph{vertices} and each vertex is a (possibly empty) sequence $u=u_1\cdots u_j$ of positive integers. When $u=u_1\cdots u_j$ and $v=v_1\cdots v_k$ are vertices, we let $uv = u_1\cdots u_jv_1\cdots v_k$ be the \emph{concatenation} of $u$ and $v$. In particular, $u\emptyset = \emptyset u= u$. 
The length of vertex $u\in\mathcal{U}$, $|u|$, is called the \emph{generation} or \emph{height} of $u$. If $u=u_1\ldots u_k$, then the vertex $p(u)=u_1\ldots u_{k-1}$ is the \emph{parent} of $u$, and $u$ is a \emph{child} of $p(u)$. If $w=w_1\ldots w_k$ is such that $w_i=u_i$ for $1\leq i\leq k-1$ and $w_k\neq u_k$, then $u$ and $w$ are \emph{siblings}. Finally, $\mathrm{t}\subset \mathcal{U}$ is a \emph{plane tree} if $\mathrm{t}$ satisfies the following three conditions:
\begin{enumerate}[(i)]
\item $\emptyset \in\mathrm{t}$,
\item if $u\in\mathrm{t}$, then $p(u)\in\mathrm{t}$, 
\item there is a collection of non-negative integers $\set{c_{\mathrm{t}}(u) ~:~ u\in\mathrm{t}}$ so that for all $j\in\N$ and $u\in\mathrm{t}$, $uj\in\mathrm{t}$ if and only if $1\leq j\leq c_{\mathrm{t}}(u)$.
\end{enumerate}
The value $c_{\mathrm t}(u)$ is the number of children $u$ has in $\mathrm{t}$, or the \emph{out-degree} of $u$. Denote by $\mathcal{A}$ to be the set of all plane trees and $\mathcal{A}_n$ to be the set of all plane trees $\mathrm{t}$ so that $|\mathrm{t}|=n$.

Analogously, when we say \emph{forest} we mean \emph{plane forests}, which are an ordered collection of plane trees. For $k\geq 1$, denote by $\mathcal{A}^k =\set{\mathrm{f}=(\mathrm{t}_1, \ldots, \mathrm{t}_k) ~:~ \mathrm{t}_i\in\mathcal{A}}$ the collection forests consisting of $k$ trees. Lastly, $\mathcal{A}_n^k$ is the collection of forests $\mathrm{f}$ consisting of $k$ trees so that $|\mathrm{f}|=n$.

We recall that a sequence $(x_i)_{i=0}^n$ is a \emph{\L ukasiewicz path} of length $n$ if it satisfies:
\begin{enumerate}[(i)]
\item $x_0 =0,\ x_n=-1$,
\item $x_k> -1$ for all $0\leq k\leq n-1$,
\item and $x_k-x_{k-1}\geq -1$ for all $1\leq k\leq n$. 
\end{enumerate}
Let $\mathcal{W}_n$ be the set of \L ukasiewicz paths of length $n$, then there is a bijection between $\varphi:\mathcal{A}_n\rightarrow\mathcal{W}_n$. Let $\mathrm{t}\in\mathcal{A}_n$ and let $u_1,\ldots, u_n$ be the vertices of $\mathrm{t}$ in lexicographical order. Writing $(c_1,\ldots, c_n)=(c_{\mathrm t}(u_1),\ldots, c_{\mathrm t}(u_n))$, then  $\varphi(\mathrm{t})=(x_0, x_n,\ldots, x_n)$ is given by $x_0=0$ and for $k\geq 1$, \[x_k =\sum_{i=1}^k(c_i -1).\]
We refer to $W(\mathrm{t}) := \varphi(\mathrm{t})$ as the \L ukasiewicz path of the tree $\mathrm{t}$.

Extending this to forests, recall that $\mathrm{f}\in \mathcal{A}_n^k$ is a sequence of $k$ trees $\mathrm{f}=(\mathrm{t}_1, \ldots, \mathrm{t}_k)$ so $\sum_{i=1}^k\#\mathrm{t}_i=n$.  Letting $\mathcal{W}_n^k$ be the collection of sequences $(x_i)_{i=0}^n$ satisfying: 
\begin{enumerate}[(i)]
\item $x_0 =0,\ x_n=-k$,
\item $x_j> -k$ for all $0\leq j\leq n-1$,
\item and $x_k-x_{k-1}\geq -1$ for all $1\leq k\leq n$. 
\end{enumerate}
Then $\varphi$ is extended to a bijection between $\mathcal{A}_n^k$ and $\mathcal{W}_n^k$. Indeed, taking $u_1,\ldots u_n$ to be the vertices of $\mathrm{f}$ in lexicographical order of their trees and respecting the ordering of the trees, and writing $(c_1,\ldots, c_n)=(c_{\mathrm f}(u_1),\ldots, c_{\mathrm f}(u_n))$, then  $\varphi(\mathrm{f})=(x_0, x_n,\ldots, x_n)$ is given by $x_0=0$ and for $k\geq 1$, \[x_k =\sum_{i=1}^k(c_i -1).\]

\subsection{Degree statistics of trees and forests}\label{subsec:def_degree_stat}
Call a sequence of non-negative integers $\dstat=(\dstat_i)_{i\geq0}\subseteq\N_0^\N$ a \emph{degree statistic} if it satisfies $\sum_{i\geq 0}\dstat_i=1+\sum_{i\geq0}i\dstat_i$. A plane tree $\mathrm{t}$ has degree statistic $\dstat$ if for all $i\geq 0$, $\mathrm{t}$ contains exactly $\dstat_i$ vertices with $i$ children, and note that every degree statistic is realized by some tree. Furthermore, we call $\sum_{i\geq 0}\dstat_i$ the \emph{size} of $\dstat$ and denote this by $|\dstat|$. Observe that if $\mathrm{t}$ has degree statistic $\dstat$ then $\#\mathrm{t}=|\dstat|=\sum_{i\geq0}\dstat_i$.

We will be interested in plane forests and extend the above definition. For $k\in\N$, then $\dstat=(\dstat_i)_{i\geq0}\subseteq\N_0^\N$ is a \emph{degree statistic with $k$ components} or a \emph{degree statistic corresponding to $k$ components} if $\sum_{i\geq 0}\dstat_i=k+\sum_{i\geq0}i\dstat_i$. A plane forest $\mathrm{f}$ with $k$ components has degree statistic $\dstat$ if for all $i\geq 0$, $\mathrm{f}$ contains exactly $\dstat_i$ vertices with $i$ children, and similarly, every degree statistic is realized by some forest. As above, we call $\sum_{i\geq 0}\dstat_i$ the \emph{size} of $\dstat$ and indeed if $\mathrm{f}$ is a forest with degree statistic $\dstat$ then the number of vertices in $\mathrm{f}$ is the size of $\dstat$.

For $\mu$ a distribution on $\N_{0}$, a \emph{$\mu$-Bienaymé} tree is a random plane tree that is defined as follows. For each $u\in \mathcal{U}$, let $C_u$ be an independent sample from $\mu$ and let $T_\mu$ be the subset of $\mathcal{U}$ consisting of all $u_1\dots u_k$ with $k\ge 0$ for which $u_j\le C_{u_1\dots u_{j-1}}$ for all $1\le j \le k$. For any $n$ for which $\P{\#T_\mu=n}>0$, we let $T_n(=T_{\mu,n})$ have the law of $T_\mu$ conditioned on the event that $\#T_\mu=n$.

\subsection{$\R$-trees and the Gromov--Hausdorff topologies}

Let $(T,d)$ be a metric space. We call $(T,d)$ an $\R$-tree if for all $x,y \in X$ the following two conditions hold:

\begin{enumerate}
    \item There exists a unique isometric embedding $f:[0,d(x,y)] \to T$ such that $g(0) = x$ and $g(d(x,y)) = y$. We call $\llbracket x ,y  \rrbracket := g([0,d(x,y)])$ the $\emph{path}$ from $x$ to $y$.
    \item For any continuous and injective function $h:[0,1] \to T$ with $h(0) = x$ and $h(1) = y$, we have that $h([0,1]) = \llbracket x , y \rrbracket$.
\end{enumerate}

Let $(T_1,\rho_1,d_1)$, $(T_2,\rho_2,d_2)$ be two rooted compact $\R$-trees. A measurable relation $R \subseteq T_1 \times T_2$ is called a \emph{correspondence} betweeen $T_1$ and $T_2$ if the following three points holds:

\begin{enumerate}
    \item For all $x_1 \in T_1$ there exists $x_2 \in T_2$ such that $(x_1,x_2) \in R$.
    \item For all $y_2 \in T_2$ there exists $y_1 \in T_1$ such that $(y_1,y_2) \in R$.
    \item $(\rho_1,\rho_2) \in R$.
\end{enumerate}
We define the \emph{distortion} of $R$ by
$$
\dis(R) = \sup\big\{ |d_1(x_1,y_1) - d_2(x_2,y_2)| : (x_1,x_2), \ (y_1,y_2) \in R \big\}.
$$
Let $\cR(T_1,T_2)$ be the set of all correspondences between $T_1$ and $T_2$. We define the \emph{Gromov--Hausdorff} distance between $T_1$ and $T_2$ to be 
$$
\dgh(T_1,T_2) = \frac{1}{2}\inf_{R \in \cR(T_1,T_2)} \dis(R).
$$
The Gromov--Hausdorff distance defines a pseudometric on the space of rooted compact $\R$-trees, and so it has a unique quotient metric space associated with it, which we denote by $(\T,\dgh)$. $\T$ is exactly the set of equivalence classes of isometric rooted compact $\R$-trees. When the exact identity of the root or metric is clear from the context we shall abuse notation and just write $t \in \T$. There are a few related topologies that we will consider:

\textbf{Marked-Gromov--Hausdorff topology:} If we wish to add some finite number of extra marked points other than the root, there is a simple strengthening of the Gromov--Hausdorff topology that we consider. Let $(T_1,\rho_1,d_1,\cX_1)$ and $(T_2,\rho_2,d_2,\cX_2)$ be compact rooted $\R$-trees, each with collections of $m$ marked points $\cX_1 = (x_1^{(1)} , ... , x_m^{(1)})$ and $\cX_2 = (x_1^{(2)} , ... , x_m^{(2)})$. A correspondence $R$ between $T_1$ and $T_2$ is called an \emph{marked correspondence} if $(x_i^{(1)} , x_i^{(2)}) \in R$ for all $1 \leq i \leq m$. If $\cR^n(T_1 , T_2)$ denotes the set of all marked correspondences, then we define the \emph{$m$-marked-Gromov--Hausdorff} distance between the two marked trees as
$$
\dgh^m(T_1,T_2) = \frac{1}{2}\inf_{R \in \cR^m(T_1 , T_2)} \dis(R).
$$
As above, this defines a pseudometric, and we take $(\T^{(m)},\dgh^m)$ to denote its unique quotient metric space.

\textbf{Local Gromov--Hausdorff topology:} Let $\T_{\operatorname{loc}}$ be the set of all equivalence classes of isometric locally compact rooted $\R$-trees. For a tree $(T,\rho,d)$ and a fixed $s > 0$, we define $T^{(s)} = \{x \in T : d(x,\rho) \leq s\}$, which makes $(T^{(s)},\rho,d)$ an element of $\T$. We say that a sequence $((T_n,\rho_n,d_n))_{n \geq 1}$ convergences to $(T,\rho,d)$ in the \emph{local Gromov--Hausdorff topology} if for all $s > 0$, we have that $((T_n^{(s)} , \rho_n , d_n))_{n \geq 1}$ converges to $(T^{(s)},\rho,d)$ in the Gromov--Hausdorff topology on $\T$.

\textbf{Gromov--Hausdorff--Prokhorov topology:} There exists a natural strengthening of the Gromov--Hausdorff topology when our trees come equipped with a finite Borel measure. Let $(T_1,\rho_1,d_1,\mu_1)$ and $(T_2,\rho_2,d_2,\mu_2)$ be two compact $\R$-trees with $\mu_1$ and $\mu_2$ Borel probability measures. Let $\nu$ be a finite Borel measure on the product $T_1 \times T_2$. We define the \emph{discrepency} of $\nu$ as
$$
D(\nu,\mu_1,\mu_2) = \|\mu_1 - \nu \circ\pi_{1}^{-1}\| + \|\mu_2 - \nu \circ \pi_{2}^{-1}\|,
$$
where $\pi_{i}$ is the coordinate projection mapping and $\|\cdot\|$ denotes the total variation of a signed measure. Let $M(X)$ denote the set of all finite Borel measures on a metric space $(X,d)$. We then define the \emph{Gromov--Hausdorff--Prokhorov} distance (GHP) between the two measured trees as follows:
$$
\dghp = \frac{1}{2}\inf_{R \in \cR(T_1 , T_2), \ \nu \in M(T_1 \times T_2)} \Big(\dis(T) \vee D(\nu,\mu_1,\mu_2) \vee \nu(R^c)\Big).
$$
Again, the Gromov--Hausdorff--Prokhorov distance defines a pseudometric on the space of rooted compact measured $\R$-trees, and we take $(\T_{\mathrm{meas}},\dghp)$ to denote the corresponding quotient metric space.

\textbf{Extra notation:} Let $(t,\rho) \in \T_{\operatorname{loc}}$ and let $x \in t$. We let $t^{\uparrow x}$ denote the subtree consisting of all $y \in t$ such that $x \in \llbracket \rho ,y \rrbracket$. (This has the same meaning as for discrete trees.) We view $t^{\uparrow x}$ as an element of $\T_{\operatorname{loc}}$ by rooting it at $x$. We define $\height(x) = d(\rho,x)$, and define $\height(t) = \sup_{x \in t} \height(x)$. We let $\deg(x)$ denote the number of connected components in $t \setminus \{x\}$, which we call the \emph{degree}. We call the points with degree $1$  \emph{leaves}, and let $\cL(t)$ denote the set of all leaves in $t$. The points with degree at least three will be called \emph{branch points}, and we denote the set of all branch points by $\branch(t)$. If $t \in \T$, then $\branch(\cT)$ is at most countable, as the sets $\{b \in \branch(\cT) : \height(\cT^{\uparrow b}) > k^{-1}\}$ are finite for all $k \geq 1$ by compactness. If $S \subseteq t$, then we define $\Span(S) = \cup_{s,s' \in S} \llbracket s , s' \rrbracket$. When $S$ is finite, the space $t \setminus \Span(S)$ is a countable collection of connected pre-compact components $\{t_k : k \geq 1\}$. There is a unique vertex $\rho_k \in \Span(S)$ for each $k \geq 1$ such that $t_k \cup \{\rho_k\}$ is closed. We call $\{t_k \cup\{\rho_k\} : k \geq 1\}$ the subtrees \emph{pendent} to $\Span(S)$, and we view them as being rooted at the $\rho_k$.

\subsection{Discrete trees with edge lengths and the marked local topology}\label{subsec:def_marked_local_topology}

Throughout this paper we often work with rooted combinatorial trees $(t,\rho)$ that come equipped with an edge length function $\ell$. We want to stress that generation and height for a vertex differ in this case. For a vertex $v \in t$,  the \emph{generation} of $v$ is the number of vertices on the path between $\rho$ minus $1$, and the \emph{height} of $v$ is the sum of the edge lengths on the path between $\rho$ and $v$. In trees with edge lengths, distances between pairs of vertices where neither are the root are defined analogously. Moreover, for any edge $e=\{u,v\}$ in $t$, we will use $e$ to denote both the edge in the underlying combinatorial tree and the line segment connecting $u$ to $v$ in the tree when seen with its edge lengths.

Let $\fT$ be the set of all $(t,p,\ell,o)$ with $t$ a locally finite (plane) tree rooted at o, $\ell:E(t) \to [0,\infty)$ a function that assigns lengths to each edge, and $p$ a marked path than begins at $o$. In this paper we make use of a slight modification of the notion of local convergence for graphs with edge lengths introduced in \cite{aldous2004objective}. We call $r > 0$ a \emph{continuity point} of $t$ if there does not exist a vertex at distance exactly $r$ from $o$ in $p$. For such an $r$, let $p^{(r)}$ be all the vertices in $p$ that are distance at most $r$ from $o$. We define $F_r(o,t)$ the \emph{$r$-fringe of $t$} as follows. If all vertices of $p$ are at distance at most $r$ from $o$ then $F_r(o,t)=t$. Otherwise, pick any vertex $v$ in $p$ at distance greater than $r$ from $o$ and let $F_r(o,t)$ be the subtree spanned by all vertices in $t$ whose path towards $v$ enters the closed ball $B_r(o,t)$. Note that this construction does not depend on the choice of $v$ and $F_r(o,t)$ is the analogue of the ball $B_r(o,t)$ in the fringe topology. The fringe $F_r(o,t)$ maintains the marks on the vertices of $p$ that lie in $F_r(o,t)$.

For a sequence $(t_n,p_n,\ell_n,o_n)_{n \geq 1}$ in $\fT$, we say that $(t_n,p_n,\ell_n,o_n) \to (t,p,\ell,o)$ \emph{weakly locally} if for all continuity points $r > 0$ there exists an $N \geq 0$ such that for all $n \geq N$ there exists a graph isomorphism $\phi_n:F_r(o_n,t_n) \to F_r(o,t)$ which maps $p_n^{(r)}$ to $p^{(r)}$ and which satisfies $\ell_n(\phi_n^{-1}(e)) \to \ell(e)$ for all edges $e \in E(F_r(o,t))$. 

This notion of convergence determines a topology for which turns $\fT$ into a Polish space which we call the \emph{marked local topology}. We say that $T_n \to T$ in distribution with respect to the marked local topology if $\e[h(T_n)] \to \e[h(T)]$ for all bounded continuous functions $h:\fT \to [0,\infty)$. We extend the notion to pairs of trees in $\fT$ in the straightforward way.

Let $(T_n,\rho_n,\ell_n)_{n \geq 1}$  be a sequence of random rooted trees with edge lengths, and let $(T,P,\ell,o) \in \fT$ be a random tree. For each $n \geq 1$ be $O_n,V_n$ be two independent uniformly selected vertices in $T_n$, and let $P_n(O_n)$ and $P_n(V_n)$ be the path from $O_n$ to $\rho_n$ and $V_n$ to $\rho_n$ respectively. We say that  $(T_n,\rho_n,\ell_n) \to (t,p,\ell,o)$ \emph{in probability with respect to the marked local topology} if
$$
\Big( \big( T_n , P_n(O_n) , \ell_n , O_n \big) , \big( T_n , P_n(V_n) , \ell_n , V_n \big) \Big) \to \Big( \big( T_1 , P_1 , \ell_1 , o_1 \big) , \big( T_2 , P_2 , \ell_2 , o_2 \big) \Big)
$$
in distribution with respect to the marked local topology, where $( T_1 , P_1 , \ell_1 , o_1)$ and $( T_2 , P_2 , \ell_2 , o_2 )$ are independent copies of $(T,P,\ell,o)$.

Local convergence in probability for graphs is a familiar notion within the literature. We direct the reader to Sections 2 of \cite{aldous2004objective} or \cite{van2024random} or for some deeper discussion which motivates our definitions. 

\section{Packing numbers are local}\label{sec:packing_number_local}

In this section we develop some tools to discuss packings of discrete trees with edge lengths. (Here, we will often call trees with edge lengths simply `trees'.) We will allow ourselves to place packing centres on vertices or along edges. The key idea in this section is a recursive algorithm to generate a canonical maximum packing of a tree, and we use it to derive a law of large numbers for packing numbers with fixed radii of sequences of trees with a sufficiently nice local limit. The recursive definition and the notion of the type of a packing that we define below are inspired by Meir and Moon, who used similar notions for trees without edge lengths to obtain their asymptotic results in \cite{meir1973expected,meir1975expected} (for $r=1$) and \cite{meir1977packing,meir1977packingII} (for general $r$). Our use of local convergence is new.



\subsection{The type of a packing}

Let $(t,\rho,\ell)$ be a tree, and let $P$ be an $r$-packing of $t$ for some $r > 0$. We define the \emph{type of $P$} to be the smallest distance between a packing centre and the root of $t$, i.e., $\type(P) := \inf_{v \in P} d(\rho,v)$. Let $\cS_r(t)$ be the collection of all maximum $r$-packings of $t$. The \emph{$r$-type of $t$} is the largest possible type among packings in $\cS(t)$, i.e., $\type_r(t) = \sup_{P \in \cS_r(t)} \type(P)$. When there is no risk of confusion we will occasionally call the $r$-type `the type'. We call $P \in \cS_r(t)$ an \emph{$r$-type-attaining packing} (or $r$-TAP) if $\type(P) = \type_r(t)$. For convention, we take the $r$-type of an empty packing to be $r$. 

One reason that we choose to study $r$-TAPs is that they satisfy a helpful recursion at the root. Let $(t_1,\rho_1,\ell_1),...,(t_k,\rho_k,\ell_k)$ be trees, and let $P_1,...,P_k$ be $r$-TAPs of the respective trees. We define a new rooted tree $\bM(t_1,...,t_k)$ by gluing all the tree $t_1,...,t_k$ at their roots.
We call $\bM(t_1,...,t_k)$ the \emph{merger} of the trees $t_1,...,t_k$. Suppose without loss of generality that the trees are ordered from greatest type to least, $\type_r(t_1) \geq ... \geq \type_r(t_k)$.  Let $o_1 \in P_1,...,o_k \in P_k$ be such that $\height(o_j) = \type_r(P_j)$ for all $j$. Note that $d(o_i,o_j) = \type_r(t_i) + \type_r(t_j)$, so if this sum is smaller than $r$ for some $i,j$, then $P_1\cup \dots \cup P_k$ is not an $r$-packing. We now show how to turn $P_1\cup \dots \cup P_k$ into an $r$-TAP of $\bM(t_1,...,t_k)$ by removing some packing centres. Set 
$$
i^* = i^*_r(P_1,...,P_k) = \inf\left\{2 \leq i \leq k : \type_r(t_{i-1}) +\type_r(t_i) < r\right\}\wedge (k+1).
$$ 
We define the \emph{merger of the packings $P_1,...,P_k$}, written $\bM_r(P_1,...,P_k)$, as
$$
\bM_r(P_1,...,P_k) = (P_1 \cup ... \cup P_k) \setminus \{o_{i^*} , o_{i^* + 1} , ... , o_k\}.
$$
\cref{fig:merger_of_packings} illustrates these definitions. Since the ordering was not important to the definitions, we may analogously define $\bM_r(S)$ for unordered sets of packings. In the next lemma we observe that the merger of $r$-TAPs is an $r$-TAP of the merger.

\begin{lem}\label{lem:recursive_tap}
    Let $t_1,...,t_k$ be trees and let $P_1,...,P_k$ be defined as above. Then, $\bM_r(P_1,...,P_k)$ is an $r$-TAP of $\bM(t_1,...,t_k)$.
\end{lem}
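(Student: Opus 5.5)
The plan is to argue in three steps: (a) $\bM_r(P_1,\dots,P_k)$ is an $r$-packing, (b) it has maximum size, and (c) its type equals $\type_r(\bM(t_1,\dots,t_k))$. Write $\tau_j=\type_r(t_j)$, with $\tau_1\ge\dots\ge\tau_k$. The basic geometric fact is that a path between points of different $t_i,t_j$ passes through the common root. Hence their distance is $\height(x)+\height(y)$.

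For (a), I first record an a priori bound for an $r$-TAP $P_j$. Every $x\in P_j\setminus\{o_j\}$ satisfies $\height(x)\ge\tau_j$, by definition of the type. It also satisfies $r\le d(x,o_j)\le \height(x)+\tau_j$, so
\[
\height(x)\ge\max(\tau_j,\,r-\tau_j).
\]
For indices $a<b<i^*$, monotonicity gives $\tau_a+\tau_b\ge\tau_{b-1}+\tau_b\ge r$, so the kept centres $o_1,\dots,o_{i^*-1}$ are pairwise compatible. For $x\in P_j\setminus\{o_j\}$ and $y\in P_i\setminus\{o_i\}$ with $i\ne j$, I add the two lower bounds in the form $\max(\tau_j,r-\tau_j)+\max(\tau_i,r-\tau_i)\ge \tfrac12(2r)+\tfrac12(2r)/1\cdot\tfrac12\ge r$. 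More precisely, the sum is at least the average of $(\tau_j+\tau_i)$ and $(2r-\tau_i-\tau_j)$, which is $r$. For $x\in P_j\setminus\{o_j\}$ and a kept $o_i$ with $i<i^*$, there are two cases. If $\tau_j\le\tau_i$, use $\height(x)+\tau_i\ge r-\tau_j+\tau_i\ge r$. If $\tau_j>\tau_i$, then $j<i$ by the ordering, so $j<i^*$ and $\tau_j+\tau_i\ge r$. Ties at equal types are harmless, since the bound uses only heights.

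For (b), let $Q$ be any $r$-packing of the merger. I assign the root, if it is in $Q$, to a single $t_j$. Then $Q\cap t_j$ is an $r$-packing of $t_j$, so $|Q\cap t_j|\le|P_j|$. Let $J$ be the set of indices where equality holds. For $j\in J$, $Q\cap t_j$ is a maximum packing, so its type $\sigma_j$ satisfies $\sigma_j\le\tau_j$. The closest centres of the pieces indexed by $J$ must be pairwise $r$-separated, which gives $\sigma_a+\sigma_b\ge r$ for all $a\ne b$ in $J$. It therefore suffices to prove a combinatorial lemma: if $J$ is a set of indices for which there exist $\sigma_j\le\tau_j$ with pairwise sums at least $r$, then $|J|\le i^*-1$. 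This yields $|Q|\le\sum_j|P_j|-(k-i^*+1)=|\bM_r(P_1,\dots,P_k)|$.

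To prove the lemma, I would note that at most one element of $J$ can have $\sigma_j<r/2$. All other elements have $\tau_j\ge r/2$, and the indices with $\tau_j\ge r/2$ form a prefix of length $p$ with $p\le i^*-1$. If $J$ contains an index $j$ with $\sigma_j<r/2$, then pairing it with another element of $J$ forces $\tau_j\ge r-\sigma_{j'}$ for some $j'$ in that prefix. An exchange argument then shows that $\tau_{p}+\tau_{p+1}\ge r$, i.e.\ $i^*\ge p+2$; I would replace $j$ by $p+1$, using the ordering, to conclude. Degenerate cases ($k=1$, empty packings with type $r$ by convention, the root itself being a centre) need separate checks.

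For (c), suppose $Q$ is a maximum packing. By (b), $|J|=i^*-1$ exactly, and the same counting shows that each closest centre of $Q$ lies in some piece indexed by $J$. Its type is then $\min_{j\in J}\sigma_j\le$ the $(i^*-1)$-th largest $\tau$, which is $\tau_{i^*-1}$. This value is exactly the type of $\bM_r(P_1,\dots,P_k)$. I expect the main obstacle to be this lemma together with its use in (c). There I must show that the closest centre of an optimal $Q$ cannot come from a non-full piece lying deeper than $\tau_{i^*-1}$, which requires a careful bookkeeping of which pieces are full.
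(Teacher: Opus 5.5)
Your proof is correct in substance but takes a different route from the paper. The paper proves the case $k=2$ directly: the packing property comes from the bound $d(y,\rho)\ge r-\type_r(t_2)$ for $y\ne o_2$, and maximality and type come from comparing $B\cap t_i$ with $P_i$. It then gets general $k$ by induction, checking that $\bM_r(\bM_r(P_1,\dots,P_{k-1}),P_k)=\bM_r(P_1,\dots,P_k)$.

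You instead handle all $k$ pieces at once. The same a priori bound $\height(x)\ge\max(\tau_j,r-\tau_j)$ gives (a); the garbled display there should just say that both heights are at least $r/2$. Part (b) reduces to showing that $i^*-1$ is the largest size of an index set on which the types have pairwise sums at least $r$. This makes it transparent why exactly $k-i^*+1$ centres are lost, and it avoids the induction bookkeeping. The paper's route instead yields, as a by-product, the decomposition of a $k$-fold merger into iterated pairwise mergers, which is the form used in the proof of \cref{lem:packing_centres_fringe}.

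Your combinatorial lemma has a one-line proof that replaces the exchange argument. Since $\sigma_j\le\tau_j$, the $\tau_j$ themselves have pairwise sums at least $r$ on $J$. If $|J|=m\ge2$, the two smallest values of $\tau$ on $J$ are at most $\tau_{m-1}$ and $\tau_m$, so $\tau_{m-1}+\tau_m\ge r$. Because $i\mapsto\tau_{i-1}+\tau_i$ is non-increasing, this says exactly $m\le i^*-1$.

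In (c), the claim that ``the same counting shows that each closest centre of $Q$ lies in some piece indexed by $J$'' is false. Take $t_1$ and $t_2$ to be segments of lengths $0.7r$ and $1.2r$, each rooted at an endpoint. Then $\tau_1=0.7r$, $\tau_2=0.2r$ and $i^*=2$. The tip of $t_1$ together with the point of $t_2$ at height $0.3r$ is a maximum packing of the merger, with $J=\{1\}$. Its closest centre lies in the non-full piece $t_2$.

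You do not need that claim, and the ``main obstacle'' you anticipate is not there. A non-full piece can only lower the minimum, so for every maximum packing $Q$ you have $\type(Q)\le\min_{j\in J}\sigma_j\le\min_{j\in J}\tau_j\le\tau_{|J|}=\tau_{i^*-1}$.

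What you should add instead is the justification that $\type(\bM_r(P_1,\dots,P_k))=\tau_{i^*-1}$. The kept closest centres have heights $\tau_1\ge\dots\ge\tau_{i^*-1}$. Any other centre of $P_j$ has height at least $\tau_j\ge\tau_{i^*-1}$ if $j<i^*$. If $j\ge i^*$, it has height at least $r-\tau_j\ge r-\tau_{i^*}>\tau_{i^*-1}$.
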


\begin{proof}

Assume that $t_1,\dots, t_k$ are ordered by non-increasing type and let $o_1,\dots, o_k$ be as above. We first prove the statement when $k=2$ and then prove the more general statement inductively. 

To show that $\bM_r(P_1,P_2)$ is an $r$-packing, we need to show that for $x,y\in \bM_r(P_1,P_2)$, it holds that $d(x,y)\ge r$. If $x,y\in P_i$, then $d(x,y)\ge r$ because $P_i$ is an $r$-packing. Otherwise, if $x\in P_1$ and $y\in P_2$, then $d(x,y)=d(x,\rho)+d(y,\rho)\ge \type_r(t_1)+d(y,\rho)$. If $i^*=3$, then we use that 
\[\type_r(t_1)+d(y,\rho)\ge \type_r(t_1)+\type_r(t_2)\ge r.\]
Otherwise, $y\ne o_2$ so by the triangle inequality, \[d(y,\rho)\ge d(y,o_2)-d(\rho,o_2)\ge r-\type_r(t_2)\ge r-\type_r(t_1)\] 
so that $d(x,y)\ge r$. 

Now, we will show that $\bM_r(P_1,P_2)$ is an $r$-TAP. Let $B$ be a maximum $r$-packing of $\bM(t_1,t_2)$ and set $B_i = B \cap t_i$ for $i=1,2$. Since $B_i$ is an $r$-packing of $t_i$, we must have that

\begin{equation}\label{eq:packing_B_smaller_Pi}
    \forall \ i\in\{1,2\} \quad
    \#B_i\le \cP_r(t_i)=\#P_i.
\end{equation}

If $i^* = 3$ then $\bM_r(P_1,P_2)=P_1\cup P_2$ so $\# \bM_r(P_1,P_2)=\cP_r(t_i)+\cP_r(t_i)$. By maximality of $B$, we must have $\#B_i=\cP_r(t_i)$ and thus $\#\bM_r(P_1,P_2)=\#B$. Moreover,
\[\type(B)= \type(B_1)\wedge \type(B_2)\le \type_r(t_1)\wedge \type_r(t_2) = \type(\bM_r(P_1,P_2)),\]
proving that $\bM_r(P_1,P_2)$ is an $r$-TAP of $\bM(t_1,t_2)$.

We now consider $i^*=2$ so that $\#\bM_r(P_1,P_2)=\cP_r(t_1)+\cP_r(t_2)-1$. We claim that in this case $\type(\bM_r(P_1,P_2))=\type_r(t_1)$. First, $\type(\bM_r(P_1,P_2))\le d(\rho,o_1)=\type_r(t_1)$. Moreover, if $x\in \bM_r(P_1,P_2)\cap t_1$ then $d(x,\rho)\ge \type_r(t_1)$. If $x\in \bM_r(P_1,P_2)\cap t_2$ then $x\ne o_2$ so by the triangle inequality, \[d(x,\rho)\ge d(x,o_2)-d(\rho,o_2)\ge r-\type_r(t_2)>\type_r(t_1),\] because $\type_r(t_1)+\type_r(t_2)<r$. This implies that $\type(\bM_r(P_1,P_2))=\type_r(t_1)$. 

Next, we check that $\#B=\cP_r(t_1)+\cP_r(t_2)-1$ in the case $i^* = 2$. Because $\bM_r(P_1,P_2)$ is a packing, $\#B\geq \cP_r(t_1)+\cP_r(t_2)-1$. Suppose that the inequality is strict, then \eqref{eq:packing_B_smaller_Pi} gives for all $i\in\{1,2\}$, $\#B_i=\cP_r(t_i)$. Because $\type(B_i)\le \type_r(t_i)$ we have $\#B=\cP_i(t_1)+\cP_i(t_2)$, and so there exist $x\in B_1, y\in B_2$ such that \[d(x,y)=\type(B_1)+\type(B_2)\le \type_r(t_1)+\type_r(t_2)<r,\] contradicting that $B$ is an $r$-packing and proving that $\#B=\cP_r(t_1)+\cP_r(t_2)-1$.  Moreover, if $\#B_i=\cP_i(t_i)$ then $\type(B)\le \type_r(t_i)\le \type_r(t_1)$, so $\type(B)\le \type( \bM_r(P_1,P_2))$, concluding the proof of the lemma for $k=2$. 

Now consider general $k\ge 2$. Note that 
$$
\bM(\bM(t_1,\dots, t_{k-1}),t_k)=\bM(t_1,\dots,t_{k-1}, t_k),
$$ 
so the statement follows from induction if we show that 
$$
\bM_r(\bM_r(P_1,\dots, P_{k-1}),P_k)=\bM_r(P_1,\dots,P_{k-1}, P_k).
$$
Both sets contain $P_1\cup \dots \cup P_k\setminus\{o_1,\dots, o_k\}$ and are contained in $P_1\cup \dots \cup P_k$, so it is enough to show that they agree on $o_1,\dots, o_k$. For $j< k$, $o_j\in \bM_r(\bM_r(P_1,\dots, P_{k-1}),P_k)$ if and only if $o_j\in \bM_r(P_1,\dots, P_{k-1})$, which happens if and only if $\type(P_{j-1})+\type(P_j)\le r$. This is also the criterion for $o_j\in \bM_r(P_1,\dots,P_{k-1}, P_k)$. Moreover, $o_k\in \bM_r(\bM_r(P_1,\dots, P_{k-1}),P_k)$ if and only if $\type(\bM_r(P_1,\dots, P_{k-1}))+\type(P_k)\ge  r$. But this happens if and only if $i^*\ge k$, which is equivalent to $\type(P_{k-1})+\type(P_k)\ge r$. This is the criterion for $o_k\in \bM_r(P_1,\dots,P_{k-1}, P_k)$.

\end{proof}

\begin{figure}
    \centering
    {\begin{tikzpicture}[scale=1.2]

\colorlet{colA}{red}
\colorlet{colB}{blue}
\colorlet{colC}{green!60!black}


\begin{scope}[xshift=0cm]
    \node (b1) at (0,0) {};
    \node (b2) at (-0.3,0.7) {};
    \node (b3) at (0.4,0.8) {};
    \node (b4) at (0,1.6) {};
    \draw[line width=0.3mm,colB] (0,0) -- (-0.3,0.7);
    \draw[line width=0.3mm,colB] (0,0) -- (0.4,0.8);
    \draw[line width=0.3mm,colB] (0.4,0.8) -- (0.4,1.6);
    \node[draw=none, fill=none, below=0pt] at (0,-0.2) {\small{$(t_1,P_1)$}};

    \draw node [fill,circle,scale=0.35] at (0.4,0.9) {};
    \draw node [fill,circle,scale=0.35] at (0.4,1.6) {};
    \draw node [fill,circle,scale=0.35] at (-0.3,0.7) {};
    \draw node [fill,circle,scale=0.35] at (0.5 * 0.4 , 0.5 * 0.8) {};
\end{scope}

\begin{scope}[xshift=1.5cm]
    \draw[line width=0.3mm,colC] (0,0) -- (0,0.8);
    \draw[line width=0.3mm,colC] (0,0.8) -- (-0.3,1.3);
    \draw[line width=0.3mm,colC] (0,0.8) -- (0.4,1.6);
    \node[draw=none, fill=none, below=0pt] at (0,-0.2) {\small{$(t_2,P_2)$}};

    \draw node [fill,circle,scale=0.35] at (0.4,1.6) {};
    \draw node [fill,circle,scale=0.35] at (0.4 * 2 / 9,0.8 + 0.8 * 2 / 9) {};
    \draw node [fill,circle,scale=0.35] at (-0.3,1.3) {};
    \draw node [fill,circle,scale=0.35] at (0,0.3) {};
\end{scope}

\begin{scope}[xshift=3cm]
    \node (a1) at (0,0) {};
    \draw[line width=0.3mm,colA] (0,0) -- (0,0.9);
    \draw node [fill,circle,scale=0.35] at (0,0.9) {};
    \draw node [fill,circle,scale=0.35] at (0,0.2) {};
    \node[draw=none, fill=none, below=0pt] at (0,-0.2) {\small{$(t_3,P_3)$}};
\end{scope}
\begin{scope}[xshift=4.5cm]
\draw[line width=0.3mm,->] (-0.8,1) -- (0.2,1);
\end{scope}

\begin{scope}[xshift=6cm]

    \node (r) at (0,0) {};

   \node (a1) at (0,0) {};
    \draw[line width=0.3mm,colA] (0,0) -- (0.81,0.59*0.9);
    \draw node [fill,circle,scale=0.35] at (0.81,0.59*0.9) {};

    \draw[line width=0.3mm,colB] (0,0) -- (-0.76*0.81,0.76*0.59);
    \draw[line width=0.3mm,colB] (0,0) -- (-0.89*0.31,0.89*0.95);
    \draw[line width=0.3mm,colB] (-0.89*0.31,0.89*0.95) -- (-0.89*0.31-0.8*0.59,0.89*0.95+0.8*0.81);
    
    \draw node [fill,circle,scale=0.35] at (-0.89*0.31-0.1*0.59,0.89*0.95+0.1*0.81) {};
    \draw node [fill,circle,scale=0.35] at (-0.89*0.31-0.8*0.59,0.89*0.95+0.8*0.81) {};
    \draw node [fill,circle,scale=0.35] at (-0.76*0.81,0.76*0.59) {};
    \draw node [fill,circle,scale=0.35] at (-0.5*0.89*0.31,0.5*0.89*0.95) {};

    \draw[line width=0.3mm,colC] (0,0) -- (0.8*0.31,0.8*0.95);
    \draw[line width=0.3mm,colC] (0.8*0.31,0.8*0.95) -- (0.8*0.31-0.3,0.8*0.95+0.5);
    \draw[line width=0.3mm,colC] (0.8*0.31,0.8*0.95) -- (0.8*0.31+0.4,0.8*0.9+0.8);
    

     \draw node [fill,circle,scale=0.35] at (0.8*0.31+0.4,0.8*0.9+0.8) {};\draw node [fill,circle,scale=0.35] at (0.8*0.31+0.4*2/9,0.8*0.9+0.8*2/9) {};
     \draw node [fill,circle,scale=0.35] at (0.8*0.31-0.3,0.8*0.95+0.5) {};
     \draw node [fill,circle,scale=0.35] at (0.8*0.31*3/8,0.8*0.95*3/8) {};

    \node[draw=none, fill=none, below=0pt] at (0,-0.2)
    {\small{$(\mathbf{M}(t_1,t_2,t_3),\,\mathbf{M}_r(P_1,P_2,P_3))$}};
\end{scope}
\end{tikzpicture}}
    \caption{ Illustration of the merger of three trees and 
    the merger of their $r$-TAPs. The centre closest to the root in $t_3$ is deleted in the merger as it would be too close to the green centre.}
    \label{fig:merger_of_packings}
\end{figure}

Accompanying the merger operation, we can also extend $r$-TAPs of a tree when we add edges below the root. Let $(t,\rho,\ell)$ be a tree, and let $\bA_L(t)$ be obtained from $t$ by adding a new root $\rho'$ below $\rho$ that is adjacent to $\rho$ with an edge of length $L$. Let $P$ be an $r$-TAP of $t$. Let $k^* = \sup\{k \geq 0 : kr \leq L + \bt_r(t)\}$. Then, we define $\bA_{r,L}(P)$ to be the packing obtained from $P$ by adding packing centres on the edge $\{\rho,\rho'\}$ at distances $r - \type_r(t), 2r-\type_r(t) , ... , k^* r - \type_r(t)$ from $\rho$.



\begin{lem}\label{lem:recursive_tap_2}
    Let $t$ and $P$ be as defined above. Then, $\bA_{r,L}(P)$ is an $r$-TAP of $\bA_L(t)$.
\end{lem}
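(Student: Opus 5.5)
We argue in two steps, as in the proof of \cref{lem:recursive_tap}: first that $\bA_{r,L}(P)$ is an $r$-packing of $\bA_L(t)$, and then that it has maximum size and maximum type. Write $\tau=\type_r(t)$ and let $e=\{\rho,\rho'\}$. The new centres $x_1,\dots,x_{k^*}$ lie on $e$ at distances $jr-\tau$ from $\rho$, for $j=1,\dots,k^*$. The condition $k^*r\le L+\tau$ ensures they all lie on $e$. Note that $\tau\le r$, and $\tau=r$ exactly when $P$ is empty, by our convention.

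\textbf{Packing.} Consecutive new centres are at distance exactly $r$ from each other. For $y\in P$ and $x_j$ on $e$ we have $d(y,x_j)=d(y,\rho)+jr-\tau\ge \tau+r-\tau=r$. Hence $\bA_{r,L}(P)$ is an $r$-packing. Its type, measured from $\rho'$, is $L+\tau-k^*r$ if $k^*\ge1$ and $L+\tau$ otherwise. By maximality of $k^*$ this lies in $[0,r)$ when $k^*\ge 1$.

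\textbf{Maximum size.} Let $B$ be any $r$-packing of $\bA_L(t)$. Set $B_t=B\cap t$ and $B_e=B\cap(e\setminus\{\rho\})$, so $\#B_t\le \cP_r(t)$. We split into two cases.

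\emph{Case $\#B_t=\cP_r(t)$.} Then $\type(B_t)\le\tau$, so there is $y\in B_t$ with $d(\rho,y)\le\tau$. Every point of $B_e$ is then at distance at least $r-\tau$ from $\rho$. Points of $B_e$ are pairwise $r$-separated along a segment reaching distance $L$ from $\rho$. Hence $\#B_e\le \lfloor (L-(r-\tau))/r\rfloor+1=k^*$.

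\emph{Case $\#B_t\le \cP_r(t)-1$.} It suffices to show $\#B_e\le k^*+1$. This holds because $r$-separated points in a segment of length $L$ number at most $\lfloor L/r\rfloor+1\le k^*+1$, using $\tau\ge0$.

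In both cases $\#B\le \cP_r(t)+k^*=\#\bA_{r,L}(P)$.

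\textbf{Type.} Now let $B$ be maximum, so $\#B=\cP_r(t)+k^*$. We must show $\type(B)\le \type(\bA_{r,L}(P))$. Consider the same two cases.

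\emph{Case $\#B_t=\cP_r(t)$.} Then $\#B_e=k^*$. These $k^*$ points lie in the portion of $e$ at distance at least $r-\tau$ from $\rho$, and are $r$-separated. So the one nearest to $\rho'$ is at distance at least $r-\tau+(k^*-1)r$ from $\rho$, that is, at most $L+\tau-k^*r$ from $\rho'$. If $k^*=0$, the type is bounded by the distance from $\rho'$ through $\rho$ to $y$, which is at most $L+\tau$.

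\emph{Case $\#B_t<\cP_r(t)$.} Then $\#B_e=k^*+1$. This forces $\lfloor L/r\rfloor\ge k^*$, and $k^*+1$ points $r$-separated in a segment of length $L$ have a point within $L-k^*r$ of $\rho'$. Since $L-k^*r\le L+\tau-k^*r$, the bound holds here too.

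The main obstacle is exactly this second case: keeping track of the boundary conventions (a point at $\rho$ itself, $k^*=0$, and $\tau=r$ for empty $P$). It must be checked carefully that placing a centre at or near $\rho$ on $e$, at the cost of one centre in $t$, can never beat the canonical choice either in count or in type.
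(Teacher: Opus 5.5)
Your proof is correct and follows the same route as the paper. You split on whether $B\cap t$ is a maximum $r$-packing of $t$: in one case you bound the number of centres on the new edge by $k^*$, using a centre of $B\cap t$ within $\type_r(t)$ of $\rho$; in the other you bound it by $k^*+1$, using the edge length alone. You then compare types in each case. The only differences are minor. You check explicitly that $\bA_{r,L}(P)$ is an $r$-packing, which the paper leaves implicit. You also use the non-strict bound $L-k^*r\le \type(\bA_{r,L}(P))$ rather than the paper's strict one, which fails when $\type_r(t)=0$ (harmlessly, since only $\le$ is needed).
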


\begin{proof}

    Observe that $\type(\bA_{r,L}(P))=L + \type_r(t) - k^* r$.  Let $B$ be a maximum packing of $\bA_L(t)$. First, $\#\{B\cap t\} \le \#P$ because $P$ is a maximum packing of $t$. Moreover, $\#\{B\setminus t\} \le k^*+1$ because by definition of $k^*$, we have that $(k^*+1)r > L +\type_r(t)\geq L$. Therefore, if $\#\{B\cap t\} < \#P$, then $\#\{B \setminus t\}=k^*+1 $ by maximality of $B$ and $\#B=\#\bA_{r,L}(P)$. Hence $\bA_{r,L}(P)$ is a maximal packing and
    $$
    \type(B)\le L-k^* r< \type(\bA_{r,L}(P)),
    $$
    where the first inequality follows from the fact that any $r$-packing of an edge of length $L$ with $k+1$ centres has type at most $L-rk$, and the second inequality follows from the first line of the proof. On the other-hand, if $\#\{B\cap t\} = \#P$ then the type of $B\cap t$ (viewed as a packing of $t$) is at most $\type_r(t)$. Thus, the centre on the edge $\{\rho,\rho'\}$ closest to $\rho$ is at distance at least $r-\type_r(t)$ from $\rho$, implying that $\#\{B\setminus t\}\le k^*$, so $\#B\leq \#\bA_{r,L}(P)$. Additionally, if $\#\{B\setminus t\}= k^*$ then $\type(B)\le \type(\bA_{r,L}(P))$. In both instances $\bA_{r,L}(P)$ is an $r$-TAP of $\bA_L(t)$.
\end{proof}

\subsection{The canonical $r$-TAP}

The recursions offered above give us a method to generate canonical $r$-TAPs in finite rooted trees with edge lengths that we will present in this subsection. First, we require some notation.

Let $(t = (V,E),\rho , \ell)$ be a tree and let $v \in V$. We let $C_v$ be the collection of children of $v$, let $c_v = \#C_v$, let $\ell(u)$ be shorthand for $\ell(\{u,v\})$ for all $u \in C_v$, for $v\neq \rho$, let $t^{\uparrow v}_*$ denote the union of $t^{\uparrow v}$ with the parent of $v$ and the edge connecting them, let $\fG_k \subseteq V$ be the set of vertices in generation $k$ of $t$, and let $\fm = \sup\{ k \geq 0 : \#\fG_k > 0 \}$. We recall that, since $t$ comes equipped with edge lengths, its maximum non-empty generation, $\fm$, is not the same as its height.

One way to think about the algorithm is that we initialize with packing centres at every leaf in the last generation of the tree (not necessarily the height attaining leaf), and iteratively extend and merge the partial packings as we make our way down the tree. This is the description given in Algorithm~\ref{alg:pack}. 

\begin{algorithm}[htbp]

    \SetAlgoLined
    
    $\pack_r(v) \leftarrow $ $\emptyset$ for each $v \in V$ \tcp*[f]{canonical $r$-TAP of $t^{\uparrow v}$}\;
    
    $\pack_r^*(v) \leftarrow $ $\emptyset$ for each $v \in V \setminus \{\rho\}$ \tcp*[f]{canonical $r$-TAP of $t^{\uparrow v}_*$}\;
    
     \For {$0 \leq i \leq \fm$} {
            \For {$v \in \fG_{\fm-i}$}{
                \If {$c_v = 0$} {
                $\pack_r(v) \leftarrow \{v\}$ \tcp*[f]{creates the trivial $r$-TAP of $t^{\uparrow v}$ for leaves}
                }
                \Else {
                    \For {$u \in C_v$} {
                        $\pack_r^*(u) = \bA_{r,\ell(u)}(\pack_r(u))$ \tcp*[f]{extends each packing corresponding to children of $v$}
                    }
                $\pack_r(v) = \bM_r\left\{ \pack_r^*(u)   : u \in C_v\right\}$ \tcp*[f]{merges the extended packings}
                }
            }
        }
    \Return $\pack_r(\rho)$ \tcp*[f]{output the canonical packing} \;
    
    \caption{\label{alg:pack} The definition of a canonical $r$-TAP of a rooted tree with edge lengths $(t = (V,E) , \rho , \ell)$.}
\end{algorithm}

We can also compress the algorithm into a recursively defined function:
$$
\pack_r(v) =
\begin{cases}
    \bM_r\Big(\big\{ \bA_{r,\ell_u}\big(\pack_r(u)\big) : u \in C_v\big\}\Big) , & \text{if $c_v \geq 1$} \\
    \{v\}, & \text{if $c_v = 0$.}
\end{cases}
$$

If the underlying tree is ever unclear, we will write $\pack_r(v;t)$ in place of $\pack_r(v)$. All of the packings at the top of the recursion, when $c_v = 0$, are maximum and type-attaining of $t^{\uparrow v}$. Then, all other steps in the algorithm involves extending and merging packings of trees, which by \cref{lem:recursive_tap} and \cref{lem:recursive_tap_2} preserves the property of being an $r$-TAP. We can conclude from this that $\pack_r(v)$ is an $r$-TAP of $(t^{\uparrow v},v,\ell)$, and in particular, that $\pack_r(\rho)$ is an $r$-TAP of $t$. We call $\pack_r(\rho)$ \emph{the canonical $r$-TAP of $t$}.

The following lemma offers the most important feature of the canonical $r$-TAP. It specifies that we only need to look at a bounded region around $v$ in $t$ to determine $\pack_r(\rho)\cap t^{\uparrow v}$.

\begin{figure}
    \centering

    {\begin{tikzpicture}[scale=1.2]

\colorlet{colA}{red}
\colorlet{colB}{blue}
\colorlet{colC}{green!60!black}

\begin{scope}[xshift = 0cm]

    \node (r) [fill,circle,scale=0.35] at (0,0) {};

    \node (r1) at (-1,0.8) {};
    \draw[line width=0.3mm] (0,0) -- (-0.9*0.866,0.9*0.5);


    \draw node [fill,circle,scale=0.35] at (-0.9*0.866,0.9*0.5) {};

    \draw[line width=0.3mm] (0,0) -- (-0.3,0.7);
    \draw[line width=0.3mm] (0,0) -- (0.4,0.8);
    \draw[line width=0.3mm] (0.4,0.8) -- (0.4,1.6);

    \draw node [fill,circle,scale=0.35] at (0.4,0.8) {};

    \draw node [fill,circle,scale=0.35,colA] at (0.4,1.6) {};
    \draw node [fill,circle,scale=0.35] at (-0.3,0.7) {};

    \draw[line width=0.3mm] (0,0) -- (0.866,0.5);
    \draw[line width=0.3mm] (0.866,0.5) -- (0.566,1.0);
    \draw[line width=0.3mm] (0.866,0.5) -- (1.266,1.3);

    \draw node [fill,circle,scale=0.35] at (0.866,0.5) {};

    \draw node [fill,circle,scale=0.35,colA] at (1.266,1.3) {};
    \draw node [fill,circle,scale=0.35,colA] at (0.566,1.0) {};

    \node at (0,-0.25) {$k=0$};

\end{scope}


\begin{scope}[xshift = 3cm]

    \node (r) [fill,circle,scale=0.35] at (0,0) {};

    \node (r1) at (-1,0.8) {};
    \draw[line width=0.3mm] (0,0) -- (-0.9*0.866,0.9*0.5);


    \draw node [fill,circle,scale=0.35,colA] at (-0.9*0.866,0.9*0.5) {};

    \draw[line width=0.3mm] (0,0) -- (-0.3,0.7);
    \draw[line width=0.3mm] (0,0) -- (0.4,0.8);
    \draw[line width=0.3mm] (0.4,0.8) -- (0.4,1.6);

    \draw node [fill,circle,scale=0.35] at (0.4,0.8) {};

    \draw node [fill,circle,scale=0.35,colA] at (0.4,0.9) {};
    \draw node [fill,circle,scale=0.35,colA] at (0.4,1.6) {};
    \draw node [fill,circle,scale=0.35,colA] at (-0.3,0.7) {};

    \draw[line width=0.3mm] (0,0) -- (0.866,0.5);
    \draw[line width=0.3mm] (0.866,0.5) -- (0.566,1.0);
    \draw[line width=0.3mm] (0.866,0.5) -- (1.266,1.3);

    \draw node [fill,circle,scale=0.35] at (0.866,0.5) {};

    \draw node [fill,circle,scale=0.35,colA] at (1.266,1.3) {};
    \draw node [fill,circle,scale=0.35,colA] at (0.566,1.0) {};
    \draw node [fill,circle,scale=0.35,colA] at (0.866 + 0.4 * 2 / 9,0.5 + 0.8 * 2 / 9) {};

    \node at (0,-0.25) {$k=1$};

\end{scope}

\begin{scope}[xshift = 6cm, yshift = 0cm]

    \node (r) [fill,circle,scale=0.35] at (0,0) {};

    \node (r1) at (-1,0.8) {};
    \draw[line width=0.3mm] (0,0) -- (-0.9*0.866,0.9*0.5);

    \draw node [fill,circle,scale=0.35,colA] at (-0.9*0.866,0.9*0.5) {};
    \draw node [scale=1,colA] at (-0.9*0.866*0.2,0.9*0.5*0.2) {$\mathbin{\rotatebox[origin=c]{-30}{$\times$}}$};

    \draw[line width=0.3mm] (0,0) -- (-0.3,0.7);
    \draw[line width=0.3mm] (0,0) -- (0.4,0.8);
    \draw[line width=0.3mm] (0.4,0.8) -- (0.4,1.6);

    \draw node [fill,circle,scale=0.35] at (0.4,0.8) {};

    \draw node [fill,circle,scale=0.35,colA] at (0.4,0.9) {};
    \draw node [fill,circle,scale=0.35,colA] at (0.4,1.6) {};
    \draw node [fill,circle,scale=0.35,colA] at (-0.3,0.7) {};
    \draw node [fill,circle,scale=0.35,colA] at (0.5 * 0.4 , 0.5 * 0.8) {};

    \draw[line width=0.3mm] (0,0) -- (0.866,0.5);
    \draw[line width=0.3mm] (0.866,0.5) -- (0.566,1.0);
    \draw[line width=0.3mm] (0.866,0.5) -- (1.266,1.3);

    \draw node [fill,circle,scale=0.35] at (0.866,0.5) {};


    \draw node [fill,circle,scale=0.35,colA] at (1.266,1.3) {};
    \draw node [fill,circle,scale=0.35,colA] at (0.566,1.0) {};
    \draw node [fill,circle,scale=0.35,colA] at (0.866 * 3 / 8,0.5 * 3 / 8) {};
    \draw node [fill,circle,scale=0.35,colA] at (0.866 + 0.4 * 2 / 9,0.5 + 0.8 * 2 / 9) {};

    \node at (0,-0.25) {$k=2$};

\end{scope}

\end{tikzpicture}}
    
    \caption{A figure depicting the the three main steps of the canonical $r$-TAP algorithm as described in Algorithm~\ref{alg:pack}. The vertices of $t$ are black, the packing centres at each step are red, and the single red cross depicts a packing centre that was added during an extension step, but then deleted in the final merger. At each step, the partial packings up to generation $3-k$ are determined by extending and merging the packings above non leaves, and placing packing centres at leaves.}
    \label{fig:placeholder}
\end{figure}

\begin{lem}\label{lem:packing_centres_fringe}
    Let $(t,\rho,\ell)$ be a tree and let $v$ be a vertex in $t$. Let $u$ be the lowest height ancestor of $v$ such that $d(u,v) \leq r \wedge \height(v)$, where we consider $v$ as one of its own ancestors. Then, $\pack_r(\rho)\cap t^{\uparrow v} = \pack_r(u)\cap t^{\uparrow v}$.
\end{lem}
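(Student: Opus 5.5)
The plan is to follow the recursion of Algorithm~\ref{alg:pack} down the ancestral line of $v$, from $u$ to $\rho$. The claim is that each step (one extension followed by one merger) leaves the intersection with $t^{\uparrow v}$ unchanged as long as the merge vertex is at distance more than $r$ from $v$. First dispose of the trivial case. If $\height(v)<r$, then $d(\rho,v)=\height(v)\le r\wedge \height(v)$, so $u=\rho$ and there is nothing to prove. Otherwise $u$ is the lowest ancestor with $d(u,v)\le r$, so every strict ancestor $y$ of $u$ satisfies $d(y,v)>r$. (If $u=\rho$ we are again done.)

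Let $u=x_0,x_1,\dots,x_m=\rho$ be the path from $u$ down to the root, so that $x_{j+1}$ is the parent of $x_j$. I will show by induction on $j$ that $\pack_r(x_{j+1})\cap t^{\uparrow v}=\pack_r(x_j)\cap t^{\uparrow v}$ for each $0\le j<m$. Chaining these equalities gives the lemma.

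Fix $x=x_j$ and its parent $y=x_{j+1}$. By the recursion, $\pack_r(y)=\bM_r\{\pack^*_r(w):w\in C_y\}$, where $\pack^*_r(x)=\bA_{r,\ell(x)}(\pack_r(x))$.

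First, the extension step only adds centres on the edge $\{x,y\}$, and these lie outside $t^{\uparrow x}\supseteq t^{\uparrow v}$. Hence $\pack^*_r(x)\cap t^{\uparrow v}=\pack_r(x)\cap t^{\uparrow v}$.

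Second, the merger is a union of packings of the subtrees $t^{\uparrow w}_*$ for $w\in C_y$. Only the part coming from $w=x$ meets $t^{\uparrow v}$, and from this part at most one point is removed, namely the centre $o_x$ of $\pack^*_r(x)$ closest to $y$. It remains to show that $o_x\notin t^{\uparrow v}$.

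This is the only real estimate. The point $o_x$ is removed only if $x$ has index $i\ge i^*$ in the ordering by non-increasing type. Let $w'$ be the child of $y$ with index $i^*-1$ and $w''$ the child with index $i^*$. Then
\[
\type_r(t^{\uparrow x}_*)\le \type_r(t^{\uparrow w''}_*)\le \type_r(t^{\uparrow w'}_*),
\]
and by the definition of $i^*$ the last two types sum to less than $r$. Hence $d(y,o_x)=\type_r(t^{\uparrow x}_*)<r/2<r$.

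On the other hand, every point $z\in t^{\uparrow v}$ satisfies $d(y,z)\ge d(y,v)>r$, because $y$ is a strict ancestor of $u$. So $o_x\notin t^{\uparrow v}$, which gives $\pack_r(y)\cap t^{\uparrow v}=\pack_r(x)\cap t^{\uparrow v}$ and completes the induction.

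The main obstacle is the bookkeeping rather than any hard estimate. One must check that the merger $\bM_r$ removes only points of the form $o_i$, each lying at distance $\type_r(\cdot)$ from the merge vertex. One must also check the convention that an empty packing has type $r$: an empty packing contributes no point to remove, so it is harmless. Finally, one must confirm that the strict inequality $d(y,v)>r$ holds for every parent encountered along the path below $u$, which is exactly how $u$ was chosen.
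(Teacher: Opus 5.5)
Your proposal is correct and follows essentially the same route as the paper: you walk down the ancestral path from $u$ to $\rho$ and show that neither the extension along $\{x,y\}$ nor the merger at a strict ancestor $y$ of $u$ changes the intersection with $t^{\uparrow v}$, because $d(y,v)>r$. You also make explicit the estimate the paper leaves implicit: any centre deleted in a merger lies at distance $<r/2$ from the merge vertex, so it cannot lie in $t^{\uparrow v}$.
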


\begin{proof}
Let $0 \leq k \leq \fm$, where we recall that $\fm$ is the number of generations in the tree (excluding the generation of the root). The proof is trivial if $u = \rho$, so suppose otherwise. Let $v \in \fG_k$, let $ Q = (v = v_0 , ... , \rho = v_k)$ be the path from $v$ to $\rho$ traversed in order, and let $1 \leq j < k$ such that $u = v_{j}$. For each $0 \leq i \leq k$, let $t_i = t^{\uparrow v_i} \setminus t^{\uparrow v_{i-1}}_*$ (we note that the edge $\{v_{i-1},v_i\}$ is not included in $t_i$). Observe from the definition of the canonical $r$-TAP that 
$$
\pack_r(v_i) = \bM_r\left(\bA_{r,\ell(v_{i-1})}(\pack_r(v_{i-1})),\pack_r(v_{i};t_i)\right),
$$ 
for all $0 \leq i \leq k$. This directly yields
\begin{equation}\label{eq:bounded_distance_property}
\pack_r(v_i) \cap t^{\uparrow v} = \bM_r\left(\pack_r(v_{i-1};t^{\uparrow v_{i-1}}_*) \cap t^{\uparrow v} ,\pack_r(v_{i};t_i)\right) \cap t^{\uparrow v},
\end{equation}
for all $0 \leq i \leq k$. By definition, every element of $t^{\uparrow v}$ is necessarily at distance strictly greater than $r$ from $v_{j + 1},...,v_k$, and so for all $j+1 \leq m \leq k$, 
$$
\bM_r\left(\pack_r(v_{m-1};t^{\uparrow v_{m-1}}_*) \cap t^{\uparrow v} ,\pack_r(v_{m};t_m)\right) \cap t^{\uparrow v} = \pack_r(v_{m-1};t^{\uparrow v_{m-1}}_*) \cap t^{\uparrow v}.
$$
This combines with \eqref{eq:bounded_distance_property} to give,
$$
\pack_r(v_m) \cap t^{\uparrow v} = \pack_r(v_{m-1};t^{\uparrow v_{m-1}}_*) \cap t^{\uparrow v},
$$
for all $j+1 \leq m \leq K$. Chaining this identity together and recalling that $v_{j} = u$ we get,
$$
\pack_r(\rho) \cap t^{\uparrow v} = \pack_r(v_{j} ; t_*^{\uparrow v_{j}}) \cap t^{\uparrow v} = \pack_r(v_{j}) \cap t^{\uparrow v} = \pack_r(u) \cap t^{\uparrow v},
$$
which is the desired result.
\end{proof}

\subsection{The packing number is local}\label{subsec:local_limit}

We close off this section with a law of large numbers for packing numbers of random trees that have a nice limit in the marked local topology defined in \cref{sec:formalism}. 

Before proceeding, we pause to recall the definition of the \emph{ends} of an infinite tree $t$. Let $p$ and $p'$ be two infinite paths in $t$. We call $p$ and $p'$ equivalent if they differ in at most finitely many places. The ends of $t$ are exactly the equivalence classes of the different infinite paths under this equivalence relation. We call $t$ \emph{$k$-ended} if there are $k$ such equivalence classes.

Let $(t,\ell,\rho)$ be a finite rooted tree with edge lengths, with a distinguished vertex $o$ and path $p=(o=v_0,v_1,\dots, v_k)$. We define $\ff(t,p,\ell,o)=\ff_r(t,p,\ell,o)$ to be number of packing centres on the edge $\{o,v_1\}$ in $\pack_r(v_k)$ if $k \ne 0$, and let $\ff(t,p,\ell,o) = 0$ if $k=0$. To avoid double counting, we view the edge as including $o$, but not including $v_1$.

We will apply the function $\ff$ to special choices of paths that go in the direction of the root. For $v \in (t,\ell,\rho)$, let $v_1$ be its parent in $t$, let $a_v$ be lowest height ancestor of $v$ such that $d(a_v,v_1) \leq r \wedge \height(v)$, and let $p_v$ be the path from $v$ to $a_v$. Note that, by \cref{lem:packing_centres_fringe}, we have that $\pack_r(\rho) \cap t^{\uparrow v_1}= \pack_r(a_v) \cap t^{\uparrow v_1}$, and so the number of packing centres on the edge $\{v,v_1\}$ in $\pack_r(\rho)$ is the same as the number in $\pack_r(a_v)$. Since the size of a packing is the sum over all edges of the number of packing centres on the edge, we get the following identity:
\begin{equation}\label{eq:packing_number_as_sum}
\cP_r(t) = \sum_{v \in t} \ff(t,p_v,\ell,v).
\end{equation}
With a local limit theorem in mind, we would like to define $\ff$ for infinite trees $(t,p,\ell,o) \in \fT$. First, if $t$ has more than one end or $p$ is a finite path, then we simply define $\ff(t,p,\ell,o) = 0$. Suppose now that $t$ is one-ended and that $p = (v_0, v_1 , ... )$ is the unique infinite path in $t$ that begins at $o$. Let $F(v_i) = F(v_i;t)$ denote the tree, rooted at $v_i$, which contains all vertices whose unique infinite path goes through $v_i$, i.e.,  it contains the fringe subtrees rooted at $v_0,...,v_i$. we will also use $F^*(v_k)$ to denote the tree $F(v_k)$ in combination with $v_{k+1}$ and the edge $\{v_k,v_{k+1}\}$. \cref{lem:packing_centres_fringe} ensures that the set $\pack_r(v_k ; F(v_k))\cap F(v_1)$ is eventually constant, and we can define $\ff(t,p,\ell,o)$ be the number of packing centres on the edge $\{o,v_1\}$ in the limiting set.

The continuity of the function $\ff$ at a one-ended tree (where $p$ is an infinite path) with respect to the marked local topology depends on the edge lengths. For example, if $\ell(\{o,v_1\}) = r$, then $\ff$ will not necessarily be continuous at $(t,p,\ell,o)$. Moreover, edges having zero or infinite length could also pose issues. However, taking the edge lengths to be independent and atomless turns out to be sufficient for continuity, and we prove this in the next two lemmas.

\begin{lem}\label{lem:type_continuity_for_finite_trees}

Let $(t,\ell,\rho)$ be a tree with independent atomless edge lengths. Then $\type_r(t)$ is atomless and $\type_r:\fT \to [0,r)$ is almost surely continuous at $(t,\rho,\ell)$ with respect to the marked local topology. Moreover, for any vertex $v \in t$, $\ff(t,\cdot,\rho,v):\R^{\#t-1} \to \R$ is continuous at $\ell$ almost surely.

\end{lem}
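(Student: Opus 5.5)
The plan is to make the structure of $\type_r(t)$ explicit as a function of the edge lengths. For a finite tree $t$ with $\#t-1$ edges, I would show by induction along Algorithm~\ref{alg:pack} that the canonical $r$-TAP is produced by a finite decision tree. At each vertex $v$ the algorithm makes two kinds of discrete choices. The first is the integer $k^*$ in each extension step $\bA_{r,\ell(u)}$, determined by comparing $\ell(u)+\type_r(t^{\uparrow u})$ with multiples of $r$. The second is the index $i^*$ and the ordering of types in each merger $\bM_r$, determined by comparing sums $\type_r(\cdot)+\type_r(\cdot)$ with $r$ and types with each other. On any region of $\R^{\#t-1}$ where all these comparisons are strict, the output is fixed. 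On such a region each type $\type_r(t^{\uparrow v})$, and each position of a packing centre, equals an affine function of the form $\sum_{e\in A}\ell(e)-jr$ with integer $j$ and $A$ a nonempty set of edges, namely a path. This holds because extension adds $L$ and subtracts $k^*r$, while merging takes the largest type, which is a selection among such affine functions. Hence the number of packing centres on each edge, and the set of centres, are locally constant on this open region.

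The comparisons themselves are also of affine form. Each has the shape $\sum_{e\in A}\ell(e)-\sum_{e\in A'}\ell(e)\in r\Z$, or a sum of two types equal to $r$. The key point is that such a linear form is nontrivial as a function of the lengths. For an extension comparison this is clear, since the new edge $u$ appears only on one side. For a merger comparison, the two types come from disjoint subtrees $t^{\uparrow u}_*$ and $t^{\uparrow u'}_*$, and each involves its own top edge with coefficient $+1$. I would check that coefficients cannot cancel even after ties are broken, which follows from the disjointness of the subtrees. Since the edge lengths are independent and atomless, any nontrivial affine combination $\sum c_e\ell(e)+c$ with some $c_e\neq 0$ is atomless. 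This can be seen by conditioning on all other edges and using atomlessness of a single $\ell(e)$. A finite union over the finitely many possible comparisons then has probability zero. So almost surely $\ell$ lies in the open region, which gives continuity of $\ff(t,\cdot,\rho,v)$ at $\ell$.

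Atomlessness of $\type_r(t)$ follows the same way. Almost surely the type equals one of finitely many affine forms $\sum_{e\in A}\ell(e)-jr$ with $A\neq\emptyset$; the only exception is when $t$ is a single vertex, which is degenerate and handled separately. Each such form is atomless, so $\P{\type_r(t)=x}=0$ for every $x$. Here I must handle the convention that the empty packing has type $r$, but that case never arises for a nonempty tree.

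For continuity of $\type_r$ in the marked local topology at a finite tree, I would note that if $t_n\to t$ locally with $t$ finite, then for $r'$ larger than the height of $t$ the fringe $F_{r'}$ gives $t_n\cong t$ eventually, with $\ell_n\to\ell$ edgewise. The claim then reduces to continuity of $\type_r$ as a function of $\ell\in\R^{\#t-1}$, which was established above. I expect the main obstacle to be the bookkeeping showing that every comparison is a \emph{nontrivial} linear form, especially after nested mergers where the selected maximal type may switch between branches. I would handle this by proving inductively the statement "every type and every centre height is of the form $\sum_{e\in A}\ell(e)-jr$ with $A$ a nonempty root-to-leaf edge set inside the relevant subtree".
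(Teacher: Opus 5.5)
Your argument is correct, but it is organised differently from the paper's.

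The paper's route:
\begin{itemize}
\item It proves atomlessness by induction along the recursion. An extension maps the type to $\type_r(t)+L$ modulo $r$, which is atomless by independence. A merger returns the type of one of the merged trees, which is handled by a union bound.
\item It proves continuity quantitatively. It fixes an explicit $\delta$ built from $|r-\type_r(t^{\uparrow u}_*)-\type_r(t^{\uparrow v}_*)|$ and $\type_r(t^{\uparrow w}_*)\wedge(r-\type_r(t^{\uparrow w}_*))$. It then shows by induction on generations that type errors stay below $i\delta$, so that every $k^*$ and every merger index $i^*$ is unchanged.
\end{itemize}

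Your route instead shows that the canonical $r$-TAP is piecewise affine in $\ell$:
\begin{itemize}
\item Every type and every centre height has the form $\sum_{e\in A}\ell(e)-jr$, where $A$ is a downward path starting at the relevant top edge.
\item Every decision of the algorithm is the sign of a nontrivial affine form.
\item Hence, off a null union of hyperplanes, the combinatorial output is locally constant.
\end{itemize}

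This gives atomlessness and local constancy of $\ff$ at once. Because you include comparisons between sibling types, it also controls something the paper's $\delta$ does not. Take two leaf children of the root with lengths $0.3r+\eta$ and $0.3r$, with $\eta$ much smaller than $\delta$. Swapping the two lengths is a perturbation of size $\eta<\delta$. It leaves $i^*=2$ unchanged but deletes the other sibling's centre. This changes the count on an edge, though not the type. The paper's route buys an explicit modulus of continuity; yours buys a structural description in which every decision, including the ordering, is visibly generic.

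There are two points to tighten.
\begin{itemize}
\item The hyperplane family is countable rather than finite, because of $j$ and the multiples of $r$. What you actually need is that it is locally finite. This holds because only finitely many such hyperplanes meet any bounded set, since $|\sum_{e\in A}\ell(e)|$ is bounded there. Local finiteness makes the good set open, and then, by induction from the leaves, every decision is constant on a small ball around $\ell$.
\item The merged type is not the largest sibling type. It is the smallest surviving one, namely the type of the $(i^*-1)$-st tree in decreasing order. Your argument only uses that it equals one of the siblings' extended types, so this does not affect it.
\end{itemize}
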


\begin{proof}
    We can prove that the type is atomless with a simple induction argument. First suppose that $t$ is a single edge with length $L$. The type is precisely $L \ (\text{mod } r)$, and so for any $x \in [0,r)$, by the atomlessness assumption,
    $$
    \prob\{\type_r(t) = x\} \leq \sum_{k=0}^\infty \prob\{L = kr + x\} = 0.
    $$
    From here, if we can show that the atomlessness is preserved under extensions and mergers we are done by \cref{lem:recursive_tap} and \cref{lem:recursive_tap_2}.

    If we extend a tree $t$ by an edge of independent atomless length $L$ below the root, then the new, extended tree has type $\type_r(t) + L \ (\text{mod } r)$. Since the edge lengths are independent, $\type_r(t) + L$ is atomless, and so $\type_r(t) + L \ (\text{mod } r)$ is as well by another union bound.

    Now consider $k$ trees $t_1,...,t_k$ with atomless types. Recall  the type of the merger of the $k$ trees has type $\type_r(t_I)$, where $I$ is a random index between $1$ and $k$. By applying a union bound we get,
    $$
    \prob\{\type_r(\bM_r(t_1,...,t_k)) = x\} \leq \sum_{k=1}^k \prob\{\type_r(t_i) = x\} = 0.
    $$

    Now we turn to the continuity of $\bt_r$ with respect to the marked local topology. Since $t$ is finite, if $t_n \to t$ with respect to the marked local topology, then there is some $N \geq 0$ such that $t_n$ is isomorphic as a graph to $t$ for all $n \geq N$. Because of this, it is enough to take $t_n = t$ for all $n \geq 0$ and just study the convergence of the edge lengths $(\ell_n)_{n=1}^\infty$. Moreover, we can assume that $t$ has at least one edge as the proof is trivial otherwise.

    Let $\fm$ be largest non-empty generation of $t$. Fix $0<\eps < 1$ and take 
    $$
    \delta = \frac{\eps}{4\fm}\left(\min_{u \ne v} |r-\type_r(t^{\uparrow u}_*) + \type_r(t^{\uparrow v}_*)| \wedge \min_w\big( \type_r(t_*^{\uparrow w}) \wedge (r - \type_r(t_*^{\uparrow w}))\big)\right).
    $$ 
    By the atomlessness of the type for all finite trees, we know that $\delta$ is non-zero almost surely. We will work on this event. Suppose that $\|\ell_n - \ell\|_\infty < \delta$. We will argue inductively that for $i=0,\dots, \fm$, $|\type_r(t^{\uparrow v}) - \type_r(t_n^{\uparrow v})| < i\delta < \eps$ for each vertex $v$ in generation $\fm-i$ of $t$.

    The base case of generation $\fm$ is trivial, so it suffices to just complete the inductive step. Assume the claim for all vertices in generation $\fm-i+1$, and let $v$ be in generation $\fm-i+1$. We first prove that $|\type_r(t_*^{\uparrow v}) - \type_r((t_n)_*^{\uparrow v})| < i\delta$. Indeed, by \cref{lem:recursive_tap_2} we have that
    \begin{equation}\label{eq:i_still_hate_continuity}
    \type_r(t_*^{\uparrow v}) = \ell(v) - k(v)r + \type_r(t^{\uparrow v}),
    \end{equation}
    where $k(v) = \sup\{k \geq 0 : kr \leq \ell(v) + \type_r(t^{\uparrow v})\}$. We also similarly have
    \begin{equation}\label{eq:i_hate_continuity}
    \type_r((t_n)_*^{\uparrow v}) =\ell_n(v) - k_n(v)r + \type_r((t_n)^{\uparrow v}),
    \end{equation}
    where $k_n(v)$ is defined analogously to $k(v)$. By using the inductive hypothesis and the fact that $\|\ell_n-\ell\|_\infty < \delta$ with the triangle inequality we get
    \begin{align}
    \Big|\big(\ell_n(v) + \type_r((t_n)^{\uparrow v})\big) - \big(\ell(v) + \type_r(t^{\uparrow v})\big)\Big| &\leq i\delta \notag \\
    &\leq \frac{\eps}{2}\Big(\type_r(t_*^{\uparrow v}) \wedge (r-\type_r(t_*^{\uparrow v}))\Big). \label{eq:error_bound_extended_type}
    \end{align}
    We will now show that $k(v) = k_n(v)$. Combining \eqref{eq:i_still_hate_continuity} and \eqref{eq:error_bound_extended_type} we get that
    \begin{align*}
        \ell_n(v) + \type_r((t_n)^{\uparrow v}) - k(v)r &\geq \ell(v) + \type_r(t^{\uparrow v}) - k(v)r - \frac{1}{2}\type_r(t^{\uparrow v}_*) \\
        &= \type_r(t^{\uparrow v}_*) - \frac{1}{2}\type_r(t^{\uparrow v}_*) > 0,
    \end{align*}
    and 
    \begin{align*}
        \ell_n(v) + \type_r((t_n)^{\uparrow v}) - (k(v)+1)r &\leq \ell(v) + \type_r(t^{\uparrow v}) - (k(v)+1)r + \frac{1}{2}(r-\type_r(t^{\uparrow v}_*)) \\
        &= -(r-\type_r(t^{\uparrow v}_*)) + \frac{1}{2}(r-\type_r(t^{\uparrow v}_*)) < 0,
    \end{align*}
    Together, the inequalities imply that $k(v) = k_n(v)$.
    Inputting this identity into \eqref{eq:i_hate_continuity}, using the inductive hypothesis, and using the bound $\|\ell_n-\ell\|_\infty < \delta$ once again we get
    \begin{equation}\label{eq:hatred_towards_continuity_unwavering}
    |\type_r((t_n)_*^{\uparrow v})-\type_r(t_*^{\uparrow v})| < i\delta 
    \end{equation}
    If we show that the error does not get worse upon completing a merger step, then the claim will be proven for all vertices in generation $\fm-i$, finishing the proof. Let $v$ be a vertex in this generation, with set of children $v_1,...,v_k$ ordered such that $\type_r(t_*^{\uparrow v_1}) \geq ... \geq \type_r(t_*^{\uparrow v_k})$. By \cref{lem:recursive_tap}, it suffices to show that $i_n(v) = i(v)$, where
    $$
    i(v) = \inf\left\{2 \leq j \leq k :  \type_r\left(t_*^{\uparrow v_j}\right) + \type_r\left(t_*^{\uparrow v_{j-1}}\right) < r\right\}\wedge (k+1),
    $$
    and $i_n(v)$ is defined completely analogously for $t_n$. Indeed, for any $2 \leq j \leq k$ we have by \eqref{eq:hatred_towards_continuity_unwavering} and the definition of $\delta$,
    \begin{align*}
        &\left|\type_r\left((t_n)_*^{\uparrow v_j}\right) + \type_r\left((t_n)_*^{\uparrow v_{j-1}}\right) - \type_r\left(t_*^{\uparrow v_j}\right) - \type_r\left(t_*^{\uparrow v_{j-1}}\right)\right| \\
        &\leq \frac{1}{2}\left| r - \type_r\left(t_*^{\uparrow v_j}\right) - \type_r\left(t_*^{\uparrow v_{j-1}}\right) \right|.
    \end{align*}
    Then, for all $2 \leq j \leq k$, the following implications hold:
    $$
    \type_r\left((t_n)_*^{\uparrow v_j}\right) + \type_r\left((t_n)_*^{\uparrow v_{j-1}}\right) < r \iff \type_r\left(t_*^{\uparrow v_j}\right) + \type_r\left(t_*^{\uparrow v_{j-1}}\right) < r.
    $$ 
    This fact immediately implies that $i_n(v) = i(v)$ almost surely and completes the proof of continuity.

    To see the final fact about the continuity of $\ff$ at any given vertex, it suffices to note that in the just completed proof of the continuity for the type we showed that the number of packing centres on each edge $e \in t$ is the same for length $\ell'$ as for length $\ell$ whenever $\|\ell-\ell'\|_\infty < \delta$.
\end{proof}

\begin{lem}\label{lem:continuity_local_top}
    Let $(T,p,\ell,o)$ be a (random) one-ended tree with i.i.d.\ atomless edge lengths. Then, $\ff$ is almost surely continuous at $(T,p,\ell,o)$ in the marked local topology on $\mathfrak{T}$.
\end{lem}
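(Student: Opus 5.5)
We reduce continuity of $\ff$ at the one-ended tree $(T,p,\ell,o)$ to the finite-tree continuity of \cref{lem:type_continuity_for_finite_trees}, using the stabilisation given by \cref{lem:packing_centres_fringe}. Write $p=(o=v_0,v_1,\dots)$. Since $T$ is one-ended with i.i.d.\ atomless (hence a.s.\ positive) edge lengths, the distances $d(v_0,v_i)$ increase to $\infty$ almost surely. Indeed, the lengths along $p$ are i.i.d.\ and not a.s.\ zero, so their sum diverges. We can therefore fix a random index $K$ such that $d(v_1,v_K)>r$, and then a continuity point $R$ of $T$ with $R>d(o,v_{K+1})$; a.s.\ all but countably many $R$ qualify. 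The fringe $F_R(o,T)$ is then a finite tree containing $F(v_K)$ together with the edge $\{v_K,v_{K+1}\}$.

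\emph{Step 1: locality.} By \cref{lem:packing_centres_fringe} applied in any tree containing $F^*(v_K)$ as the fringe hanging from $v_{K+1}$, the number of packing centres on $\{o,v_1\}$ is determined by $\pack_r(a;F(a))$, where $a$ is the lowest ancestor within distance $r$ of $v_1$. This $a$ is some $v_j$ with $j<K$. Hence $\ff(T,p,\ell,o)$ equals the number of packing centres on $\{o,v_1\}$ in $\pack_r(v_K;F(v_K))$. The same identity holds for any tree $(t',p',\ell',o')$ whose $R$-fringe is isomorphic to $F_R(o,T)$, provided $d'(v_1',v_K')>r$ still holds in $t'$. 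For finite $t'$ one also needs $p'$ to reach past $v'_K$; if $p'$ ends inside the fringe then $t'$ cannot match $F_R(o,T)$ under a marked isomorphism, since that isomorphism maps $p'^{(R)}$ onto $p^{(R)}$ and $v_{K+1}\in p^{(R)}$.

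\emph{Step 2: reduction to a finite tree.} Let $(t_n,p_n,\ell_n,o_n)\to(T,p,\ell,o)$ in the marked local topology. For $n$ large there are isomorphisms $\phi_n:F_R(o_n,t_n)\to F_R(o,T)$ that respect the marked paths, with $\ell_n\circ\phi_n^{-1}\to\ell$ edgewise. Because $d(v_1,v_K)>r$ strictly and only finitely many edges are involved, $d_n(\phi_n^{-1}v_1,\phi_n^{-1}v_K)>r$ for $n$ large. By Step 1, $\ff(t_n,p_n,\ell_n,o_n)$ then equals the number of centres on the root edge in $\pack_r$ of the finite tree $F(v_K)$ carrying the edge lengths $\ell_n\circ\phi_n^{-1}$. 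One must check that this finite tree really appears as the fringe $F(\phi_n^{-1}v_K;t_n)$, and that the relevant portion of $\pack_r(\rho_n)$, or of the infinite-tree limit, is computed from it. This follows from the definition of $F_R$, since every vertex whose path towards the far end of $p_n$ passes through $v_K$ lies within $R$ of $o_n$.

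\emph{Step 3: the finite-tree conclusion.} The tree $F(v_K)$, conditioned on its shape, has independent atomless edge lengths. The last assertion of \cref{lem:type_continuity_for_finite_trees} gives that the count of centres on a fixed edge is a.s.\ continuous in the edge lengths, so the counts for $\ell_n\circ\phi_n^{-1}$ agree with those for $\ell$ for all $n$ large. Because $K$ and the shape are random, we apply that lemma conditionally on the shape and take a countable union over the possible shapes, so that a single a.s.\ event suffices.

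\emph{Main obstacle.} The delicate part is Step 1/2: the choice of $K$ must ensure the determining region stays inside the common fringe uniformly in $n$, since the distance condition defining $a_v$ is itself a function of the edge lengths. Strict inequality $d(v_1,v_K)>r$ together with convergence of finitely many lengths handles this. The case where $t_n$ is finite, and $\ff$ is computed via $\pack_r$ of its true root, needs the same stabilisation argument applied to the ancestral path.
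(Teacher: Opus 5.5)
Your proposal is correct and is essentially the paper's argument: choose $K$ so that the spine distance beyond $v_1$ exceeds $r$, localise the count on $\{o,v_1\}$ via \cref{lem:packing_centres_fringe}, match fringes along the sequence, and conclude with \cref{lem:type_continuity_for_finite_trees}. It is also more careful than the paper's version: the paper takes the fringe radius $d^*=d(v_1,v_{K+1})$, even though $F_{d^*}(o,T)$ need not contain $F(v_K)$, whereas you take a continuity point $R>d(o,v_{K+1})$ and check that $d(v_1,v_K)>r$ and the extent of the marked path persist for large $n$.

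One caveat, shared with the paper: your Step~1 identity covers finite approximants (rooted at the end of the marked path) and one-ended approximants with infinite marked path, but not infinite approximants that are multi-ended or carry a finite marked path. There $\ff$ is $0$ by convention, so continuity genuinely fails against such sequences. This is harmless for \cref{thm:packing_is_local}, whose approximants are finite.
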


\begin{proof}

    The sum of infinitely many i.i.d.\ atomless random variables is infinite almost surely, so there exists some finite $K$ such that $d^* := \sum_{i=1}^K \ell\{v_i,v_{i+1}\} > r$. This implies that the number of packing centres on the edge $\{v_0,v_1\}$ in $\pack_r(v_K;F_K)$ is equal to $\ff(T,p,\ell,o)$. Suppose that $(T_n,p_n,\ell_n,o_n) \to (T,p,\ell,o)$ in the marked local topology. By definition, for any $\delta > 0$, we can choose $N \geq 0$ large enough that for all $n \geq N$ there is some isomorphism 
    $$
    \phi_n:F_{d^*}(T_n,p_n,\ell_n,o_n) \to F_{d^*}(T,p,\ell,o),
    $$
    where $|\ell_n(e) - \ell(\phi_n(e))| < \delta$ for all $e \in F_{d^*}(T_n,p_n,\ell_n,o_n)$. This reduces the problem of continuity back to the problem of continuity for finite trees with i.i.d.\ atomless edge lengths, allowing us to directly apply \cref{lem:type_continuity_for_finite_trees}.
\end{proof}

With this fact proven we can prove our desired limit law.

\begin{thm}\label{thm:packing_is_local}
    Let $(T_n,\rho_n,\ell_n)_{n \geq 1}$ be a sequence of random rooted trees on $n$ vertices with edge lengths, let $(T,p,\ell,o) \in \fT$ be one-ended with i.i.d.\ atomless edge lengths, and let $(\beta_n)_{n \geq 1}$ be a sequence of positive real numbers. Let $O_n$ be a uniformly chosen vertex from $T_n$ for all $n \geq 1$. If $(T_n,\rho_n,\beta_n^{-1}\ell_n) \to (T,p,\ell,o)$ in probability with respect to the marked local topology and $(\ell_n(O_n)^2)_{n \geq 1}$ is uniformly integrable, then
    $$
    \frac{1}{n}\cP_{r\beta_n}(T_n) \convp \e[\ff(T,p,\ell,o)].
    $$
\end{thm}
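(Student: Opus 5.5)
The plan is to combine the identity \eqref{eq:packing_number_as_sum} with local convergence in probability through a first and second moment argument. Since packings scale with edge lengths, $\cP_{r\beta_n}(T_n,\ell_n)=\cP_r(T_n,\beta_n^{-1}\ell_n)$, so we may work with the rescaled tree $\tilde T_n=(T_n,\rho_n,\beta_n^{-1}\ell_n)$ and radius $r$. By \eqref{eq:packing_number_as_sum},
\[
\frac1n\cP_r(\tilde T_n)=\frac1n\sum_{v\in T_n}\ff(\tilde T_n,p_v,\beta_n^{-1}\ell_n,v)=\Ce{\ff(\tilde T_n,p_{O_n},\cdot,O_n)}{\tilde T_n}.
\]
It therefore suffices to show that the mean of this conditional expectation converges to $\e[\ff(T,p,\ell,o)]$ and that its variance tends to $0$. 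For the variance, take two independent uniform vertices $O_n,V_n$. Then
\[
\E{\Big(\frac1n\cP_r(\tilde T_n)\Big)^2}=\E{\ff(\tilde T_n,p_{O_n},\cdot,O_n)\,\ff(\tilde T_n,p_{V_n},\cdot,V_n)}.
\]
Convergence in probability in the marked local topology says that the pair of rooted marked trees converges jointly in distribution to two independent copies. If we also have joint convergence of the pair of $\ff$-values, the right-hand side tends to $\e[\ff(T,p,\ell,o)]^2$, the left-hand side's mean tends to $\e[\ff(T,p,\ell,o)]$, and Chebyshev's inequality finishes the proof.

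\textbf{Step 1 (matching $\ff$ on finite and limiting objects).} On the finite tree, $\ff$ is evaluated along $p_v$, the path from $v$ up to the ancestor $a_v$. On the marked object $(\tilde T_n,P_n(O_n),\cdot,O_n)$, the marked path goes all the way to $\rho_n$. By \cref{lem:packing_centres_fringe}, the number of centres on the edge $\{v,v_1\}$ computed from $\pack_r(\rho_n)$ equals the number computed in $\pack_r(w)$ for any ancestor $w$ at or below $a_v$. So $\ff$ along $p_v$ equals $\ff$ evaluated on the fringe containing the path to the root, and we may view the summand as the function $\ff$ on $\fT$ applied to $(\tilde T_n,P_n(O_n),\cdot,O_n)$. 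This needs a small check that the definition of $\ff$ for finite paths agrees with the fringe version; I expect this to be routine.

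\textbf{Step 2 (continuity).} By \cref{lem:continuity_local_top}, $\ff$ is almost surely continuous at $(T,p,\ell,o)$, and hence the product map is almost surely continuous at the independent pair. The continuous mapping theorem then gives
\[
\big(\ff(\tilde T_n,\cdot,O_n),\,\ff(\tilde T_n,\cdot,V_n)\big)\convd\big(\ff(T_1,\dots),\,\ff(T_2,\dots)\big).
\]

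\textbf{Step 3 (uniform integrability; the main obstacle).} The function $\ff$ is unbounded, so convergence in distribution does not by itself give convergence of the moments. The key bound is that an edge of length $L$ carries at most $\lfloor L/r\rfloor+1$ centres of an $r$-packing. Hence
\[
\ff(\tilde T_n,\cdot,O_n)\le 1+\beta_n^{-1}\ell_n(O_n)/r .
\]
I read the hypothesis as uniform integrability of the squares of the rescaled lengths $\beta_n^{-1}\ell_n(O_n)$. Under that reading, $\ff(\tilde T_n,\cdot,O_n)^2$ is uniformly integrable, which gives convergence of the first moment. For the product, Cauchy--Schwarz gives $\ff(O_n)\ff(V_n)\le\tfrac12(\ff(O_n)^2+\ff(V_n)^2)$, and since $O_n$ and $V_n$ have the same law, the product is uniformly integrable as well, so the second moment also converges.

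The limits are finite, because $\ff(T,p,\ell,o)\le 1+\ell(o)/r$ and Fatou's lemma transfers the second-moment bound to the limit. Combining the two moment limits, the variance of $\frac1n\cP_{r\beta_n}(T_n)$ tends to $0$, which proves the claim.
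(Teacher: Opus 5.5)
Your proposal is correct and follows essentially the same route as the paper: the sum identity \eqref{eq:packing_number_as_sum}, first and second moments via two conditionally independent uniform vertices, almost sure continuity of $\ff$ from \cref{lem:continuity_local_top}, uniform integrability coming from the squared length of the edge above $O_n$, and Chebyshev's inequality. The differences are cosmetic. You get uniform integrability of $\ff(O_n)\ff(V_n)$ directly from $\ff\le 1+L/r$ and $ab\le\tfrac12(a^2+b^2)$, where the paper truncates at level $K$; your bound $1+L/r$ is the correct one, whereas the paper's $\ell_n(O_n)/r$ fails for short edges. Your reading of the hypothesis as uniform integrability of the rescaled lengths is exactly what the paper's reduction to $\beta_n=1$ implicitly uses.
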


\begin{proof}
    Most of the important work was done above already. Before moving forward with the final proof there are a couple things to note. First, convergence locally in probability immediately implies that $p$ must be infinite, as two neighbourhoods that are a bounded distance apart cannot have independent edge lengths. Second, scaling edge lengths by $\beta_n$ is equivalent to changing the radius of the packing by $\beta_n^{-1}$ for packing numbers, and so it suffices to complete the proof for $\beta_n =1 $. To simplify the following computations we will compress our notation by writing $\ff(T_n,O_n)$ in place for $\ff(T_n,p_{O_n},\ell_n,O_n)$, where, as before, $p_{O_n}$ is the path from $O_n$ to $\rho$.
    
    Let $K \geq 0$. Using \eqref{eq:packing_number_as_sum} we get that $\e[\cP_r(T_n)] = n\e[\ff(T_n,O_n)]$. Then, since $\ff(T_n,O_n)$ is at most $\ell(O_n)/r$ (all the packing centres on the edge must be distance at least $r$ from each other) we get
    $$
    \Big|\e[\ff(T_n,O_n)] - \e\left[\ff(T_n,O_n) \wedge K\right]\Big| \leq  \e\left[\frac{\ell_n(O_n)}{r} \I{\ell_n(O_n) \geq rK}\right].
    $$
    Taking a limit in $n$ and then $K$ we get, by the uniform integrability of $\ell_n(O_n)$ and the fact that $\ff(\cdot,\cdot) \wedge K$ is bounded and almost surely continuous at $(T,p,\ell,o)$,
    $$
    \lim_{n \to \infty} n^{-1}\e[\cP_r(T_n)] =  \lim_{n \to \infty} \e[\ff(T_n,O_n)] = \e[\ff(T,o)].
    $$
    If we can show that $\e[\cP_r(T_n)^2] = (1+o(1))(n\e[\ff(T,o)])^2$ then we are done by Chebyshev's inequality. Let $(V_n)_{n \geq 1}$ be another independent collection of uniform vertices in $T_n$. For the second moment, just as above, we can write $\e[\cP_r(T_n)^2] = n^2\e[\ff(T_n,O_n)\ff(T_n,V_n)]$. Then, by a similar argument to the one just performed we get,
    \begin{align*}
    &\lim_{n \to \infty} \left|n^{-2}\e[\cP_r(T_n)^2] - \e[\ff(T,o)]^2\right| \\
    &\leq \lim_{K \to \infty}\lim_{n \to \infty} 2\e\left[ \frac{\ell_n(O_n)\ell_n(V_n)}{r^2}\I{\ell_n(O_n) \geq \sqrt{rK}} \right] \\
    &\leq \frac{2}{r^2}\lim_{K \to \infty}\sup_n \e[\ell_n(V_n)^2]\e\left[\ell(O_n)^2\I{\ell_n(O_n) \geq \sqrt{rK}}\right],
    \end{align*}
    where the final bound is due to the Cauchy-Schwarz inequality. The last line is zero by uniform integrability.
\end{proof}

For this whole section we have allowed variation in the edge lengths. However, if we change our perspective and focus on trees with fixed edge lengths, i.e.,
    $$
    \fT_z = \{ (t,p,\ell,o) \in \fT : \ell(e) = z \text{ for all edges } e \in t  \},
    $$
then the continuity of $\ff$ becomes much clearer (to see this, note that $\fT_z$ is a closed set in the topology we have been using and that this topology is equivalent to the standard local topology on $\fT_z$). Moreover, the edge lengths are uniformly bounded by definition. Hence, we could perform a substantially simplified version of our arguments in this section to derive a corresponding limit law in the fixed edge length case which we will omit. Using the established local convergence in probability of Bienaym\'{e} trees with finite-variance offspring distribution, we can then derive a useful limit theorem for their packing numbers (see \cite[Theorem 7.12]{janson_simpygen}).

\begin{thm}\label{thm:unscaled_local_packing_number}
    Let $\mu$ be a distribution on $\N_0$ with mean $1$ and variance $0 < \sigma^2 < \infty$, and, for each $n \geq 1$ such that $\mu^{*n}\{n-1\} > 0$, let $T_n$ be a Bienaym\'{e} tree conditioned to have size $n$. Then, for each $r > 0$, there exists $C(r,\mu) > 0$ such that $n^{-1}\cP_r(T_n) \convp C(r,\mu)$. Moreover, as $r \to \infty$ $C(r,\mu) \to \fC\sigma^{-2}$, where $\fC$ is the absolute constant from~\cref{subsec:def_const}
\end{thm}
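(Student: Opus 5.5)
The plan has two parts: a fixed-$r$ law of large numbers obtained from the local limit of $T_n$, and the identification of the $r\to\infty$ limit using \cref{thm:bien}.

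\textbf{Fixed $r$.} I will run the fixed-edge-length analogue of \cref{thm:packing_is_local} on $\fT_1$. Give every edge of $T_n$ length $1$. For combinatorial trees only vertices may be centres, so I first check that Algorithm~\ref{alg:pack}, \cref{lem:recursive_tap} and \cref{lem:recursive_tap_2} survive this restriction. The extension step then places centres at integer distances, and the merger step is unchanged. Equivalently, one can work with the lattice version of the type, which takes values in $\{0,\dots,\lceil r\rceil-1\}$.

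The identity \eqref{eq:packing_number_as_sum} still holds, with $\ff(t,p_v,1,v)\in\{0,1\}$ counting whether $v$ is a centre. By \cref{lem:packing_centres_fringe}, $\ff$ depends only on a finite fringe $F_{r+1}$. Hence $\ff$ is bounded and continuous on $\fT_1$, which is closed, and there it coincides with the usual local topology. So no atomlessness or uniform integrability argument is needed.

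The input is the known local convergence in probability of $T_n$, seen from uniform vertices with the path to the root marked, to Kesten's tree $T^\infty_\mu$ with its spine \cite[Theorem 7.12]{janson_simpygen}. The two-vertex version needed for our definition of convergence in probability is part of that theorem. Given this, the first and second moment computation in the proof of \cref{thm:packing_is_local} goes through verbatim:
\[
\tfrac1n\cP_r(T_n)\convp C(r,\mu):=\e[\ff_r(T^\infty_\mu)].
\]
Positivity of $C(r,\mu)$ is clear, since the spine leaves of Kesten's tree are centres with positive probability.

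\textbf{Limit $r\to\infty$.} Here I would avoid any direct analysis of $T^\infty_\mu$ and instead use \cref{thm:bien} with a diagonal argument. The key step is monotonicity: $r\mapsto\cP_r(T)$ is nonincreasing. I argue by contradiction. Suppose $r^2\sigma^2 C(r,\mu)$ did not converge to $\fC$. Then there are $\eta>0$ and $r_j\to\infty$ with, say,
\[
r_j^2\sigma^2C(r_j,\mu)>\fC+\eta \qquad\text{for all } j
\]
(the other inequality is symmetric). For each $j$, the fixed-$r$ result lets me choose $n_j$ increasing with $n_j\ge j^{\,4} r_j^{4}$ and
\[
\P{\left|\tfrac1{n_j}\cP_{r_j}(T_{n_j})-C(r_j,\mu)\right|>\tfrac{\eta}{2\sigma^2r_j^2}}<\tfrac1j .
\]
Interpolate $\gamma_n=r_j$ for $n_j\le n<n_{j+1}$. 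Then $\gamma_n\to\infty$ and $\sqrt n/\gamma_n\to\infty$. Along $n=n_j$ this gives
\[
\frac{\sigma^2\gamma_n^2}{n}\cP_{\gamma_n}(T_n)>\fC+\tfrac{\eta}{2}
\]
with probability tending to $1$, contradicting \cref{thm:bien} along that subsequence. The case $\liminf<\fC$ is identical. Note that only the subsequence $n_j$ matters, since \cref{thm:bien} is stated for arbitrary sequences, so monotonicity is not even needed.

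The same argument gives the covering statement of \cref{thm:bien_reversed_limit} via \cref{prop:packink_is_covering}, which identifies the covering limit with $C_{\mu,2r}$.

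\textbf{Main obstacle.} The hard step is the first one: verifying that the canonical TAP machinery and \cref{lem:packing_centres_fringe} remain valid when centres are restricted to vertices. The point to check is whether, in \cref{lem:recursive_tap_2}, vertex-only packings of a path behave like the continuum ones with the type rounded. If they do not, I would fall back on \eqref{eq:pack_R_tree_vs_tree}-type sandwiching between the $\R$-tree packing numbers at radii $r$ and $r+1$. Those are handled by \cref{thm:packing_is_local} with unit edge lengths, and the sandwich suffices for the $r\to\infty$ statement, though not for exact fixed-$r$ convergence.
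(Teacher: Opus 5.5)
Your proposal is correct and follows the paper's route. The fixed-$r$ law of large numbers is the bounded, unit-edge-length version of \cref{thm:packing_is_local} fed by Janson's local limit, and the identification of $\lim_{r\to\infty} r^2\sigma^2 C(r,\mu)$ is the diagonal argument of \cref{fact:basic_prob} combined with \cref{thm:bien}. Your diagonal construction is in fact more careful than the paper's: you choose $n_j$ so that the interpolated radii satisfy $\gamma_n\ll\sqrt n$, which the hypothesis of \cref{fact:basic_prob} as stated does not guarantee, and you allow real $r_j$. The one detail to add is a deterministic tie-breaking rule (e.g.\ plane order) in the merger $\bM_r$: with unit edge lengths types have atoms, so without such a rule $\ff$ is not well defined.
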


\begin{proof}
    The first part was explained before the statement of this theorem, and
    the second part is a consequence of a general probability theory fact that we will provide in the appendix (see \cref{app:consistency_limit} \cref{fact:basic_prob}).
\end{proof}

\section{Packing the local limit}\label{sec:covering_local_limit}

This section is dedicated to using \cref{thm:packing_is_local} to analyse packings of $\cS_n^\lambda$ in the large $n$ and $\lambda$ limit. For this, we introduce some notation concerning canonical $r$-TAPs of one-ended trees. We will define $\pack_r(t) = \pack_r(t,p,\ell,o)$ to be countable set of points which satisfy the property that, for any $i \geq 0$, $\pack_r(v_k ; F(v_k))\cap F(v_i)$ is equal to $\pack_r(t,p,\ell,o)\cap F(v_i)$ for large $k$. In accordance with the finite case, we call this set the \emph{canonical $r$-TAP} of $t$. One can see this set as the result of running the $r$-TAP algorithm on a tree rooted at $\infty$ `at the end of $p$'. Observe that $\ff_r(t,p,\ell,o)$ as defined in the previous section is the number of packing centres on $\{0,v_1\}$ in $\pack_r(t,p,\ell,o)$.

We start by proving \cref{prop:packing_is_local}, with $m_s=\e[M_s]$, for $M_s=\ff_s(T^*,P^*,\ell^*,o^*)$. That is, $M_s$ is the number of packing centres on the root-edge in the canonical $s$-TAP of the binary Kesten tree with independent $\mathrm{Exp(1)}$ edge lengths, $(T^*,P^*,\ell^*,o^*)$.

\packinglocal*


\begin{proof}
    The proof is an immediate application of \cref{thm:packing_is_local} at $T^*$.
\end{proof}

In the rest of the section, we will prove \cref{prop:asymp_m_s}. For an informal overview of the proof, we refer the reader to \cref{subsec:def_const}.

\proplocalpacking*



We aim to study the asymptotic growth of $\e[M_s]$, so we will first study its value for fixed $s$.


\begin{lem}\label{lem:expectation_Ms}
Let $E\sim \Exp(1)$, $0<X_1<X_2<\dots$ be the points in a rate $1$ Poisson point process conditional on $X_1<s$. Independently, let $B$ be  a binary Bienaymé tree with offspring distribution $\tfrac{1}{2}\delta_0+\tfrac{1}{2}\delta_2$ and independent $\Exp(1)$ edge lengths, and let $(B_i)_{i=1}^\infty$ be independent \emph{planted} binary Bienaymé trees with the same offspring and edge length distributions. Then, 
\begin{align*}\e[M_s]=&\P{\bt_s(B)=0}\cup \{E>s-\bt_s(B)\}\\
&\quad \times\left(\frac{e^{-s}}{1-e^{-s}}+\P{\forall k\geq 1:\bt_s(B_k)\not\in (X_k, s-X_k)}\right),\end{align*}
with the convention that the set $(a,b)$ is empty if $a\geq b$.
\end{lem}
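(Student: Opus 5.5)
The plan is to compute $\e[M_s]$ by following the canonical $s$-TAP of $T^*$ along the spine $P^*=(o^*=v_0,v_1,v_2,\dots)$. We use the recursion $\pack_s(v_i;F(v_i))=\bM_s\bigl(\bA_{s,\ell(v_{i-1})}(\pack_s(v_{i-1};F(v_{i-1}))),\pack_s(v_i;t_i)\bigr)$ from the proof of \cref{lem:packing_centres_fringe}. By that lemma, only the spine vertices within distance $s$ of the root edge matter.

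\emph{Step 1 (the root edge before any merger).} Write $B$ for the unplanted tree grafted at $o^*$ and $E=\ell^*(\{o^*,v_1\})$. By \cref{lem:recursive_tap_2}, extending $\pack_s(B)$ along the root edge places centres at distances $s-\type_s(B),2s-\type_s(B),\dots$ from $o^*$. Also, $o^*$ itself is a centre exactly when $\type_s(B)=0$, which happens exactly when $B$ is a single leaf. Hence, before any merger, the root edge (including $o^*$, excluding $v_1$) carries a centre iff $\{\type_s(B)=0\}\cup\{E>s-\type_s(B)\}$. I read the first factor of the statement as the probability of this event. We then record the type of the extended packing at $v_1$, namely $\tau:=E+\type_s(B)-k^*s$.

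\emph{Step 2 (survival along the spine).} By \cref{lem:recursive_tap}, a merger removes only the lowest centre of a packing, and only when it is too close to a competing lowest centre. Therefore only the topmost centre on the root edge (the one nearest $v_1$) is at risk. After that centre, every further extension along the spine just adds centres and removes nothing.

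By memorylessness of the $\Exp(1)$ lengths, the grafting points along the spine above the root edge form a rate-$1$ Poisson process. The trees $B_k$ grafted there are i.i.d.\ planted binary Bienaymé trees, independent of $(B,E)$. Here $X_k$ is the distance from the relevant centre on the root edge to the $k$-th grafting point.

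The lowest centre of $B_k$ is at distance $X_k+\type_s(B_k)$ from our centre. In the merger it beats our centre (which sits at depth $X_k$ below the merge point) exactly when $\type_s(B_k)>X_k$. It is then within distance $s$ of our centre exactly when $\type_s(B_k)<s-X_k$. So our centre survives iff $\type_s(B_k)\notin(X_k,s-X_k)$ for every $k$, with the convention that the interval is empty once $X_k\ge s/2$.

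\emph{Step 3 (assembling the formula).} We split on whether $X_1<s$, i.e.\ whether the first grafting point above the centre is within distance $s$.
\begin{itemize}
\item If $X_1\ge s$, no merger can interfere. This case contributes the first term of the bracket, which is where $e^{-s}$ and the normalisation by $1-e^{-s}$ (conditioning on $X_1<s$) come from.
\item Otherwise we condition the Poisson process on $X_1<s$, which gives the second term of the bracket.
\end{itemize}
Independence of $(B,E)$ from the spine data then factorises the expectation into the product displayed in the statement.

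\emph{Main obstacle.} The delicate point is Step 2: justifying that interference by $B_k$ is governed only by its own type $\type_s(B_k)$. The worry is that an earlier merger might already have removed the lowest centre of the merged subtree, changing the competitor seen at later spine vertices. I would handle this by an induction along the spine using \cref{lem:recursive_tap}. The inductive claim is that, as long as our centre survives, it remains the lowest centre of the accumulated packing. Then each subsequent merger compares it directly with $o_{B_k}$, and the survival event is exactly the intersection of the events $\{\type_s(B_k)\notin(X_k,s-X_k)\}$.

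I would also check that $M_s\le1$ in the relevant regime, or else that the extra centres are accounted for. This is the step whose exact bookkeeping against the displayed constants I expect to need the most care.
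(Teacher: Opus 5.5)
Your Steps 1 and 2 follow the same strategy as the paper. You track the topmost centre on the root edge, which is the only one at risk, and show it survives iff $\type_s(B_k)\notin(X_k,s-X_k)$ for all $k$. Your event in Step 1 is also the correct one, including the case $\type_s(B)=0$ where $o^*$ itself is a centre. (The paper's own write-up asserts that the root edge carries no centre whenever it is shorter than $s-\type_s(B)$, which overlooks that centre.)

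\textbf{The gap is Step 3.}

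The split on $\{X_1\ge s\}$ is vacuous. By the definition of $k^*$ in \cref{lem:recursive_tap_2}, your $\tau=E+\type_s(B)-k^*s$ lies in $[0,s)$, so the topmost centre is always within distance $s$ of $v_1$. Moreover, even for an unconditioned exponential $X_1$, such a split would give $e^{-s}+(1-e^{-s})\P{\,\cdot\mid X_1<s}$. That is a convex combination, not the sum appearing in the bracket.

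The term $\frac{e^{-s}}{1-e^{-s}}$ has a different origin. It is the expected number of centres on the root edge strictly below the topmost one, which, as you note in Step 2, are never removed. So $M_s\le 1$ fails; this is exactly the check you flagged. The right bookkeeping is $M_s=\I{N\ge 1}\bigl(N-1+\I{\text{top centre survives}}\bigr)$, where $N=k^*+\I{\type_s(B)=0}$ is the number of centres on the root edge before any merger.

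\textbf{The missing ingredient} is the memoryless decomposition of $E$, which the paper builds into its construction $\ell^*\{v_0,v_1\}=sG_s-\type_s(B)+X_1$.
\begin{itemize}
\item Conditionally on $\type_s(B)=t$ and on $N\ge 1$, the overshoot is $\Exp(1)$. Here the overshoot is $E-(s-t)$ if $t>0$, and $E$ itself if $t=0$.
\item Its integer part in units of $s$ is $N-1$, with $\P{N-1\ge j}=e^{-sj}$. Hence $\e[N-1\mid N\ge 1]=\frac{e^{-s}}{1-e^{-s}}$.
\item Its remainder modulo $s$ is $\tau$. This is independent of $N-1$ and of $t$, and has the law of $\Exp(1)$ conditioned to be less than $s$.
\end{itemize}
The last point is why the $X_k$ in the statement form a Poisson process conditioned on $X_1<s$, with $X_1=\tau$ and later spacings given by the spine edge lengths. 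It is not an unconditioned one, as your Step 2 suggests. Since the spine beyond $v_1$ and the $B_k$ are independent of $(B,E)$, taking expectations yields the displayed product.

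\textbf{A smaller point.} Your inductive claim that a surviving centre stays the lowest centre of the accumulated packing is false when $\type_s(B_k)<X_k$ and $X_k+\type_s(B_k)\ge s$. In that case both centres are kept and the one from $B_k$ becomes lowest. This is harmless: then $X_k\ge s/2$, so all later intervals $(X_j,s-X_j)$ are empty and no later merger can remove your centre.
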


\begin{figure}
    \centering
\begin{tikzpicture}[
    scale=0.8, transform shape,
    x=1.7cm,
    y=1cm,
    every node/.style={
      circle,
      draw,
      inner sep=1.5pt,
      minimum size=5mm,
      font=\small
    },
    spine vertex/.style={
      circle,
      draw=red!75!black,
      fill=red!15,
      line width=0.9pt
    },
    spine edge/.style={
      draw=red!75!black,
      line width=1.4pt
    },
    gwblue/.style={
      draw=blue!70!black,
      fill=blue!15
    },
    gwgreen/.style={
      draw=green!50!black,
      fill=green!15
    },
    gworange/.style={
      draw=orange!80!black,
      fill=orange!20
    },
    gwpurple/.style={
      draw=purple!70!black,
      fill=purple!15
    },
    tree edge/.style={
      line width=0.8pt
    },
    edge label/.style={
      rectangle,
      draw=none,
      fill=white,
      inner sep=1pt,
      font=\small
    },
    tree label/.style={
      rectangle,
      rounded corners,
      inner sep=2pt,
      font=\small,
      draw
    }
    ]
    
    
    \node[spine vertex] (v0) at (0,0)   {$o^*$};
    \node[spine vertex] (v1) at (1,-1)  {$v_1$};
    \node[spine vertex] (v2) at (2,-2)  {$v_2$};
    \node[spine vertex] (v3) at (3,-3)  {$v_3$};
    
    \draw[spine edge]
      (v0) -- node[edge label,above right] {} (v1);
    
    \draw[spine edge]
      (v1) -- node[edge label,above right] {} (v2);
    
    \draw[spine edge]
      (v2) -- node[edge label,above right] {} (v3);
    
    
    \draw[spine edge,densely dotted]
      (v3) -- ++(0.8,-0.8);
    
    
    
    \node[gwblue] (t0a) at (-1.5,-1) {};
    \node[gwblue] (t0aright) at (-0.5,-1) {};
    \node[gwblue] (t0b) at (-2.1,-2) {};
    \node[gwblue] (t0c) at (-0.9,-2) {};
    \node[gwblue] (t0d) at (-2.4,-3) {};
    \node[gwblue] (t0e) at (-1.8,-3) {};
    
    \draw[tree edge,blue!70!black] (v0) -- (t0a);
    \draw[tree edge,blue!70!black] (v0) -- (t0aright);
    \draw[tree edge,blue!70!black] (t0a) -- (t0b);
    \draw[tree edge,blue!70!black] (t0a) -- (t0c);
    \draw[tree edge,blue!70!black] (t0b) -- (t0d);
    \draw[tree edge,blue!70!black] (t0b) -- (t0e);
    
    \node[
      tree label,
      draw=blue!70!black,
      fill=blue!10
    ] at (-1.7,-3.7) {$ B$};
    
    
    \node[gwgreen] (t1a) at (0.2,-2) {};
    \node[gwgreen] (t1b) at (-0.3,-3) {};
    \node[gwgreen] (t1c) at (0.7,-3) {};
    
    \draw[tree edge,green!50!black] (v1) -- (t1a);
    \draw[tree edge,green!50!black] (t1a) -- (t1b);
    \draw[tree edge,green!50!black] (t1a) -- (t1c);
    
    \node[
      tree label,
      draw=green!50!black,
      fill=green!10
    ] at (0.2,-3.7) {$ B_1$};
    
    
    \node[gworange] (t2a) at (3,-1.1) {};
    \node[gworange] (t2b) at (2.6,-0.1) {};
    \node[gworange] (t2c) at (3.4,-0.1) {};
    \node[gworange] (t2d) at (3.8,0.9) {};
    \node[gworange] (t2e) at (4.4,0.9) {};
    
    \draw[tree edge,orange!80!black] (v2) -- (t2a);
    \draw[tree edge,orange!80!black] (t2a) -- (t2b);
    \draw[tree edge,orange!80!black] (t2a) -- (t2c);
    \draw[tree edge,orange!80!black] (t2c) -- (t2d);
    \draw[tree edge,orange!80!black] (t2c) -- (t2e);
    
    \node[
      tree label,
      draw=orange!80!black,
      fill=orange!10
    ] at (4.5,0) {$ B_2$};
    
    
    \node[gwpurple] (t3a) at (2,-4) {};
    \node[gwpurple] (t3b) at (1.4,-5) {};
    \node[gwpurple] (t3c) at (2.6,-5) {};
    \node[gwpurple] (t3d) at (1.1,-6) {};
    \node[gwpurple] (t3e) at (1.7,-6) {};
    
    \draw[tree edge,purple!70!black] (v3) -- (t3a);
    \draw[tree edge,purple!70!black] (t3a) -- (t3b);
    \draw[tree edge,purple!70!black] (t3a) -- (t3c);
    \draw[tree edge,purple!70!black] (t3b) -- (t3d);
    \draw[tree edge,purple!70!black] (t3b) -- (t3e);
    
    \node[
      tree label,
      draw=purple!70!black,
      fill=purple!10
    ] at (2,-6.7) {$B_3$};
    
\end{tikzpicture}
    \caption{Sketch of the Kesten tree constructed in \cref{lem:expectation_Ms}. The spine is red. The trees $B$, $B_1, \ B_2$ and $B_3$ are in blue, green yellow and pink.}
    \label{fig:kesten_sketch}
\end{figure}

\begin{proof}

    Recall that $M_s=\#\{\pack_s(T^*)\cap \{o^*,v_1\}\}$, where $\{o^*,v_1\}$ is the line segment between the root $o^*$ and its first ancestor $v_1$. Recall from \cref{subsec:local_limit} that $F^*(o^*)$ denotes the subtree rooted at $v_1$ containing $F(o^*)$ and the edge $\{o^*,v_1\}$. For this proof, we let $\cM_s = \pack_s(T^*)\cap \{o^*,v_1\}$, $\cM_s' = \pack_s(F^*(o^*)) \cap \{o^*,v_1\}$, and $M_s' = \#\{\pack_s(F^*(o^*)) \cap \{o^*,v_1\}\}$. 

    We start by giving a convenient construction of $T^*$ using the Poisson process $(X_i)_{i=1}^\infty$, which will allow us to deduce $\e[M_s]$. Let $P^* = (v_0,v_1,\dots)$ be an infinite path and set $o^* := v_0$. We will build $T^*$ by defining the fringe trees pendent to $P^*$ (which come equipped with edge lengths) as well as the edge lengths in $P^*$. Set $F(o^*;T^*)=B$, and for $i\geq 1$ set $B_i$ to be the tree rooted at $v_i$, pendent to $P^*$ (that is, $B_i = F(v_i;T^*) \setminus F^*(v_{i-1};T^*)$). For each $i \geq 1$, let $\ell^*\{v_i,v_{i+1}\} = X_{i+1}-X_i$. The only thing left to construct is $\ell^*\{v_0,v_1\}$. For each $0 < t < s$, let $B_{s,t}$ be an independent $\operatorname{Ber}(e^{-(s-t)})$ random variable, and let $E_{s,t}$ be an independent $\operatorname{Exp}(1)$ random variable conditioned to be less than $s-t$. Additionally, let $G_s$ be an independent geometric random variable with support $\{1,2,\ldots\}$ and success probability $e^{-s}$. We set
    $$
    \ell^*\{v_0,v_1\} =
    \begin{cases}
        sG_s - t_s(B) + X_1, & \text{if } B_{s,t_s(B)} = 1 \\
        E_{s,t_s(B)}, & \text{if } B_{s,t_s(B)} = 0
    \end{cases}
    .
    $$
    By the memoryless property of the exponential distribution, $\ell^*\{v_0,v_1\}$ is an $\operatorname{Exp}(1)$ random variable that is independent of $t_S(B)$. (Indeed, first, $B_{s,t_s(B)}$ samples whether $\ell^*\{v_0,v_1\}$ exceeds $s-t_s(B)$. If so, then $G_s$ samples how many multiples of $s$ fit in $\ell^*\{v_0,v_1\}-(s-t_s(B))$, and $X_1$ samples the remainder of $\ell^*\{v_0,v_1\}-(s-t_s(B))$ under division by $s$.)

    On the event $A^c := \{B_{s,t_s(B)} = 0\}$, we have that $M'_s=0$ by \cref{lem:recursive_tap_2}. Hence, since $M_s \le M_s'$, we have that $M_s = 0$ on $A^c$.

    On the event $A$, again by using \cref{lem:recursive_tap_2}, we know that $M_s' = G_s$ and that $\type_s(F^*(v_0)) = X_1$. Then, to determine $M_s$ on the event $A$, we only need to determine if the packing centre closest to $v_1$ in $\pack_s(F^*(v_0))$, call it $p$, gets removed in a merger step. Using the structure of $T^*$ and \cref{lem:recursive_tap}, we see that this happens if and only if there is some $k \geq 1$ so that $\type_s(B_k) \in (d_{T^*}(p,v_k),s-d_{T^*}(p,v_k))$. By construction we have $d_{T^*}(p,v_k) = X_k$, which gives us
    \begin{align*}
        M_s = & \I{A}(M'_s-\I{\exists k\geq 1:\bt_s(B_k)\in (X_k, s-X_k)})\\
        =&
        \I{A}(M'_s-1+\I{\forall k\geq 1:\bt_s(B_k)\notin (X_k, s-X_k)}).
    \end{align*}
    Because $M'_s$ is equal to $G_s$ on $A$, the claimed expression for $\e[M_s]$ follows.
\end{proof}

We now examine the asymptotics of $\e[M_s]$ to prove \cref{prop:asymp_m_s}.

\begin{proof}[Proof of \cref{prop:asymp_m_s}]
We use the expression for $\e[M_s]$ given by \cref{lem:expectation_Ms}. First, remark that with probability $1/2$ the tree $B$ is an isolated vertex, whose type is $0$. This fact, in addition to a union bound, yields

\begin{align*}
    \frac{1}{2}
    \leq
    \P{\bt_s(B)=0}\cup \{E>s-\bt_s(B)\}
    \leq& \frac{1}{2}+\P{\bt_s(B)=0 }\cap \{ \#B>1 \}\\
    &+
    \P{E>s-\bt_s(B)}.
\end{align*}
If $B$ is not an isolated vertex and $\bt_s(B)=0$ then $\height(B)\geq s$, and so 
$$
\P{\bt_s(B)=0}\cap \{
\#B>1\}\leq \P{\height(B)\geq s}.
$$ Moreover, since 
$$
\P{E>s-\bt_s(B)}\leq \P{\bt_s(B)\geq s/2}+\P{E>s/2}
$$ 
and $\bt_s(B)\leq \height(B)$, we have 

\begin{align*}
    \frac{1}{2}
    \leq
    \P{\bt_s(B)=0}\cup \{E>s-\bt_s(B)\}
    \leq &
    \frac{1}{2}+2\P{\height(B)\geq s/2}
    +
    \P{E>s/2},
\end{align*}
so

\begin{equation}\label{eq:lim_first_term_MS}
    \lim_{s\to\infty}\P{\bt_s(B)=0}\cup \{E>s-\bt_s(B)\}=\frac{1}{2}.
\end{equation}
To evaluate $\P{\forall 1\le k \le K:\bt_s(B_k)\not\in (X_k, s-X_k)}$, recall that the $X_i$ are sampled from a rate 1 Poisson point process on $\R^+$ conditioned on $X_1<s$. We start by eliminating this conditioning, let $(\Tilde{X_i})_{i\geq 0}$ be sampled from a rate 1 Poisson point process on $\R^+$. A decomposition on the event $\Tilde{X}_1<s$ and its complement yields

\begin{equation}\label{eq:poisson_process_cdn_UB}
\begin{aligned}
    &\P{\forall 1\le k \le K:\bt_s(B_k)\not\in (\Tilde{X}_k, s-\Tilde{X}_k)}\\
    &\hspace{2cm}\leq e^{-s}+ \P{\forall 1\le k \le K:\bt_s(B_k)\not\in (X_k, s-X_k)}
\end{aligned}
\end{equation}

and 

\begin{equation}\label{eq:poisson_process_cdn_LB}
\begin{aligned}
    &\P{\forall 1\le k \le K:\bt_s(B_k)\not\in (\Tilde{X}_k, s-\Tilde{X}_k)}\\
    &\hspace{2cm}\geq (1-e^{-s})\P{\forall 1\le k \le K:\bt_s(B_k)\not\in (X_k, s-X_k)}.
\end{aligned}
\end{equation}
In the following, we evaluate $\P{\forall 1\le k \le K:\bt_s(B_k)\not\in (\Tilde{X}_k, s-\Tilde{X}_k)}$. We adopt a continuous point of view on the construction of this event. Equivalently, on the infinite path below the root, branch points appear at rate $1$, so trees appear in position $v$ with $\bt_s(B)\in (v,s-v)$ at rate

$$\P{ \bt_s(B)\in (v,s-v) },$$
which is non-zero only if $v\in[0,s/2)$. As a consequence, if we let $\Gamma_s$ be the integrated rate,

\begin{equation*}
    \Gamma_s=\int_{v=0}^{s/2} \p\left\{ \bt_s(B)\in (v,s-v) \right\} dv,
\end{equation*}
the number of trees attached to the infinite path below $\rho$ at a position $v$ and of type in $(v,s-v)$ is distributed as a Poisson random variable of parameter $\Gamma_s$. Hence, 

\begin{equation}\label{eq:proba_kill_as_poisson_process}
    \P{\forall 1\le k \le K:\bt_s(B_k)\not\in (\Tilde{X}_k, s-\Tilde{X}_k)}=e^{-\Gamma_s}.
\end{equation}
It remains to evaluate $\Gamma_s$. Remark that, if the height of a tree is less than $s/2$, then its height and type coincide. Splitting the event in the integral defining $\Gamma_s$ depending on the height of the tree yields

\begin{equation}\label{eq:gamma_s_int}
    \Gamma_s
    =
    \int_{v=0}^{s/2} \p\left\{ v<\height(B)<s/2 \right\} dv +\int_{v=0}^{s/2} \p\left\{ v<t_s(B)<s-v; \height(B)>s/2 \right\} dv.
\end{equation}
The first term is controlled using \cref{lem:height_continuous_Bienayme_tree}, that is,

\begin{equation}\label{eq:gamma_s_limit1}
    \int_{v=0}^{s/2} \P{v<\height(B)<s/2 } dv
    =
    \int_{v=0}^{s/2} \left( \frac{2}{2+v}-\frac{2}{2+s/2} \right) dv
    =
    2\log (s) -2.
\end{equation}
The second term of \eqref{eq:gamma_s_int} is controlled in the following lemma, which we pause to prove.

\begin{lem}\label{lem:integral_convergence}
 Let $T$ be a planted $\mu = \frac{1}{2}\delta_0 + \frac{1}{2}\delta_2$ offspring distribution Bienaym\'{e} tree with $\operatorname{Exp}(1)$ edge length and $\cT^{(1/2)}$ the tree encoded by a Brownian excursion conditioned to have height at least $1/2$. Then,
\begin{align*}
    &\int_{v=0}^{s/2}\prob\{v< \type_s(T)<s-v\; ; \;\height(T)>s/2\} \ dv \\
    &\quad \longrightarrow  4\int_0^{1/2} \prob\{v<\type_1(\mathcal{T}^{(1/2)})<1-v\} \ dv
\end{align*}
as $s\to \infty$.
\end{lem}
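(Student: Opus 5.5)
We substitute $v = su$ in the integral. This gives
\[
\int_{0}^{s/2}\prob\{v<\type_s(T)<s-v\,;\,\height(T)>s/2\}\,dv = s\,\P{\height(T)>s/2}\int_0^{1/2} \Cprob{u<\tfrac1s\type_s(T)<1-u}{\height(T)>s/2}\,du .
\]
The statement then reduces to two inputs. The first is the tail asymptotic $s\,\P{\height(T)>s/2}\to 4$. By \cref{lem:height_continuous_Bienayme_tree} we have $\P{\height(T)>h}=2/(2+h)$, so $s\cdot 2/(2+s/2)\to 4$ exactly. The second is pointwise convergence of the integrand for a.e.\ $u\in(0,1/2)$. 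Since the integrand is a probability, it is bounded by $1$, and dominated convergence on the fixed interval $(0,1/2)$ then concludes.

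For the pointwise convergence we use scaling. Note that $\frac1s\type_s(T)=\type_1(\frac1s T)$, since scaling all edge lengths by $1/s$ scales every packing and its type. The main input is a conditioned scaling limit: conditionally on $\height(T)>s/2$, the rescaled tree $\frac1s T$, viewed as a rooted compact $\R$-tree, converges in distribution in the Gromov--Hausdorff topology to $\cT^{(1/2)}$. Here the normalisation of the CRT is the one matching the height tail $2/(2+h)$, i.e.\ the one for which $\P{\height>h}\sim 2/h$.

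I would prove this scaling limit as follows. First, encode the binary tree with $\Exp(1)$ edge lengths by its contour or height process. For the critical binary tree with exponential edge lengths this process is essentially a (time-changed) simple random-walk or Brownian-type excursion. Second, apply the classical conditioned convergence of excursions under the Itô measure restricted to height $>1/2$: conditioning on the height being at least $h$ is a non-degenerate event in the limit, and Aldous/Le Gall type invariance principles give convergence of the rescaled contour conditioned on large height. An alternative is to use the known description of a critical binary tree with exponential lengths as a Poisson/Brownian genealogy, which gives the conditioned convergence almost directly. Third, pass from the contour function to the tree via the Lipschitz map from excursions to $\R$-trees.

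With the scaling limit in hand, we apply \cref{prop:convergence_of_the_type}: $\type_1(\cdot)$ is a.s.\ continuous at $\cT^{(1/2)}$. The continuous mapping theorem then gives $\type_1(\frac1s T)\to\type_1(\cT^{(1/2)})$ in distribution under the conditioning. Consequently the conditional probability of the event $\{u<\type_1<1-u\}$ converges whenever $u$ and $1-u$ are not atoms of the law of $\type_1(\cT^{(1/2)})$. The law has at most countably many atoms, so this holds for all but countably many $u$, which is enough for dominated convergence.

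The main obstacle is the conditioned scaling limit with the correct constant. The height of the binary tree with exponential lengths must be matched to the height of $\cT^{(1/2)}$, so that the threshold is exactly $1/2$ and not $c/2$. I would pin down the constant by matching the limiting height tails: $\P{\height(T)>sh}\sim 2/(sh)$ should correspond to the Itô measure $n(\height>h)$ under the chosen normalisation. This consistency fixes the scaling and leaves only the weak convergence itself, which is standard.
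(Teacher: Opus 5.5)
Your proposal is correct and follows the paper's proof step for step. The steps are: the substitution $v=su$; the exact tail $s\,\P{\height(T)>s/2}\to 4$ from \cref{lem:height_continuous_Bienayme_tree}; a height-conditioned Gromov--Hausdorff scaling limit, which the paper takes from Le Gall's Corollary~1.13 combined with de Raph\'elis' Theorem~1 rather than from an explicit contour-process argument; almost sure continuity of $\type_1$ at $\cT^{(1/2)}$ via \cref{prop:convergence_of_the_type}; and dominated convergence.

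Two remarks. First, restricting to those $u$ for which neither $u$ nor $1-u$ is an atom is a point of rigour the paper leaves implicit. Second, your worry about the normalising constant is unnecessary. By scale invariance of the It\^o measure, the tree coded by $c$ times an excursion, conditioned to have height above $1/2$, has a law that does not depend on $c$.
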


\begin{proof}
    First, the change of variable $v=v/s$ followed by a simple decomposition of probability yields

    \begin{align*}
        &\int_{v=0}^{s/2}\prob\{v< \type_s(T)<s-v\; ; \;\height(T)>s/2\} \ dv \\
         & \quad =
         s \int_{v=0}^{1/2}\prob\left\{v< \frac{1}{s}\type_s(T)<1-v\; ; \;\height(T)>s/2\right\} \ dv\\
         & \quad =
         s \prob\left(\height(T)>s/2\right)
         \int_{v=0}^{1/2} \prob\left\{v< \frac{1}{s}\type_s(T)<1-v \ \mid \ \height(T)>s/2\right\} \ dv.
    \end{align*}
    Next, we adapt the proof of \cite[Corollary 1.13]{legall2005} to establish convergence in distribution with respect to the Gromov-Hausdorff topology of height-conditioned Bienaymé trees with edge lengths. The proof of \cite[Corollary 1.13]{legall2005}, which goes through the height process, takes for input the convergence of the local time at $0$ and of the height process of a unconditioned Bienaymé forest to prove the convergence of height conditioned Bienaymé trees. In \cite[Theorem 1]{deraphelis2015}, the joint convergence of the height process of a Bienaymé forest with or without edge length is proven. The convergence of the local time at $0$ is also proven and does not depend on the edge lengths. Hence, combining \cite[Theorem 1]{deraphelis2015} and \cite[Corollary 1.13]{legall2005} proves convergence of the height process for height conditioned Bienaymé tree with edge lengths. By \cite[Lemma 2.4]{legall2005} and the familiar encoding of trees by their height processes, we get distributional convergence with respect to the Gromov-Hausdorff topology.

    Together with \cref{prop:convergence_of_the_type}, which establishes that the type is almost surely continuous at $\cT^{(1/2)}$, we have that

    \begin{align*}
        \lim_{s\to \infty}\P{v< \frac{1}{s}\type_s(T)<1-v \ \mid \ \height(T)>s/2} 
        \ = \ 
        \P{v<\type_1 (\mathcal{T}^{(1/2)})<1-v}.
    \end{align*}
    By \cref{lem:height_continuous_Bienayme_tree} we have that
    $$
    \lim_{s\to \infty}s \prob\left\{\height(T)>s/2\right\}
    =
    4.
    $$
    Applying the dominated convergence we can conclude the proof.
    
\end{proof}

Combining the results of \eqref{eq:gamma_s_limit1} and \cref{lem:integral_convergence} into \eqref{eq:gamma_s_int} leads to

\begin{equation}
\begin{aligned}
    \Gamma_s
    =&
    2\log(s)-2+4\int_0^{1/2} \prob\{v<\type_1(\mathcal{T}^{(1/2)})<1-v\} \ dv+o_s(1).
\end{aligned}
\end{equation}
Let

$$\fC=\frac{1}{2}\exp\left(2-4\int_0^{1/2} \prob\{v<\type_1(\mathcal{T}^{(1/2)})<1-v\} \ dv\right)$$
and plug the above into \eqref{eq:proba_kill_as_poisson_process} to get

$$\P{\forall 1\le k \le K:\bt_s(B_k)\not\in (\Tilde{X}_k, s-\Tilde{X}_k)}= (1+o_s(1))\cdot s^2 \fC .$$
Using the bounds \eqref{eq:poisson_process_cdn_UB} and \eqref{eq:poisson_process_cdn_LB} gives 

$$\P{\forall 1\le k \le K:\bt_s(B_k)\not\in (X_k, s-X_k)}= (1+o_s(1))\cdot 2 s^2 \fC.$$
Recalling \cref{lem:expectation_Ms} in combination with \eqref{eq:lim_first_term_MS} concludes the proof of \cref{prop:asymp_m_s}.

\end{proof}

\section{Packing the fringe}\label{sec:covering_fringe}
In this section, we prove the covering analogue of \cref{prop:fringe}, that a vanishing proportion of balls are needed to cover the remainder of the tree $T_n$ once the skeleton $\skel$ is already covered. \cref{prop:fringe} follows from using the relation between packing and covering number of \cref{prop:packink_is_covering}. We begin this section by proving the \cref{prop:fringe} from a number of structural lemmas about the forest obtained by removing $S_n^\lambda$ from $T_n$, and the remainder is devoted to proving those lemmas.


We write $k_{\operatorname{out}}(\skel)$ for the total number of neighbours in $(\skel)^c$ of the vertices in $\skel$. I.e. 
\[k_{\operatorname{out}}(\skel)=\sum_{v\in \skel}(k_{T_n}(v)-k_{\skel}(v))= \sum_{v\in \skel}(k_{T_n}(v)-1)+1 \]
where the last equality holds because $\skel$ is a tree.

The asymptotic growth of $\#\skel$ and $k_{\operatorname{out}}(\skel)$ can be deduced from Lemma 3.3 in \cite{addarioberry2025discretesnakesgloballycentered} by Addario-Berry, Goldschmidt, Mitchell and the third author.

\begin{restatable}{lem}{sizeskel}\label{lem:size_skel}
As $n\to \infty,$ for $1\ll \gamma_n\ll n^{1/2}$
\begin{equation}\label{eq:roots_fringe}
\frac{\gamma_n \# \skel }{n}\convp \frac{\sqrt{2\lambda}}{\sigma}
\end{equation}
and for $\gamma_n=n^{1/2}$, 
\begin{equation}\label{eq:roots_fringe_sqrt}
\frac{ \# \skel }{n^{1/2}}\convd \tau_{\lfloor \lambda \rfloor }
\end{equation}
where $\tau_{k}$ is the time of the $k$th increment in a counting process on $[0,\infty)$ with intensity $\tfrac{\sigma^2}{2}tdt$ at time $t$.  
In both cases,
\begin{equation}\label{eq:binaryskeleton}
\frac{ k_{\operatorname{out}}(\skel)}{\# \skel } \convp \sigma^2,
\end{equation}
 and it holds that $\frac{\gamma_n^2 \dstat_2(\skel)}{n}\convp \lambda $ as $n\to \infty$.
\end{restatable}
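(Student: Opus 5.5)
Throughout, write $k=\lfloor \lambda n/\gamma_n^2\rfloor$ for the number of sampled leaves. The plan is to reduce all four claims to the known scaling limit of the subtree of $T_n$ spanned by the root and $k$ uniform points. When $k\to\infty$ we use the ``globally centred'' estimates of \cite[Lemma 3.3]{addarioberry2025discretesnakesgloballycentered}; when $k$ is fixed, the Aldous CRT limit suffices.

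\emph{Size of the skeleton.} First fix $k$ and rescale distances by $\sigma n^{-1/2}/2$. Then $\skel$ converges to the subtree of $\cT$ spanned by the root and $k$ uniform points. Indeed, uniform leaves and uniform vertices give the same limit, since leaves have asymptotic density $\mu(0)$ and are asymptotically uniformly spread. Aldous' line-breaking construction says the total length $L_k$ of this subtree satisfies $L_k^2/2\sim\Gamma(k,1)$. Equivalently, $L_k$ is the $k$th point of an inhomogeneous Poisson process of intensity $t\,dt$. Undoing the scaling gives \eqref{eq:roots_fringe_sqrt}, with the intensity $\tfrac{\sigma^2}{2}t\,dt$. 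We also need to check that the number of vertices of $\skel$ equals its length plus one.

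For $k\to\infty$ with $k\ll n$, we instead invoke the cited Lemma 3.3. It gives concentration: the length of a spanning tree of $k$ uniform points is $(1+o_p(1))\tfrac{2}{\sigma}\sqrt{2kn}$. Heuristically, this is $\Gamma(k)$ concentrating at $k$, so $L_k\approx\sqrt{2k}$. Substituting $k=\lambda n/\gamma_n^2$ yields \eqref{eq:roots_fringe}.

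\emph{Branch points.} The spanned subtree of $k$ leaves has at most $k-1$ branch points. With high probability all of them are binary, and $\dstat_2(\skel)=k-1-o_p(k)$. This holds because in the discrete tree, vertices of degree at least $3$ inside $\skel$ where two sampled paths split are asymptotically simple. Multi-branching and coincident branch points have vanishing proportion, as the CRT is binary and distinct branch points are macroscopically separated at the scale $n/k$ of edge lengths. Hence $\gamma_n^2\dstat_2(\skel)/n\convp\lambda$.

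\emph{Outgoing edges.} Here $k_{\operatorname{out}}(\skel)=1+\sum_{v\in\skel}(k_{T_n}(v)-1)$. The key input is that ancestral lines of uniform points are size-biased spines. Along the path from a typical vertex to the root, the degrees are approximately i.i.d.\ with the size-biased law $i\mu(i)$, with weak dependence controlled by the same Lemma 3.3 or the standard absolute continuity of spines in conditioned Bienaym\'e trees. The mean of $k-1$ under this law is $\sum i(i-1)\mu(i)=\sigma^2$. The $O(k)$ branch points and leaves are negligible compared with $\#\skel\asymp\sqrt{kn}$. A law of large numbers along $\skel$, proved by a second-moment computation over pairs of vertices of $\skel$, then gives \eqref{eq:binaryskeleton}.

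The main obstacle is this last step. A truly uniform LLN for degrees along a random spanned subtree needs decorrelation between distinct spine vertices, together with uniformity in $k$ up to $n/\gamma_n^2$. The plan is to take this directly from \cite{addarioberry2025discretesnakesgloballycentered}, which controls exactly such additive functionals along skeletons. If that is unavailable, the fallback is to condition on the shape of $\skel$ and use the Łukasiewicz path decomposition of the pendent forest.
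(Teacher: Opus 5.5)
Your route is the paper's. The paper gives no argument beyond the remark that the lemma follows from \cite[Lemma 3.3]{addarioberry2025discretesnakesgloballycentered}, and you take the $k\to\infty$ concentration and the degree law of large numbers from the same source, adding the Aldous limit for fixed $k$.

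The genuine problem is the normalisation, which is the entire content of this lemma. Rescaling by $\sigma n^{-1/2}/2$ sends $T_n$ to the tree coded by $\mathbf{e}$. Line-breaking with intensity $t\,dt$ (the construction of $\cT$ used in the paper) describes instead the tree coded by $2\mathbf{e}$, i.e.\ the limit of $\sigma n^{-1/2}T_n$.

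Mixing the two conventions produces your estimate $\frac{2}{\sigma}\sqrt{2kn}$. Substituting $k=\lambda n/\gamma_n^2$ into it gives $\gamma_n\#\skel/n\to 2\sqrt{2\lambda}/\sigma$, twice the limit in \eqref{eq:roots_fringe}. The correct statement is $\#\skel=(1+o_p(1))\,\sigma^{-1}\sqrt{2kn}$.

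Likewise, ``undoing the scaling gives intensity $\frac{\sigma^2}{2}t\,dt$'' is asserted, not computed. With the correct normalisation, $\#\skel/\sqrt n\convd C_k/\sigma$, where $C_k$ is the $k$th line-breaking point, and the points $C_i/\sigma$ have intensity $\sigma^2t\,dt$. Your own scaling would give $\frac{\sigma^2}{4}t\,dt$.

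For $k=1$ this is the classical Rayleigh limit of the depth $D_n$ of a uniform vertex: $\mathbf{P}(D_n>y\sqrt n)\to e^{-\sigma^2y^2/2}$, with mean $\sqrt{\pi/(2\sigma^2)}$. It is also the only intensity compatible with \eqref{eq:roots_fringe}, since intensity $c\,t\,dt$ gives $\tau_k\approx\sqrt{2k/c}$ for large $k$. So the factor $\frac12$ in \eqref{eq:roots_fringe_sqrt} looks like a typo in the statement. Your proof should derive $\sigma^2t\,dt$ rather than claim to recover the stated intensity.

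Two smaller points. First, your argument for $\dstat_2(\skel)=k-1-o_p(k)$ via binarity of the CRT only covers fixed $k$. When $k\to\infty$ the skeleton's edges have length of order $\#\skel/k\asymp\sqrt{n/k}\asymp\gamma_n$ (not $n/k$). That is far below the macroscopic scale, where the CRT gives no information. You need a discrete estimate there, or must take this claim from the cited lemma as well. Heuristically, the path to the $(j+1)$st sampled leaf joins the current skeleton at one of its at most $j$ branch points with probability of order $j/\sqrt{jn}$. The expected number of such defects is then $O(k^{3/2}n^{-1/2})=o(k)$, using $k=o(n)$.

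Second, the last assertion can only hold when $k\to\infty$. For $\gamma_n=n^{1/2}$ one has $\gamma_n^2\dstat_2(\skel)/n=\dstat_2(\skel)\le\lfloor\lambda\rfloor-1$, so your ``hence'' must be restricted to $\gamma_n\ll n^{1/2}$.
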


Morally, the edges enumerated by $k_{\operatorname{out}}(\skel)$ form the roots in the forest that makes up the fringe. We can guarantee a large number of roots by taking $\lambda$ large enough, and consequently, a large number of roots makes diameters of connected components in the fringe shorter and reduces its covering number. We will make this precise in this section.

Denote by $\fforest$ the \emph{fringe forest} obtained by removing the skeleton from the size conditioned $\mu$-Bienaym\'e tree, $\fforest = T_n\setminus\skel$. There are $k_{\operatorname{out}}(\skel)$ tree components of $\fforest$ which are rooted at the neighbours of $\skel$. Order the tree components using the lexicographic order of the roots in $T_n$. Furthermore, denote the distance of a vertex $v\in\fforest$ from $\skel$ by \[d(\skel, v) = \min\set{d(u, v): u\in\skel}\]
and the subtree rooted at $v$ is ${\fforest}^{\uparrow v}$.

The following is the main result of this section. In essence, it says that once we cover the skeleton $\skel$ using open balls of radius $\gamma_n$, then we may cover the fringe forest $\fforest$ (and thus the remainder of the tree $T_n$) using only an additional $\eps n/\gamma_n^2$ open balls of radius $\eps\gamma_n$. 

\begin{prop}\label{prop:fringe_output}
Let $1\ll\gamma_n\leq n^{1/2}$. For any $\eps,\delta>0$, there exist $R$ so that for $\lambda>R^2/\sigma^2$, then $\fforest\setminus\set{v\in\fforest~:~d(v,\skel)<\eps\gamma_n }$ can be covered with $\eps n/\gamma_n^2$ balls of radius $\eps\gamma_n$ with probability at least $1-\delta$ for all sufficiently large $n$. 
\end{prop}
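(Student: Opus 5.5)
We plan to describe the fringe forest $\fforest$ conditionally on the skeleton, and then cover its deep parts with a simple greedy scheme. Conditionally on $\skel$ and on $k:=k_{\operatorname{out}}(\skel)$, the forest $\fforest$ should be (close to) a uniform plane forest with $k$ trees and a prescribed degree statistic, of total size $n-\#\skel$. This is the usual decomposition of a size-conditioned Bienaymé tree along the subtree spanned by the root and sampled leaves. We expect this to follow from the \L ukasiewicz encoding of forests in \cref{sec:formalism}, or else we can work directly with a $\mu$-Bienaymé forest of $k$ trees conditioned on total size. By \cref{lem:size_skel}, with high probability $k\ge (1-\eta)\sigma\sqrt{2\lambda}\,n/\gamma_n$, since $k\approx \sigma^2\#\skel$ and $\#\skel\approx \sqrt{2\lambda}\,n/(\sigma\gamma_n)$ (similarly in the case $\gamma_n=n^{1/2}$). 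So the typical tree of $\fforest$ has size of order $n/k\asymp \gamma_n^2/(\sigma\sqrt{\lambda})$, and hence height of order $\gamma_n/(\sigma\sqrt{\lambda})\le \gamma_n/R$.

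The covering itself is a greedy argument. Any tree component $\tau$ of height less than $\eps\gamma_n$ lies entirely within distance $\eps\gamma_n$ of $\skel$, because its root is adjacent to $\skel$, so it needs no balls. For a component of height $h\ge \eps\gamma_n$ we use the standard layered covering: put centres at all vertices of $\tau$ at generations $j\lfloor \eps\gamma_n/2\rfloor$ that have a descendant at distance at least $\eps\gamma_n/2$ above them. Each such centre covers a whole "slab", and the number of centres is at most $\sum_j Z_{j\lfloor\eps\gamma_n/2\rfloor}(\tau;\eps\gamma_n/2)$, where $Z_m(\tau;a)$ counts the vertices in generation $m$ with height of their subtree at least $a$. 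We then take expectations in the forest. Using the Kolmogorov estimate $\P{\height(T_\mu)\ge a}\sim 2/(\sigma^2 a)$ together with the many-to-one formula, the expected number of vertices of an unconditioned $\mu$-Bienaymé forest with $k$ roots at generation $m$ whose subtrees have height at least $a$ is about $k\cdot 2/(\sigma^2 a)$. Summing over the relevant layers, which survive with probability about $2/(\sigma^2 m)$, gives a harmonic-type sum: the expected number of balls is $O\bigl(k\,(\sigma^2\eps\gamma_n)^{-1}\log(\cdot)\bigr)$. A crude bound is enough. The key point is that only components reaching height $\eps\gamma_n$, of which there are about $2k/(\sigma^2\eps\gamma_n)$, contribute, and each contributes $O(\#\tau/\gamma_n^2+1)$ balls in expectation under a suitable size truncation. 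The total is therefore of order
\[\frac{k}{\sigma^2\eps\gamma_n}+\frac{n}{\eps^2\gamma_n^2}\cdot(\text{small}),\]
and since $k\asymp \sqrt{\lambda}\,n/\gamma_n$ this is still of order $\sqrt\lambda\, n/\gamma_n^2$, which is not small.

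So the greedy count has to be sharpened, and this is the main obstacle. The correct comparison is that, conditioned on total size $n$ with $k\gg \sqrt n$ roots, the forest is subcritically tilted. A forest with $k$ roots and $\approx n$ vertices behaves like a Bienaymé forest whose offspring law is exponentially tilted to mean $1-\theta$, with $\theta\asymp k/n\asymp \sqrt\lambda/\gamma_n$. For such a forest the survival probability to height $a$ is about $\theta e^{-c\theta a}$. With $a=\eps\gamma_n$ this is $\exp(-c\eps\sigma\sqrt{\lambda})/\gamma_n$ up to constants. The number of components reaching height $\eps\gamma_n$ is then $O\bigl(\sqrt{\lambda}\,e^{-c\eps R}\,n/\gamma_n^2\bigr)$, and each needs $O(1)$ balls plus a size-proportional term controlled the same way. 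Choosing $R$ large compared to $\eps^{-1}\log(1/\eps)$ makes the total at most $\eps n/\gamma_n^2$ in expectation, and Markov's inequality gives probability $1-\delta$.

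To make the tilting rigorous we would use the \L ukasiewicz/cycle-lemma representation. The law of the forest is that of i.i.d.\ $\mu$-trees conditioned on the random-walk hitting event, and a local limit theorem for $\mu^{*n}$ lets us compare it with the tilted unconditioned forest at bounded Radon–Nikodym cost. We expect this change-of-measure step, especially when $\gamma_n$ is close to $n^{1/2}$ and $k/n$ is not small, to be the hardest part of the proof.
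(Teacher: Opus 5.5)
Your heuristic is the right one, and it is the mechanism behind the paper's proof. With $k\asymp\sqrt{\lambda}\,n/\gamma_n$ roots and size $\approx n$, the fringe forest is effectively subcritical with drift $\theta\asymp\sqrt{\lambda}/\gamma_n$. Reaching height of order $\eps\gamma_n$ therefore costs an extra factor exponentially small in $\eps\sqrt{\lambda}$, and Markov's inequality finishes. However, the step you defer as ``the hardest part'' is where the proof actually lives, and the setup you propose for it is not correct. Given $\skel$, the fringe forest is \emph{not} a $\mu$-Bienaym\'e forest with $k$ roots conditioned on its size. Because $\skel$ is spanned by uniformly chosen leaves, the conditional law carries a weight depending on the total number of leaves $N_0(T_n)$ (e.g.\ a factor $\binom{N_0(T_n)}{m}^{-1}$ if $m$ leaves are sampled without replacement). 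Its fluctuations are of order $\exp(\lambda\sqrt{n}/\gamma_n^2)$, so it cannot be dropped when $\gamma_n\ll n^{1/4}$. The exact statement, which is your first guess and what the paper uses, is that conditionally on its random degree statistic $\dstat$ the forest is uniform on $\dsForest(\dstat)$. Its cyclically shifted \L ukasiewicz path is then a uniform arrangement of a fixed multiset of steps, not an i.i.d.\ walk. For $\mu$ with only a second moment, $\dstat$ may contain very large degrees. A local limit theorem for $\mu^{*n}$ does not give you a bounded Radon--Nikodym factor against a tilted i.i.d.\ forest for this object.

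The ingredient that makes this step work in the paper, and that your proposal lacks, is monotonicity in the degree statistic. \cref{lem:degree_order} and \cref{cor:stochastic_order_forest} replace $\dstat$ by a sub-binary $\dstat'$ with the same size and number of components. This can only increase $\height({\fforest}^{\uparrow V})$ stochastically for a uniform vertex $V$. \cref{lem:height_subbinary_ds} then removes the out-degree-one vertices by an urn/dilation argument, using $\dstat'_1\le\beta\#\fforest$ from \cref{lem:bounded_deg1}. For a binary degree statistic, the uniform forest \emph{is} a binary Bienaym\'e forest conditioned on size and number of components, so \cref{lem:bridge_shift} gives an honest i.i.d.\ $\pm1$ bridge. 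The change of measure to $\tfrac12\bigl(1+\tfrac{R}{(1-\alpha)\gamma_n}\bigr)\delta_0+\tfrac12\bigl(1-\tfrac{R}{(1-\alpha)\gamma_n}\bigr)\delta_2$ then costs only a constant by binomial estimates (\cref{lem:meas_change_const}). Long excursions are handled by \cref{lem:first_exc_cbridge} and the height tail by \cref{lem:sub_critical_height_profile}.

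Two smaller points. First, instead of your fixed-offset count of tall components plus a ``size-proportional term'', the paper takes the best of the $\lfloor\eps\gamma_n/2\rfloor$ layer offsets. The number of balls is then at most $\#\{v:\height({\fforest}^{\uparrow v})>\eps\gamma_n/2\}/\lfloor\eps\gamma_n/2\rfloor$, which reduces everything to $n\,\P{\height({\fforest}^{\uparrow V})>\eps\gamma_n/2}$, exactly what the cyclic-shift encoding computes. Second, your final count should be of order $\lambda e^{-c\eps\sigma\sqrt{\lambda}}\,n/\gamma_n^2$. As written, $\sqrt{\lambda}\,e^{-c\eps R}\,n/\gamma_n^2$ is not uniform over $\lambda>R^2/\sigma^2$, which the statement requires.
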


To prove \cref{prop:fringe_output}, we require the following input, which bounds the proportion of vertices in $\fforest$ with subtrees taller than $\frac{\eps\gamma_n}{2}$.

\begin{lem}\label{lem:prop_tall_trees}
For any $\varepsilon,\delta>0$, there exists
$R>0$ such that, for every
$\lambda>R^2/\sigma^2$ and every sequence
$\gamma_n\to\infty$ with $\gamma_n\le n^{1/2}$,
\[\P{\#\set{v\in \fforest:\height\left({\fforest}^{\uparrow v}\right)>\frac{\eps\gamma_n}{2}}>\frac{\eps^2n}{2\gamma_n}} <\delta,\]
for all sufficiently large $n$.
\end{lem}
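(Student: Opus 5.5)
We will condition on the skeleton and use the structure of the fringe forest. Given $\skel$ and $k=k_{\operatorname{out}}(\skel)$, the fringe forest $\fforest$ is (up to the standard spinal decomposition used in \cite{addarioberry2025discretesnakesgloballycentered}) a $\mu$-Bienaymé forest with $k$ trees conditioned on its total size $N=n-\#\skel$. By \cref{lem:size_skel}, with high probability $\#\skel=o(n)$ when $\gamma_n\ll n^{1/2}$ (or $O_p(n^{1/2})$ when $\gamma_n=n^{1/2}$), so $N=(1+o_p(1))n$. We also have $k\ge (\sigma^2/2)\#\skel\ge c\sqrt{\lambda}\,n/\gamma_n$ with high probability, where $c>0$ depends only on $\sigma$ (in the boundary case $\gamma_n=n^{1/2}$ we use \eqref{eq:roots_fringe_sqrt}, with $\tau_{\lfloor\lambda\rfloor}$ of order $\sqrt{\lambda}$). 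The point is that the number of roots $k$ is at least of order $\sqrt\lambda\, N/\gamma_n$, and taking $\lambda$ large makes this ratio large.

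Next we express the count through the \L ukasiewicz path and use a first-moment computation. For a vertex $v$ of the forest, $\height(\fforest^{\uparrow v})>h$ with $h=\eps\gamma_n/2$ means that the fringe subtree at $v$ is tall. Summing over $v$ gives
\[
\e\Big[\#\{v:\height(\fforest^{\uparrow v})>h\}\,\Big|\,k,N\Big]
= N\,\P{\height(\text{fringe subtree at a uniform vertex})>h}.
\]
In a conditioned forest with $k\gg\sqrt N$ roots, the fringe subtree at a uniform vertex is close in total variation to an unconditioned $\mu$-Bienaymé tree, at least when it is of size $o(N/k\cdot\ldots)$. Its height therefore exceeds $h$ with probability about $2/(\sigma^2 h)$ by Kolmogorov's estimate. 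That would give a count of order $N/\gamma_n$ with constant $4/(\sigma^2\eps)$, which is not small enough against $\eps^2 n/(2\gamma_n)$. So we must exploit $k$ being large. The forest with $k$ roots and size $N$ has trees of typical size $N/k\lesssim \gamma_n/(c\sqrt\lambda)$ in the scaling sense, and equivalently it is a forest whose height process lives at scale $N/k\ll\gamma_n$ once $\lambda$ is large. More precisely, the total length of the forest measured in height is such that a tall subtree at $v$ requires an ancestor tree in the forest of height $>h$. So we bound the count by the total size of all component trees of height $>h$ (every such $v$ lies in such a component), and in addition by $h$ times the number of vertices of height beyond depth ... Concretely, we plan to bound $\#\{v\}$ by $\sum_{i\le k}\#t_i\,\one\{\height(t_i)>h\}$.

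The main step is to show that $\e\big[\sum_i \#t_i\one\{\height(t_i)>h\}\big]\le \eps^2 n\delta/(2\gamma_n)$ for $\lambda$ large, after which Markov's inequality finishes the proof. By exchangeability of the $k$ trees, this expectation equals $N\,\P{\height(t_{I})>h}$ for a size-biased component $t_I$. Under the conditioned-forest law, cyclic lemma and Kemperman's formula give that a size-biased component of size $m$ has probability weight $\approx \frac{k}{N}\,m\,\P{\#T=m}\cdot\P{S_{N-m}=-(k-1)}/\P{S_N=-k}$. With $\P{\#T=m}\asymp m^{-3/2}$ and the local CLT, the sum over $m$ of $\P{\height>h\mid \#T=m}$ is governed by the regime $m\gtrsim h^2$. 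The factor $\P{S_{N-m}=-(k-1)}/\P{S_N=-k}$ decays like $e^{-ck^2m/N^2}$, which kills $m\gg N^2/k^2\asymp\gamma_n^2/\lambda$. Since $h^2=\eps^2\gamma_n^2/4\gg \gamma_n^2/\lambda$ once $\lambda>R^2/\sigma^2$ with $R$ large in terms of $\eps,\delta$, we get $\P{\height(t_I)>h}\le C(\eps)e^{-c\eps^2\lambda\sigma^2}\cdot$ (a factor of order $k\gamma_n/N$). After multiplying by $N$, this is at most $\eps^2\delta n/(2\gamma_n)$. I expect this estimate to be the main obstacle: it requires uniform local-limit control of the ratio of random-walk probabilities for $m$ up to order $N$, together with tail bounds for the height of a size-conditioned tree, uniformly in $\gamma_n\le n^{1/2}$. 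The case $\gamma_n\asymp n^{1/2}$ is handled with the same bounds, because there $k\asymp\sqrt{\lambda}\sqrt n$.
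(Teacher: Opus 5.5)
The proposal breaks down at the reduction $\#\{v:\height(\fforest^{\uparrow v})>h\}\le\sum_{i\le k}\#t_i\,\one\{\height(t_i)>h\}$, with $h=\eps\gamma_n/2$. The inequality is true, but its right-hand side is of order $n$, not $n/\gamma_n$, so it can never drop below $\eps^2 n/(2\gamma_n)$.

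Heuristically:
\begin{itemize}
\item each of the $k\asymp\sqrt{\lambda}\,n/\gamma_n$ components is taller than $h$ with probability of order $1/\gamma_n$;
\item a tall component typically has $\asymp h^2\asymp\eps^2\gamma_n^2$ vertices;
\item only $O(h)$ of those vertices actually carry a subtree of height $>h$.
\end{itemize}
Your bound counts all $\asymp h^2$ vertices of each tall component. Summing over components gives total mass $c(\lambda,\eps)\,n$ with $c(\lambda,\eps)>0$.

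Your own estimate shows this. The factor $k\gamma_n/N$ multiplying $C(\eps)e^{-c\eps^2\lambda\sigma^2}$ is $\asymp\sigma\sqrt{2\lambda}$, which is a constant and not $O(1/\gamma_n)$. So $N\,\P{\height(t_I)>h}$ is of order $n$, and the sentence ``after multiplying by $N$'' silently drops a factor $\gamma_n$. For fixed $\lambda,\eps$ the size-biased component is tall with probability bounded away from $0$ as $n\to\infty$, so no choice of $R$ repairs this route. The uniform local-limit control you flag as the main obstacle is not where the difficulty lies.

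Your diagnosis is correct on two points: the critical Kolmogorov estimate gives the right order $N/\gamma_n$ but not a small constant, and the large number of roots must be exploited. What is needed is to keep the vertex-level identity you wrote first, $N\,\P{\height(\text{subtree at a uniform vertex})>h}$, and replace the critical estimate by a slightly subcritical one. This is the paper's route:
\begin{itemize}
\item the subtree at a uniform vertex is the first excursion of a uniform cyclic shift of the \L ukasiewicz path (\cref{lem:bridge_shift});
\item after a bounded change of measure, this is compared with an unconditioned Bienaym\'e tree with drift of order $R/\gamma_n$;
\item for that tree $h\,\P{\height>h}\lesssim 2te^{-t}/(1-e^{-t})$ with $t\asymp R\eps$ (\cref{lem:sub_critical_height_profile}), which gives $N\cdot o_R(1)/(\eps\gamma_n)$.
\end{itemize}

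Your opening claim also needs repair. Because the skeleton is spanned by uniformly chosen leaves, the law of $\fforest$ given $\skel$ is not a size-conditioned $\mu$-Bienaym\'e forest. It is that forest tilted by a weight depending on its number of leaves, for instance $\binom{L+\#\text{leaves}(\fforest)}{L}^{-1}$ with $L=\lfloor\lambda n/\gamma_n^2\rfloor$. For non-binary $\mu$ this tilt shifts the leaf count by order $L$, which exceeds its $\sqrt{n}$ fluctuations when $\gamma_n\ll n^{1/4}$.

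The paper avoids this as follows:
\begin{itemize}
\item it conditions on the degree statistic, under which $\fforest$ is uniform;
\item it dominates subtree heights by those for a sub-binary statistic with the same size and root count (\cref{lem:more_tall_trees_when_subbinary});
\item it strips out-degree-one vertices at a constant cost in height (\cref{lem:height_subbinary_ds});
\item it uses that a binary forest with fixed size and root count is a conditioned Bienaym\'e forest for \emph{any} binary law, in particular the subcritical one (\cref{lem:height_binary_ds}).
\end{itemize}

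Finally, the statement requires $R$ to be independent of $(\gamma_n)$ for all $\gamma_n\le n^{1/2}$. \cref{lem:size_skel} covers only $\gamma_n\ll n^{1/2}$ and $\gamma_n=n^{1/2}$. The paper obtains uniformity from monotonicity in $\lambda$ plus a diagonal contradiction argument, and your sketch would need something similar.
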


Assuming \cref{lem:prop_tall_trees} and employing a deterministic strategy provided in \cite[\S 4]{burnRTxf}, we may cover the fringe forest $\fforest$ with open balls of radius $\eps\gamma_n$ whenever the degree statistic of $\fforest$ is succeeded by a degree statistic in the set $N'$, where the appropriate set $N'$ will be given in the proof of \cref{lem:prop_tall_trees}. In the application of this strategy, we assume that some vertices in $\fforest$ fall within the covering of the skeleton $\skel$, specifically, we assume for $\varepsilon>0$, the vertices that are less than $\varepsilon\gamma_n$ distance of the skeleton are already covered.

First, we introduce some notation. Write, for $i\in \N$, the collection of vertices in the fringe forest that are distance $i$ away from the skeleton 
\[\ell_i = \set{v\in \fforest : d\left(v, \skel\right) = i}.\] 
Note that the roots of the fringe trees are at distance one from the skeleton. Second, we fix $k$ and we will capture the vertices in $\fforest$ that are at distance $j$ modulo $k$ from $\skel$ whose subtrees have height at least $k$ by the following sets for $j=0,\ldots, k-1$:
    \[\cC_k^j = \bigcup_{i=0}^\infty \set{v\in\ell_{ik+j} : \height\left({\fforest}^{\uparrow v}\right)\geq k}.\]
The next lemma shows that all vertices that are farther than $2k$ from the skeleton are contained in the $2k$-balls of $\cC_k^j$ .

\begin{lem}
    Let $k\in\N$ and $j\in\set{0,1,\ldots,k-1}$, then
    \[V\left(\fforest\right) =\left( \bigcup_{v\in \cC_k^j} B(v, 2k)\right)\cup\set{v\in\fforest~:~ d\left(v,\skel\right)< 2k}.\]
\end{lem}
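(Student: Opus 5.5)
The inclusion $\supseteq$ is trivial, so the plan is to prove $\subseteq$. Fix a vertex $w\in\fforest$ with $d(w,\skel)\ge 2k$. We must exhibit some $v\in\cC_k^j$ with $d(v,w)<2k$. The idea is to walk down from $w$ towards the skeleton and pick the appropriate ancestor of $w$.

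Write $i_0=d(w,\skel)\ge 2k$. The ancestral path of $w$ in its fringe component passes through exactly one vertex of each level $\ell_i$, for $1\le i\le i_0$, since the path from $w$ to its fringe root decreases the distance to $\skel$ by one at each step. Among the integers in $(i_0-2k,\,i_0-k]$ there is exactly one $i$ with $i\equiv j \pmod k$. This interval has length $k$, and its elements are at least $i_0-2k+1\ge 1$, so the corresponding ancestor $v\in\ell_i$ of $w$ exists and lies in $\fforest$. (If $j=0$ and $i$ would be $0$, the lower bound rules this out, because $i\ge 1$.)

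Two facts about this $v$ finish the argument.
\begin{itemize}
\item[(a)] Since $w\in\fforest^{\uparrow v}$ and $d(v,w)=i_0-i\ge k$, we get $\height(\fforest^{\uparrow v})\ge k$. Hence $v\in\cC_k^j$.
\item[(b)] Since $i_0-i<2k$, we get $d(v,w)<2k$, so $w\in B(v,2k)$, using the convention that balls are open.
\end{itemize}

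Vertices with $d(w,\skel)<2k$ lie in the second set by definition, which completes the inclusion.

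The only delicate point is the boundary bookkeeping. We need strict versus non-strict inequalities to match the open-ball convention, so the chosen window must be $(i_0-2k,i_0-k]$. We also need the chosen level to be at least $1$, so that $v$ is genuinely a fringe vertex and not a skeleton vertex. Both follow from $i_0\ge 2k$.
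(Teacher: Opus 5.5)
Your proof is correct and is essentially the paper's argument: both pick the ancestor of $w$ whose level is congruent to $j$ modulo $k$ and lies at distance in $[k,2k)$ below $w$. The paper indexes this level as $(i-2)k+j$ with $i=\min\{m\in\N_0: d(u,S^\lambda_n)<mk+j\}$. Your restriction to $d(w,S^\lambda_n)\ge 2k$ from the start is slightly cleaner, since it avoids the paper's degenerate choice of a level-$0$ vertex when $j=0$ and $d(u,S^\lambda_n)\in[k,2k)$.
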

\begin{proof}
Fix $k\in\N$ and $j\in\set{0,1,\ldots, k-1}$. Let $u\in\fforest$ and take 
$$i = \min\set{m\in\N_0 : d\left(u,\skel\right)< mk+j}.$$ If $i\geq 2$, then take $v\in\ell_{(i-2)k+j}$ to be the ancestor of $u$ that is distance $(i-2)k +j$ from $\skel$ so that $\height\left( {\fforest}^{\uparrow v} \right)\geq k$. Since $d(u,v)< 2k$, then $u\in B(v, 2k)$. If $i\in
\set{0,1}$, then $d\left(u,\skel\right)<2k$.
\end{proof}

Fix $k$ and $j$ and assume we have covered $\skel$. Suppose that for each tree in $\fforest$, the vertices at distance less than $2k$ from $\skel$ are captured in the covering of $\skel$. The previous lemma says that to additionally cover the rest of $\fforest$, we need only to cover the vertices in $\bigcup_{v\in \cC_k^j} B(v, 2k)$ for some $j$, which can be obtained by placing the centres of the covering at each of the vertices in $\cC_k^j$. Therefore, for an $\ell$ of interest, to ensure that at most $\ell$ additional balls are placed, we would need $\min_{j\in\set{0,1\ldots,k-1}}\#\cC_k^j$ to be at most $\ell$. We immediately provide the proof of \cref{prop:fringe_output} where $k=\eps\gamma_n/2$ and $\ell=\eps n/\gamma_n^2$:

\begin{proof}[Proof of \cref{prop:fringe_output}]
Let $\eps>0$ and for each $j\in\set{0, 1,\ldots,  \lfloor\frac{\eps\gamma_n}{2}\rfloor-1}$, set $X_j = \#\cC_{\lfloor\frac{\eps\gamma_n}{2}\rfloor}^j$. Then we have $\#\set{v\in \fforest:\height\left({\fforest}^{\uparrow v}\right)>\frac{\eps\gamma_n}{2}}=\sum_{j=0}^{\lfloor\frac{\eps\gamma_n}{2}\rfloor-1} X_j$ and that \[\min_{j\in\set{0,1,\ldots, \lfloor\frac{\eps\gamma_n}{2}\rfloor}}X_j\leq \frac{1}{\lfloor\frac{\eps\gamma_n}{2}\rfloor}\sum_{j=0}^{\lfloor\frac{\eps\gamma_n}{2}\rfloor-1} X_j = \frac{\#\set{v\in \fforest:\height\left({\fforest}^{\uparrow v}\right)>\frac{\eps\gamma_n}{2}}}{\lfloor\frac{\eps\gamma_n}{2}\rfloor}.\]
Thus for $\delta>0$, \cref{lem:prop_tall_trees} allows us to appropriately fix $\lambda$ so that
\begin{align*}\P{\min_{j\in\set{0,1,\ldots, \lfloor\frac{\eps\gamma_n}{2}\rfloor-1}}X_j > \frac{\varepsilon n}{\gamma_n^2}} &\leq \P{\frac{\#\set{v\in\fforest:\height\left({\fforest}^{\uparrow v}\right)>\frac{\eps\gamma_n}{2}}}{\lfloor\frac{\eps\gamma_n}{2}\rfloor} >\frac{\varepsilon n}{\gamma_n^2}}\\
&\leq \P{\#\set{v\in \fforest:\height\left({\fforest}^{\uparrow v}\right)>\frac{\eps\gamma_n}{2}}> \left\lfloor\frac{\varepsilon^2 n}{2\gamma_n}\right\rfloor}\\
&< \delta,
\end{align*}
for all $n$ large.
\end{proof}

From \cref{prop:fringe_output}, we immediately get the covering analogue of \cref{prop:fringe}.

\begin{prop}\label{prop:fringe_cover}
    Let $1\ll\gamma_n\leq \sqrt{n}$. For any $\eps>0$ it holds that \[
    \lim_{\lambda\to \infty}\limsup_{n\to \infty}\P{\cC_{\gamma_n}(T_n)>\cC_{(1-\eps)\gamma_n}(\skel)+\frac{\eps n}{\gamma_n^2}} =0.
    \]
\end{prop}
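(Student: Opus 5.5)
Given a near-optimal covering of the skeleton, I will enlarge its balls and add a few extra balls for the fringe, using \cref{prop:fringe_output}. Fix $\eps>0$ and $\delta>0$. Replace $\eps$ by a smaller $\eps'$ to be chosen later, and take $R=R(\eps',\delta)$ from \cref{prop:fringe_output}. For every $\lambda>R^2/\sigma^2$ and all large $n$, the following holds with probability at least $1-\delta$: the set $\fforest\setminus\{v:d(v,\skel)<\eps'\gamma_n\}$ can be covered by at most $\eps' n/\gamma_n^2$ balls of radius $\eps'\gamma_n$. Work on this event.

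Let $C$ be a minimum $(1-\eps)\gamma_n$-covering of $\skel$, so $\#C=\cC_{(1-\eps)\gamma_n}(\skel)$. The centres lie in $\skel\subseteq T_n$ (if coverings of $\skel$ allow points on edges, \eqref{eq:pack_R_tree_vs_tree} and rounding cost only an additive $1$ in the radius). Take $\eps'\le\eps/2$. Any vertex $v\in\fforest$ with $d(v,\skel)<\eps'\gamma_n$ has a vertex $u\in\skel$ with $d(u,v)<\eps'\gamma_n$. Some $c\in C$ has $d(c,u)<(1-\eps)\gamma_n$, and distances in the subtree $\skel$ agree with those in $T_n$. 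Hence $d(c,v)<(1-\eps+\eps')\gamma_n\le\gamma_n$. So the balls of radius $\gamma_n$ around $C$ cover $\skel$ together with all fringe vertices within distance $\eps'\gamma_n$ of $\skel$. The extra balls of radius $\eps'\gamma_n\le\gamma_n$ cover the rest of $\fforest$. Since $T_n=\skel\cup\fforest$, we get
\[
\cC_{\gamma_n}(T_n)\le \cC_{(1-\eps)\gamma_n}(\skel)+\eps' n/\gamma_n^2\le \cC_{(1-\eps)\gamma_n}(\skel)+\eps n/\gamma_n^2 .
\]

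The probability of the complementary event is therefore at most $\delta$ for every $\lambda>R^2/\sigma^2$ and all large $n$. Taking $\limsup_n$, then $\lambda\to\infty$, and then $\delta\to0$ gives the claim.

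There are two points to check. First, the skeleton must be an isometrically embedded subtree, which holds because it is a subtree spanned by the root and some leaves. Second, the open-ball convention (distance $<r$) in the triangle-inequality step must be handled with strict inequalities, as above. The only subtle part is the discrete-versus-$\R$-tree bookkeeping for centres placed on edges of $\skel$. Any lost additive constant in the radius can be absorbed because $\gamma_n\to\infty$: replace $(1-\eps)$ by $(1-\eps/2)$ in the slack.
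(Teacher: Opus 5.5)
Your proof is correct and is essentially the paper's argument. You obtain the deterministic bound $\cC_{\gamma_n}(T_n)\le \cC_{(1-\eps)\gamma_n}(\skel)+\cC_{\eps\gamma_n}(F)$, with $F=\fforest\setminus\{v: d(v,\skel)<\eps\gamma_n\}$, by enlarging the skeleton's balls to radius $\gamma_n$, and then apply \cref{prop:fringe_output} to the remaining fringe; your passage to $\eps'\le\eps/2$ is harmless but unnecessary, since with open balls $(1-\eps)\gamma_n+\eps\gamma_n=\gamma_n$ already gives the strict inequality and centres of coverings of the combinatorial tree $\skel$ are vertices anyway.
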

\begin{proof}
For brevity, write $F =\fforest\setminus\set{v\in\fforest~:~d(v,\skel)<\eps\gamma_n }$, and we note the following deterministic bound,
\[\cC_{\gamma_n}(T_n)\leq \cC_{(1-\eps)\gamma_n}(\skel) + \cC_{\eps\gamma_n}(F).\]
Indeed, take a minimum $(1-\eps)\gamma_n$-covering of $\skel$ consisting of $\cC_{(1-\eps)\gamma_n}(\skel)$ balls. Enlarging each ball to have radius $\gamma_n$ yields a $\gamma_n$-covering of $\skel$ that also covers every vertex of  $\fforest$ that is within $\eps\gamma_n$ distance of $\skel$. The remaining vertices in $F$ can be covered with $\cC_{\eps \gamma_n}(F)$ balls of radius $\eps\gamma_n$, and hence also with $\cC_{\eps \gamma_n}(F)$ balls of radius $\gamma_n$.

Now, for $\delta>0$, \cref{prop:fringe_output} yields there exists $R$ so that for all $\lambda\geq \frac{R^2}{\sigma^2}$, then for all $n$ sufficiently large we obtain
\[\P{\cC_{\gamma_n}(T_n)-\cC_{(1-\eps)\gamma_n}(\skel)>\frac{\eps n}{\gamma_n^2}} \leq \P{\cC_{\eps\gamma_n}(F)>\frac{\eps n}{\gamma_n^2}}< \delta,\]
giving the desired result.
\end{proof}

\begin{proof}[Proof of \cref{prop:fringe}]
    The proof follows from \cref{prop:fringe_cover} evaluated at at $\gamma_n/2$ and the relation between packing and covering, that is, $\cC_{\gamma_n/2}(T_n)=\cP_{\gamma_n}(T_n)$ and $\cC_{(1-\eps)\gamma_n/2}(\skel)=\allowdisplaybreaks\cP_{(1-\eps)\gamma_n}(\skel)$.
\end{proof}

\bigskip
\noindent {\bf Outline:}\\
The remainder of \cref{sec:covering_fringe} will comprise of the ingredients to prove \cref{lem:prop_tall_trees}, with its proof found at the end of \cref{sec:transfer_bounds}. We provide a brief overview of the steps going into this proof:
\begin{itemize}
    \item[\cref{sec:stochastic_domination}:] The fringe forest $\fforest$ with degree statistic $\dstat$ has the same law as $\rforest_\dstat$, a uniform sample among all plane forests with prescribed degree statistic $\dstat$. The aim of this section is to show that we may use $\rforest_{\dstat'}$ as our object of interest, where $\dstat'$ is an appropriate sub-binary degree statistic.
    
     \cref{dfn:partial_order} provides a partial order $\preceq$ on degree statistics and \cref{lem:degree_order} shows that $\dstat$ precedes a sub-binary degree statistic $\dstat'$ by constructively showing how to move up the partial order. We then utilize the tools showing stochastic domination in random trees with fixed degree statistic, particularly the ideas in \citet{height} and \citet{donderwinkel2026tight}, to achieve \cref{cor:stochastic_order_forest} that shows that $\preceq$ induces a stochastic ordering on the heights of the subtrees of the corresponding uniform forests with a given degree statistic. \cref{lem:more_tall_trees_when_subbinary} consequently shows that the expected number of tall subtrees in $\fforest$ is bounded above by the expected number of tall subtrees in $\rforest_{\dstat'}$.\\
    
    \item[ \cref{sec:number_tall_trees}:] To show that the probability $\rforest_{\dstat'}$ has many tall subtrees is small, it is enough to apply Markov's inequality and show that the probability a uniformly rooted subtree is tall is sufficiently small. The objective of this section is to show that a uniformly rooted subtree of $\rforest_\bdstat$ is unlikely to be tall, where $\bdstat$ is a binary degree statistic obtained from $\dstat'$. 
    
    We reduce the sub-binary degree statistic $\dstat'$ to a binary degree statistic $\bdstat$ by removing all out-degree one vertices: $\bdstat = \dstat'-(0, \dstat'_1, 0, \ldots)$. For a uniform vertex $V$ of $\rforest_\bdstat$, we shift our focus on the probability that $\rforest_\bdstat^{\uparrow V}$ is tall. 
    
    Observation \eqref{eq:uniform_binary} says that $\rforest_\bdstat$ has the same distribution as $F$, a $\mu$-Bienaym\'e forest conditioned to have the same size and number of components as $\bdstat$ whenever $\mathrm{supp}(\mu)=\{0,2\}$. Then, \cref{lem:bridge_shift} allows us to encode $\rforest_\bdstat^{\uparrow V}$ using a cyclic shift of the \L ukasiewicz encoding of $F$. Afterwards, within \cref{lem:height_binary_ds}, we compare functions of $\rforest_\bdstat^{\uparrow V}$ to functions of a Bienaym\'e tree using a change of measure argument. Setting the function of interest to be the height of the encoded tree,  \cref{lem:sub_critical_height_profile} (shows that the probability a sub-binary Bienaym\'e tree is tall decays as the offspring distribution becomes slightly subcritical) along with \cref{lem:first_exc_cbridge} (controls the length of the excursion encoding $\rforest_\bdstat^{\uparrow V}$) allow us to obtain the desired bound on $\P{\height\left( \rforest_{\bdstat}^{\uparrow V}\right)>k}$.\\
    
    \item[\cref{sec:transfer_bounds}:] The tail bound for $\height\left( \rforest_{\bdstat}^{\uparrow V}\right)$ in \cref{lem:height_binary_ds} plus Markov's inequality is strong enough to derive a result like \cref{lem:prop_tall_trees} for uniformly rooted subtrees in  $\rforest_\bdstat$. Much of the work in this last subsection goes towards converting the bound to the setting of uniformly rooted subtrees in $\rforest_{\dstat'}$.We finish with the proof of \cref{lem:prop_tall_trees}.
    
    \cref{lem:height_subbinary_ds} allows us to transfer the tail bounds by looking at vertex-to-leaf paths in $\rforest_\bdstat$ and adding in out-degree one vertices. The analysis of the length of the vertex-to-leaf paths in $\rforest_{\dstat'}$ is obtained by viewing the addition of out-degree one vertices as an urn experiment.

    \cref{fact:non_uniform} is a non-uniform-in-$\gamma_n$ version of \cref{lem:prop_tall_trees}. This is proved by conditioning on whether we may find a suitable sub-binary degree statistic $\dstat'$ that succeeds $\dstat$ in $\preceq$, in which case we may obtained the desired bound by combining \cref{lem:more_tall_trees_when_subbinary}, \cref{lem:height_binary_ds}, and \cref{lem:height_subbinary_ds}. Furthermore, we show that the case where there is no suitable $\dstat'$ is a low probability event using \cref{lem:bounded_deg1} and \eqref{eq:roots_fringe}.

    Lastly, \cref{lem:prop_tall_trees} is proved using \cref{fact:non_uniform} and a monotonicity argument on the proportion of tall subtrees of the fringe forest when the skeleton is instead spanned by the root and $\lfloor \lambda n/g^2\rfloor$ uniform leaves, for some large $g\leq \sqrt n$.
\end{itemize}

\subsection{Stochastic domination}\label{sec:stochastic_domination}
We aim to bound the proportion of tall subtrees in the fringe forest $\fforest$ and we will do so by deriving a bound on the proportion of tall subtrees in a sub-binary forest. Then, we can transfer the bound because subtree heights in sub-binary forests stochastically dominate the subtree heights in $\fforest$. This stochastic domination is a consequence of a partial order on the degree statistics, which is developed in this subsection.

Denote by $\dsForest(\dstat)$ the set of plane forests with degree statistic $\dstat$, and denote by $\rforest_\dstat\in_u\dsForest(\dstat)$ a uniformly chosen plane forest with fixed degree statistic $\dstat$. Furthermore, for a vertex $v$, denote $\rforest_\dstat^{\uparrow v}$ to be the induced subtree of $\rforest_\dstat$ rooted at $v$. When the degree statistic $\dstat$ corresponds to one component, then we instead write $\rtree_\dstat\in_u\dsTree(\dstat)$ and $\rtree_\dstat^{\uparrow v}$ respectively.

We will define a partial order $\preceq$ on degree statistics by specifying a covering relation \footnote{
	For a partially ordered set $(\mathcal{P},\preceq)$, $y \in\mathcal{P}$ covers $x \in \mathcal{P}$ if $x \prec y$ and for all $z \in \mathcal{P}$, if $x \preceq z \preceq y$ then $z \in \{x,y\}$. The term \emph{covering relation} is standard nomenclature and will only be used in defining the partial order $\preceq$. Outside of this, the usage of cover or covering will refer to \cref{def:covering}.}.
\begin{dfn}\label{dfn:partial_order}
    Let $\dstat = (\dstat_i)_{i\geq0},\ \dstat'=(\dstat_i')_{i\geq0}$ be degree statistics corresponding to plane forests with the same number of components, so $\sum_{i\geq 0}\dstat_i-\sum_{i\geq0}i\dstat_i = \sum_{i\geq 0}\dstat_i'-\sum_{i\geq0}i\dstat_i'$. Then $\dstat'$ covers $\dstat$ if either of the following holds:
\begin{enumerate}
\item  It holds that $\dstat_1'=\dstat_1+1$ and $\dstat_i'=\dstat_i$ for $i\neq 1$, or
\item There exists $k,\ell$ with $1\leq k\leq \ell$ such that
    \[\dstat_i'= \dstat_i +\I{i=k}-\I{i=k-1}+\I{i=\ell}-\I{i=\ell+1}.\]
\end{enumerate}
\end{dfn}
From this definition of $\preceq$, we can obtain $\dstat'$ from $\dstat$ by either adding a vertex with one child, or for $1\leq k\leq \ell$, we replace one vertex with $k-1$ children and one vertex with $\ell+1$ children by a vertex with $k$ children and one with $\ell$ children. In the latter, this informally says that the out-degrees in $\dstat'$ are less skewed than the out-degrees in $\dstat$. We will compare degree statistics of the same size and same number of components, so we will focus on moving along the partial order using $(2)$.

The next theorem states that the partial order $\preceq$ induces a stochastic ordering on the
heights of uniformly rooted subtrees of random plane forests with a given degree statistic.

\begin{thm}\label{cor:stochastic_order_forest}
Fix $r\geq 1$ and let $\preceq$ be the partial order on degree statistics as in \cref{dfn:partial_order}.
Take $\dstat=(\dstat_i)_{i\geq 0}$, $\dstat'=(\dstat_i')_{i\geq 0}$ to be degree statistics corresponding to $r$ components and have the same size.
Let $\rforest_\dstat, \rforest_{\dstat'}$ to be uniform forests from $\dsForest(\dstat),\ \dsForest(\dstat')$ respectively, and take $V, V'$ to be uniformly chosen vertices from $\rforest_\dstat$ and $\rforest_{\dstat'}$ . Then,
    \[\dstat\preceq\dstat' \implies \height\left(\rforest_\dstat^{\uparrow V}\right) \preceq_{\mathrm{st}}\height\left(\rforest_{\dstat'}^{\uparrow V'}\right).\]
\end{thm}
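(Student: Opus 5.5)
By transitivity of $\preceq_{\mathrm{st}}$, it suffices to prove the claim when $\dstat'$ covers $\dstat$. Since both statistics have the same size and the same number of components, only case (2) of \cref{dfn:partial_order} occurs. So fix $1\le k\le \ell$ such that $\dstat'$ arises from $\dstat$ by turning one vertex of out-degree $k-1$ into one of out-degree $k$, and one vertex of out-degree $\ell+1$ into one of out-degree $\ell$. The plan is to build a coupling of $(\rforest_\dstat,V)$ and $(\rforest_{\dstat'},V')$ under which $\height(\rforest_\dstat^{\uparrow V})\le \height(\rforest_{\dstat'}^{\uparrow V'})$ almost surely, in the spirit of \citet{height} and \citet{donderwinkel2026tight}.

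\emph{Step 1: a multiset encoding.} Label the vertices $1,\dots,N$ with $N=|\dstat|$ and assign out-degrees $c_1,\dots,c_N$ realising $\dstat$. A uniform forest in $\dsForest(\dstat)$ together with a uniform vertex is obtained, up to a uniform relabelling, as follows. Take a uniform labelled forest with $r$ roots in which vertex $i$ has $c_i$ children. Alternatively, use the cycle-lemma / \L ukasiewicz bijection applied to a uniform permutation of the out-degree sequence, which gives the forest with a uniform cyclic rotation. The uniform vertex $V$ is then uniform among the labels. I would use the "child list" construction: each vertex $i$ holds $c_i$ slots, and the slots are filled by a uniform bijection to the non-root vertices, subject to acyclicity. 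This is equivalent to a uniform labelled forest with prescribed degree sequence, with count $\binom{N-1}{r-1}$-type formulas independent of which labels carry which degree.

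\emph{Step 2: the local move.} Let $a$ denote the vertex of degree $\ell+1$ and $b$ the vertex of degree $k-1$. To pass to $\dstat'$, move one child $w$ of $a$ (chosen uniformly among its $\ell+1$ children) to become a child of $b$, provided $b$ is not a descendant of $w$. This keeps the vertex set and $V$ fixed, and only the subtree $\rforest^{\uparrow w}$ is relocated. The key claim is that, after a suitable re-randomisation (swapping the roles of $a$ and $b$ with appropriate probability, as in \citet{height}), the result is uniform on $\dsForest(\dstat')$ with a uniform vertex. For the coupling of heights, only ancestors of $V$ lose height, namely those $u$ with $a\in\llbracket u, w\rrbracket$; $V$'s own subtree height can drop only if $V$ is an ancestor of $a$ and the relocated subtree was the tall one.

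\emph{Step 3: handling the bad case by a symmetry swap.} This is the main obstacle. When $V$ is an ancestor of $a$, I would use the exchangeability between $a$ and $b$ together with $k\le \ell$: the subtree under $b$ receives a pendant subtree, and the condition $k-1<\ell+1$ means that the position of the less-branched vertex is, in distribution, at least as favourable. Concretely, I would show that $\P{\height(\rforest^{\uparrow V})\ge h}$ can be written as a sum over ancestral configurations, with weights depending on $c_a,c_b$ through a Schur-convex-type expression. The transfer $(\ell+1,k-1)\to(\ell,k)$ is exactly a Robin Hood transfer, so it increases that expression. I would first check this on the formula $\P{V\text{ has a descendant path of length }h}$ obtained by counting forests with a marked path, where the count factorises as $\prod_{u\text{ on path}} c_u$ times a degree-independent term. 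The product of degrees along a path is maximised by balanced degrees, which gives the monotonicity. Extending from "existence of a marked path" to the height event requires an inclusion-exclusion or a direct injection, and I expect this to be the delicate part.
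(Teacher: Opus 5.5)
Your reduction to a single covering step is correct, and so is the observation that only case (2) of \cref{dfn:partial_order} can occur between statistics of equal size. After that there is a genuine gap.

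\textbf{Step 2.} This is not yet a coupling. Moving $\rforest^{\uparrow w}$ under $b$ is undefined when $b\in\rforest^{\uparrow w}$, and you do not say what replaces it there. The ``re-randomisation'' is never specified. In any case the move can strictly lower $\height(\rforest^{\uparrow V})$, so everything rests on Step 3.

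\textbf{Step 3.} This is a first-moment argument, and that cannot work. Your marked-path count does factorise as claimed: a fixed descending path $u_0,\dots,u_h$ is present with probability proportional to $\prod_{i<h}c_{u_i}$. But summing it gives $\E{\#\{\text{descendants of } V \text{ at distance } h\}}$, which is the probability that a uniform vertex has \emph{depth} at least $h$. The transfer $(\ell+1,k-1)\to(\ell,k)$ does increase this quantity, since up to normalisation it is the elementary symmetric polynomial $e_h$ of the out-degrees. However, that is a statement about depths. The event $\{\height(\rforest^{\uparrow V})\ge h\}$ is the event that this count is \emph{positive}, and monotonicity of an expectation says nothing about that. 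Inclusion--exclusion only produces alternating sums, for which term-wise monotonicity is useless. So the proof stops exactly at the step you call delicate.

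The missing idea is to condition on enough structure that the height event becomes an event about a random split of a \emph{fixed} collection of subtrees. The paper does this via \cref{thm:stochastic_order_tree}, after turning the forest into a tree by adding a root of degree $r$.

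\begin{enumerate}
\item It marks a uniform vertex of out-degree $k-1$, a uniform vertex of out-degree $\ell+1$, and the uniform vertex.
\item It passes to the equivalence class obtained by detaching all fringe subtrees of the two degree-marked vertices that do not contain the other one, and reattaching them in every possible way.
\item By \citet[Proposition A.1]{donderwinkel2026tight}, every class has the same probability under $\dstat$ and $\dstat'$. The uniform vertex only adds a multiplicity that is constant on the class.
\item Conditionally on the class, the only randomness is how the fixed subtrees $t_1,t_2,\dots$ are split between the two vertices. The relevant part has size in $\{k-1,\ell+1\}$ for $\dstat$ versus $\{k,\ell\}$ for $\dstat'$. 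Crucially, the class does not record which of the two vertices carries which degree; this is the symmetrisation your Step 2 alludes to.
\end{enumerate}

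Conditionally on the class, $\height(\rforest^{\uparrow V})$ behaves as follows.
\begin{itemize}
\item If neither marked vertex is a descendant of $V$, it is deterministic.
\item If both are, it is handled by the tree-height comparison of \citet{donderwinkel2026tight}.
\item If exactly one is, it equals a fixed distance plus $\max_{i\in A}(1+\height(t_i))$ over the random part $A$.
\end{itemize}
In the last case the eggs-in-one-basket lemma \citep[Lemma 29]{height} gives the comparison; it is the probability-level form of your Robin Hood intuition. Summing over classes then proves the theorem.
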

\begin{proof}
    This result is an immediate consequence of \cref{thm:stochastic_order_tree}, which can be found in \cref{sec:postponed}. We can put this in the setting of trees by adding the unit vector $e_r$ with $1$ in the $r$-th coordinate to the degree statistics: $\dstat+e_r,\ \dstat'+e_r$. Take $\rtree,\ \rtree'$ to be uniform samples from the respective sets $\dsTree(\dstat+e_r),\ \dsTree(\dstat'+e_r)$ restricted to trees with root degree $r$, then \cref{thm:stochastic_order_tree} gives the desired result.
\end{proof}

Take the fringe forest $\fforest$ and let $\dstat$ be its degree statistic. The next lemma gives a function that outputs a sub-binary degree statistic $\dstat'$ that corresponds to a forest with the same size, same number of components, and fewer leaves so that $\dstat$ precedes $\dstat'$ in the partial order.

\begin{lem}\label{lem:degree_order}
Let $\dstat = (\dstat_i)_{i\geq 0}$ be a degree statistic corresponding to $r$ components and with size $n$. From $\dstat$, we may obtain a sub-binary degree statistic $\dstat'$ with the same size, corresponding to the same number of components and $\dstat'_0\leq\dstat_0$, so that $\dstat\preceq\dstat'$.

\end{lem}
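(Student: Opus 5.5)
We obtain $\dstat'$ from $\dstat$ by repeatedly applying moves of type (2) in \cref{dfn:partial_order}. Each such move preserves the size $\sum_i \dstat_i$ and the quantity $\sum_i \dstat_i-\sum_i i\dstat_i$, hence the number of components. It also weakly decreases $\dstat_0$, since a vertex with $0$ children can only be removed, never created. Transitivity of $\preceq$ then gives $\dstat\preceq\dstat'$. It therefore suffices to show that, as long as $\dstat$ is not sub-binary, some move of type (2) applies and decreases a suitable nonnegative integer potential.

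Suppose $\dstat$ is not sub-binary, so there is a maximal $m\ge 3$ with $\dstat_m>0$. We need a vertex with $k-1$ children, where $1\le k\le \ell=m-1$, so that we can replace the pair of degrees $(k-1,m)$ by $(k,m-1)$. A vertex with $m$ children is available by choice of $m$. Any vertex other than it with at most $m-2$ children can serve as the degree-$(k-1)$ vertex. Such a vertex exists: $m\ge 3$ forces $\dstat_0\ge 1$, because a forest containing an internal vertex has leaves. Concretely, the identity $\sum_i(i-1)\dstat_i=-r<0$ together with $\dstat_m>0$, $m-1\ge 2$, forces $\dstat_0>0$. Taking $k=1$ and $\ell=m-1\ge 2$, the move turns one leaf into a vertex of degree $1$ and one vertex of degree $m$ into one of degree $m-1$. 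The condition $1\le k\le \ell$ holds, so this is a legitimate cover. It keeps the size, strictly lowers $\dstat_0$ by one, and lowers the potential $\Phi(\dstat)=\sum_i \dstat_i\,(i-2)_+$ by exactly one.

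Since $\Phi\ge 0$ is an integer and each step decreases it by one, after $\Phi(\dstat)$ steps we reach $\dstat'$ with $\Phi(\dstat')=0$, that is, $\dstat'_i=0$ for all $i\ge 3$, so $\dstat'$ is sub-binary. Along the way every intermediate sequence remains a valid degree statistic with nonnegative entries. The leaf count stays positive whenever a further step is needed, by the argument above. The only delicate point is checking the validity condition $1\le k\le\ell$ of \cref{dfn:partial_order} and the nonnegativity of $\dstat_0$ at each stage, and the identity $\sum_i (i-1)\dstat_i=-r$ handles both. One may alternatively prefer moves $(m-2,m)\to(m-1,m-1)$ to keep the number of leaves unchanged, but the statement only requires $\dstat'_0\le\dstat_0$, so the leaf-consuming move suffices.
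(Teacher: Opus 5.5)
Your proof is correct and uses the same basic strategy as the paper. You chain covering moves of type (2) from \cref{dfn:partial_order}, each of which lowers the excess $\sum_{i\ge3}(i-2)\dstat_i$ (your $\Phi$, the paper's $m$) by exactly one, until the statistic is sub-binary.

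The only difference is which vertex receives the transferred child. The paper first uses out-degree-one vertices (its Operation A, replacing degrees $(1,\ell)$ by $(2,\ell-1)$). Only after those run out does it use leaves (its Operation B, which is exactly your move). You use leaves throughout, and you justify the existence of a leaf cleanly via $\dstat_0=r+\sum_{i\ge2}(i-1)\dstat_i$. Your version is simpler and fully proves the lemma as stated.

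What the paper's ordering buys is control of the number of out-degree-one vertices: it gives $\dstat'_1=|\dstat_1-\sum_{i\ge3}(i-2)\dstat_i|$. This is what \cref{lem:bounded_deg1} uses ``from the construction of $\dstat'$''. Your construction instead gives $\dstat'_1=\dstat_1+\sum_{i\ge3}(i-2)\dstat_i$ and $\dstat'_0=\dstat_0-\sum_{i\ge3}(i-2)\dstat_i$. This larger $\dstat'_1$ would still be at most $\beta\#\fforest$ with high probability. Indeed, it is bounded by $2\mathbf m_0+2\mathbf m_1-n-1$, with $\mathbf m$ the degree statistic of $T_n$, and $2(\mu_0+\mu_1)-1<1$ for a nondegenerate critical $\mu$. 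However, the argument in \cref{lem:bounded_deg1} would need that adjustment if your construction were used.
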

\begin{proof}
We introduce two operations on degree statistics. Let $\mathbf m=(\mathbf m_i)_{i\geq0}$ be a degree statistic and in the following operations assume that $\mathbf m_k, \mathbf m_\ell>0$:
\begin{itemize}[]
\item {\bf Operation A:} Take $k=1$ and $\ell\geq 3$. For all $i$, set 
\[\mathbf m_i' = \mathbf m_i +\I{i=2} - \I{i=1}+\I{i=\ell-1}-\I{i=\ell}.\]
\item {\bf Operation B:} Take $k=0$ and $\ell\geq 3$. For all $i$, set 
\[\mathbf m_i' = \mathbf m_i +\I{i=1} - \I{i=0}+\I{i=\ell-1}-\I{i=\ell}.\]
\end{itemize}
By \cref{dfn:partial_order} (2), each of the two operations produces a degree statistic $\mathbf m'=(\mathbf m_i')_{i\geq 0}$ that is higher up the partial order while maintaining the same size $n$ and same number of components. Note that Operation B reduces the number of out-degree zero vertices by one, while Operation A leaves this value unchanged. Operation A reduces the number of out-degree one vertices by one, while Operation B will increase them by one. 

Let $\dstat=(\dstat_i)_{i\geq 0}$ be a degree statistic corresponding to $r$ components and has size $n$. We assume $\dstat$ is not sub-binary, otherwise we may take $\dstat'=\dstat$. 
We see that $\sum_{k\geq 3}\dstat_k$ is the number of vertices with more than two children and $m:=\sum_{k\geq 3}(k-2)\dstat_k$ is the number of children that need to be transferred to a vertex with exactly one child or to a leaf in order to obtain a sub-binary degree statistic.

We construct $\dstat'$ by applying Operation A as many times as necessary and then applying Operation B if there still remain vertices with more than two children, where each application of the operations reduces $m$ by one. Note that it will always be possible to apply Operation B since any degree statistic realizes to a forest, which has leaves. 

If $m\leq \dstat_1$, then apply Operation A $m$ times and let $\dstat'$ be the resulting degree statistic. Note that $\dstat'$ has the same size and corresponds to the same number of components as $\dstat$. Furthermore, $\dstat'$ is sub-binary since each of the $m$ children have been transferred to out-degree-one vertices. Lastly, since only Operation A was applied, then $\dstat_0'=\dstat_0$.

If $m> \dstat_i$, then obtain the degree statistic $\dstat'$ by applying Operation A $\dstat_1$ times and applying Operation B $m-\dstat_1$ times. Again, $\dstat'$ has the same size and corresponds to the same number of components as $\dstat$. As well, $\dstat'$ is sub-binary since the $m$ children have been transferred to vertices with one child or to leaves. Lastly, $\dstat_0'<\dstat_0$ since Operation $B$ was applied.
\end{proof}

Lastly, the forests with degree statistic $\dstat'$ will produce more tall subtrees than the forests with degree statistic $\dstat$.
\begin{lem}\label{lem:more_tall_trees_when_subbinary}
Let $\dstat =(\dstat_i)_{i\geq0},\ \dstat'=(\dstat_i')_{i\geq0}\in \N_0^\N$ be degree statistics corresponding to the same number of components and with the same size such that $\dstat'$ is sub-binary and $\dstat\preceq\dstat'$. With $\rforest_\dstat\in_u\dsForest(\dstat)$ and $\rforest_{\dstat'}\in_u\dsForest(\dstat')$, then
\[\E{\#\set{v\in \rforest_\dstat ~:~\height\left(\rforest^{\uparrow v}_{\dstat}\right)>k} } \leq\E{\#\set{v\in \rforest_{\dstat'} ~:~\height\left(\rforest^{\uparrow v}_{\dstat'}\right)>k} }.\]
\end{lem}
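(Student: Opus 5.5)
The plan is to rewrite the expected count of tall subtrees as the forest size times a probability for a uniformly chosen vertex, and then apply \cref{cor:stochastic_order_forest}. Let $V$ be uniform on the vertices of $\rforest_\dstat$, independent of the forest given its realisation, and define $V'$ in the same way for $\rforest_{\dstat'}$. Linearity of expectation gives
\[
\E{\#\set{v\in \rforest_\dstat ~:~\height\left(\rforest^{\uparrow v}_{\dstat}\right)>k}}
= \sum_{\rf\in\dsForest(\dstat)} \frac{1}{\#\dsForest(\dstat)}\sum_{v\in\rf}\I{\height(\rf^{\uparrow v})>k}
= |\dstat|\,\P{\height\left(\rforest_\dstat^{\uparrow V}\right)>k},
\]
since every forest in $\dsForest(\dstat)$ has exactly $|\dstat|$ vertices. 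The same identity holds for $\dstat'$ with $|\dstat'|$ in place of $|\dstat|$.

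By hypothesis $\dstat$ and $\dstat'$ have the same size, $|\dstat|=|\dstat'|$, and they correspond to the same number $r$ of components. We also have $\dstat\preceq\dstat'$. These are exactly the hypotheses of \cref{cor:stochastic_order_forest}, which yields $\height(\rforest_\dstat^{\uparrow V})\preceq_{\mathrm{st}}\height(\rforest_{\dstat'}^{\uparrow V'})$. In particular, for every $k$,
\[
\P{\height\left(\rforest_\dstat^{\uparrow V}\right)>k}\le \P{\height\left(\rforest_{\dstat'}^{\uparrow V'}\right)>k}.
\]
Multiplying both sides by the common size $|\dstat|=|\dstat'|$ and using the identity from the first paragraph gives the claimed inequality. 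Note that the sub-binary assumption on $\dstat'$ plays no role in this argument; it only matters for how the lemma is used later.

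There is essentially no obstacle here, since all the substantive work sits in \cref{cor:stochastic_order_forest}, and through it in \cref{thm:stochastic_order_tree}. The only point needing care is that the definition of $\preceq$ forces equal size, so the normalising factors match. If $\preceq$ were allowed to use covering move (1), which adds a degree-one vertex and so increases the size, then we would need $|\dstat|\le|\dstat'|$ instead of equality; that inequality would still suffice, since increasing the multiplier on the right only strengthens the bound.
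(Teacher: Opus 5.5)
Your proof is correct and is exactly the paper's argument. Both write the expected count as the common size $|\dstat|=|\dstat'|$ times the probability that the subtree above a uniform vertex is taller than $k$, then apply \cref{cor:stochastic_order_forest}. One small correction to your closing aside: the paper's $\preceq$ does include move (1), so equal size comes from the lemma's hypothesis rather than from $\preceq$, and the unequal-size variant you sketch would not follow from \cref{cor:stochastic_order_forest}, which itself requires equal sizes.
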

\begin{proof}
Immediately from \cref{cor:stochastic_order_forest}, for any $k\in\R$ and uniformly chosen $V\in_u\rforest_\dstat$, $V'\in_u\rforest_{\dstat'}$,
\begin{align*}
\E{\#\set{v\in \rforest_\dstat ~:~\height\left(\rforest^{\uparrow v}_{\dstat}\right)>k)} } 
&= n\P{\height\left(\rforest^{\uparrow V}_{\dstat}\right)>k}\\
&\leq n\P{\height\left(\rforest^{\uparrow V'}_{\dstat'}\right)>k}\\
&=\E{\#\set{v\in \rforest_{\dstat'}~:~\height\left(\rforest^{\uparrow v}_{\dstat'}\right)>k} \label{eq:more_tall_trees}}.
\end{align*}
\end{proof}

In the next section, we will obtain a bound on the proportion of vertices with tall subtrees in a Bienyam\'e forest with $k_{\operatorname{out}}(\skel)$ roots, then \cref{lem:more_tall_trees_when_subbinary} will be used to extend this bound to the fringe forest $\fforest$.

\subsection{The likelihood of a tall subtree}\label{sec:number_tall_trees}
We will analyse the proportion of vertices with tall subtrees of a Bienaym\'e forest conditioned to have size $\#\fforest$ and $k_{\operatorname{out}}(\skel)$ roots.

Note that, conditional on its degree statistic, a Bienaym\'e tree is uniformly distributed over the set of trees with that degree statistic. Furthermore, conditioning on the skeleton fixes the degree statistic of the fringe forest. The plane forests with a given degree statistic are in bijection with trees having the corresponding skeleton: the bijection is obtained by assigning the trees of the forest to the outgoing edges of the skeleton in depth-first order. Consequently, conditional on its degree statistic $\dstat = (\dstat_i)_{i\geq 0}$, the fringe forest $\fforest$ is uniformly distributed over the set of plane forests with degree statistic $\dstat$.

Then, from \cref{sec:stochastic_domination}, we can stochastically dominate the heights of subtrees in $\fforest$ by looking at a sub-binary degree statistic $\dstat'=(\dstat_i')_{i\geq 0}$ with the same size, same number of components, and $\dstat_0'\leq \dstat_0$. In particular, \cref{lem:more_tall_trees_when_subbinary} will allow us to extend a bound on the proportion of tall subtrees in a forest with degree statistic $\dstat'$ to the proportion of tall subtrees observed in $\fforest$.

To obtain the bound for plane forests with degree statistic $\dstat'$, we will reduce $\dstat'$ to a binary degree statistic by removing all out-degree one vertices: $\bdstat = \dstat'-(0,\dstat_1', 0, \ldots)$. So, for $\dstat$ and $\dstat'$ having size $n$, then $\bdstat$ is a binary degree statistic corresponding to the same number of components and has size $n-\dstat_1'$. Motivated by \cref{lem:size_skel}, we first focus on the case where we have a binary forest with $n-\dstat_1'$ vertices and $\frac{Rn}{\gamma_n}$ roots. 

Starting with the degree statistic $\dstat$ of $\fforest$, let $\dstat'$ be a sub-binary degree statistic obtained from \cref{lem:degree_order}. The next lemma shows that both degree statistics $\dstat,\dstat'$ observes $\dstat_1$ and $\dstat_1'$ bounded away from $\#\fforest$.

\begin{lem}\label{lem:bounded_deg1}
    Let $\mu$ be an offspring distribution with mean $1$ and finite variance $0<\sigma^2<\infty$, and fix $\lambda>0$. Let $\fforest$ be the fringe forest obtained from removing the skeleton $\skel$ from the size-conditioned $\mu$-Bienaym\'e tree $T_n$. Let $\dstat$ be the degree statistic of $\fforest$ and $\dstat'$ be the sub-binary degree statistic obtained from \cref{lem:degree_order}. There exists $\beta = \beta(\mu)<1$ so that with high probability, $\dstat_1 \leq \beta\#\fforest$ and $\dstat_1'\leq \beta\#\fforest$.
\end{lem}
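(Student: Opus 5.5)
The plan is to compare the degree statistic $\dstat$ of $\fforest$ with that of $T_n$. Then we use the law of large numbers for empirical degree frequencies of size-conditioned Bienaymé trees (e.g.\ \cite{janson_simpygen}): for each fixed $k$, $\dstat_k(T_n)/n\convp \mu(k)$, where $\dstat_k(T_n)$ is the number of vertices of $T_n$ with $k$ children. We only need this for $k=0$ and $k=1$.

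\textbf{Step 1: $\#\fforest$ is essentially $n$.} By \cref{lem:size_skel}, $\#\skel=O_p(n/\gamma_n)$ in the regime $1\ll\gamma_n\ll n^{1/2}$, and $\#\skel=O_p(n^{1/2})$ when $\gamma_n=n^{1/2}$. In both cases $\#\skel=o_p(n)$, so $\#\fforest/n\convp 1$.

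\textbf{Step 2: the bound on $\dstat_1$.} Every vertex of $\fforest$ has the same children in $\fforest$ as in $T_n$, since descendants of fringe vertices are never in the skeleton. Hence $\dstat_k\le \dstat_k(T_n)$ for every $k$. Mean $1$ and $\sigma^2>0$ force $\mu(1)<1$. Set $\beta:=\tfrac12\bigl(1+\max(\mu(0),\mu(1))\bigr)<1$. Then
\[
\dstat_1\le \dstat_1(T_n)=(\mu(1)+o_p(1))\,n=(\mu(1)+o_p(1))\,\#\fforest,
\]
so $\dstat_1\le\beta\#\fforest$ with high probability.

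\textbf{Step 3: the bound on $\dstat'_1$.} We read off $\dstat'_1$ from the construction in the proof of \cref{lem:degree_order}. Let $m=\sum_{k\ge3}(k-2)\dstat_k$.
\begin{itemize}
\item If $m\le\dstat_1$, only Operation A is applied. Each application removes one degree-one vertex, so $\dstat'_1=\dstat_1-m\le\dstat_1$.
\item Otherwise, Operation A is applied $\dstat_1$ times, which removes all degree-one vertices. Then Operation B is applied $m-\dstat_1$ times, each adding one degree-one vertex. Operation B with $\ell=3$ creates a degree-2 vertex, not a degree-1 vertex, so no extra count arises. Hence $\dstat'_1=m-\dstat_1\le m$.
\end{itemize}
In either case $\dstat'_1\le\max(\dstat_1,m)$.

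It remains to bound $m$. Weighted sums such as $m$ are not directly controlled by convergence of finitely many frequencies, which is the one point needing care. Instead, use the identity $\sum_{k\ge0}(k-1)\dstat_k=-r$, where $r$ is the number of components of $\fforest$. It gives
\[
m\le\sum_{k\ge2}(k-1)\dstat_k=\dstat_0-r\le\dstat_0\le\dstat_0(T_n).
\]
Since $\mu$ has mean $1$, $\mu(0)<1$. Therefore $m\le(\mu(0)+o_p(1))\,n=(\mu(0)+o_p(1))\,\#\fforest$, and by Step 1 this is at most $\beta\#\fforest$ with high probability.

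The only external input is the degree-frequency law of large numbers for $k\in\{0,1\}$. The main obstacle, controlling the unbounded weighted sum $m$, is avoided by the identity in Step 3, which reduces it to the leaf count.
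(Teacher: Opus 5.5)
Your proof is correct and follows the paper's route. You compare $\dstat$ with the degree statistic of $T_n$, use $\#\fforest/n\convp 1$ from \cref{lem:size_skel} together with the law of large numbers for degree frequencies, and use $\dstat'_1\le\max(\dstat_1,m)$ read off from \cref{lem:degree_order}. The only difference is which tree the counting identity is applied to: you apply it on $\fforest$ to get $m\le\dstat_0\le\dstat_0(T_n)$ with limit $\mu(0)$, whereas the paper applies it on $T_n$ to get $\sum_{i\ge3}(i-2)\mathbf m_i=2\mathbf m_0+\mathbf m_1-(n+1)$ with limit $2\mu_0+\mu_1-1$, and both constants are below $1$.
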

\begin{proof}
Take $T_n$ to be a size-conditioned Bienaym\'e tree with offspring distribution $\mu=(\mu_i)_{i\geq0}$. From \cref{lem:size_skel}, we deduce that for $\eps>0$, 
\begin{equation}\label{eq:fringe_size}
\liminf_{n\rightarrow\infty}{\#\fforest/n} =  \liminf_{n\rightarrow\infty}(1-\#\skel/n) \geq (1-\eps).
\end{equation}

Let $\mathbf m= (\mathbf m)_{i\geq 0}$ and $\dstat=(\dstat_i)_{i\geq 0}$ be the respective degree statistics of $T_n$ and $\fforest$, and observe that for each $i\geq0$ we have $\dstat_i\leq \mathbf m_i$. Furthermore, a standard result for Bienaym\'e trees (for example see \cite[Theorem 7.11(ii)]{janson_simpygen}) states for each $i\geq0$, 
\[\frac{\mathbf m_i}{n} \overset{p}{\longrightarrow} \mu_i.\]
In particular, we have that 
\begin{equation}\label{eq:empirical_dist1}
\frac{\mathbf m_1}{n} \overset{p}{\longrightarrow} \mu_1
\end{equation}
and using the fact that $\mathbf m$ is the degree statistic of a tree, 
\begin{equation}\label{eq:empirical_dist_sum}
    \frac{1}{n}\left( \sum_{i\geq 3}(i-2)\mathbf m_i\right) = \frac{1}{n}\left( 2\mathbf m_0 + \mathbf m_1 - (n+1) \right)  \overset{p}{\longrightarrow} 2\mu_0+\mu_1-1. 
\end{equation}
Since $\mu$ is critical and non-degenerate, then we observe that $M:=\max\set{\mu_1,\ 2\mu_0+\mu_1-1}<1$ and, we further have $\frac{M}{1-\eps}<1$ for $\eps>0$ chosen small enough. Take $\beta \in \left(\frac{M}{1-\eps},\ 1 \right)$ and note that $\beta(1-\eps)\in(M,1)$. 

Immediately from \eqref{eq:fringe_size}, \eqref{eq:empirical_dist1}, and observing that $\dstat_1 \leq \mathbf m_1\leq \beta(1-\eps)n$, we have $\P{\dstat_1 \leq \beta\#\fforest}\underset{n\rightarrow\infty}{\longrightarrow} 1$.

Now take the sub-binary degree statistic  $\dstat'$ obtained from \cref{lem:degree_order}, then from the construction of $\dstat'$, we have \[\dstat'_1 \leq \max\set{\dstat_1,\ \sum_{i\geq3}(i-2)\dstat_i-\dstat_1}.\] 
Applying \eqref{eq:fringe_size}, \eqref{eq:empirical_dist1} and \eqref{eq:empirical_dist_sum} with the observations $\dstat_1 \leq \beta(1-\eps)n$ 
and 
\[\sum_{i\geq3}(i-2)\dstat_i-\dstat_1\leq\sum_{i\geq3}(i-2)\dstat_i\leq\sum_{i\geq3}(i-2)\mathbf m_i\leq\beta(1-\eps)n,\] 
yields $\P{\dstat_1' \leq \beta\#\fforest}\underset{n\rightarrow\infty}{\longrightarrow} 1$.
\end{proof}

To bound the number of tall subtrees within a binary forest with given size, we can union bound over all possible subtrees. Thus, it is enough to zoom in on a uniformly rooted subtree and analyse how likely that that particular tree is tall.

Let $F_{\mu,n,k}$ denote a $\mu$-Bienaym\'e forest conditioned to have $n$ vertices and $k$ components. In particular, in the binary case where $\mathrm{supp}(\mu)=\set{0,2}$, this conditional distribution does not depend on $\mu$. Moreover, since the binary degree statistic $\bdstat$ is uniquely determined by its size $n-\dstat_1'$ and number of components $\frac{Rn}{\gamma_n}$, then $F_{\mu,n-\dstat_1',\frac{Rn}{\gamma_n}}$ is uniform on $\dsForest(\bdstat)$. It follows for $\rforest_\bdstat\in_u\dsForest(\bdstat)$, we have that \begin{equation}\label{eq:uniform_binary}
    \rforest_\bdstat \overset{d}{=}F_{\mu,n-\dstat_1',\frac{Rn}{\gamma_n}}.
\end{equation}
    
Take a uniform vertex $V\in\rforest_\bdstat$. The subtree $\rforest_\bdstat^{\uparrow V}$ can 
then be studied by sampling a \L ukasiewicz path encoding $F_{\mu,n-\dstat_1',\frac{Rn}{\gamma_n}}$ and examining the excursion above the running infimum that starts at a uniformly chosen time. Equivalently, we can examine the first excursion of a uniform cyclic shift of that same \L ukasiewicz path. The following lemma lets us study the first excursion of a conditioned random walk instead.

\begin{lem}\label{lem:bridge_shift}
Let $\mu$ be an offspring distribution and let $F_{\mu, n,k}$ be a $\mu$-Bienaym\'e forest conditioned to have $n$ vertices and $k$ components. Denote the \L ukasiewicz path of $F_{\mu,n,k}$ by \[W:=W(F_{\mu,n,k})=(W_0, W_1, \ldots, W_n)\] so that $W_0=0$ and $W_n=-k$. Let $i\in[n]$ and define the cyclic shift 
\[W^{(i)} = (0, W_{i+1}-W_i,\ W_{i+2}-W_i,\ \ldots,\ W_{i+n}-W_i),\]
with indices taken modulo $n$.

Let $X=(X_j)_{j\geq0}$ be random walk with $X_0=0$ and steps independently distributed as $J-1$ where $J\sim\mu$. Then for $I\in_u[n]$, the cyclic shift $W^{(I)}$ is distributed as the random walk $(X_0, X_1, \ldots, X_n)$ conditioned on $X_n=-k$. 
\end{lem}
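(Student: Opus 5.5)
The plan is to compute the law of $W^{(I)}$ directly and compare it with the law of the conditioned walk, using the cyclic-lemma counting for forests.

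First describe both supports. The walk $(X_0,\dots,X_n)$ conditioned on $X_n=-k$ charges every bridge $x=(x_0,\dots,x_n)$ with $x_0=0$, $x_n=-k$ and increments $\ge -1$. Its weight is
\[\prod_{j=1}^n\mu(x_j-x_{j-1}+1)\big/\P{X_n=-k}.\]
This weight depends only on the multiset of increments, so it is invariant under cyclic shifts of the increment sequence. On the other hand, the \L ukasiewicz path of $F_{\mu,n,k}$ is uniform-times-product-weight on $\mathcal{W}_n^k$. By the bijection $\varphi$, a forest $\mathrm f$ has $\P{F_\mu=\mathrm f}=\prod_{v}\mu(c_{\mathrm f}(v))$, which is the same product $\prod_j\mu(w_j-w_{j-1}+1)$ for $w=\varphi(\mathrm f)$. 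Hence $\P{W=w}=\prod_j\mu(\Delta w_j)/Z$ for $w\in\mathcal W_n^k$.

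Next compute the law of the shift. For any bridge $x$ as above, write
\[\P{W^{(I)}=x}=\frac1n\sum_{i\in[n]}\P{W=\theta_{-i}x},\]
where $\theta_{-i}x$ is the unique path whose $i$-th cyclic shift is $x$, obtained by cyclically rotating increments back. Each summand equals $\prod_j\mu(\Delta x_j)/Z$ if $\theta_{-i}x\in\mathcal W_n^k$, and $0$ otherwise, since the product weight is rotation invariant. So
\[\P{W^{(I)}=x}=\frac{N(x)}{nZ}\prod_j\mu(\Delta x_j),\]
where $N(x)$ is the number of rotations of $x$ lying in $\mathcal W_n^k$.

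The key step is the cyclic lemma (Dvoretzky--Motzkin / Kemperman form): for a sequence of $n$ integer increments $\ge -1$ summing to $-k$, exactly $k$ of its $n$ cyclic rotations are first-passage paths, that is, stay $>-k$ before time $n$. These are precisely the rotations starting just after each of the $k$ times at which a new minimum level $-1,\dots,-k$ is first attained in the doubly infinite periodic extension. I would state this with a short proof. Take $i$ to be the last time the path attains its overall minimum $m$. Then the rotation starting at $i$ hits levels $-1,\dots,-k$ for the first time at distinct times. For any rotation, being in $\mathcal W_n^k$ is equivalent to starting right after such a first hitting time. This uses skip-freeness downward, since increments are $\ge -1$, so levels are attained one by one. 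Counting multiplicities when $x$ is periodic needs care, but counting rotation indices $i\in[n]$ rather than distinct paths makes the statement $N(x)=k$ hold verbatim.

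With $N(x)=k$ for all bridges, $\P{W^{(I)}=x}\propto\prod_j\mu(\Delta x_j)$ on all bridges ending at $-k$. This is exactly the conditioned walk law, and the normalising constants match automatically, giving $Z=\tfrac nk\P{X_n=-k}$ as a byproduct (Kemperman's formula). The main obstacle is the cyclic lemma count with periodic sequences, handled as above by indexing rotations. One minor point: the shift is defined with $W_{i+j}-W_i$ for $i+j>n$ interpreted through the periodic extension $W_{m+n}=W_m-k$. I would make this explicit so that $W^{(I)}$ ends at $-k$.
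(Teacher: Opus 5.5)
Your proof is correct and follows essentially the paper's argument. Both average over the $n$ rotations, use that the product weight is invariant under rotation, and apply the cycle lemma to find exactly $k$ admissible rotations. The one difference is the normalising constant: the paper identifies it through Kortchemski's description of $W$ as the walk stopped at $\tau_{-k}$ conditioned on $\tau_{-k}=n$, together with Kemperman's formula, whereas you get it simply by noting that both laws are proportional to $\prod_j\mu(\Delta x_j+1)$ on the same set of bridges, which also recovers Kemperman's formula as a byproduct. Your remark that the shift should be read through the periodic extension $W_{m+n}=W_m-k$ is a useful clarification of the statement as written.
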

\begin{proof}
Let $I\in_u[n]$ and $w=(w_0,w_1,\ldots, w_n)$ be a sequence satisfying $w_0=0$ and $w_n=-k$. Then,
\begin{align*}
\P{W^{(I)}=w} &=\frac{1}{n}\sum_{i=1}^n\P{W^{(i)}=w}=\frac{1}{n}\sum_{i=1}^n\P{W=w^{(-i)}}\\
&=\frac{k}{n}\P{W=w^{(-i^*)}} \tageq\label{eq:cycle_lemma},
\end{align*}
where the last line follows from the cycle lemma \cite[Lemma 6.1]{pitman2006combinatorial} and the shift $-i^*$ is one of the $k$ shifts so that $w^{(-i^*)}$ hits $-k$ for the first time at time $n$.

Denote the hitting time of $-k$ for the random walk $X$ by
    \[\tau_{-k}(X) = \min\set{i\geq0 ~:~ X_i=-k}.\]
For each $j\in[n]$, write $\Delta w^{(-i^*)}_j= w_{i^*+j}-w_{i^*+j-1}$. Using the fact that $W$ is distributed as $(X_0,\ldots,X_{\tau_{-k}(X)})$ conditioned on $\tau_{-k}(X)=n$ (\cite[Proposition 1.4]{kortchemski2012invariance}), then \eqref{eq:cycle_lemma} becomes
\begin{align*}
  \P{W^{(I)}=w}
  &=
  \frac{k}{n}\cdot\frac{\prod_{j=1}^n\mu(\Delta w^{(-i^*)}_j +1)}{\P{\tau_{-k}(X)=n}}
  =
  \frac{\frac{k}{n}\prod_{j=1}^n\mu(\Delta w^{(-i^*)}_j +1)}{\frac{k}{n}\P{X_n=-k}}\\
  &=
  \probC{(X_0, X_1, \ldots, X_n) =w}{X_n=-k},
\end{align*}
where the second equality applies Kemperman's formula \cite[(6.3)]{pitman2006combinatorial}.
\end{proof}

We have reduced studying the uniformly rooted subtree $\rforest_\bdstat^{\uparrow V}$ to studying the first excursion of a conditioned random walk. The following lemma controls the length of this excursion whose proof is deferred to \cref{sec:postponed}.

\begin{restatable}{lem}{firstexcbridge}\label{lem:first_exc_cbridge}
    Let $X=(X_k)_{k\geq 0}$ be a simple symmetric random walk and denote the hitting time of $-1$ by
    \[\tau_{-1}(X) = \min\set{i\geq0 ~:~ X_i=-1}.\] 
    Take $\gamma_n\rightarrow\infty$ so that $\gamma_n=O(\sqrt{n})$. Let $\eps>0$ and fix $\alpha\in[0,1)$ bounded away from $1$. For fixed $R$ large enough, then conditioning on $X_{n(1-\alpha)}=-\frac{Rn}{\gamma_n}$ yields
    \[\probC{\tau_{-1}(X)>\left\lceil\frac{n(1-\alpha)}{2}\right\rceil}{ X_{n(1-\alpha)}=-\frac{Rn}{\gamma_n}}\leq \eps n^{-1/2},\]
    for all $n$ large enough.
\end{restatable}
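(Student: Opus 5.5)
Set $N=n(1-\alpha)$, $k=Rn/\gamma_n$ and $m=\lceil N/2\rceil$. The plan is to split the bridge at time $m$ and estimate the two halves separately. By the Markov property at time $m$,
\[
\probC{\tau_{-1}(X)>m}{X_N=-k}=\sum_{x\ge 0}\frac{\P{\tau_{-1}(X)>m,\ X_m=x}\,\P{X_{N-m}=-k-x}}{\P{X_N=-k}}.
\]
Since $\gamma_n=O(\sqrt n)$, we have $k\ge c R\sqrt n$ for some $c>0$ depending only on the implicit constant and on $\alpha$. Because $\alpha$ is bounded away from $1$, $N\asymp n$, so $k\ge c' R\sqrt N$. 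Parity issues are handled by restricting to admissible $x$ and $k$.

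\textbf{Step 1 (denominator).} Use the local CLT, or large-deviation local estimates if $k/N$ is not small, to bound $\P{X_N=-k}$ from below by its exact order. In the Gaussian regime this is $\asymp N^{-1/2}e^{-k^2/(2N)}$. More generally, Stirling's formula gives $\P{X_N=-k}\asymp N^{-1/2}e^{-N I(k/N)}$, where $I$ is the rate function. We can assume $k/N\le 1$; otherwise the event is empty.

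\textbf{Step 2 (numerator).} Two bounds enter here.
\begin{itemize}
\item For the second factor, the local bound $\P{X_{N-m}=-k-x}\lesssim N^{-1/2}e^{-(N-m)I((k+x)/(N-m))}$ holds uniformly in $x\ge0$, by the same Stirling estimate.
\item For the first factor, use the ballot estimate $\P{\tau_{-1}>m,\ X_m=x}\le \frac{C(x+1)}{m}\P{X_m=x}$.
\end{itemize}
Summing over $x$ then reduces the problem to comparing
\[
\sum_{x\ge0}\frac{x+1}{m}\,\P{X_m=x}\,\P{X_{N-m}=-k-x}
\qquad\text{with}\qquad
\P{X_N=-k}=\sum_{x\in\Z}\P{X_m=x}\,\P{X_{N-m}=-k-x}.
\]

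\textbf{Step 3 (comparison).} Under the bridge law, $X_m$ is concentrated near $-k/2$, which is of order $-R\sqrt N/2$, with fluctuations of order $\sqrt N$. The event $X_m\ge0$ therefore has conditional probability at most $e^{-cR^2}$ by a Gaussian or Hoeffding tail bound for hypergeometric-type bridges; exchangeability of bridge steps makes this clean. Combining this with the weight $(x+1)/m$ gives a ratio of at most $C\,\E{(X_m^++1)\mid X_N=-k}/m$ restricted to $\{X_m\ge0\}$. Bounding $X_m^+\le m$ only crudely is too lossy, so instead use Cauchy–Schwarz: the conditional second moment of $X_m+k/2$ is $O(N)$. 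This yields a bound of order
\[
\frac{\sqrt N}{m}\,e^{-cR^2}\lesssim N^{-1/2}e^{-cR^2},
\]
and choosing $R$ large enough makes this at most $\eps n^{-1/2}$.

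\textbf{Main obstacle.} The delicate point is uniformity when $k/N$ is not small, i.e.\ $\gamma_n$ of constant order relative to $\sqrt n$ fails and $k$ is linear in $N$. There the Gaussian approximation breaks down. I would handle this by an exponential tilt: under the tilted measure, the walk has drift $-k/N$ and the bridge becomes a typical endpoint. One then repeats the argument with the tilted local CLT, where the tail bound for $X_m\ge0$ becomes even stronger, giving exponentially small probability.
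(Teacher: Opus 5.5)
Your proof is correct, but it takes a different route from the paper's. After reducing to $\alpha=0$, the paper exponentially tilts to the walk $Y$ with up-probability $p_n=\frac12(1-R/\gamma_n)$. This walk has the same bridge law as $X$, but the endpoint $-Rn/\gamma_n$ is typical for it. The Robbins--Stirling bounds behind \cref{lem:meas_change_const} give $\P{Y_{n-m}=-Rn/\gamma_n-j}\le 4e^8\,\P{Y_n=-Rn/\gamma_n}$ uniformly in $j$ (with $m=\lceil n/2\rceil$), which removes the conditioning entirely. The paper then tilts back to the symmetric walk on $\{\tau_{-1}>m\}$. There $Y_m\ge 0$ bounds the density by $(4p_n(1-p_n))^{m/2}\le e^{-R^2m/(2\gamma_n^2)}$, and Kemperman's formula $\P{\tau_{-1}(X)\ge m}\le e^8/\sqrt m$ finishes the argument.

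You instead keep the bridge and use the exact reflection identity $\P{\tau_{-1}>m,\,X_m=x}=\frac{2(x+1)}{m+x+2}\P{X_m=x}$. This turns the target into at most $\frac2m\E{(X_m+1)\mathbf{1}_{\{X_m\ge 0\}}\mid X_N=-k}$, which is a statement purely about the bridge. Under the bridge, $X_m$ is an affine image of a hypergeometric variable with mean $-mk/N$ and variance at most $m$. Hoeffding's inequality for sampling without replacement then gives $\P{X_m\ge 0\mid X_N=-k}\le e^{-mk^2/(2N^2)}\le e^{-k^2/(4N)}$. Cauchy--Schwarz yields the bound $4\sqrt2\,N^{-1/2}e^{-k^2/(8N)}$, and $k^2/N\ge R^2n/\gamma_n^2$ is at least a constant times $R^2$.

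Two simplifications follow.
\begin{itemize}
\item Once you have the bridge reformulation, your Steps 1 and 2 are never used and can be deleted. These are the Stirling/local-CLT bounds on $\P{X_N=-k}$ and $\P{X_{N-m}=-k-x}$.
\item Your ``main obstacle'' does not arise. Since $\gamma_n\to\infty$, $k/N=R/((1-\alpha)\gamma_n)\to 0$, so $k$ is never linear in $N$. In any case the Hoeffding and variance bounds above are non-asymptotic and uniform in $k/N$, so no tilting is needed.
\end{itemize}

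Your route is therefore more elementary and self-contained. The paper's route has the advantage of reusing exactly the local estimates of \cref{lem:meas_change_const}, which it needs anyway for the change of measure in \cref{lem:height_binary_ds}.
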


The following lemma will be used to control the height of $\rforest_\bdstat^{\uparrow V}$. It shows that the probability a sub-binary Bienaym\'e tree is tall decays as the offspring distribution becomes slightly subcritical.
\begin{lem}\label{lem:sub_critical_height_profile}
Let $\alpha\in[0,1)$, $t>0$ and $m$ large enough so that
\begin{equation}\label{eq:m_big}
    tm^{-1}< 1-\alpha.
\end{equation} Let $T_\mu$ be a $\mu$-Bienaym\'e tree with offspring  distribution
\[\mu=\frac{1}{2}(1-\alpha+tm^{-1})\delta_0+\alpha\delta_1+\frac{1}{2}(1-\alpha-tm^{-1})\delta_2.\] 
Then,\[\limsup_{m\rightarrow\infty}m\P{\height( T_\mu)>m} \leq \frac{e^{-t}2t}{(1-\alpha)(1-e^{-t})}\]
and thus \[\lim_{t\rightarrow\infty}\limsup_{m\rightarrow\infty}m\P{\height( T_\mu)>m}=0.\]

\end{lem}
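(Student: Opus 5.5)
We compare the generating-function recursion for the extinction probabilities by generation. Write $f(x)=\frac12(1-\alpha+tm^{-1})+\alpha x+\frac12(1-\alpha-tm^{-1})x^2$ for the offspring generating function. Set $q_k=\P{\height(T_\mu)>k}$, where $\height$ counts generations, since $T_\mu$ here carries no edge lengths. Then $1-q_{k+1}=f(1-q_k)$ and $q_0=1$.

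Expanding $f$ around $1$ gives the exact recursion
\[
q_{k+1}=q_k-\tfrac{1}{2}\bigl(1-\alpha-tm^{-1}\bigr)q_k^2-\tfrac{t}{m}q_k,
\]
because $f'(1)=1-tm^{-1}$ and $f''(1)=1-\alpha-tm^{-1}$. Abbreviate $c=\frac12(1-\alpha-tm^{-1})>0$, which is positive by \eqref{eq:m_big}, and $\theta=t/m$, so that
\[
q_{k+1}=q_k\bigl(1-\theta-cq_k\bigr).
\]
This is a discrete logistic-type map. The plan is to compare it with the ODE $y'=-\theta y-cy^2$, whose solution from $y(0)=1$ is
\[
y(x)=\frac{\theta}{(\theta+c)e^{\theta x}-c}.
\]

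\textbf{Step 1 (comparison).} Show $q_k\le y(k)$ for all $k$. The cleanest route is through reciprocals: put $u_k=1/q_k$. Then
\[
u_{k+1}=\frac{u_k}{1-\theta-c/u_k}\ \ge\ u_k(1+\theta)+c,
\]
using $1/(1-x)\ge 1+x$ applied with $x=\theta+c/u_k$. Solving the linear inequality $u_{k+1}\ge(1+\theta)u_k+c$ with $u_0=1$ gives
\[
u_k\ge (1+c/\theta)(1+\theta)^k-c/\theta .
\]

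\textbf{Step 2 (asymptotics).} Take $k=m$ in Step 1. Since $\theta=t/m$, we have $(1+\theta)^m\to e^t$, and hence
\[
m\,q_m\le \frac{m}{(1+cm/t)(1+t/m)^m-cm/t}=\frac{t}{(t/m+c)(1+t/m)^m-c}\to \frac{t}{c_\infty(e^t-1)},
\]
where $c_\infty=\frac12(1-\alpha)$. The limit equals $\frac{2t}{(1-\alpha)(e^t-1)}=\frac{2te^{-t}}{(1-\alpha)(1-e^{-t})}$, which is exactly the claimed bound on the $\limsup$. Since $q_k$ is decreasing, the same bound holds for $\P{\height>m}$ under any convention that shifts $m$ by one; this costs only a factor $1+o(1)$.

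\textbf{Step 3 (limit in $t$).} The second statement is immediate, because $2te^{-t}/(1-e^{-t})\to0$ as $t\to\infty$.

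The main obstacle is only the direction of the elementary inequalities. The inequality $1/(1-x)\ge1+x$ requires $x=\theta+cq_k<1$, which holds because $\theta+c\le\frac12(1-\alpha+tm^{-1})<1$. Beyond that, the estimate in Step 1 must be kept sharp enough to give exactly the constant $2t e^{-t}/((1-\alpha)(1-e^{-t}))$ and not a weaker one.
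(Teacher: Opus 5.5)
Your argument is correct and is essentially the paper's. Both pass to reciprocals of the survival probabilities and bound the reciprocal recursion below by an affine one: the paper first normalizes by $(1-tm^{-1})^{-k}$ and telescopes, while you use $1/(1-x)\ge 1+x$ and solve $u_{k+1}\ge (1+\theta)u_k+c$ directly, and both reach the constant $\frac{2te^{-t}}{(1-\alpha)(1-e^{-t})}$. The only slip is the initial value $q_0=1$ for $q_k=\P{\height(T_\mu)>k}$. With $q_0=1$ your recursion actually computes $\P{\height(T_\mu)\ge k}$, which dominates $\P{\height(T_\mu)>k}$, so the bound transfers at no cost rather than up to a factor $1+o(1)$.
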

\begin{proof}
We closely follow the proof of Theorem 1 in \cite{kesten1966galton} which deals with critical offspring distributions, and adapt to our setting with a sub-critical offspring distribution. The probability generating function for $\mu$ is
\[f(x)= \frac{1}{2}(1-\alpha+tm^{-1})+\alpha x+\frac{1}{2}(1-\alpha-tm^{-1})x^2,\]
and its iterates are
\[f_0(x)=x,\quad f_{k}(x)=f(f_{k-1}(x)) \textrm{ for } k\geq1.\]
In particular, $f_{m}(0) = \P{\textrm{generation $m$ is empty}}$ (for example, see \cite[Theorem 4.1]{harris1963theory}) so
$\P{\height(T_\mu)\geq m} = 1 - f_{m}(0)$. Setting $u_k = 1- f_k(0)$, we write
\begin{align*}
f_k(0) &= \frac{1}{2}(1-\alpha+tm^{-1})+\alpha f_{k-1}(0)+\frac{1}{2}(1-\alpha-tm^{-1})f_{k-1}(0)^2\\
&= \frac{1}{2}(1-\alpha+tm^{-1})+\alpha (1 - u_{k-1})+\frac{1}{2}(1-\alpha-tm^{-1})(1- u_{k-1})^2\\
&= 1-(1-tm^{-1})u_{k-1} +\frac{1}{2}(1-\alpha-tm^{-1})u_{k-1}^2,
\end{align*}
to obtain the recursion
\[u_k = (1-tm^{-1})u_{k-1} -\frac{1}{2}(1-\alpha-tm^{-1})u_{k-1}^2.\]
To estimate $u_{k}$, set
\begin{align*}
v_{k} &= (1-tm^{-1})^{-k}u_k\\
&=(1-tm^{-1})^{-k}\left( (1-tm^{-1})u_{k-1} -\frac{1}{2}(1-\alpha-tm^{-1})u_{k-1}^2\right)\\ 
&= v_{k-1} -\frac{1}{2}(1-\alpha-tm^{-1})(1-tm^{-1})^{k-2}v_{k-1}^2,
\end{align*}
and consider the difference
\begin{align*}
\frac{1}{v_{m}}-\frac{1}{v_0}  &= \sum_{k=1}^{m} \pran{\frac{1}{v_k}-\frac{1}{v_{k-1}}}\\
&= \sum_{k=1}^{m}\frac{v_{k-1}-\left( v_{k-1} -\frac{1}{2}(1-\alpha-tm^{-1})(1-tm^{-1})^{k-2}v_{k-1}^2\right)}{v_kv_{k-1}}\\
&=\sum_{k=1}^{m}\frac{\frac{1}{2}(1-\alpha-tm^{-1})(1-tm^{-1})^{k-2}}{1-\frac{1}{2}(1-\alpha-tm^{-1})(1-tm^{-1})^{k-2}v_{k-1}}\\
&=\frac{1}{2}\cdot\frac{1-\alpha-tm^{-1}}{1-tm^{-1}}\sum_{k=0}^{m-1}\frac{(1-tm^{-1})^{k}}{1-\frac{1}{2}(1-\alpha-tm^{-1})(1-tm^{-1})^{k-1}v_{k}}. \tageq\label{eq:diff}
\end{align*}

Furthermore, observing that $x:=\frac{1}{2}(1-\alpha-tm^{-1})(1-tm^{-1})^{k-1}v_{k}\in(0,1)$ under \eqref{eq:m_big}, applying $1/(1-z)\geq 1$ for $z \in (0,1)$, and using $1-z\leq e^{-z}$ for $z\geq-1$, we bound \eqref{eq:diff} from below by
\begin{align*}
\frac{1}{2}\cdot\frac{1-\alpha-tm^{-1}}{1-tm^{-1}}\sum_{k=0}^{m-1}(1-tm^{-1})^{k}&=\frac{1}{2}\cdot\frac{1-\alpha-tm^{-1}}{1-tm^{-1}}\cdot\frac{1-(1-tm^{-1})^{m}}{tm^{-1}}\\
&\geq\frac{1}{2}\cdot\frac{1-\alpha-tm^{-1}}{1-tm^{-1}}\cdot\frac{1-e^{-t}}{tm^{-1}}.
\end{align*}

Noting that $v_0= u_0=1$, then
\begin{align*}
\P{\height(T_\mu)>m}&=u_{m} 
=
\pran{1-tm^{-1}}^{m}\cdot v_{m}\\
&\leq\pran{1-tm^{-1}}^{m}\cdot\frac{1}{1+\frac{1}{2}\cdot\frac{1-\alpha-tm^{-1}}{1-tm^{-1}}\cdot\frac{1-e^{-t}}{tm^{-1}}}\\
&=\frac{1}{m}\cdot\frac{e^{-t}2t}{2tm^{-1}+\frac{(1-\alpha-tm^{-1})}{1-tm^{-1}}(1-e^{-t})}.
\end{align*}
Hence,
\begin{align*}
\limsup_{n\rightarrow \infty} m\P{\height(T_\mu)>m} 
&\leq \limsup_{n\rightarrow\infty} \frac{e^{-t}2t}{2tm^{-1}+\frac{(1-\alpha-tm^{-1})}{1-tm^{-1}}(1-e^{-t})}\\
& = \frac{e^{-t}2t}{(1-\alpha)(1-e^{-t})},
\end{align*}
and so,
\[\lim_{t\rightarrow\infty}\limsup_{n\rightarrow\infty}m\P{\height( T_\mu)>m} =\lim_{t\rightarrow\infty}\frac{e^{-t}2t}{(1-\alpha)(1-e^{-t})} =0.\]
\end{proof}

The key output of this subsection towards the proof of \cref{lem:prop_tall_trees} is a tail bound controlling the height and size of $\rforest_\bdstat^{\uparrow V}$. Equipped with the previous two lemmas we are now prepared to conclude the subsection by obtaining the desired bound.

\begin{lem}\label{lem:height_binary_ds}
Let $\gamma_n\rightarrow\infty$ so that $\gamma_n=O(\sqrt{n})$. Fix $\alpha\in[0,1)$ bounded away from $1$ and let $R>0$. Let $\bdstat$ be a binary degree statistic with size $n(1-\alpha)$ and $\frac{Rn}{\gamma_n}$ components. For $\rforest_\bdstat\in_u\dsForest(\bdstat)$ and uniformly chosen vertex $V$, we have:
\begin{enumerate}[(i)]
\item Let $\eps,\delta>0$, then for sufficiently large $R$ we have 
\[\P{\height(\rforest_\bdstat^{\uparrow V})>\delta\gamma_n}<\frac{\eps}{\delta}\gamma_n^{-1},\]
for all $n$ large enough.
\item Let $\eps>0$. There exists constant $c>0$ so that for large enough $y$ (and in the case $\gamma_n=\sqrt{n}$, we additionally, independent of $n$, need $R$ sufficiently large), 
    \[\P{\#\rforest_\bdstat^{\uparrow V}\geq y}<cy^{-1/2} + \eps n^{-1/2},\]
for all $n$ large enough.
\end{enumerate}
\end{lem}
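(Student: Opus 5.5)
We reduce both parts to statements about a conditioned random walk via \cref{lem:bridge_shift}. Write $N=n(1-\alpha)$ and $k=Rn/\gamma_n$. By \eqref{eq:uniform_binary}, $\rforest_\bdstat$ is a binary Bienaym\'e forest with $N$ vertices and $k$ components; we take the critical binary law $\tfrac12\delta_0+\tfrac12\delta_2$, so the \L ukasiewicz steps are $\pm1$. The subtree $\rforest_\bdstat^{\uparrow V}$ is encoded by the excursion above the running infimum starting at the uniform time $V$. By \cref{lem:bridge_shift}, this has the law of the first excursion (up to $\tau_{-1}$) of a simple random walk $X$ conditioned on $X_N=-k$. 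So both (i) and (ii) become questions about the law of the initial excursion of a bridge with strong negative drift $-k/N\asymp R/\gamma_n$.

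For (i), we first split on whether the excursion is long, using \cref{lem:first_exc_cbridge}: $\P{\tau_{-1}>\lceil N/2\rceil \mid X_N=-k}\le \eps n^{-1/2}\le \eps'\gamma_n^{-1}$, since $\gamma_n=O(\sqrt n)$. On $\{\tau_{-1}\le N/2\}$, we change measure from the bridge to an unconditioned walk with step law tilted to have drift $-k/N$. This tilted walk is exactly the \L ukasiewicz walk of the subcritical law of \cref{lem:sub_critical_height_profile} with $\alpha=0$, with $tm^{-1}=2k/N\approx 2R/((1-\alpha)\gamma_n)$.

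The Radon--Nikodym derivative of the bridge with respect to the tilted walk, restricted to $\mathcal F_{\tau_{-1}}$ on $\{\tau_{-1}\le N/2\}$, is
\[
\frac{\P{X_{N-\tau}=-k+1}}{\P{X_N=-k}}
\]
under the tilt. A local CLT shows that this ratio is bounded by a constant $C$ uniformly for $\tau\le N/2$: both probabilities are of order $N^{-1/2}$ at the tilted mean. Hence
\[
\P{\height(\rforest_\bdstat^{\uparrow V})>\delta\gamma_n}\le C\,\P{\height(T_{\tilde\mu})>\delta\gamma_n}+\eps'\gamma_n^{-1}.
\]
We then apply \cref{lem:sub_critical_height_profile} with $m=\delta\gamma_n$ and $t\approx 2R\delta/(1-\alpha)$. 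This gives $\P{\height(T_{\tilde\mu})>m}\le \frac{1}{\delta\gamma_n}\cdot\frac{e^{-t}2t}{(1-e^{-t})}(1+o(1))$. Choosing $R$ large makes $t$ large, so the bound is $\le \eps/(\delta\gamma_n)$.

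For (ii), we follow the same route for the event $\{\tau_{-1}\ge y\}$. On $\{\tau_{-1}\le N/2\}$ the change of measure gives $\P{\#\ge y}\le C\,\P{\tau_{-1}(\tilde X)\ge y}+\eps n^{-1/2}$. For the tilted walk, which has nonpositive drift, $\P{\tau_{-1}\ge y}$ is at most the critical value, which is $\le c y^{-1/2}$ by the classical hitting-time tail for simple random walk (Kemperman's formula). When $\gamma_n=\sqrt n$, the lemma \cref{lem:first_exc_cbridge} is where $R$ large is needed; otherwise no extra condition is required.

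The main obstacle is the uniform bound on the Radon--Nikodym ratio. The drift $-k/N$ is of order $R/\gamma_n$, which may be as large as $R/\sqrt n$ times a constant, or much larger when $\gamma_n\ll\sqrt n$. The local CLT must therefore be applied to the exponentially tilted walk, whose variance stays $\asymp 1$. This requires checking that $(N-\tau)^{-1/2}/N^{-1/2}\le\sqrt2$, and that the displacement $-k+1$ versus $-k$ sits within $O(\sqrt N)$ of the tilted mean at both times. We will verify this via a uniform local limit theorem for triangular arrays of tilted $\pm1$ steps.
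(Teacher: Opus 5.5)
Your proposal is correct and is essentially the paper's proof. The paper likewise reduces to the bridge via \cref{lem:bridge_shift}, changes measure on $\{\tau_{-1}\le N/2\}$ to the subcritical binary walk (it samples the forest from that law directly, which gives the same bridge as your critical-then-tilt route), bounds the likelihood ratio uniformly by explicit Stirling estimates (\cref{lem:meas_change_const}) instead of a local CLT, and concludes with \cref{lem:sub_critical_height_profile}, \cref{lem:first_exc_cbridge}, and a comparison with the critical tree for (ii).

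Two small corrections. The matching tilt is $tm^{-1}=k/N=R/((1-\alpha)\gamma_n)$, so $t=R\delta/(1-\alpha)$; with $2k/N$ the tilted mean would sit at $-2k$ and the ratio bound would fail. Also, the numerator $\P{X_{N-\tau}=-k+1}$ need not be within $O(\sqrt N)$ of its tilted mean, but there you only need the uniform bound $\sup_x\P{X_j=x}=O(j^{-1/2})$; the local CLT lower bound is required only for the denominator.
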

\begin{proof} 
Let $\mu$ be the offspring distribution 
\[\mu= \frac{1}{2}\left(1+\frac{R}{(1-\alpha)\gamma_n}\right)\delta_0 + \frac{1}{2}\left(1-\frac{R}{(1-\alpha)\gamma_n}\right)\delta_2.\] 
Let $\bdstat=(\bdstat_i)_{i\geq0}$ be a binary degree statistic of size $n(1-\alpha)$ corresponding to $\frac{Rn}{\gamma_n}$ roots. From the observation made in \eqref{eq:uniform_binary}, we may sample $\rforest_\bdstat\in_u\dsForest(\bdstat)$ by sampling a \L ukasiewicz path $W=(W_0, W_1,\ldots, W_{n(1-\alpha)})$ of a $\mu$-Bienaym\'e forest conditioned to have size $n(1-\alpha)$ and $\frac{Rn}{\gamma_n}$ components. Let $V$ be a uniform vertex in $\rforest_\bdstat$, then the subtree rooted at $\rforest_\bdstat^{\uparrow V}$ is encoded by the first excursion of a uniform cyclic shift of $W$, that is, for a uniformly chosen time $I\in_{u}[n(1-\alpha)]$, $\rforest_\bdstat^{\uparrow V}$ is encoded by the first excursion of $W^{(I)}$. Denote this excursion by $e$. 

Take $Y=(Y_k)_{k\geq0}$ to be a simple random walk where $X_0=0$ and the steps are independently distributed as $K-1$ where $K\sim\mu$. Let $e'$ be the first excursion of $Y$ above its running infimum. Then $e'$ encodes a $\mu$-Bienaym\'e tree $T_{\mu}$.

By \cref{lem:bridge_shift},
    \[W^{(I)} \overset{d}{=} (Y_0,Y_1,\ldots, Y_{n(1-\alpha)})\ \bigg|\ Y_{n(1-\alpha)}=-\frac{Rn}{\gamma_n}.\]
Both parts of the proof will rely on the following change of measure. We compare the behaviour of $\rforest_\bdstat^{\uparrow V}$ to an unconditioned Bienaym\'e tree by taking $f$ a bounded measurable function with respect to $Y$, and estimating $\E{f(e)},\ \E{f(e')}$. First observe,
\begin{align*}
    \E{f(e)}
    &=
    \e\left[f(e') \ \Big| \ Y_{n(1-\alpha)}=-\frac{Rn}{\gamma_n}\right]
    =
    \E{f(e')\cdot \frac{\I{Y_{n(1-\alpha)}=-\frac{Rn}{\gamma_n}}}{\P{Y_{n(1-\alpha)}=-\frac{Rn}{\gamma_n}}}}\\
    &=
    \E{f(e')\frac{\probC{Y_{n(1-\alpha)}=-\frac{Rn}{\gamma_n}}{Y_1,\ldots,Y_{|e'|}}}{\P{Y_{n(1-\alpha)}=-\frac{Rn}{\gamma_n}}}}.
\end{align*}
Now, given the length of the first excursion $e'$, then 
\[\Cprob{Y_{n(1-\alpha)}=-\frac{Rn}{\gamma_n}}{Y_1,\ldots,Y_{|e'|}}=\P{Y_{n(1-\alpha)-|e'|}=-\frac{Rn}{\gamma_n}+1}.\] 
We condition on whether $|e'|\leq n(1-\alpha)/2$, and in particular, \cref{lem:meas_change_const} yields a constant $C>0$ so that for all $n$ large enough, all $\ell\leq n(1-\alpha)/2$

$$
\frac{\P{Y_{n(1-\alpha)-\ell}=-\frac{Rn}{\gamma_n}+1}}{\P{Y_{n(1-\alpha)}=-\frac{Rn}{\gamma_n}}}
\leq C.
$$
Therefore,

\begin{align*}
    \E{f(e)}&=\E{f(e)\I{|e|\leq \frac{n(1-\alpha)}{2}}} + \E{f(e)\I{|e|>\frac{n(1-\alpha)}{2}}}\\
    &\leq C\E{f(e')} + \|f\|_\infty\P{|e|>\frac{n(1-\alpha)}{2}}.\tageq\label{eq:measure_change}
\end{align*}
We use \eqref{eq:measure_change} to prove the two statements of the lemma.

For part (i), let $\eps,\delta>0$ and take $f(\cdot)$ to be the indicator function that the inputted excursion encodes a tree with height bigger than $\delta\gamma_n$. Note that excursions $e,\ e'$ encode the respective trees $\rforest_\bdstat^{\uparrow V},\ T_\mu$. Then for all $n$ large enough, \eqref{eq:measure_change} becomes
\begin{align*}
\P{\height(\rforest_\bdstat^{\uparrow V})>\delta\gamma_n} &\leq C\P{\height(T_\mu)>\delta\gamma_n} +1\cdot \probC{\tau_{-1}(Y)>\frac{n(1-\alpha)}{2}}{Y_{n(1-\alpha)}=-\frac{Rn}{\gamma_n}}\\
&< \frac{\eps}{2\delta}\gamma_n^{-1} + \frac{\eps}{2\delta} n^{-1/2}<\frac{\eps}{\delta}\gamma_n^{-1},
\end{align*}
for $R$ and $n$ large enough by \cref{lem:sub_critical_height_profile} and \cref{lem:first_exc_cbridge}, where the conditioning allows us to apply the latter.

Now for part (ii), let $\eps>0$ and set $R$ to satisfy \cref{lem:first_exc_cbridge}. Consider the offspring distribution $\nu = \frac{1}{2}\delta_0 + \frac{1}{2}\delta_2$. Then $\mu\preceq_{st}\nu$, so there exists a coupling $(A,B)$ so that $A\sim \mu$ and $B\sim\nu$ and $A\leq B$ almost surely. We can simultaneously construct Bienaym\'e trees $T_\mu$ and $T_\nu$ by taking independent copies of $(A,B)$ and assign it as the respective offspring counts for every potential vertex in the Ulam--Harris tree. By induction on the generations, every vertex present in $T_\mu$ also appears in $ T_\nu$, so $T_\mu\subseteq T_\nu$ almost surely. Thus, we get stochastic domination on the sizes of the trees, $\P{\#T_\mu\geq y}\leq \P{\#T_\nu\geq y}$ for all $y\in\R$.

Let $f(\cdot)$ be the indicator function that the inputted excursion encodes a tree with size at least $y$. Again, taking $e,\ e'$ which encode trees $\rforest_\bdstat^{\uparrow V},\ T_\mu$, then \eqref{eq:measure_change} becomes
\begin{align*}
\P{\#\rforest_\bdstat^{\uparrow V}\geq y} &\leq C\P{\#T_\mu\geq y} +1\cdot \probC{\tau_{-1}(Y)>\frac{n(1-\alpha)}{2}}{Y_{n(1-\alpha)}=-\frac{Rn}{\gamma_n}}\\
&\leq C\P{\#T_\nu\geq y} + \eps n^{-1/2}\\
&< C'y^{-\frac{1}{2}} + \eps n^{-1/2},
\end{align*}
where we go from the first to the second line using stochastic domination and \cref{lem:first_exc_cbridge}, which can be applied due to the conditioning. The last line follows from \cite[Proposition 2.3]{angel2025largest} for large enough $y$. 

\end{proof}

\subsection{Transferring bounds and the proof of \cref{lem:prop_tall_trees}}\label{sec:transfer_bounds}

The next lemma allows us to transfer the tail bounds for uniform forests with a fixed binary degree statistic to uniform forests with fixed sub-binary degree statistic.
\begin{lem}\label{lem:height_subbinary_ds}
    Fix a sub-binary degree statistic $\dstat = (\dstat_0, \dstat_1, \dstat_2, 0, 0,\ldots)$ with size $n$ that corresponds to $k$ components. Further assume there is a $\beta<1$ so that $\dstat_1\leq \beta n$. Set $\bdstat=(\dstat_0,0,\dstat_2,0,0,\ldots)$ to be the binary degree statistic obtained from $\dstat$ by replacing $\dstat_1$ with 0. Then, for $\rforest_\dstat\in_u\dsForest(\dstat),\ \rforest_\bdstat\in_u\dsForest(\bdstat)$ and respectively chosen uniform vertices $V, V'$ we have for $h>0$,
    \begin{align*}
        \P{\height\left( \rforest_\dstat^{\uparrow V}\right)> h}
        < & \P{\height\left(\rforest_\bdstat^{\uparrow V'}\right)>\frac{1-\beta}{4}h}
        +
        \P{\# \rforest_\bdstat^{\uparrow V'}>h^3}\\
        &+
        h^3\exp\set{-\frac{h(1-\beta)}{8}}.
    \end{align*}
\end{lem}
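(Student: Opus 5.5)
We will couple $\rforest_\dstat$ and $\rforest_\bdstat$ by inserting out-degree-one vertices. A uniform forest with degree statistic $\dstat$ is obtained from a uniform forest with degree statistic $\bdstat$ by subdividing edges and stacking extra vertices above roots. Concretely, each of the $\dstat_1$ unary vertices is assigned to one of the $m:=\dstat_0+\dstat_2$ vertices of $\rforest_\bdstat$, with all assignments (as multisets of chains) equally likely. Each binary-forest vertex $w$ then becomes a chain of $1+N_w$ vertices, where $N_w$ is the number of unary vertices placed on it. The vector $(N_w)$ is a uniform weak composition of $\dstat_1$ into $m$ parts, independent of $\rforest_\bdstat$; this is a Pólya-urn / Bose--Einstein allocation. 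To check that the resulting forest is uniform on $\dsForest(\dstat)$, we count: each sub-binary forest arises from exactly one pair (binary forest, composition).

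Under this coupling, take $V$ uniform in $\rforest_\dstat$. It lies in the chain of some binary vertex $V'$. Then $\height(\rforest_\dstat^{\uparrow V})$ is at most the maximum, over leaves $u$ of $\rforest_\bdstat^{\uparrow V'}$, of the weighted path length $\sum_{w\in\llbracket V',u\rrbracket}(1+N_w)$. The law of $V'$ is not exactly uniform, because it is size-biased by $1+N_w$. We handle this either by noting that exchangeability of $(N_w)$ makes the chain containing $V$ have a bounded Radon--Nikodym factor, or more simply by choosing $V$ as follows: pick $V'$ uniform and then a uniform point of its chain. The two choices differ by a bounded factor that we absorb; the precise route will be decided when writing out the argument, and the constants in the statement leave room.

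We then split on three events. First, if $\height(\rforest_\bdstat^{\uparrow V'})>\frac{1-\beta}{4}h$, we are in the first term. Second, if $\#\rforest_\bdstat^{\uparrow V'}>h^3$, we are in the second term. Otherwise the subtree has at most $h^3$ leaves, and each leaf path $Q$ has $L\le \frac{1-\beta}{4}h$ binary vertices. Its weighted length is $L+\sum_{w\in Q}N_w$, and $\sum_{w\in Q}N_w$ is a sum of $L$ coordinates of a uniform composition of $\dstat_1\le\beta n$ into $m\ge(1-\beta)n$ parts. That sum is stochastically dominated by a sum of $L$ independent geometric variables with mean $\dstat_1/m\le \beta/(1-\beta)$. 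A Chernoff bound then gives that the probability of exceeding $h-L$ is at most $\exp\{-h(1-\beta)/8\}$. A union bound over the at most $h^3$ leaf paths yields the third term.

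The main obstacle is the Chernoff step. We need $(1-\beta)h/4$ geometrics of mean $\le\beta/(1-\beta)$ to exceed $\ge 3h/4$ only with probability $e^{-h(1-\beta)/8}$. Their mean sum is $\le \beta h/4$, so the deviation is linear in $h$, but the rate must be tracked carefully. We will first try the negative-binomial tail via the moment generating function evaluated at $\theta=(1-\beta)/2$. We also need the negative association of the multinomial-type coordinates, or the exact Beta-mixture representation of Pólya urns, to justify the comparison with independent geometrics.
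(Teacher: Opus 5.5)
Your architecture (encode the unary vertices by a weak composition over the collapsed binary forest, split on the height and size of the collapsed subtree, union bound over at most $h^3$ leaf paths, Chernoff bound per path) is the paper's. However, your treatment of the marked vertex has a genuine gap.

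The diagnosis is off. In your coupling $V'$ is \emph{exactly} uniform given $\rforest_{\bdstat}$, since $\mathbf{E}[1+N_w]=1+\dstat_1/m=n/m$ for every $w$ (with $m=n-\dstat_1$). What is not uniform is the law of the composition given $(\rforest_{\bdstat},V')$: it is size-biased at the coordinate $V'$, with density $m(1+N_{V'})/n$ relative to the uniform composition. This density grows linearly in $N_{V'}$, which can be as large as $\dstat_1$, so there is no bounded Radon--Nikodym factor. The uniform law of $V$ has the same density relative to your alternative ``uniform $V'$, then a uniform point of its chain'', so that substitution cannot be undone at bounded cost either. Even a bounded factor could not be absorbed, because the first two terms of the statement carry coefficient exactly $1$. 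Consequently your claim that $\sum_{w\in Q}N_w$ is a sum of coordinates of a uniform composition fails for every leaf path $Q$, since each starts at $V'$.

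The missing idea is the paper's bijection between marked forests $(\rf,v)$, $\rf\in\dsForest(\dstat)$, and pairs consisting of a marked binary forest $(\rf',v')$, $\rf'\in\dsForest(\bdstat)$, and a weak composition of $\dstat_1$ into $n-\dstat_1+1$ parts. The extra part splits the unary chain attached to $v'$ at $v$: the piece between $v$ and $v'$ counts towards $\height(\rf^{\uparrow v})$, and the piece above $v$ does not. Every fibre has size $\binom{n}{n-\dstat_1}$, so a uniform $(\rforest_{\dstat},V)$ collapses to a uniform marked binary forest together with an independent uniform composition. Equivalently, your size-biased $N_{V'}$ is the sum of two coordinates of a uniform composition into $m+1$ parts. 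The number of vertices on each $V$-to-leaf path is then the number of vertices on the corresponding path of $\rforest_{\bdstat}^{\uparrow V'}$ plus equally many coordinates of a uniform composition (one more if you keep your cruder bound by the whole chain above $V'$). No change of measure is needed.

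A secondary point: a sum of coordinates of a uniform composition is \emph{not} stochastically dominated by a sum of as many independent geometrics with the same mean. Two stochastically ordered laws with equal means coincide, whereas the former is bounded by $\dstat_1$. What you need, and what holds, is domination of exponential moments. The paper obtains it by reading the path length as the time of the $L$-th white draw, without replacement, from an urn with $n-\dstat_1$ white and $\dstat_1$ black balls. It then uses that the hypergeometric count is smaller in convex order than the binomial with the same mean, so the binomial lower-tail Chernoff bound transfers. Negative association together with the marginal bound $\mathbf{P}(w_i\ge j)\le(\dstat_1/n)^j\le\beta^j$ would serve equally well.

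With both repairs, your estimate using the geometric moment generating function at $\theta=(1-\beta)/2$, with threshold $h-L\ge 3h/4$ and $L\le(1-\beta)h/4+1$, does give the rate $(1-\beta)h/8$. This holds up to the same $\pm1$ bookkeeping that the paper also glosses over.
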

\begin{proof}
We closely follow the argument in \cite[Lemma 15]{height}. Consider the following sets of plane forests with a marked vertex:
\[\mathcal{S}=\set{(\rf ,v): \rf\in\dsForest(\dstat), v\in \rf} \text{ and } \mathcal{R}=\set{(\rf' ,v'): \rf\in\dsForest(\bdstat), v'\in \rf'},\]
and the set of weak compositions of $\dstat_1$ into $n-\dstat_1+1$ parts:
    \[\mathcal{W} =\set{w=(w_1, \ldots, w_{n-{\dstat_1}+1}) ~: w_i\in\N_0, \sum_{i=1}^{n-\dstat_1+1}=\dstat_1}.\]
We show there is a bijection between $\mathcal{S}$ and $\mathcal{R}\times\mathcal{W}$.

Given a plane forest with a marked vertex $(\rf, v)\in\mathcal{S}$, we can obtain a weak composition $w_{(\rf,v)}=(w_1, \ldots, w_{n-\dstat_1+1})$ and a plane forest with a marked vertex $(\rf^-, v^-)$ from $(\rf, v)$ by \emph{collapsing} all out-degree-one vertices as follows.
Let $v_1$ be the first weak descendant of $v$ (i.e a descendent of $v$ or $v$ itself) whose out-degree is not one, and fix an ordering $v_1\prec v_2\prec\cdots\prec v_{n-\dstat_1}$ of the non-out-degree-one vertices so that $v_2\prec\cdots\prec v_{n-\dstat_1}$ respects the lexicographic order in $\rf$. 

To obtain $w_{(\rf,v)} = (w_1,\ldots, w_{n-\dstat_1+1})$, let $w_1=d_\rf(v, v_1)$ and take $w_{n-\dstat_1+1}$ to be the length of the longest path from $v$ to an ancestor so that all vertices on this path have out-degree-one, expect possibly $v$. For $i\in\set{2,\ldots, n-\dstat_1}$, then $w_i$ is the length of the longest path from $v_i$ to an ancestor that, after the first vertex, only contains out-degree-one vertices. To obtain the marked collapsed tree $(\rf^-,v^-)$, mark $v^-$ to be $v_1$. Next, obtain 
 $\rf^-$ by identifying every out-degree-one vertex with its first descendant that does not have out-degree one. This results in $(\rf^-, v^-, w_{(\rf,v)})$. See \cref{fig:collapse} for an example. 
 
\begin{figure}
    \centering
\includegraphics[width=.7\linewidth]{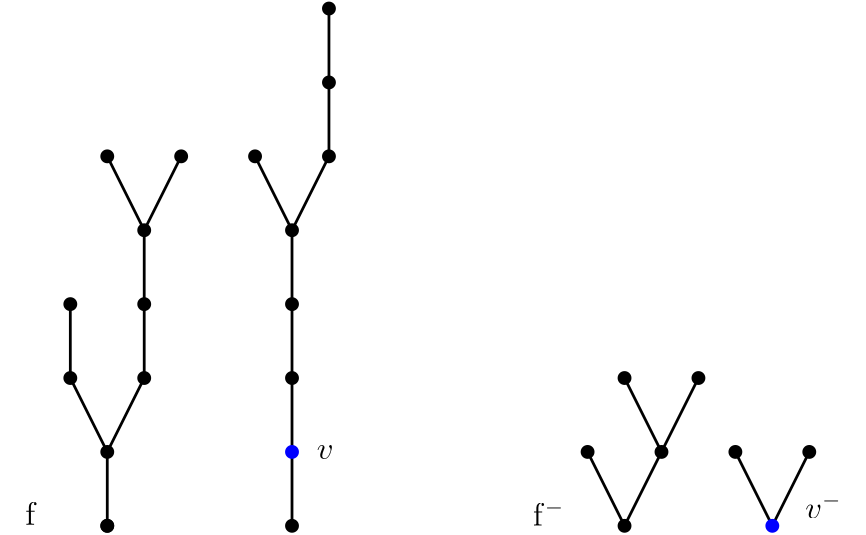}
    \caption{Left: A marked plane forest $(\rf, v)$. Right: The marked plane forest $(\rf^-, v^-)$ obtained from $(\rf, v)$. The marked vertex $v^-$ is the first non-degree-one weak descendant of $v$, the forest $\rf^-$ is obtained from $\rf$ by collapsing all degree-one vertices, and the weak composition is $w_{(\rf,v)}=(3,1,1,2,0,0,0,2,1)$.}
    \label{fig:collapse}
\end{figure}

It is clear that this is a bijection. Given $(\rf^-, v^-, w_{(\rf,v)})$ we may recover $(\rf, v)$ as follows. Set $v_1=v^-$ and order the remaining vertices $v_2,\ldots, v_{n-\dstat_1}$ in lexicographic order. We use $w_{(\rf,v)}=(w_1, \ldots, w_{n-\dstat_1+1})$ to place out-degree-one vertices. Place $w_1+w_{n-\dstat_1+1}$ vertices on the ancestral edge of $v_1$ and instead place the mark on the $w_1$-th descendant of $v_1$. 
For all other vertices $v_i$, place $w_i$ vertices on the ancestral edge of $w_i$. If any of the vertices $v_i$ is a root, then simply attach the $w_i$ new vertices as ancestors of $v_i$.

Note that sampling uniformly from $\mathcal{S}$ is the same as taking a uniform sample $\rf\in\dsForest(\dstat)$ and then sampling a uniform vertex $v\in\rf$, since every forest in $\dsForest(\dstat)$ has the same number of vertices. Similarly, a uniform sample from $\mathcal{R}$ has the same distribution as uniformly sampling from $\dsForest(\bdstat)$ and then uniformly sampling a vertex.

Now, for a marked forest $(\rf',v')\in\mathcal{R}$, there are exactly $\#\mathcal{W}=\binom{n}{n-\dstat_1}$ marked forests $(\rf,v)\in\mathcal{S}$ so that $(\rf^-,v^-)=(\rf',v')$. It follows that if $(F,V)\in_u\mathcal{S}$, then its collapse $(F^-,V^-)$ is also a uniform sample from $\mathcal{R}$. Furthermore, conditioning on $(F^-,V^-) =(\rf',v')$, then the distribution of $(F,V)$ is described by choosing a uniform composition $w\in\mathcal{W}$ and assigning out-degree-one vertices to ancestral edges as done in the above, which will consequently determine the marked vertex $V$. 

We are interested in bounding the height of $F^{\uparrow V}$, where $(F,V)$ is a uniform sample from $\mathcal{S}$. Let $h\geq 1$ and choose $(\rf',v')\in\mathcal{R}$ so that $\height\left(\rf'^{\uparrow v'} \right) \leq \frac{(1-\beta)h}{4}$ and $\#\rf'^{\uparrow v'}\leq h^3$. Consider a $v'$-to-leaf path $U=(v' = u_1,\ldots, u_m)$ in $\rf'^{\uparrow v'}$, then it follows from the above construction that conditional on $(F^-, V^-) = (\rf',v')$, the total number of vertices lying on the corresponding $V$-to-leaf path in $F^{\uparrow V}$ is determined by how many vertices get placed between $u_1$ and $V$ and on the ancestral edges of  $u_2, \ldots, u_m$. Thus, the number of vertices added to the corresponding $V$-to-leaf path in $F^{\uparrow V}$ is distributed as the sum of the $m$ parts of a uniformly sampled weak composition $W\in\mathcal{W}$. Note that we may take the first $m$ parts of $W$ by exchangeability of the $W_i$s. Thus, the total number of vertices on the corresponding $V$-to-leaf path in $F^{\uparrow V}$ has the same distribution as the random variable $X_m$ described in the following experiment: consider an urn with $n-\dstat_1$ white balls and $\dstat_1$ black balls. Repeatedly sample without replacement from the urn until it is empty. For $0\leq i\leq n-1$, write $Y_i$ for the total number of white balls drawn after the $i$-th sample, and set $Y_{n}=n-\dstat_1$. Then let $X_m=\min\set{i~:~Y_i=m}$. 

We also introduce the with replacement counterpart of $Y_i$, where for $0\leq i\leq n$ let $Y_i^*\sim \text{Binomial}(i, \frac{n-\dstat_1}{n})$, and note that $Y_i$ and $Y_i^*$ observe the same mean. Furthermore by \cite[Proposition 20.6]{MR883646}, $Y_i^*$ is a dilation of $Y_i$, which consequently implies that $\E{f(Y_i)}\leq \#{f(Y_i^*)}$ for convex functions $f:\R\rightarrow\R$. Particularly, concentration inequalities obtained via bounding exponential moments for $Y_i^*$ can be applied to $Y_i$. 

Now, for $x\geq 2$ we have
\begin{align*}
\P{X_m>\frac{xmn}{n-\dstat_1}}
&\leq
\P{Y_{\left\lceil \frac{xmn}{n-\dstat_1}\right\rceil}\leq m}
\leq
\P{Y_{\left\lceil \frac{xmn}{n-\dstat_1}\right\rceil}\leq \frac{1}{2}\E{Y_{\left\lceil \frac{xmn}{n-\dstat_1}\right\rceil}}}\\
&\leq\exp\set{-\frac{xm}{8}},
\end{align*}
where the second line follows from $\e\Big[Y_{\left\lceil \frac{xmn}{n-\dstat_1}\right\rceil}\Big] = xm$. The last line uses the fact that $\E{Y_i}=\E{Y_i^*}$, so applying a Chernoff bound \cite[Theorem 2.3(c)]{mcdiarmid98concentration} to \\$\p\Big\{Y^*_{\left\lceil \frac{xmn}{n-\dstat_1}\right\rceil}\leq \frac{1}{2}\e\Big[{Y^*_{\left\lceil \frac{xmn}{n-\dstat_1}\right\rceil}}\Big]\Big\}$ also gives the same bound for $\p\Big\{ Y_{\left\lceil \frac{xmn}{n-\dstat_1}\right\rceil}\leq \frac{1}{2}\e \Big[Y_{\left\lceil \frac{xmn}{n-\dstat_1}\right\rceil}\Big]\Big\}$.

In particular, for $y\geq\frac{2mn}{n-\dstat_1}$, then
$\P{X_m\geq y}\leq \exp\set{-\frac{y(n-\dstat_1)}{8n}}$.
Since the number of vertices on a fixed $v'$-to-leaf path is one larger than the distance between $v'$ and that leaf, then it follows that for any leaf $l$ in $\rf'^{\uparrow v'
}$ and for $y\geq\frac{2(d_{\rf'^{\uparrow v'}}(v', l)+1)n}{n-\dstat_1}$, we have
\begin{align*}
\probC{d_{F^{\uparrow V}}(V, l)\geq y}{(F^-, V^-)= (\rf',v')} &\leq \exp\set{-\frac{(y+1)(n-\dstat_1)}{8n}}\\
&\leq\exp\set{-\frac{y(n-\dstat_1)}{8n}},
\end{align*}
where we abuse notation and use $l$ to denote both the leaf in a $v'$-to-leaf path in $\rf'^{\uparrow v'}$ and the corresponding leaf in $F^{\uparrow V}$.

As we chose $(\rf',v')$ so that $\height\left(\rf'^{\uparrow v'} \right) \leq \frac{(1-\beta)h}{4}$, then setting $y=h$ satisfies
\[\frac{2(d_{\rf'^{\uparrow v'}}(v', l)+1)n}{n-\dstat_1}\leq \frac{4n}{n-\dstat_1}\cdot \frac{(1-\beta)h}{4}\leq  \frac{(1-\beta)hn}{n-\dstat_1}\leq h,\] where we use the assumption $\dstat_1\leq \beta n$. Furthermore, the choice of $(\rf',v')$ satisfies $\#\rf'^{\uparrow v'}\leq h^3$, so a union bound over all leaves gives
\begin{equation}\label{eq:dilation_argument}
    \Cprob{\height\left(F^{\uparrow V}\right)\geq h}{(F^-, V^-)= (\rf',v')} <h^3\exp\set{-\frac{h(n-\dstat_1)}{8n}}.
\end{equation}
Going back to estimating the height of $F^{\uparrow V}$ and applying a union bound on the height and size of its collapse $(F^-, V^-)$, 

\begin{equation}\label{eq:proba_high_fringe_forestdecompo}
\begin{aligned}
    \P{\height\left(F^{\uparrow V}\right)>h}
    \leq& \P{\height\left({F^-}^{\uparrow V^-} \right)>\tfrac{(1-\beta)h}{4}} + \P{\#{F^-}^{\uparrow V^-}>h^3} \\
    &  \hspace{-3cm}+ \probC{\height\left(F^{\uparrow V}\right)>h}{\height\left({F^-}^{\uparrow V^-} \right)\leq\tfrac{(1-\beta)h}{4},\ \#{F^-}^{\uparrow V^-}\leq h^3}.
\end{aligned}
\end{equation}
We let $\cA=\{ (\rf', v'):~\height\left(\rf'^{\uparrow v'}\right)\leq\frac{(1-\beta)h}{4}, \# \rf'^{\uparrow v'}\leq h^3 \}$ and decompose
\begin{equation}
\begin{aligned}
    &\probC{\height\left(F^{\uparrow V}\right)>h}{\height\left({F^-}^{\uparrow V^-} \right)\leq\tfrac{(1-\beta)h}{4},\ \#{F^-}^{\uparrow V^-}\leq h^3}\\
    &=
    \sum_{(f',v')\in \cA}  \Cprob{\height\left(F^{\uparrow V}\right)\geq h}{(F^-, V^-)= (\rf',v')}
    \cdot
    \P{(F^-,V^-)=(\rf',v')}.
\end{aligned}
\end{equation}
Bounding the first probability in the sum with \eqref{eq:dilation_argument} and plugging the obtained control into \eqref{eq:proba_high_fringe_forestdecompo} yields
\begin{align*}
    \P{\height\left(F^{\uparrow V}\right)>h}
    \leq &
    \P{\height\left({F'}^{\uparrow V'} \right)>\frac{(1-\beta)h}{4}} + \P{\#{F'}^{\uparrow V'}>h^3}\\
    &+ h^3\exp\set{-\frac{h(n-\dstat_1)}{8n}}.
\end{align*}
The assumption $\dstat_1\leq\beta n$ concludes the proof of the lemma.
\end{proof}


We can now prove a non-uniform-in-$\gamma_n$ version of \cref{lem:prop_tall_trees}, that is:
\begin{lem}\label{fact:non_uniform}
    Let $1\ll\gamma_n\leq n^{1/2}$. For any $\eps,\delta>0$, there exists $R$ so that for $\lambda>\frac{R^2}{\sigma^2}$,  then 
    \[\P{\#\set{v\in \fforest:\height\left({\fforest}^{\uparrow v}\right)>\frac{\eps\gamma_n}{2}}>\frac{\eps^2n}{2\gamma_n}} <\delta,\]
    for sufficiently large $n$.
\end{lem}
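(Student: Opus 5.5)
We condition on the skeleton $\skel$, which fixes the degree statistic $\dstat$ of $\fforest$ and makes $\fforest$ uniform on $\dsForest(\dstat)$. By Markov's inequality it suffices to bound
\[
\E{\#\set{v\in\fforest:\height(\fforest^{\uparrow v})>\tfrac{\eps\gamma_n}{2}}\,\Big|\,\skel}
= \#\fforest\cdot\P{\height(\rforest_\dstat^{\uparrow V})>\tfrac{\eps\gamma_n}{2}}
\]
by $\delta\eps^2 n/(4\gamma_n)$ on a good event $G_n$ for $\skel$ with $\P{G_n}\ge 1-\delta/2$. Let $G_n$ be the event that three things hold:
\begin{itemize}
\item[(a)] $\#\fforest\ge (1-\eps')n$, with $\#\fforest\le n$;
\item[(b)] $\dstat_1\le\beta\#\fforest$ and $\dstat_1'\le\beta\#\fforest$, where $\dstat'$ is the sub-binary statistic of \cref{lem:degree_order};
\item[(c)] the number of components $k_{\operatorname{out}}(\skel)$ is at least $R n/\gamma_n$.
\end{itemize}
Items (a) and (b) have probability tending to one by \eqref{eq:fringe_size} and \cref{lem:bounded_deg1}. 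For (c), \eqref{eq:roots_fringe} and \eqref{eq:binaryskeleton} give $k_{\operatorname{out}}\approx \sigma\sqrt{2\lambda}\,n/\gamma_n$, which exceeds $Rn/\gamma_n$ once $\lambda>R^2/\sigma^2$. In the case $\gamma_n=\sqrt n$ we instead use \eqref{eq:roots_fringe_sqrt}: for large $\lambda$, $\tau_{\lfloor\lambda\rfloor}$ is large with high probability.

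On $G_n$, \cref{lem:more_tall_trees_when_subbinary} lets us replace $\dstat$ by $\dstat'$, which has the same size and the same number of components. Set $\bdstat=\dstat'-(0,\dstat'_1,0,\dots)$. Its size is $n(1-\alpha)$ with $1-\alpha\ge(1-\eps')(1-\beta)$, which is bounded away from $0$. We then apply \cref{lem:height_subbinary_ds} with $h=\eps\gamma_n/2$ to bound $\P{\height(\rforest_{\dstat'}^{\uparrow V})>h}$ by the sum of three terms.
\begin{itemize}
\item[(i)] The height term $\P{\height(\rforest_\bdstat^{\uparrow V'})>\tfrac{(1-\beta)\eps}{8}\gamma_n}$. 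By \cref{lem:height_binary_ds}(i) with $\delta_0=(1-\beta)\eps/8$, and with $R$ large, this is at most $\eta\gamma_n^{-1}$ for any prescribed $\eta$.
\item[(ii)] The size term $\P{\#\rforest_\bdstat^{\uparrow V'}>h^3}$. By \cref{lem:height_binary_ds}(ii) this is at most $c h^{-3/2}+\eta n^{-1/2}$, which is $o(\gamma_n^{-1})+\eta\gamma_n^{-1}$ since $\gamma_n\le\sqrt n$.
\item[(iii)] The exponential term $h^3e^{-h(1-\beta)/8}$, which is $o(\gamma_n^{-1})$.
\end{itemize}
Multiplying by $\#\fforest\le n$ gives a conditional expectation of at most $3\eta n/\gamma_n$. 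Choosing $\eta=\delta\eps^2/24$ then yields
\[
\P{\#\set{v\in\fforest:\height(\fforest^{\uparrow v})>\tfrac{\eps\gamma_n}{2}}>\tfrac{\eps^2n}{2\gamma_n}}\le \tfrac{\delta}{2}+\P{G_n^c}<\delta.
\]

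The main obstacle is that \cref{lem:height_binary_ds} is stated for a binary statistic with \emph{exactly} $Rn/\gamma_n$ components and size exactly $n(1-\alpha)$ for fixed $\alpha$, whereas $\bdstat$ is random, with more components than this and an $\alpha$ that varies.

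For the component count, I would use monotonicity: more roots only make subtrees shorter. Concretely, since $\mu$ there is subcritical with parameter $R/((1-\alpha)\gamma_n)$, the bounds of \cref{lem:sub_critical_height_profile} and \cref{lem:first_exc_cbridge} improve as $R$ grows. So I would rerun the proof of \cref{lem:height_binary_ds} with $R'=k_{\operatorname{out}}\gamma_n/n\ge R$ in place of $R$, checking that the constants are uniform over $R'\ge R$ and over $\alpha$ in a compact subset of $[0,1)$.

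The uniformity of the measure-change constant $C$ over this range of $(R',\alpha)$ is the step I expect to need the most care. It reduces to a local limit theorem for the binomial walk, uniform over these parameters.
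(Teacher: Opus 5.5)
Your proposal is correct and follows essentially the same route as the paper. Both arguments use Markov's inequality on a good event controlled by \cref{lem:bounded_deg1} and \eqref{eq:roots_fringe}, followed by \cref{lem:more_tall_trees_when_subbinary}, \cref{lem:height_subbinary_ds} and two applications of \cref{lem:height_binary_ds}; your conditioning on $\skel$ is just a cleaner version of the paper's decomposition over the set $N'$.

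The uniformity in $R'\ge R$ and $\alpha$ that you single out is exactly the point the paper passes over with ``$\bdstat$ corresponds to $R'n/\gamma_n$ components for some $R'\geq R$''. It is benign, and needs no new local limit theorem:
\begin{itemize}
\item \cref{lem:meas_change_const} already gives the parameter-free constant $4e^8$ whenever $p_n\in(1/4,3/4)$. This holds on your good event once you add the condition $k_{\operatorname{out}}(\skel)=o_p(n)$.
\item The bounds behind \cref{lem:sub_critical_height_profile} and \cref{lem:first_exc_cbridge} are explicit and monotone in $R$, so they are uniform over $R'\ge R$ and over $\alpha$ in a compact subset of $[0,1)$.
\end{itemize}
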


\begin{proof}
For brevity let $\cA= \left\{ \#\left\{v\in\fforest~:~ \height\big({\fforest}^{\uparrow v}\big)>\frac{\eps\gamma_n}{2}\right\}>\frac{\eps^2n}{2\gamma_n} \right\}$. We aim to upper bound $\P{\cA}$.

Let $\eps,
\delta>0$. Take $R$ a large enough constant to satisfy the forthcoming applications of \cref{lem:height_binary_ds}, and let $\lambda>\frac{R^2}{2\sigma^2}$ with $\fforest$ being the corresponding fringe forest.

Let $N'$ be the following collection of sub-binary degree statistics:
\[N'= \left\{\begin{aligned}
\mathbf m'=(\mathbf m_i')_{i\geq 0}~:~& \sum_{i\geq 0}\mathbf m'_i=n(1-\alpha_n) =r'+\sum_{i\geq0}i\mathbf m_i,\\ &\mathbf m_i'=0 \text{ for }i\geq 3,\ \mathbf m_1'\leq \beta n(1-\alpha_n),\ r'\geq \frac{Rn}{\gamma_n}\end{aligned}\right\},\]
where $\beta$ is the constant in \cref{lem:bounded_deg1} and  $\alpha_n\in(0,1)$ is some small constant.

Let $\dstat$ be the random degree statistic of $\fforest$ and throughout this proof, fix the size of $\dstat$ to be $n(1-\alpha_n)$ and denote the number of components by $r_n$. Taking $\rforest_\dstat\in_u\dsForest(\dstat)$ and $\rforest_{\mathbf m'}\in_u\dsForest(\mathbf m')$, then,

\begin{equation}
    \P{\cA}
    =
    \P{\#\set{v\in\rforest_\dstat~:~ \height\left({\rforest_\dstat}^{\uparrow v}\right)>\frac{\eps\gamma_n}{2}}>\frac{\eps^2n}{2\gamma_n}}.
\end{equation}
Decomposing on the event $\set{\exists \mathbf m'\in N' : \dstat\preceq \mathbf m'}$ and its complement, we get
\begin{equation}\label{eq:split_terms_compact}
\begin{aligned}
    \P{\cA}
    \leq &
    \max_{\mathbf m'\in N'}\P{\#\set{v\in\rforest_\dstat~:~ \height\left({\rforest_\dstat}^{\uparrow v}\right)>\tfrac{\eps\gamma_n}{2}}>\tfrac{\eps^2n}{2\gamma_n},\ \dstat\preceq\mathbf m'}\\
    &+
    \P{\not\exists \mathbf m'\in N' : \dstat\preceq \mathbf m'}.
\end{aligned}
\end{equation}

We note that the second term of equation \eqref{eq:split_terms_compact} can be controlled by the union bound of when $\dstat_1$ is too large, when $\dstat$ corresponds to too few components, and when $\fforest$ is too small,
\[\set{\not\exists \mathbf m'\in N' : \dstat\preceq \mathbf m'}\subseteq\set{\dstat_1>\beta n(1-\alpha_n)}\cup\set{r_n < \frac{Rn}{\gamma_n}}\cup\set{\#\fforest<n(1-\alpha_n)}.\]
Or equivalently,
\begin{align*}
    \left\{\exists \mathbf m'\in N' : \dstat\preceq \mathbf m' \right\}
    \supseteq 
    \set{\dstat_1\leq \beta n(1-\alpha_n)}\cap\set{r_n\geq \frac{Rn}{\gamma_n}}\cap\set{\#\fforest\geq n(1-\alpha_n)}.
\end{align*}

Indeed, suppose $\dstat$ satisfies all three conditions. We build $\dstat'\in N'$ with $\dstat\preceq \dstat'$ as follows: by \cref{lem:degree_order} we obtain a sub-binary degree statistic $\dstat'$ which succeeds $\dstat$ in the partial order. From the construction of $\dstat'$, we have that $\dstat'$ has the same size as $\dstat$ and corresponds to $r_n$ components. Furthermore, \cref{lem:bounded_deg1} shows that $\dstat'_1\leq \beta\#\fforest$, and thus, $\dstat'\in N'$. Now bounding the second term of \eqref{eq:split_terms_compact},

\begin{align*}
\P{\not\exists \mathbf m'\in N' : \dstat\preceq \mathbf m'} 
\leq & 
\P{\dstat_1 > \beta n(1-\alpha_n)}
+
\P{r_n < \frac{Rn}{\gamma_n}}\\
&+
\P{\#\fforest<n(1-\alpha_n)}
<
\frac{\delta}{2},\tageq\label{eq:bad_deg_stat_compact}
\end{align*}
where, for large enough $n$, the inequality follows from \cref{lem:bounded_deg1} and \eqref{eq:roots_fringe}. 

To control the first term on the right-hand-side of \eqref{eq:split_terms_compact}, fix some $\mathbf{m}'\in N'$ so that $\dstat\preceq \mathbf{m}'$ and for brevity let
\begin{equation}\label{eq:def_abbreviation_pm}
    p(\mathbf m')
    =
    \P{\#\set{v\in\rforest_\dstat~:~ \height\left({\rforest_\dstat}^{\uparrow v}\right)>\tfrac{\eps\gamma_n}{2}}>\tfrac{\eps^2n}{2\gamma_n},\ \dstat\preceq\mathbf m'}.
\end{equation}
We aim to bound $p(\mathbf m')$ uniformly in $\mathbf m' \in N'$. By Markov's inequality
\begin{align*}
    p(\mathbf m')
    \leq
    \frac{2\gamma_n}{\eps^2n}\E{\I{\dstat\preceq\mathbf m'}\cdot\#\set{v\in\rforest_\dstat~:~ \height \left( {\rforest_\dstat }^{\uparrow v} \right) > \frac{\eps\gamma_n}{2}}}.
\end{align*} \cref{lem:more_tall_trees_when_subbinary} states that
\begin{align*}
    p(\mathbf m') 
    \leq &
    \frac{2\gamma_n}{\eps^2n}\E{\#\set{v\in\rforest_{\mathbf m'}~:~ \height\left({\rforest_{\mathbf m'}}^{\uparrow v}\right)>\frac{\eps\gamma_n}{2}}}\\
    =&
    \frac{2\gamma_n}{\eps^2n}\sum_{v\in\dsForest_{\mathbf m'}}\P{\height\left({\rforest_{\mathbf m'}}^{\uparrow v}\right)>\frac{\eps\gamma_n}{2}}.
\end{align*}

Given $\mathbf m'$, take $\mathbf b = \mathbf m'-(0,\mathbf m_1', 0, 0,\ldots)$ be the binary degree statistic after removing all out-degree one vertices. Then for $V'$ a uniformly chosen vertex from $\rforest_{\bdstat}$, applying \cref{lem:height_subbinary_ds} gives:

\begin{align*}
p(\mathbf m')
< &
 \frac{2\gamma_n}{\eps^2}
\P{\height\left({\rforest_{\mathbf b}}^{\uparrow V'}\right)>\frac{(1-\beta)\eps\gamma_n}{8}} \\
&+
\frac{2\gamma_n}{\eps^2}\Bigg(\P{{\#\rforest_{\mathbf b}}^{\uparrow V'}>\frac{\eps^3\gamma_n^3}{8}} + \frac{\eps^3\gamma_n^3}{8}\exp\set{-\frac{(1-\beta)\eps\gamma_n}{16}}
\Bigg).
\end{align*}
Note that by construction of $N'$ and the fact that $\alpha_n,\beta \in(0,1)$, then $\bdstat$ has size $n(1-\alpha')$ for some $\alpha'\in(0,1)$. Additionally, $\bdstat$ corresponds to $\frac{R'n}{\gamma_n}$ components for some $R'\geq R$. Thus, applying \cref{lem:height_binary_ds} twice yields for any $\xi,\xi'>0$
\begin{align*}
    p(\mathbf m')
    <&
    \frac{2\gamma_n}{\eps^2}\left( \frac{8\xi}{(1-\beta)\eps\gamma_n} + c\sqrt{\frac{8}{\eps^3\gamma_n^3}} + \frac{\xi'}{\sqrt{n}}  \right)
    +
    \frac{\eps\gamma_n^4}{4}\exp\set{-\frac{(1-\beta)\eps\gamma_n}{16}}\\
    =&
    \frac{16\xi}{(1-\beta)\eps^3}
    +
    \frac{4\sqrt{2}c}{\sqrt{\eps^7 \gamma_n}}
    +
    \frac{2\xi'\gamma_n}{\eps^2\sqrt{n}}
    +
    \frac{\eps\gamma_n^4}{4}\exp\set{-\frac{(1-\beta)\eps\gamma_n}{16}}    
\end{align*}
choosing $\xi=\frac{\delta(1-\beta)\eps^3}{128}$, $\xi'=\frac{\delta\eps^2}{16}$, and recalling that $\gamma_n\to\infty$ but $\gamma_n\leq \sqrt{n}$ we obtain, for $n$ large enough,
\begin{equation}\label{eq:p_m_prime_final_bound}
    p(\mathbf m') \leq \frac{\delta}{2}.
\end{equation}
Combining \eqref{eq:bad_deg_stat_compact}, \eqref{eq:def_abbreviation_pm} and \eqref{eq:p_m_prime_final_bound} into \eqref{eq:split_terms_compact} concludes the proof of \cref{fact:non_uniform}.
\end{proof}

We are now ready to prove \cref{lem:prop_tall_trees}.

\begin{proof}[Proof of \cref{lem:prop_tall_trees}]
It only remains to show that in \cref{fact:non_uniform}, $R$ can be chosen independently of the sequence $(\gamma_n)_n$. Fix $\varepsilon,\delta>0$. For $0<g\le\sqrt n$, write $F_{n,g}^{\lambda}$ for the fringe forest when the skeleton is spanned by the root and $\lfloor \lambda n/g^2\rfloor$ uniform leaves, and set
$$
p_{n,g}(\lambda)
:=
\P{
\#\left\{v\in F_{n,g}^{\lambda}:
\operatorname{ht}\bigl((F_{n,g}^{\lambda})^{\uparrow v}\bigr)
>\frac{\varepsilon g}{2}\right\}
>\frac{\varepsilon^2 n}{2g}}.
$$
With this notation, $p_{n,\gamma_n}(\lambda)$ is the quantity to bound to prove \cref{lem:prop_tall_trees}. Coupling the skeletons using the same sequence of sampled leaves shows that they are increasing in $\lambda$. So the fringe forests are decreasing, and thus $p_{n,g}(\lambda)$ is non-increasing in $\lambda$. We claim that
\begin{equation}\label{eq:lem_sec5_unif}
    \exists \ \Lambda,\Gamma>0, \ N\geq 1, \ s.t \ 
    \forall \ n\geq N,
    \ \Gamma \leq g \leq \sqrt{n}
    \quad 
    p_{n,g}(\Lambda)<\delta.
\end{equation}
Indeed, if \eqref{eq:lem_sec5_unif} did not hold, then we could choose strictly increasing integers $n_j$ and numbers $g_j$ satisfying
$$
j\le g_j\le\sqrt{n_j},
\qquad
p_{n_j,g_j}(j)\ge\delta.
$$
Define
$$
\gamma_n'=
\begin{cases}
g_j, & n=n_j \text{ for some }j,\\
\sqrt n, & \text{otherwise}.
\end{cases}
$$
Then $\gamma_n'\to\infty$ and $\gamma_n'\le\sqrt n$. Applying \cref{fact:non_uniform} to this sequence, with $\delta/2$ in place of $\delta$, gives a fixed $\lambda_0>0$ such that
$$
p_{n,\gamma_n'}(\lambda_0)<\delta/2
$$
for all sufficiently large $n$. For all sufficiently large $j$, monotonicity therefore yields
$$
\delta
\le p_{n_j,g_j}(j)
\le p_{n_j,g_j}(\lambda_0)
<\delta/2,
$$
a contradiction, proving \eqref{eq:lem_sec5_unif}. It remains to choose $R=\sigma\sqrt \Lambda$. For every $\lambda>R^2/\sigma^2$ and every sequence $\gamma_n\to\infty$ with $\gamma_n\le\sqrt n$, we have $n\ge N$ and $\gamma_n\ge \Gamma$ for all sufficiently large $n$, and hence
$$
p_{n,\gamma_n}(\lambda)
\le p_{n,\gamma_n}(\Lambda)<\delta.
$$
Thus $R$ can be chosen independently of the sequence $(\gamma_n)_n$, concluding the proof of the lemma.
\end{proof}

\section{Local convergence of the skeleton}\label{sec:local_skeleton}

This section is dedicated to proving  \cref{prop:skeleton_local_limit}, which we recall below. 

\skelLocalLim*

The very first step to prove \cref{prop:skeleton_local_limit} is to study the asymptotics of the size and degree statistic of $\skel$. Let $(\dstat_0(\skel),\dstat_1(\skel), \dots)$ be the degree statistic of $\skel$ when viewed as a tree rooted at the root of $T_n$.  \cref{lem:size_skel}, that was stated without proof in the previous section, studies these quantities. 
 


\sizeskel*

To prove \cref{prop:skeleton_local_limit} we first explicitly provide the distribution of $\skelr$, and then we prove convergence of its underlying combinatorial tree as well as its edge lengths. Finally, we prove that this implies convergence in the topology defined in \cref{subsec:def_marked_local_topology}. We recall that in this paper, by tree we mean plane tree.

\begin{lem}\label{lem:weak_limit_in_steps}
    Conditioned on having degree statistic $\dstat$, $\skel$ is a uniform tree with said degree statistic. Moreover, $\skelr$ is a tree with edge lengths given by a uniform weak composition of $\dstat_1$ into $-1+\dstat_0+\dstat_2+\dots$ and an underlying combinatorial tree that is uniform among trees with degree statistic $(\dstat_0,0,\dstat_2,\dots)$, independent of the weak composition.
\end{lem}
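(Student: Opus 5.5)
The plan is to prove the first claim with a re-rooting/exchangeability argument at the level of plane trees. The second claim will then follow from the same collapse bijection used in the proof of \cref{lem:height_subbinary_ds}.

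\textbf{Step 1: the law of $\skel$ given its shape.} Write $k=\lfloor\lambda n/\gamma_n^2\rfloor$ and condition on the leaves being sampled. For a fixed plane tree $s$, compute $\P{\skel = s}$ by summing over pairs consisting of
\begin{itemize}
\item[(a)] a plane tree $t$ of size $n$ containing $s$ as the subtree spanned by the root and some $k$ of its leaves;
\item[(b)] the choice of those $k$ leaves.
\end{itemize}
I will use a decomposition: $t$ is obtained from $s$ by grafting a plane forest at the $k_{\operatorname{out}}$ outgoing slots. The slots are described in depth-first order, and the number of slots at a vertex $v$ of $s$ with $c$ children in $s$ is determined by how many extra children $v$ has and where they sit among its children. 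The weight of $t$ under the Bienaymé measure is $\prod_{u\in t}\mu(c_t(u))$, and the sampling probability of the leaf set depends only on the number of leaves of $t$.

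The key observation is that, once we fix the full degree sequence data attached to $s$, the remaining sum factorizes. By data I mean, for each vertex of $s$, its skeleton out-degree $c_s(v)$ and its $T_n$ out-degree. The sum then depends on $s$ only through the multiset of pairs $(c_s(v),c_t(v))$, $v\in s$. Summing over $t$ therefore gives a quantity depending on $s$ only through its degree statistic $\dstat(s)$.

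The one subtlety is that the leaves of $s$ must be leaves of $t$, and the root plays a special role. The root is always in $s$, and has degree at least $1$ in $s$ unless $k=0$. I will handle this by noting that the constraint is again a function of $c_s(v)$ alone. It follows that $\skel$ is uniform given $\dstat(\skel)$.

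\textbf{Step 2: collapsing to $\skelr$.} Apply the collapse map from the proof of \cref{lem:height_subbinary_ds} without the marked vertex. A plane tree with degree statistic $\dstat$ is in bijection with pairs $(\tau, w)$, where:
\begin{itemize}
\item $\tau$ is a plane tree with degree statistic $(\dstat_0,0,\dstat_2,\dots)$;
\item $w$ is a weak composition of $\dstat_1$ into one part per edge of $\tau$.
\end{itemize}
There is one edge of $\tau$ for each non-root vertex, so $w$ has $-1+\dstat_0+\dstat_2+\dots$ parts, with the vertices listed in lexicographic order. Here the part $w_v$ counts the out-degree-one vertices inserted on the edge from $v$ to its parent.

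Care is needed at the root. If the root of $\skel$ has out-degree one, the inserted vertices lie above it, so I will either note that in $\skel$ the root has out-degree one only through the path to the first branch point, or attach that path as an extra part. I will check which convention matches the stated count. Since the map is a bijection, uniformity on trees with statistic $\dstat$ pushes forward to the product of uniform measures. That product is exactly the claimed independence.

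Finally, the edge length of $\skelr$ on the edge above $v$ is $w_v+1$, so the lengths are a deterministic shift of the uniform weak composition; I read the statement as describing them up to this shift.

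\textbf{Main obstacle.} I expect Step 1 to be the main obstacle. The difficulty is making the factorization rigorous, in particular showing that the number of ways to interleave extra children at each skeleton vertex depends only on the degrees. The count is $\binom{c_t(v)}{c_s(v)}$, and the pendant forest is then unconstrained apart from its total size. Together with the leaf-sampling weight, which depends only on $\dstat_0(t)$, this should finish the argument.
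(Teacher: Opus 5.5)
Your proposal is correct and follows essentially the paper's argument: show that the number of ways to complete a skeleton shape $s$ to the full tree depends on $s$ only through its degree statistic, then collapse the out-degree-one vertices into a uniform weak composition. The paper fixes the degree statistic of $T_n$ before counting, which makes the leaf-sampling weight constant; you instead sum Bienaym\'e weights directly, so you must use that the summand, though not a product, is symmetric in the pairs $(c_s(v),c_t(v))$.

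On your root caveat: the exact bijection needs $\dstat_0+\dstat_2+\cdots$ parts, not $-1+\dstat_0+\dstat_2+\cdots$. The extra part is the chain of out-degree-one vertices from the root of $\skel$ to its first branch point, which is nonempty with high probability here. The stated count, like the $+1$ shift in lengths you noticed, is therefore an imprecision in the statement, and it is harmless for its use in \cref{prop:skeleton_local_limit}.
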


\begin{proof}
    It is well known that that for a Bienaymé tree $T$ and a fixed  tree $t$, $\P{T=t}$ depends only on $t$ through its degree statistic. This property extends to the subtree spanned by the root and a fixed number of leaves picked uniformly at random in a size-conditioned Bienaymé tree. We give a short informal proof of this fact and point the interested reader to \cite[Section 2.2]{duquesne2007growth} for the corresponding reconstruction procedure. Fix two degree statistics $\dstat$ and $\dstat'$. For a  rooted tree $t$ of degree statistic $\dstat'$, the number of ways to complete it by adding edges and vertices up to attaining degree statistic $\dstat$ depends on $t$ only though $\dstat'$. This proves our first statement.

    Further, the way to add $\dstat_1$ out-degree $1$ vertices to a tree $t$ with degree statistics $(\dstat_0,0,\dstat_2,\dots)$ is the number of weak compositions of $\dstat_1$ into the number of edges $-1+\dstat_0+\dstat_2+\dots$. As this number depends on $t$ only through its degree statistic, our second statement is proven.

\end{proof}


Now, we study convergence of the underlying combinatorial tree of $\skelr$. We proved that given the degree statistic $\dstat$ of $\skel$, the underlying combinatorial tree of $\skelr$ is an uniform tree of degree statistic $(\dstat_0,0,\dstat_2,\dots)$. We prove weak local convergence of this combinatorial tree under some assumptions on $(\dstat_0,0,\dstat_2,\dots)$, which follows from \cref{lem:size_skel}.

\begin{lem}\label{lem:loc_convergence_shape}
    Fix $k\geq0$ and any sequence of degree statistics $\big(\dstat^{(n)}\big)_{n\geq 1}$, where $\dstat^{(n)}:=\big(\dstat_i^{(n)}\big)_{i\geq 0}$ such that 

    \begin{equation}\label{eq:assumption_local_limit}
        |\dstat^{(n)}|\to \infty; 
        \quad \frac{\dstat_0^{(n)}}{\dstat_2^{(n)}}\xrightarrow{P} 1; 
        \quad
        \frac{|\dstat^{(n)}|}{\dstat_0^{(n)}}\xrightarrow{P} 2.
    \end{equation}


    \noindent Let $T_{\dstat}$ be a rooted tree sampled uniformly at random from degree statistic $\dstat$ and $V$ a vertex chosen uniformly at random in $T_{\dstat}$. 
    Also let $(K,P,\rho)$ be a rooted binary-Kesten tree, where $P$ is its infinite distinguished path. 
    Then, for any tree $t$

    \begin{equation}\label{eq:local_conv_goal}
    \P{F_k(V,T_{\dstat})=t\mid T_{\dstat}}
    \xrightarrow{P}
    \P{F_k(\rho,K)=t}.
    \end{equation}
\end{lem}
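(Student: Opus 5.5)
The plan is to reduce the statement to a first and second moment computation for fringe counts. The fringe of $V$ in the marked local topology (with the path to the root marked) is determined by the ancestral line of $V$ up to $k$ generations together with the subtrees hanging off it. For a fixed finite tree $t$ with marked path, let $N_t(T_{\dstat})$ be the number of vertices $v$ of $T_{\dstat}$ such that $F_k(v,T_{\dstat})=t$. Then
\[\P{F_k(V,T_{\dstat})=t\mid T_{\dstat}} = N_t(T_{\dstat})/|\dstat|.\]
It therefore suffices to show that $\E{N_t}/|\dstat|\to \P{F_k(\rho,K)=t}$ and $\mathrm{Var}(N_t)/|\dstat|^2\to 0$. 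We work conditionally on the degree statistics, which we may take to be deterministic sequences with $\dstat_0/\dstat_2\to1$ and $|\dstat|/\dstat_0\to 2$. The statement in probability then follows by a subsequence argument. Note that only $\dstat_0,\dstat_2$ and possibly $o(|\dstat|)$ other degrees occur in the reduced tree.

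For the first moment, we use the cyclic-shift and \L ukasiewicz machinery already used in \cref{lem:bridge_shift}. A uniform tree with degree statistic $\dstat$ corresponds to a uniform arrangement of the multiset of steps, $\dstat_i$ copies of $i-1$, modulo the cycle lemma. The event $F_k(v)=t$ specifies the following:
\begin{itemize}
\item the outdegrees of the $k$ ancestors of $v$, together with the position of the marked child among their children;
\item the complete shapes of the finite pendant subtrees rooted on the spine below level $k$.
\end{itemize}
All of these are finite patterns of steps. Counting arrangements containing the pattern, exactly as in the proof that Bienaymé trees with given degree statistic converge locally (compare \cite[Theorem 7.12]{janson_simpygen}), gives the following. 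The probability that a uniform vertex has fringe $t$ converges to the product over vertices of $t$ of the empirical frequencies $\dstat_i/|\dstat|$, with a size-biasing factor $i\,\dstat_i/|\dstat|$ for spine vertices. By the assumption, these frequencies tend to $\tfrac12\delta_0+\tfrac12\delta_2$, and the size-biased law tends to $\delta_2$ with a uniform choice of marked child. This is precisely the law of $F_k(\rho,K)$ from \cref{dfn:kesten_tree}. A concrete approach is a direct bijection: remove the pattern and replace it by a single leaf or edge, reducing to a degree statistic with boundedly many entries changed. The ratio of the numbers of trees is an explicit product of falling factorials of the $\dstat_i$. It tends to the claimed limit because the boundedly many entries all involve $\dstat_i$ of order $|\dstat|$, for $i\in\{0,2\}$.

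For the second moment, we take two independent uniform vertices $V,V'$. With probability $1-o(1)$ they lie at generation distance greater than $2k$, and their fringes are disjoint. In this case we remove both patterns, and the same counting gives asymptotic independence: $\E{N_t^2}/|\dstat|^2\to \P{F_k(\rho,K)=t}^2$.

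The main obstacle is the spine part of the fringe, namely the ancestors of $v$ and their other subtrees. This needs an a priori bound showing that the subtrees pendant to the first $k$ ancestors are finite with high probability, so that the fringe event really is a finite pattern. It also needs the observation that degrees $i\notin\{0,2\}$ contribute vanishingly. We would handle the first point by summing the limiting probabilities over all finite $t$: since the total is $1$ for the Kesten tree, tightness follows by Fatou. For the second point, note that $(|\dstat|-\dstat_0-\dstat_2)/|\dstat|\to0$, which follows from the two ratio assumptions.
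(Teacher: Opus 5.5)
Your proposal is correct and follows essentially the paper's proof: the quenched probability is $N_t/|\dstat|$; the first moment comes from replacing the fringe by a marked leaf of a smaller tree, which gives a ratio of falling factorials tending to $2^{-\#t}=\P{F_k(\rho,K)=t}$; and the second moment comes from counting pairs of vertices with disjoint fringes. The only difference is how overlapping pairs are discarded. The paper uses the lower bound together with the fact that the limits $2^{-\#t-\#t'}$ sum to one, whereas you argue disjointness directly; generation distance $>2k$ is not the right reason for that (the fringe is the whole subtree of the $k$-th ancestor), but on $\{F_k(V)=t,\,F_k(V')=t'\}$ an overlap forces $V'\in F_k(V)$ or $V\in F_k(V')$, which has probability at most $(\#t+\#t')/|\dstat|$.
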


\begin{proof}


We call trees distributed as $F_k(\rho,K)$ Bienaymé $k$-fringes with offspring distribution $\mu=\tfrac{1}{2}\delta_0+\tfrac{1}{2}\delta_2$, and denote by $B$ such a random tree.  
Then, for $t$ a fixed binary $k$-fringe
    
\begin{equation}\label{eq:proba_k-rooted_bienamye}
    \P{B=t}=2^{-\#t}.
\end{equation}
Indeed, if $B=t$ the degrees of the $\#t-k+1$ non-distinguished vertices match and for the $k-1$ distinguished vertices, the choice of left or right for the next distinguished vertex match. Then, we introduce
    
$$N\big(t,T_{\dstat}\big)=\# \{v \in T_{\dstat}, \ F_k(v,T_{\dstat})=t \},$$
the number of vertices of $T_{\dstat}$ having $t$ as a $k$-fringe. Doing so,

\begin{equation}\label{eq:proba_fringe_quenched}
    \P{F_k(V,T_{\dstat})=t\mid T_{\dstat}}=\frac{N\big(t,T_{\dstat}\big)}{|\dstat|}
\end{equation}
and

\begin{equation}\label{eq:proba_fringe_as_N}
    \P{F_k(V,T_{\dstat})=t}=\frac{ \E{N\big(t,T_{\dstat}\big)}}{|\dstat|}.
\end{equation}
To prove \eqref{eq:local_conv_goal} we estimate the first and second moment of $N(t,T_{\dstat})$. Recall that $T_{\dstat}$ is an uniform rooted tree with degree statistics $\dstat$, and so

$$\E{N\big(t,T_{\dstat}\big)}
=
\frac{\sum_{T\in \mathbb{T}_{\dstat}}N(t,T)}{\#\mathbb{T}_{\dstat}}
=
\frac{\sum_v \sum_{T\in\mathbb{T}_{\dstat}}\I{F_k(v,T)=t}}{\#\mathbb{T}_{\dstat}},
$$
where $\mathbb{T}_{\dstat}$ is the set of rooted trees with degree statistics $\dstat$. First, we get from \cite[equation 283]{pitman2006combinatorial} that
    
$$\#\mathbb{T}_{\dstat}
=
\frac{(|\dstat|-1)!} {\prod_{i\in \N_0} \dstat_i !}
$$
To estimate the expectation of $N\big(t,T_{\dstat}\big)$ it remains to count, for a fixed vertex $v$, how many trees with degree statistics $\dstat$ have $F_k(v,T_{\dstat})=t$. To find all these trees, it suffices to fix a rooted tree on the remainder of vertices, with a marked vertex, marking where vertex $v_k$ is grafted. That is the same as choosing a rooted tree with degree statistic $(\dstat_0-\dstat_0(t)+1, \ \dstat_2-\dstat_2(t),\ \dstat_3,\dots)$, where the extra leaf is marked and will be merged with $v$. There are

$$(\dstat_0-\dstat_0(t)+1) 
\frac{(|\dstat|-\#t)!} {(\dstat_0-\dstat_0(t)+1) !(\dstat_2-\dstat_2(t)) !\prod_{i\geq 3} \dstat_i ! }
$$
such trees. The first factor of the product stems from the choice of the marked leaf, the remainder from the number of rooted trees with said degree statistics. Putting it all together yields

\begin{align*}
    \E{N(t,T_{\dstat})}
    =& \ 
    \frac{\dstat_0! \dstat_2! }{(\dstat_0-\dstat_0(t)) !(\dstat_2-\dstat_2(t)) !}
    \frac{(|\dstat|-\#t)!}{(|\dstat|-1)!}.
\end{align*}
Using Stirling formula alongside \eqref{eq:assumption_local_limit} leads to

\begin{equation}
    \frac{\E{ N(t,T_{\dstat})}}{|\dstat|}
    \xrightarrow{P}
    2^{-\#t}
\end{equation}
Plugging the above into \eqref{eq:proba_fringe_as_N} and recalling the law of $B$ given in \eqref{eq:proba_k-rooted_bienamye} we get

$$
\P{F_k(V,T_{\dstat})=t}\xrightarrow{P}\P{B=t}=\P{F_k(\rho,K)=t}.$$
Then, we compute the second moment of $N(t,T_{\dstat})$. Doing so directly is combinatorially hard because, for two vertices $v_1$ and $v_2$, their $k$-fringes can intersect. This is why we define
    
$$N(t,t',T_{\dstat})=\# \{v_1,v_2 \in T_{\dstat}, \ F_k(v_1,T_{\dstat})=t, \ F_k(v_2,T_{\dstat})=t', \ F_k(v_1,T_{\dstat})\cap F_k(v_2,T_{\dstat})=\emptyset \},$$
counting the number of pairs of vertices having $t$ and $t'$ as disjoint $k$-fringe. Similar computation as done previously yields

\begin{equation}\label{eq:limit_count_two_fringes}
    \frac{\E{ N(t,t',T_{\dstat})}}{|\dstat|^2}
    \xrightarrow{P}
    2^{-\#t}2^{-\#t'}.
\end{equation}
Remark that for $V_1$ and $V_2$ two vertices of $T_{\dstat}$ chosen uniformly at random we have 

$$\P{ F_k(V_1,T_{\dstat})=t, \ F_k(V_2,T_{\dstat})=t'}
\geq
\frac{\E{ N(t,t',T_{\dstat})}}{|\dstat|^2},
$$
with the inequality stemming from the condition of non-intersection in the definition of $N(t,t',T_{\dstat})$. However, we established in \eqref{eq:limit_count_two_fringes} that the limit of the right-hand-side defines a probability distribution on pairs of planar-$k$-fringes, hence 

$$\P{ F_k(V_1,T_{\dstat})=t, \ F_k(V_2,T_{\dstat})=t'}
\xrightarrow{P}
2^{-\#t}2^{-\#t'}.
$$
Moreover, 

$$\P{F_k(V_1,T_{\dstat})=F_k(V_2,T_{\dstat})=t}=\frac{ \E{N(t,T_{\dstat})^2}}{|\dstat|^2},$$
so we just proved that

\begin{equation}
    \frac{\E{N(t,T_{\dstat})^2}}{\E{N(t,T_{\dstat})}^2}
    \xrightarrow{P}1.
\end{equation}
Recalling \eqref{eq:proba_fringe_quenched} alongside a second moment argument proves that

\begin{equation}
    \P{F_k(V,T_{\dstat})=t\mid T_{\dstat}}
    \xrightarrow{P}
    \P{F_k(\rho,K)=t},
\end{equation}
concluding the proof of the lemma.
\end{proof}

The proof of the convergence of the edge lengths takes for input \cref{lem:weak_limit_in_steps} and the following technical lemmas, their proofs are postponed to \cref{app:skeleton_local_conv}. 

\begin{restatable}{lem}{compositionconv}\label{lem:limit_composition}
    Let $(E_1,\dots,E_{K_N})$ be an uniform weak composition of $N$ into $K:=K_N$, that is, $K$ non-negative integers summing to $N$ chosen uniformly at random. Suppose that $K$ goes to infinity with $N$ and that $K=o(N)$. Then, for any fixed $\ell$ we have

    $$\frac{K_N}{N}(E_1,\dots,E_{\ell}) \convd (\mathcal{E}_1,\dots,\mathcal{E}_{\ell}),$$
    a vector of $\ell$ independent copies of $\operatorname{Exp}(1)$ random variables.
\end{restatable}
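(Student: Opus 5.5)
The plan is to represent a uniform weak composition through stars and bars, or equivalently through geometric random variables conditioned on their sum, and then take a direct limit. The cleanest route is an exact formula for the joint law of the first $\ell$ coordinates. By stars and bars, the number of weak compositions of $N$ into $K$ parts is $\binom{N+K-1}{K-1}$. Fixing $(E_1,\dots,E_\ell)=(e_1,\dots,e_\ell)$ with $s=e_1+\dots+e_\ell\le N$ leaves $\binom{N-s+K-\ell-1}{K-\ell-1}$ completions. Hence
\[
\P{E_1=e_1,\dots,E_\ell=e_\ell}=\frac{\binom{N-s+K-\ell-1}{K-\ell-1}}{\binom{N+K-1}{K-1}},
\]
which depends on $(e_i)$ only through $s$.

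I would then evaluate this ratio at $e_i=\lfloor x_iN/K\rfloor$ with fixed $x_i\ge 0$, so that $s\approx (x_1+\dots+x_\ell)N/K=o(N)$. Writing the ratio as a product of elementary factors gives
\[
\frac{(K-1)\cdots(K-\ell)}{(N+K-1)\cdots(N+K-\ell)}\prod_{j=0}^{s-1}\frac{N-j}{N+K-1-j},
\]
up to bookkeeping of the index shifts. The first factor is $(K/N)^\ell(1+o(1))$, using $K\to\infty$ and $K=o(N)$. For the product, take logarithms: each term equals $-\frac{K-1}{N-j}(1+O(K/N))$, and since $s=o(N)$ the sum is
\[
-\frac{sK}{N}(1+o(1))=-(x_1+\dots+x_\ell)+o(1),
\]
uniformly for $x$ in compacts.

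This yields a local limit theorem: $(N/K)^\ell\,\P{E=e}\to e^{-\sum x_i}$ uniformly on compact sets of $x$. The lattice $(K/N)\Z_{\ge0}^\ell$ has cell volume $(K/N)^\ell\to 0$, so summing over cells in a box $\prod[a_i,b_i]$ gives convergence of $\P{\frac KN E\in\prod[a_i,b_i]}$ to $\prod(e^{-a_i}-e^{-b_i})$. Since the limit is a probability density, Scheffé's lemma, or simply the fact that box probabilities determine the law and no mass is lost, upgrades this to the claimed convergence in distribution to i.i.d.\ $\Exp(1)$ variables.

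The only delicate point is the uniformity of the logarithmic expansion. It requires $s\,K^2/N^2\to0$ and $\ell/K\to0$, both of which follow from $s=O(N/K)$ together with $K\to\infty$ and $K=o(N)$. As an alternative check, one can realize $E$ as i.i.d.\ geometric variables with mean $N/K$ conditioned on their sum being $N$. A local CLT for the sum then shows that removing $\ell$ terms barely changes the conditioning probability, and geometrics with mean $N/K$, rescaled by $K/N$, converge to $\Exp(1)$.
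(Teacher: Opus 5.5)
Your proposal is correct and is essentially the paper's proof. The paper likewise writes the joint law of $(E_1,\dots,E_\ell)$ via stars and bars as $\binom{N-s+K-\ell-1}{K-\ell-1}\binom{N+K-1}{K-1}^{-1}$, expands it elementarily to $(K/N)^\ell e^{-sK/N}(1+o(1))$ for $s=o(N)$, and sums over lattice points to obtain the exponential distribution functions; your only slip is cosmetic, since after pairing $\prod_{j<s}(N-j)/(N+K-1-j)$ the remaining $\ell$ denominator factors are $(N+K-s-1)\cdots(N+K-s-\ell)$, which changes nothing because $s=o(N)$.
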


We later use this lemma with $N=\dstat_1^{(n)}$ and $K_N=-1+\dstat^{(n)}_0+\dstat^{(n)}_2+\dots$. Next, we prove that if the edge length and the underlying combinatorial tree of $\skelr$ converges then convergence holds in the topology defined in \cref{subsec:def_marked_local_topology}.

\begin{restatable}{lem}{weakconvergencesteps}\label{lem:steps_weak_convergence}
    Let $\cT_n$ be a tree with edge lengths and $T_n$ its underlying combinatorial tree. Suppose that $T_n$ has size $n$ and that, for every finite collection $e_1,\dots,e_\ell$ of edges chosen independently from their lengths, their lengths $l_{e_1},\dots,l_{e_\ell}$ converge in distribution to independent $\operatorname{Exp}(1)$ random variables. Also suppose that for $V$ a vertex of $T_n$ chosen uniformly at random, any $k\geq 0$ and any $k$-fringe $t$

    \begin{equation}\label{eq:local_conergence_for_topology}
    \P{F_k(V,T_{n})=t\mid T_n}
    \xrightarrow{P}
    \P{F_k(\rho,K)=t},
    \end{equation}
    where $K$ is the binary-Kesten tree. Then, $\cT_n$ converges in the marked local topology to the binary-Kesten tree with $\operatorname{Exp}(1)$ edge lengths.
\end{restatable}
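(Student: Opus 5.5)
We must show that $\cT_n$ converges in probability in the marked local topology. By the definition in \cref{subsec:def_marked_local_topology}, this is joint convergence in distribution of the pair $\big((\cT_n,P_n(O_n),\ell_n,O_n),(\cT_n,P_n(V_n),\ell_n,V_n)\big)$ to two independent copies of $T^*$, where $O_n,V_n$ are independent uniform vertices.

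The plan is to reduce this to convergence of finite-dimensional objects. Fix a continuity point $r>0$. Because the Kesten tree with $\Exp(1)$ edge lengths has a.s.\ finite $r$-fringe, for every $\eta>0$ there is a combinatorial depth $k$ such that, with probability at least $1-\eta$, the $r$-fringe of $T^*$ (in edge-length distance) is contained in its combinatorial $k$-fringe $F_k(\rho,K)$. Continuity of the edge-length fringe in the lengths then holds a.s., since the lengths are atomless. It therefore suffices to prove joint convergence of two pieces of data for each fixed $k$:
\begin{itemize}
\item[(a)] the pair of combinatorial fringes $\big(F_k(O_n,T_n),F_k(V_n,T_n)\big)$;
\item[(b)] the edge lengths on these fringes.
\end{itemize}
The target is the law of $\big(F_k(\rho,K),F_k(\rho',K')\big)$ for independent copies, with i.i.d.\ $\Exp(1)$ lengths on all edges.

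\textbf{Step (a).} By \eqref{eq:local_conergence_for_topology}, the quenched empirical law $\frac{1}{n}\sum_v \delta_{F_k(v,T_n)}$ converges in probability to the law of $F_k(\rho,K)$ on the countable space of $k$-fringes. Since $O_n$ and $V_n$ are conditionally independent given $T_n$,
\[
\P{F_k(O_n,T_n)=t,\,F_k(V_n,T_n)=t'\mid T_n}=\P{F_k(V,T_n)=t\mid T_n}\,\P{F_k(V,T_n)=t'\mid T_n}.
\]
Taking expectations and using bounded convergence gives joint convergence to the product law. Tightness follows because the limit law is a probability measure (Scheffé-type argument).

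\textbf{Step (b).} Condition on $T_n$ and on the two fringes. The edges of $F_k(O_n,T_n)\cup F_k(V_n,T_n)$ are then a finite collection selected using only the combinatorial tree and the independent uniform vertices, hence independently of the lengths. By hypothesis, their lengths converge jointly to independent $\Exp(1)$ variables.

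The main obstacle is the overlap of the two fringes. Two uniform vertices have disjoint $k$-fringes with probability tending to $1$. This follows from step (a): if overlap had non-vanishing probability, then, since a $k$-fringe of bounded size has at most boundedly many vertices sharing it, we would contradict the quenched convergence. More concretely, $\P{V_n\in F_k(O_n,T_n)}\le \E{\#F_k(O_n,T_n)\wedge M}/n+\P{\#F_k>M}\to 0$, and overlap of the fringes means one root lies in a bounded neighbourhood of the other's fringe.

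On the disjointness event, the edges are distinct and the hypothesis yields i.i.d.\ $\Exp(1)$ limits on the union. Combining with (a) through a conditioning argument (lengths independent of the selection) gives joint convergence of the marked fringes with edge lengths. Finally, we need to pass from "isomorphic fringes with converging lengths" to convergence in the marked local topology. By Skorokhod representation, we take a.s.\ versions, and for each continuity point $r$ the edge-length $r$-fringe is determined continuously by the combinatorial $k$-fringe and its lengths, outside a null set where a spine vertex sits exactly at distance $r$. The path marks are handled automatically, since $P_n(O_n)$ restricted to the fringe is the distinguished spine in $F_k$. Bounded continuous test functions then converge, which concludes the proof.
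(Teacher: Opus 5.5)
Your proposal follows essentially the same route as the paper's proof: the product law for the pair of combinatorial $k$-fringes comes from conditional independence of $O_n,V_n$; disjointness holds with high probability via $\P{V_n\in F_k(O_n,T_n)}\le M/n+\P{\#F_k(O_n,T_n)>M}$ and tightness; the edge lengths on the disjoint fringes converge to i.i.d.\ $\Exp(1)$; and you pass to metric $r$-fringes because the first $k$ spine lengths exceed $r$ with high probability. One point should be stated more sharply: a $k$-fringe is the subtree $T_n^{\uparrow u_k}$ rooted at the $k$-th ancestor, so two such fringes intersect only if nested, which means one sampled vertex lies in the other's fringe (not merely in a bounded neighbourhood of it), and this is exactly what your bound controls.
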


We are now ready to prove \cref{prop:skeleton_local_limit}.

\begin{proof}[Proof of \cref{prop:skeleton_local_limit}]

    In this proof we let $\dstat^{(n)}$ be the degree statistic of $\skel$. By construction, $\dstat_0=\lfloor \tfrac{\lambda n}{\gamma_n^2}\rfloor$. By \cref{lem:size_skel}, $\frac{\gamma_n^2 \dstat_2(\skel)}{n}\convp \lambda $, and so

    \begin{equation}\label{eq:n0_equivalent_n1}
        \frac{\dstat_0}{\dstat_2}\xrightarrow{P}1.
    \end{equation}
    Because $\skel$ is a tree, $\dstat_0=2+\sum_{i\geq 2}\dstat_i(i-1)$, so


    \begin{equation}\label{eq:n0_+_n2_is_n}
        \frac{|\dstat|-\dstat_1}{\dstat_0}\xrightarrow{P}2.
    \end{equation}
    Using \eqref{eq:roots_fringe}, which translates to $\tfrac{\gamma_n}{n}\sum \dstat_i \convp \tfrac{\sqrt{2\lambda}}{\sigma}$, we get

    \begin{equation}\label{eq:limit_for_edge_length}
        \frac{\gamma_n \dstat_1}{n} \convp \frac{\sqrt{2\lambda}}{\sigma}; \quad  \frac{\gamma_n^2}{n}\left(-1+\dstat_0+\sum_{i\geq 2}\dstat_i\right)
    \convp
    2\lambda.
    \end{equation}
    We recognize $\#\skelr$ in the right-hand-side of \eqref{eq:limit_for_edge_length}, proving the last statement of \cref{prop:skeleton_local_limit}. Then, \cref{lem:weak_limit_in_steps} proves that the underlying combinatorial tree of $\skelr$ is an uniform tree with degree statistic $(\dstat_0,0,\dstat_2,\dots)$. Plugging \eqref{eq:n0_equivalent_n1} and \eqref{eq:n0_+_n2_is_n} into \cref{lem:loc_convergence_shape} proves that \eqref{eq:local_conergence_for_topology} holds for $T_n$ the underlying combinatorial tree of $\skelr$. \cref{lem:weak_limit_in_steps} also proves that the edge lengths of $\skelr$ are distributed as an uniform weak composition of $\dstat_1$ into $-1+\dstat_0+\dstat_2+\dots$. Plugging \eqref{eq:limit_for_edge_length} into \cref{lem:limit_composition} proves that any finite collection of edges of $\sigma\sqrt{2\lambda}\skelr/\gamma_n$ converges in distribution to $\operatorname{Exp}(1)$ random variables, independent of everything else.  It remains to use \cref{lem:steps_weak_convergence} to conclude the proof of \cref{prop:skeleton_local_limit}.
\end{proof}




\begin{figure}
    \centering
    \begin{tikzpicture}[
    scale=0.6,
    every node/.style={
        circle,
        draw=black,
        fill=white,
        minimum size=6mm,
        inner sep=0pt,
        line width=0.8pt
    },
    spine/.style={line width=1pt},
    tree/.style={line width=1pt},
    lab/.style={draw=none,fill=none,font=\normalsize}
]


\coordinate (v)  at (0,0);
\coordinate (v2) at (3,-1.2);
\coordinate (v3) at (5.6,-2.2);
\coordinate (v4) at (8.2,-3.2);
\coordinate (rho) at (11.4,-3.2);


\draw[black,spine]   (v) -- (v2);
\draw[black,spine]    (v2) -- (v3);
\draw[black,spine] (v3) -- (v4);
\draw[dashed,gray!70,line width=1pt] (v4) -- (rho);


\coordinate (g2) at (0,1.3);
\coordinate (g3) at (-1.1,2.6);
\coordinate (g4) at (0,2.6);
\coordinate (g5) at (-1.1,1.3);

\draw[green!60!black,tree] (v)--(g2);
\draw[green!60!black,tree] (g2)--(g3);
\draw[green!60!black,tree] (g2)--(g4);
\draw[green!60!black,tree] (v)--(g5);


\coordinate (b1) at (3,0);
\coordinate (b2) at (3,1.7);
\coordinate (b3) at (1.8,1.7);
\coordinate (b4) at (3,3.0);
\coordinate (b5) at (1.8,3);
\coordinate (b6) at (4.2,3);
\coordinate (b7) at (3.5,1);

\draw[blue,tree] (v2)--(b1);
\draw[blue,tree] (b1)--(b2);
\draw[blue,tree] (b2)--(b5);
\draw[blue,tree] (b2)--(b6);
\draw[blue,tree] (b1)--(b7);


\coordinate (r1) at (5.6,-0.9);

\draw[red,tree] (v3)--(r1);


\coordinate (o1) at (8.2,-1.5);
\coordinate (o2) at (7.1,0);
\coordinate (o3) at (9.3,0);

\draw[orange,tree] (v4)--(o1);
\draw[orange,tree] (o1)--(o2);
\draw[orange,tree] (o1)--(o3);


\foreach \x in {v,g2,g3,g4,g5}
    \filldraw[fill=green!80!lime] (\x) circle (3.2pt);

\foreach \x in {v2,b1,b2,b5,b6,b7}
    \filldraw[fill=blue] (\x) circle (3.2pt);

\foreach \x in {v3,r1}
    \filldraw[fill=red] (\x) circle (3.2pt);

\foreach \x in {v4,o1,o2,o3}
    \filldraw[fill=orange] (\x) circle (3.2pt);

\filldraw[fill=black] (rho) circle (3.2pt);


\node[lab] at (-0.15,-0.55) {$v$};
\node[lab] at (2.9,-1.85) {$v_2$};
\node[lab] at (5.55,-2.85) {$v_3$};
\node[lab] at (8.05,-3.85) {$v_4$};
\node[lab] at (11.35,-3.85) {$\rho$};

\end{tikzpicture}

    \caption{Sketch of a $4$-rooted fringe.}
    \label{fig:k-rooted}
\end{figure}

\section{Continuity of packings}\label{sec:continuity_covering}

In this section we discuss packings of compact $\R$-trees, focusing specfically on the continuity of the type with respect to the Gromov--Hausdorff topology. The basic definitions in \cref{sec:packing_number_local} for the type of a packing, the $r$-type of a tree, and an $r$-type-attaining packing all straightforwardly extend to rooted compact $\R$-trees. We will be viewing the type as a function $\type_r:\T \to [0,r)$ in this section. Our goal is to prove the following preliminary result needed in the proof of \cref{thm:bien}. For the next proposition we introduce the notation $\cT^{(\sigma,x)}$ to indicate a random $\R$-tree encoded by a Brownian excursion $(\frac{2}{\sigma}e_t^{(\sigma x /2 )} : t \geq 0)$, where $(e^{(y)}_t : t \geq 0)$ denotes an excursion of Brownian motion conditioned to have height at least $y$ for any $y \geq 0$.

\begin{prop}\label{prop:convergence_of_the_type}
For any $\sigma,x,r>0$, $\type_r$ is almost surely continuous at $\cT^{(\sigma,x)}$.
\end{prop}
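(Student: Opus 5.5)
The plan is to reduce continuity at $\cT^{(\sigma,x)}$ to the finite-tree statement \cref{lem:type_continuity_for_finite_trees}. The reduction goes through trimming. For a compact rooted $\R$-tree $t$ and $\eta>0$, let
\[ t_\eta=\{y\in t:\height(t^{\uparrow y})\ge \eta\}. \]
By compactness, $t_\eta$ is a finite tree with edge lengths. Every point of $t$ lies within distance $\eta$ of $t_\eta$, and every leaf of $t_\eta$ is within $\eta$ of a leaf of $t$. The argument has three steps: a deterministic sandwich lemma comparing $t$ with $t_\eta$, continuity of trimming in the Gromov--Hausdorff topology, and almost sure genericity of $\cT_\eta$ for $\cT=\cT^{(\sigma,x)}$.

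\emph{Step 1 (deterministic sandwich).} I would show that for every compact $t$ and every $r>0$,
\[ \cP_{r+2\eta}(t_\eta)\le \cP_{r}(t)\le \cP_{r-2\eta}(t_\eta), \]
and that the corresponding types satisfy
\[ \type_{r+2\eta}(t_\eta)-2\eta\le \type_r(t)\le \type_{r-2\eta}(t_\eta)+2\eta \]
whenever the three packing numbers coincide. The proof would map each centre of a packing of $t$ to its nearest point in $t_\eta$. This moves each point by at most $\eta$ and changes root distances by at most $\eta$. Conversely, a packing of $t_\eta$ can be pushed up to nearby points of $t$, for instance towards leaves, which again moves points by at most $\eta$. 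Since the type is defined as a supremum over maximum packings, it transfers under these maps with a $2\eta$ error, provided the maximum cardinality is the same on all three sides. That is why coinciding packing numbers are needed.

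\emph{Step 2 (genericity of the limit).} For $\cT=\cT^{(\sigma,x)}$ and fixed $\eta$, the trimmed tree $\cT_\eta$ is a.s. a finite binary tree. Conditionally on its shape, its edge lengths have a jointly absolutely continuous law, which I would read off the excursion description via the Williams/Neveu--Pitman structure of the trimmed Brownian tree. The induction in \cref{lem:type_continuity_for_finite_trees} only uses atomlessness of the edge lengths and sums, so it applies verbatim. Hence a.s. $s\mapsto \cP_s(\cT_\eta)$ is locally constant near $s=r$ and $s\mapsto\type_s(\cT_\eta)$ is continuous at $s=r$. Consequently, for $\eta$ small, the packing numbers in Step 1 agree, and $\type_{r\pm2\eta}(\cT_\eta)$ is within $o(1)$ of $\type_r(\cT_\eta)$. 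Letting $\eta\to0$ also gives $\type_r(\cT_\eta)\to\type_r(\cT)$.

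\emph{Step 3 (continuity of trimming).} Suppose $t_n\to\cT$ in $\dgh$, with correspondence distortion $\delta_n\to0$. I would show that for a.e. $\eta$ (for instance, $\eta$ not equal to any of the countably many heights $\height(\cT^{\uparrow b})$, $b\in\branch(\cT)$), the trees $(t_n)_\eta$ converge to $\cT_\eta$ as finite trees with edge lengths: eventually the shapes agree and the edge lengths converge. Combining this with \cref{lem:type_continuity_for_finite_trees} (continuity in edge lengths at generic $\cT_\eta$ and at the radii $r\pm2\eta$) and the sandwich of Step 1 applied to $t_n$ gives
\[ \limsup_n|\type_r(t_n)-\type_r(\cT)|\le 4\eta+o_\eta(1). \]
Letting $\eta\to0$ concludes. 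Some care is needed to apply Step 1 to $t_n$: the packing-number coincidence there must be inherited from the limit. This holds because $\cP_{r\pm2\eta}((t_n)_\eta)=\cP_{r\pm2\eta}(\cT_\eta)$ for large $n$, by the local constancy from Step 2.

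I expect Step 3 to be the main obstacle. Gromov--Hausdorff closeness does not directly control the combinatorial shape of $(t_n)_\eta$, because near-degenerate branches of height close to $\eta$ in $t_n$ may appear or disappear. The first thing I would try is to use the quantitative trimming bound $\dgh(t_\eta,t'_{\eta})\lesssim\dgh(t,t')$ together with the a.s. separation in $\cT$ between subtree heights and $\eta$. For a binary finite limit with separated branch points this upgrades GH convergence to convergence of the shape and edge lengths.
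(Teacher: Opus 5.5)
There is a genuine gap, and it is exactly where the real difficulty of the statement lies. In Step 2, the sentence ``consequently, for $\eta$ small, the packing numbers in Step 1 agree'' does not follow from genericity of the trimmed trees.

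For each fixed $\eta$, genericity of $\cT_\eta$ gives a random window $\delta(\eta)>0$ on which $s\mapsto\cP_s(\cT_\eta)$ is constant around $r$. But nothing forces $\delta(\eta)>2\eta$. Moreover, $\cT_\eta$ itself changes with $\eta$ and acquires ever more branches as $\eta\to0$. So the fixed-$\eta$ statements say nothing about the limit.

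In fact, the coincidence you need is equivalent to a property of $\cT$ itself.
\begin{itemize}
\item The half $\cP_{r-2\eta}(\cT_\eta)=\cP_r(\cT)$ is automatic for small $\eta$. This follows from $\cP_r(\cT)\le\cP_{r-2\eta}(\cT_\eta)\le\cP_{r-2\eta}(\cT)$ together with $\cP_s(\cT)\to\cP_r(\cT)$ as $s\uparrow r$, which holds by compactness.
\item The other half, $\cP_{r+2\eta}(\cT_\eta)=\cP_r(\cT)$ for some $\eta>0$, holds if and only if $\cT$ has a maximum $r$-packing with all pairwise distances strictly larger than $r$. One direction: an $(r+2\eta)$-packing of $\cT_\eta\subseteq\cT$ of that size is such a packing. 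The other: a packing with gaps at least $r+\eps$ projects into $\cT_\eta$ as an $(r+2\eta)$-packing once $\eta\le\eps/4$.
\end{itemize}
Likewise, your lower type bound $\type_{r+2\eta}(\cT_\eta)\to\type_r(\cT)$ requires relaxed maximum packings whose types approach $\type_r(\cT)$, which is essentially a relaxed $r$-TAP.

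The side claim $\type_r(\cT_\eta)\to\type_r(\cT)$ is circular. Since $\dgh(\cT_\eta,\cT)\le\eta$, it is an instance of the continuity you are trying to prove. It genuinely fails for trees whose maximum packings are all tight. For example, take a segment of length $r$ rooted at an endpoint: it has $\type_r=0$, but its $\eta$-trimming has type $r-\eta$.

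So Step 2 presupposes the whole probabilistic content of the proposition, namely that $\cT^{(\sigma,x)}$ almost surely has no forced tight pairs. Step 3 inherits the same gap, since it imports the packing-number coincidence ``by the local constancy from Step 2''.

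What is missing is an argument that almost surely $\cT^{(\sigma,x)}$ admits a relaxed $r$-TAP. The paper proves this for the mass-$1$ CRT and transfers it by absolute continuity under the It\^o measure (\cref{lem:absolute_continuity}). For the CRT it proceeds in three steps:
\begin{itemize}
\item It shows a leaf-supported $r$-TAP exists almost surely (\cref{cor:leaves}), using mobility of uniform points and Aldous' self-similar local limit.
\item It shows that the absence of a relaxed $r$-TAP forces a branch point $b$ with $\type_r(\cT^{\uparrow b^-})+\type_r(\cT^{\uparrow b^+})=r$ (\cref{lem:tight_pair=same_type}).
\item It kills this event using that the two subtrees above a branch point are conditionally independent rescaled CRTs and that $\type_r(\cT)$ is atomless (\cref{prop:atomless}).
\end{itemize}
Once a relaxed $r$-TAP is available, continuity follows from a short correspondence argument (\cref{prop:no_tight_continuity}). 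Your trimming sandwich and the shape-convergence problem you flag in Step 3 then become unnecessary.
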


When dealing with finite trees, the existence of $r$-TAPs was evident from the existence of an algorithm that generates them. However, the algorithm required us to initialize with packing centres on every leaf, something that will not work on $\R$-trees due to their potentially uncountable set of leaves. Luckily, the existence of $r$-TAPs is a relatively direct consequence of compactness, and we will briefly pause here to record it as a fact.

\begin{fact}
    Let $t \in \T$ and let $r > 0$. There exists an $r$-TAP of $t$.
\end{fact}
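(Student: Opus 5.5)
The statement is the Fact that every $t\in\T$ has an $r$-TAP for each $r>0$. I would prove it by compactness in two stages: first show that a maximum $r$-packing exists, and then show that the supremum of the type over all maximum packings is attained.

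\textbf{Stage 1: maximum packings exist.} Since $t$ is compact, it is totally bounded. Cover $t$ by finitely many open balls of radius $r/2$, say $N$ of them. Any two points in one such ball are at distance $<r$, so a ball contains at most one point of any $r$-packing. Hence every $r$-packing has at most $N$ points. The set of packing sizes is therefore a bounded set of nonnegative integers. It is nonempty, because a single point is a packing, so its maximum $\cP_r(t)=:m$ is attained. Thus $\cS_r(t)\neq\emptyset$ and $\type_r(t)$ is well defined. (If $m=0$ there is nothing to prove, but $t$ contains its root, so in fact $m\ge 1$.)

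\textbf{Stage 2: the type is attained.} Pick $P_j\in\cS_r(t)$ with $\type(P_j)\to\type_r(t)$. Each $P_j$ has exactly $m$ points, so we may list it as a vector $(x^{(j)}_1,\dots,x^{(j)}_m)\in t^m$. The space $t^m$ is compact, so along a subsequence $x^{(j)}_i\to x_i$ for every $i$.

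The limiting points still form a packing. For $i\neq i'$ we have $d(x_i,x_{i'})=\lim_j d(x^{(j)}_i,x^{(j)}_{i'})\ge r$, because distance is continuous and the condition $\ge r$ is closed. In particular the $x_i$ are distinct, so $P=\{x_1,\dots,x_m\}$ is an $r$-packing of size $m$ and hence is maximum.

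The limiting type is the right one. For a finite set, $\type(P)=\min_i d(\rho,x_i)$ is a continuous function of the points. Therefore
\[
\type(P)=\lim_j\type(P_j)=\type_r(t),
\]
and $P$ is an $r$-TAP.

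\textbf{Where the care is needed.} Both stages are easy; the one point to check is that the limit does not collapse. The closed condition $d\ge r$ ensures the $m$ limit points stay distinct, so the cardinality does not drop in the limit, and this is exactly what keeps $P$ in $\cS_r(t)$.
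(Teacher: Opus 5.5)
Your proof is correct and follows the same route as the paper: take maximum $r$-packings whose types approach $\type_r(t)$, extract a convergent subsequence by compactness, and note that both the separation $d\ge r$ and the type pass to the limit. Your Stage 1 (finiteness of $\cP_r(t)$ via total boundedness) and your remark that the limit points stay distinct spell out two steps that the paper's argument takes for granted.
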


\begin{proof}
    Let $(P_n)_{n \geq 0}$ be a sequence of maximum $r$-packings such that $\type(P_n) \to \type_r(t)$ as $n \to \infty$. Since $t$ is compact, $P_n$ has some convergent subsequence $P_{n_k}$ in $t$ in the sense that, if $P_{n_k} = \{x^{1}_{n_k},...,x^{\cP_r(t)}_{n_k}\}$, then there exists a subset $P = \{x^{1},...,x^{\cP_r(t)}\}$ of $t$ such that $x^i_{n_k} \to x^i$ as $k \to \infty$. By the definition of a packing we have that $d(x_{n_k}^i,x_{n_k}^j) \geq r$, and so we also have that $d(x^i,x^j) \geq r$, implying that $P$ is an $r$-packing. Moreover, by assumption, we have that $\type(P) = \lim_{k \to \infty} \type(P_{n_k}) = \type_r(t)$.
\end{proof}

\subsection{A sufficient condition for continuity of the type}

Recall that $\T$ is the space of compact rooted $\R$-trees equipped with the rooted Gromov--Hausdorff topology. Let $t \in \T$, and let $P$ be an $r$-packing of $t$ for some fixed $r > 0$. If $x,y \in P$ satisfy $d(x,y) = r$, then we call them a \emph{tight pair}. We call $P$ \emph{tight} if it contains a tight pair, and \emph{relaxed} if it does not contain a tight pair. We will begin by showing that having relaxed $r$-TAPs is a sufficient condition for $\type_r$ being continuous at a particular tree.

\begin{prop}\label{prop:no_tight_continuity}
    Let $t \in \T$ admit a relaxed $r$-TAP. Then, $\type_r:\T \to [0,r)$ and $\cP_r:\T \to \Z^+$ are both continuous at $t$.
\end{prop}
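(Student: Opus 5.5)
The plan is to prove semicontinuity in both directions for $\cP_r$ first, and then to handle $\type_r$ with the same transport argument. Let $P=\{x^1,\dots,x^m\}$ be a relaxed $r$-TAP of $t$, where $m=\cP_r(t)$. Set
\[
\eta:=\min_{i\neq j}d(x^i,x^j)-r>0 .
\]
Take $t_n\to t$ in $\T$, with correspondences $R_n$ whose distortion satisfies $\dis(R_n)=:2\delta_n\to0$.

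\textbf{Lower bound.} For each $i$ choose $y^i_n\in t_n$ with $(x^i,y^i_n)\in R_n$. Then
\[
d_n(y^i_n,y^j_n)\ge r+\eta-2\delta_n\ge r
\]
once $2\delta_n<\eta$. So $\{y^i_n\}$ is an $r$-packing of $t_n$, and $\liminf_n\cP_r(t_n)\ge m$. Moreover, since the roots correspond, $|d_n(\rho_n,y^i_n)-d(\rho,x^i)|\le2\delta_n$. Hence if $\cP_r(t_n)=m$ eventually, then $\liminf_n\type_r(t_n)\ge\type_r(t)$.

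\textbf{Upper bound.} This is the main obstacle. Suppose along a subsequence that $t_n$ has an $r$-packing $Q_n$ with at least $m+1$ points, or with $m$ points and type at least $\type_r(t)+\epsilon$. Transport the points of $Q_n$ back to $t$ via $R_n$ and use compactness of $t$ to extract limit points. The limit points are pairwise at distance $\ge r$. The difficulty is that distinct points of $Q_n$ may collapse, or converge to points at distance exactly $r$, so the limit is only a packing up to degeneracy. Collapse cannot happen, since pairwise distances are $\ge r-2\delta_n$. So the limit is an $r$-packing of $t$ with $m+1$ points, contradicting maximality; or it has $m$ points and type at least $\type_r(t)+\epsilon$, contradicting the definition of $\type_r(t)$.

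This gives $\limsup\cP_r(t_n)\le m$ and, on $\{\cP_r(t_n)=m\}$, $\limsup\type_r(t_n)\le\type_r(t)$. Note that this direction uses only compactness of $t$, not relaxedness. Relaxedness was used only in the lower bound, to push a packing of $t$ into $t_n$ without losing the margin.

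Combining the two directions, $\cP_r(t_n)=m$ for all large $n$ and $\type_r(t_n)\to\type_r(t)$. One point needs care: the type is a supremum over maximum packings, so it is not attained a priori on $t_n$. This is handled because the Fact above guarantees that $r$-TAPs of $t_n$ exist. Finally, the convention that the empty packing has type $r$ does not matter here, since $m\ge1$.
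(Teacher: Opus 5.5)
Your argument is correct and is essentially the paper's proof. You push the relaxed $r$-TAP into $t_n$ through the correspondences, using the margin $\eta$, to get the lower bounds for $\cP_r$ and $\type_r$; you pull packings of $t_n$ back to $t$ and use compactness of $t$ for the upper bounds, which, as the paper's separate upper-semicontinuity lemma also records, needs no relaxedness. The only cosmetic difference is that your $\epsilon$-formulation of the type upper bound does not actually need the existence of $r$-TAPs of $t_n$: whenever $\type_r(t_n)>\type_r(t)+\epsilon$, the definition of the supremum already gives a maximum packing of $t_n$ with type above $\type_r(t)+\epsilon$.
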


In the proof of \cref{prop:no_tight_continuity} we use the fact that $\cP_r$ is upper semi-continuous on the entirety of $\T$.

\begin{lem}
    $\cP_r: \T \to \Z^+$ is upper semi-continuous for all $r > 0$.
\end{lem}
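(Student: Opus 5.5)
To prove upper semi-continuity of $\cP_r$ at a fixed $t\in\T$, I will show the following. If $t_n\to t$ in the rooted Gromov--Hausdorff topology, then $\limsup_n \cP_r(t_n)\le \cP_r(t)$. Since $\cP_r$ is integer valued, it suffices to show that for all large $n$, $\cP_r(t_n)\le \cP_r(t)$. I will argue by contradiction with a compactness argument, using correspondences of small distortion to transport near-packings from $t_n$ to $t$.

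Suppose instead that along a subsequence $\cP_r(t_n)\ge k:=\cP_r(t)+1$. Then I can choose $r$-packings $\{x^1_n,\dots,x^k_n\}\subseteq t_n$. Pick correspondences $R_n$ between $t_n$ and $t$ with $\dis(R_n)\le 3\dgh(t_n,t)\to 0$, and choose $y^i_n\in t$ with $(x^i_n,y^i_n)\in R_n$. Then
\[d(y^i_n,y^j_n)\ \ge\ d_n(x^i_n,x^j_n)-\dis(R_n)\ \ge\ r-\dis(R_n).\]
By compactness of $t$, I pass to a further subsequence along which $y^i_n\to y^i$ for each $i\le k$. Taking limits gives $d(y^i,y^j)\ge r$ for $i\ne j$. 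In particular the $y^i$ are distinct, so $\{y^1,\dots,y^k\}$ is an $r$-packing of $t$ of size $\cP_r(t)+1$. This is a contradiction.

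The key point, and the only mild subtlety, is that the packing condition is a closed condition ($\ge r$, not $>r$). This closedness is exactly what makes $\cP_r$ upper rather than lower semi-continuous: the $\dis(R_n)$ error vanishes in the limit and the non-strict inequality survives. The root plays no role, so the argument applies verbatim in the rooted topology. I do not expect any real obstacle; the only care needed is choosing the correspondences with distortion tending to zero and extracting a common convergent subsequence for all $k$ points simultaneously. Both are routine since $k$ is finite and $t$ is compact.
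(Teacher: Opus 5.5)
Your proof is correct and follows essentially the same argument as the paper. Both assume trees arbitrarily close to $t$ carry $r$-packings of size $\cP_r(t)+1$, transfer those points to $t$ through correspondences whose distortion tends to zero, extract a convergent subsequence by compactness of $t$, and use that the non-strict condition $d\ge r$ passes to the limit, which gives a contradiction.
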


\begin{proof}
We will prove the statement by indirect reasoning. Write $m=\cP_r(t)$ and suppose that for any $n\ge 1$, there exists $(t_n)_{n \geq 1}$ such that $d_{GH}(t,t_n)<1/(2n)$ and there exists an $r$-packing $P_n=\{y^n_1,\dots, y^n_{m+1}\}$ of $t_n$. Let $R_n$ be a correspondence of $t$ and $t_n$ with distortion smaller than $1/n$. For all $n$ and $i$ choose $x^n_i \in t$ such that $(x^n_i,y^n_i)\in R_n$, implying that 
\[d(x^n_i, x^n_j)>d(y^n_i,y^n_j)-1/n\ge r-1/n.\]
Then, since $t$ is compact, there is a subsequence $n_k$ such that for any $1\le i \le m+1$ there is an $x_i\in t$ for which $x^{n_k}_i\to x_i$. Since $d(x^{n_k}_i, x^{n_k}_j)>r-1/n_k$ it must hold that $d(x_i,x_j)\ge r$ for $1\le i<i\le m+1$, contradicting that $m=\cP_r(t)$.

    
\end{proof}

\begin{proof}[Proof of \cref{prop:no_tight_continuity}.]

By the previous lemma, to show continuity of $\cP_r$ at $t$, it suffices to show $\cP_r$ is lower semi-continuous at $t$. Let $P$ be an $r$-TAP of $t$ with no tight pairs. There exists some $\eps > 0$ such that $\inf_{x,x' \in P} d(x,x') \geq r + \eps$. Let $t' \in \T$ be such that $\dgh(t,t') \leq \eps/2$ and let $R \subseteq t \times t'$ be a correspondence such that $\dis(R) \leq \eps$. Let $P'$ be a set in $t'$ which is obtained by taking exactly one representative from the set $\{x' \in t' : (x,x') \in R\}$ for each $x \in P$. Then, for any $x_1',x_2' \in P'$ we have $x_1,x_2 \in P$ such that $d(x_1',x_2') \geq d(x_1,x_2) - \eps \geq r$. Thus, $P'$ is an $r$-packing of $t'$, and $\cP_r(t') \geq \cP_r(t)$.

Now we turn towards the continuity of $\type_r$. Let $t,t',P,P',R$ be as above, and, by potentially making $\eps$ smaller, using the upper semi-continuity of $\cP_r$, suppose that $\#P'=\cP_r(t')$. Let $\rho$ and $\rho'$ be the roots of $t$ and $t'$ respectively. Since $(\rho,\rho') \in R$ and $P$ is an $r$-TAP of $t$ we have
$$
\type_r(t') \geq \type(P') = \inf_{p' \in P'} d(\rho',p') \geq \inf_{p \in P} d(\rho,p) - \eps = \type_r(t) - \eps,
$$
implying that $\type_r$ is lower semi-continuous at $t$.

To prove upper semi-continuity we consider a sequence of trees $(t_n)_{n \geq 1}$ such that $t_n \to t$ and $\type_r(t_n) \to \limsup_{t' \to t} \type_r(t')$ as $n \to \infty$. Since $\cP_r$ is continuous at $t$ and discrete, we can assume without loss of generality that $\cP_r(t_n) = \cP_r(t) := m$ for all $n \geq 1$. For all $n\geq 1$, let $R_n \subseteq t \times t_n$ be correspondences such that $\dis(R_n) < 2\dgh(t_n,t)$ if $t_n\neq t$ and if $t_n = t$ take the trivial correspondance. Let $P_n = \{ x_n^{(1)} , ... , x_n^{(m)} \}$ for all $n \geq 1$. For all $n \geq 1$, let $Q_n = \{ y_n^{(1)} , ... , y_n^{(m)} \}$ be a set of points in $t$ formed by taking $y_n^{(i)} \in \{ x \in t : (x,x_n^{(i)}) \in R_n \}$ for each $1 \leq i \leq m$. By assumption, $Q_n$ is an $(r-\dis(R_n))$-packing of $t$, and $\type(Q_n) \geq \type_r(t_n) - \dis(R_n)$. By compactness, we can take a subsequence such that $y_{n_k}^{(i)} \to y^{(i)}$ for some $y^{(i)} \in t$ for some collection $Q = \{y^{(1)} , ... , y^{(m)}\}$. Moreover, since $\dis(R_n) \to 0$ as $n \to \infty$ and $\type_r(t_n) \to \limsup_{t' \to t} \type_r(t')$ we get that $Q$ is a maximum $r$-packing of $t$ with type at least $\limsup_{t' \to t} \type_r(t')$.
\end{proof}

\subsection{Passing off to the Brownian CRT}

In this subsection, we relate the Brownian tree conditioned to have large height to its more well-studied relative, the Brownian CRT (conditioned to have mass $1$). In particular, we have the following result.

\begin{lem}\label{lem:absolute_continuity}
Let $\sigma,x > 0$. For all $r > 0$, define 
$$
E_r = \left\{t \in \T : t \text{ does not admit a relaxed $r$-TAP}\right\}.
$$
If $\prob\{\cT \in E_r\} = 0$ for all $r > 0$, then $\prob\{\cT^{(\sigma,x)} \in E_r\} =0$ for all $r > 0$.
\end{lem}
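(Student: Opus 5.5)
We will transfer the null event from the CRT to the height-conditioned tree through the excursion measure, using Brownian scaling and the scale-invariance of the event family $\{E_r\}_{r>0}$. The first step is a scaling remark. For $c>0$ write $ct$ for the tree $t$ with all distances multiplied by $c$. A set is an $r$-packing of $t$ exactly when it is a $cr$-packing of $ct$, and distances to the root scale by $c$. Hence $r$-TAPs of $t$ correspond bijectively to $cr$-TAPs of $ct$, and tight pairs correspond to tight pairs. It follows that $t\in E_r$ if and only if $ct\in E_{cr}$.

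The hypothesis is a statement about all radii at once, so it says that $\prob\{c\cT\in E_r\}=0$ for every $c,r>0$. Let $\mathbf{n}$ be the Itô excursion measure and $\zeta$ the excursion length. We use the standard disintegration
$$\mathbf{n}(\cdot)=\int_0^\infty \mathbf{n}(\cdot\mid \zeta=v)\,\frac{dv}{2\sqrt{2\pi v^3}},$$
together with Brownian scaling, which says that under $\mathbf{n}(\cdot\mid\zeta=v)$ the excursion has the law of $\sqrt v\,\mathbf{e}_{\cdot/v}$, where $\mathbf e$ is the normalized excursion. The tree coded by $\frac{2}{\sigma}$ times this excursion is therefore distributed as $c_v\cT$ with $c_v=\sqrt v/\sigma$ (up to the normalization convention relating $\mathbf e$ and $\cT$, which only changes the constant). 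Fix $r>0$ and let $A_r$ be the set of excursions $g$ whose coded tree $\cT_{2g/\sigma}$ lies in $E_r$. By the remark above,
$$\mathbf{n}(A_r)=\int_0^\infty \prob\{c_v\cT\in E_r\}\,\frac{dv}{2\sqrt{2\pi v^3}}=0.$$

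By definition, $e^{(\sigma x/2)}$ has law $\mathbf{n}(\cdot\mid \sup g\ge \sigma x/2)$. This conditioning event has finite and positive $\mathbf n$-measure, namely $1/(\sigma x)$ in the usual normalization. Consequently the law of $e^{(\sigma x/2)}$ is absolutely continuous with respect to $\mathbf n$, and
$$\prob\{\cT^{(\sigma,x)}\in E_r\}=\sigma x\,\mathbf{n}\bigl(A_r\cap\{\sup g\ge\sigma x/2\}\bigr)=0.$$

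The main obstacle is measurability: we need $E_r$, equivalently $A_r$, to be measurable so that these probabilities make sense. We will obtain this by expressing "admits a relaxed $r$-TAP" through countably many conditions involving $\cP_r$, $\type_r$ and packings at radii $r+1/k$. We will then use the Borel measurability of the map from excursions to trees (it is Lipschitz into $(\T,\dgh)$) and the semicontinuity of $\cP_r$ proved above. If this turns out to be delicate, we can instead work with the outer measure, since every step above only uses null sets and an $\mathbf n$-null event remains null under absolutely continuous laws. A secondary check is the exact constant in the scaling between $\mathbf e$ and $\cT$ (factor $2$ and $\sigma$), but this is harmless because the hypothesis covers all $c$ and $r$.
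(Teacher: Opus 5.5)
Your proposal is correct and follows the paper's route: the scale invariance $t\in E_r \iff ct\in E_{cr}$, the disintegration of the excursion measure over lifetimes combined with Brownian scaling, and the absolute continuity of the height-conditioned excursion law with respect to the excursion measure. You are somewhat more careful than the paper in three places: you get the direction of the scaling right ($\sqrt{v}$ rather than $1/\sqrt{v}$), you track the $2/\sigma$ factor, and you address the measurability of $E_r$, which the paper leaves implicit.
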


\begin{proof}

Let $\mathbf{e}^{(s)}_t$ be a length $s$ Brownian excursion. By Brownian scaling, $(\frac{1}{\sqrt{s}}\mathbf{e}^{(s)}_{st} : 0 \leq t \leq 1)$ is a length $1$ Brownian excursion. In particular, if $\cT_s$ is the tree encoded by $2\mathbf{e}^{(s)}$, then $\cT_s \eqdist \frac{1}{\sqrt{s}}\cT$. Hence, $\prob\{\cT_s \in E_r\} = \prob\{\cT \in E_{r/\sqrt{s}}\}$ for all $r > 0$. Given this fact, the proof is essentially a direct consequence of the excursion theory for Brownian motion. We will outline the argument without introducing all of detailed constructions and facts, though the reader may look to \cite[Chapter 7]{the_bible} for a clearer picture.

Let $\bn$ denote the excursion measure of Brownian motion, and let $\bn^+$ denote the restriction of the excursion measure to $C([0,\infty) , [0,\infty))$. Let $e = (e_t : t \geq 0)$ be the canonical process for $\bn^+$. The lifetime of $e$ is given by $R = \inf\{t > 0 : e_t = 0\}$, and the height by $H = \sup_{t \geq 0} e_t$. The laws of both $R$ and $H$ under $\bn^+$ are given as follows:
$$
\bn^+\{R \geq s\} = \sqrt{\frac{2}{\pi s}}, \quad \bn^+\{H \geq s\} = \frac{1}{s}.
$$
The measure $\bn^+(\cdot \cap \{H \geq s\})/\bn^+\{H \geq s\}$ is the law of a Brownian excursion conditioned to have height at least $s$. In particular, this means that the law is absolutely continuous with respect to $\bn^+$. Additionally, for $\bn^+_s$  the law of a Brownian excursion of length $s$, then,

$$
\bn^+\{\cdot\} = \int_0^\infty \bn^+_s(\cdot \cap \{R = s\}) \frac{1}{\sqrt{2\pi s^3}} \ ds.
$$
If $\cT(e)$ is the compact $\R$-tree encoded by the canonical process $e$, then then by the argument in the first paragraph we have, for $r > 0$,
$$
\bn^+\{\cT(e) \in E_r\} = \int_0^\infty \prob\left\{\cT \in E_{r/\sqrt{s}}\right\} \frac{1}{\sqrt{2\pi s^3}} \ ds = 0,
$$
which means that $\prob\{\cT^{(\sigma,x)} \in E_r\} = 0$.
\end{proof}

\subsection{The Brownian CRT and its local limit}

Here we review some facts about the Brownian CRT and its various representations that will be used in the proof of \cref{prop:convergence_of_the_type}

Let $U = (U_k)_{k \geq 1}$ be a sequence of independent random variables uniformly distributed over $[0,1]$. Let $C = (C_k)_{k \geq 1}$ be the arrival times of an inhomogeneous Poisson process on $t\ge 0$ with intensity $t dt$ which is independent of the sequence of uniforms. For convenience define $C_0 = U_0 = 0$. Let $(\be_k)_{k \geq 1}$ be the standard orthonormal basis for the space $\ell_1 = \{(x_k)_{k \geq 1} : \sum_{k \geq 1} x_k < \infty\}$. We can construct a random rooted compact $\R$-tree from $U$ and $C$ as a subset of $\ell_1$ that is distributed as a Brownian CRT. In particular, the compact subspace $\cT := \overline{f([0,\infty))}$ of $\ell_1$ is a Brownian CRT, where we define $f:[0,\infty) \to \ell_1$ by setting $f(0) = \bf{0}$ and defining the rest of the function according to the following rule:
$$
f(s) = f(U_k C_k) + (s-C_k)\be_{k+1}, \quad \forall C_k < s \leq C_{k+1}.
$$
for each $k \geq 1$, we let $\cT_k = f([0,C_k])$. We view $\cT$ and $\cT_k$ for each $k \geq 1$ as being rooted at $\mathbf{0}$. Heuristically, $\cT_{k+1}$ is derived from $\cT_k$ by selecting a uniform point in $\cT_k$, and attaching a stick of length $C_{k+1}-C_k$ to it. This particular construction of the Brownian CRT is commonly referred to as the \emph{stick-breaking construction} and was first introduced by Aldous in \cite{aldousContinuumRandomTree1991}.

Also introduced in \cite{aldousContinuumRandomTree1991} is a description of the local geometry of the Brownian CRT at the root, which is referred to as the self-similar CRT. For each $s > 0$, let $\psi_s$ denote the law of $s \cT$, the Brownian CRT re-scaled by a factor of $s$. Then, define a $\sigma$-finite measure $\psi$ on $\ell_1$ 
$$
\psi\{ A \} = \sqrt{\frac{2}{\pi}}\int_0^\infty \frac{1}{s^2}\psi_s\{A\} \ ds.
$$
Finally, let $\Psi$ denote the measure on $[0,\infty) \times \ell_1$ which is the product of the Lebesgue measure with $\psi$. Let $\Xi$ be a Poisson point process with intensity measure $\Psi$. Consider the subset of $[0,\infty)\times \ell_1$ 
$$
 \cR = \Big\{ \big(x,(x_k)_{k \geq 1}\big) : \exists \ \xi \text{ where } (x,\xi) \in \Xi \text{, and } (x_k)_{k \geq 1} \in \xi\Big\}
$$
equipped with the metric $d_\Xi\big(( x , (x_k)_{k \geq 1} ) , ( y , (y_k)_{k \geq 1} ) \big)$, given by the following expression:
\begin{align*}
|x-y| + \I{x = y} d_{\ell_1}\big((x_k)_{k \geq 1} , (y_k)_{k \geq 1}\big) +\I{x \ne y}\sum_{k\ge 1}(x_k+y_k) 
\end{align*}
Then, $(\cR, (0,\mathbf{0}) , d_\Xi) \in \T_{\operatorname{loc}}$. Heuristically, we can view $\cR$ as the resultant of the process where, for each $(x,\xi) \in \Xi$, we glue the compact re-scaled Brownian CRT $\xi$ at distance $x$ along an infinitely long stick. 

\begin{thm}[Theorem 11 (iii) in \cite{aldousContinuumRandomTree1991}]\label{thm:crt_local_limit}
    Let $(\cT,\mathbf{0})$ be the Brownian CRT, and $(\cR,(0,\mathbf{0}))$ the self-similar CRT. Then, $(n\cT,\mathbf{0}) \to (\cR,(0,\mathbf{0}))$ as $n \to \infty$ with respect to the local Gromov--Hausdorff topology.
\end{thm}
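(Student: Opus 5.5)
The plan is to prove the local convergence through a spinal (Bismut-type) decomposition of $\cT$ along the path from the root to a point sampled from the mass measure. After rescaling by $n$, the part of this decomposition visible in any fixed ball around $\mathbf{0}$ should become exactly the Poisson picture that defines $\cR$.

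\textbf{Step 1 (spinal decomposition).} Let $\mathbf{e}$ be the normalised excursion encoding $\cT$, with $\cT$ encoded by $2\mathbf{e}$, and let $U$ be an independent uniform time. Bismut's decomposition describes the path from $\mathbf{0}$ to the point encoded by $U$, a spine of length $2\mathbf{e}_U$, together with the excursions of $\mathbf{e}$ above its running minima on either side of $U$. Under the $\sigma$-finite measure $\bn^+ \otimes \mathrm{Leb}(dU)$ this gives the following description: the spine length $L$ has Lebesgue law, and conditional on $L$ the subtrees pendent to the spine form a Poisson point process on $[0,L]\times\T_{\mathrm{meas}}$. Its intensity is $dx$ times a fixed constant multiple of the law of $\cT(e)$ under $\bn^+$, with the multiple accounting for the two sides of the spine and the factor $2$ in the encoding. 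The law of $\cT$ is recovered by conditioning on the total mass, namely the sum of the masses of the pendent trees, being equal to $1$, using the disintegration $\bn^+=\int \bn^+_s(\cdot)\,(2\pi s^3)^{-1/2}\,ds$ recalled in the proof of \cref{lem:absolute_continuity}.

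\textbf{Step 2 (identifying the intensity).} Brownian scaling gives $\cT_s\eqdist \sqrt{s}\,\cT$. Pushing the mass disintegration forward therefore shows that the law of $\cT(e)$ under $\bn^+$, modulo mass, is $c\int_0^\infty s^{-2}\psi_s\,ds$ after the change of variable from $\sqrt{s}$ to $s$. I would check that the normalisation is exactly $\sqrt{2/\pi}$, so that, in the constants of the statement, the intensity of pendent trees along the spine is precisely $\Psi$.

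\textbf{Step 3 (rescaling and removing the conditioning).} Rescale distances by $n$ and fix a radius $r$. The ball of radius $r$ around $\mathbf{0}$ in $n\cT$ is determined by two things: the event $\{nL>r\}$, whose probability tends to $1$, and the pendent trees attached at original spine distance $x<r/n$, truncated at distance $r$. Trees attached further along the spine lie entirely outside the ball. In the rescaled picture, the unconditioned Poisson process restricted to $[0,r]$ is exactly the restriction of $\Xi$ to $[0,r]$, since the scaling invariance of $\Psi$ makes its law independent of $n$. It remains to show that conditioning on total mass $1$ changes this restriction by an amount that vanishes in total variation. The trees attached on $[0,r/n]$ have total original mass $O_p(r^2/n^2)$, because the mass of the pendent forest along a spine segment of length $\ell$ scales like $\ell^2$. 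Hence the conditional density of the restricted process given total mass $1$ is a ratio of the form
\[
\frac{p(1-m)}{p(1)}\longrightarrow 1,
\]
where $p$ is the continuous density of the total mass carried by the rest of the spine and $m$ is the mass of the restricted part. A dominated-convergence argument then gives total-variation convergence.

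\textbf{Step 4 (upgrading to local Gromov--Hausdorff convergence).} Couple the two Poisson processes so that they agree on $[0,r]$ with probability tending to $1$. Since the ball is then built from the same pieces, this yields distributional local Gromov--Hausdorff convergence for every $r$. One needs the mild observation that only finitely many pendent trees reach height larger than any given $\delta>0$, so that the truncation of $\cR$ at radius $r$ is a compact tree and the gluing is continuous.

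I expect the main obstacle to be Step 3. It requires a rigorous, quantitative absolute-continuity bound between the mass-conditioned and unconditioned spinal Poisson processes, together with the required regularity of the density $p$, derived from the explicit stable-$1/2$-type law of the total mass. I would try first to write $p$ explicitly through the Lévy–Khintchine exponent of the mass subordinator along the spine, and to verify its continuity and positivity at $1$.
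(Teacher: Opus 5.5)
The paper does not prove this statement. It imports it from Aldous and uses it only in \cref{lem:fixed_point_mobile}, where all that is needed is that $A_1$ is open and that $\Psi\{A_1'\}=\infty$. Your spinal argument is therefore a genuinely different, self-contained route, and it goes through.

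\textbf{Steps 1--2.} Work under $\bn^+(de)\,\mathbf{1}\{U\le\sigma(e)\}\,dU$; you need this restriction to $U\le\sigma$. Bismut's decomposition then gives a spine of Lebesgue-distributed length $L=2e_U$. Given $L$, the pendent trees form a Poisson process on $[0,L]$ with intensity $dx$ times the law of $\cT(2e)$ under $\bn^+$, each side of $U$ contributing $dx/2$. The disintegration over lifetimes together with Brownian scaling shows that this law is exactly $\psi=\sqrt{2/\pi}\int_0^\infty s^{-2}\psi_s\,ds$. So the paper's constant is the right one for the convention that $\cT$ is coded by $2\mathbf{e}$.

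\textbf{Step 3.} This step closes explicitly. Integrate the remaining spine length against Lebesgue measure; the mass carried by the rest of the spine then has density
\[
\int_0^\infty \ell\,(2\pi s^3)^{-1/2}e^{-\ell^2/(2s)}\,d\ell=(2\pi s)^{-1/2}.
\]
Hence, on $\{L>a\}$, the restriction of the pendent process to $[0,a]$ has density $(1-m_a)^{-1/2}\mathbf{1}\{m_a<1\}$ with respect to the unconditioned Poisson process. Here $m_a\eqdist a^2 m_1$ is the mass of that restriction.

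Two technical points need adjusting.
\begin{itemize}
\item This density is unbounded, so dominated convergence does not apply directly. Use Scheff\'e's lemma instead: the density tends to $1$ in probability and its expectation is $\mathbf{P}(L>a)\to1$.
\item Conditioning the $\sigma$-finite measure on $\{\sigma=1\}$ is a priori defined only for almost every value of the mass. The scaling covariance of both descriptions pins it down at the value $1$.
\end{itemize}

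\textbf{Comparison.} The citation is economical and suffices for the paper's purely qualitative use. Your argument makes the normalisation of $\psi$ transparent and gives a stronger conclusion. It yields total-variation convergence of the law of the $r$-ball of $n\cT$ to that of $\cR$, with an explicit Radon--Nikodym derivative. This is more than convergence in distribution in the local Gromov--Hausdorff topology.
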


To finish the review, we present a number of known facts about the Brownian CRT that we will use below. In the facts we reference sampling uniform points in $\cT$. The uniform measure $\mu$ on the CRT may be constructed via martingale limits inside the stick-breaking procedure; see \cite{aldousContinuumRandomTree1991}. 

\begin{enumerate}
    \item $\height(\cT)$ follows a known explicit distribution, and in particular, it satisfies $\e[\height(\cT)] < \infty$ \cite{wang2015height}.
    \item \label{point:2} Almost surely, $\mu$ on $\cT$ has full support and satisfies $\mu(\cL(\cT)) = 1$ \cite{aldousContinuumRandomTree1991}.
    \item If $X$ is a uniform point in $\cT$, then $(\cT,X) \eqdist (\cT,\mathbf{0})$. This property we refer to below as \emph{re-rooting invariance} \cite{duquesne2009re-rooting}.
    \item Almost surely, every branch point of $\cT$ has degree three, i.e., for all $b \in \branch(\cT)$, $\cT \setminus \{b\}$ has three connected components \cite{albenque2015brownian}.
    \item \label{point:5} Let $X_1,X_2,X_3$ be three independent uniform points in $\cT$, and let $B$ be the almost surely unique branch point in $\Span\{X_1,X_2,X_3\}$. Let $\cT_1,\cT_2,\cT_3$ be the connected components of $\cT \setminus \{B\}$ containing $X_1,X_2,X_3$ respectively with the point $B$ added back in. Then, given $(\mu(\cT_1),\mu(\cT_2),\mu(\cT_3))$, the trees $(\cT_1,\cT_2,\cT_3)$ are independent, and they satisfy that $((\mu(\cT_i))^{-1/2}\cT_i,B) \eqdist (\cT,\mathbf{0})$ for $i=1,2,3$ \cite{aldous1994recursive,albenque2015brownian}.
\end{enumerate}

\subsection{Tight $r$-TAPs in a Brownian CRT}

Here we complete the proof of \cref{prop:convergence_of_the_type} by proving that the Brownian CRT admits relaxed $r$-TAPs almost surely for every $r > 0$. Throughout this subsection, we keep $r$ fixed and always take $\cT$ to be a Brownian CRT generated by the stick-breaking construction.

Let $t \in \T$. We call a point $x \in t$ \emph{$\gamma$-mobile} if there exists some $\eps \leq r/(2\gamma)$, and some $b \in \branch(t)$ such that $d(x,b) \leq \eps$ and the closure of $\cT \setminus \{b\}$ contains a tree $t_\eps\not\ni \rho$ (rooted at $b$) with $\height(t_\eps) \in (\gamma\eps , 2\gamma \eps)$. 

We will call an $r$-TAP of $\cT$, $P$, \emph{leaf-supported} if $P \subseteq \cL(\cT)$. 
 
\begin{lem}\label{lem:non_leaf_implies_immobile}
    If every $r$-TAP of $\cT$ is not leaf-supported, then there exists some $x \in \cT \setminus \cL(\cT)$ such that $x$ is not $1$-mobile.
\end{lem}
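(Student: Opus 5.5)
We argue by contraposition: assume every point of $\cT\setminus\cL(\cT)$ is $1$-mobile, and build a leaf-supported $r$-TAP. The idea is that a mobile non-leaf centre can be pushed, without loss, onto a leaf of a small pendent subtree next to it. Pushing it away from the root does not decrease the type, and pushing it a controlled amount keeps it $r$-separated from the other centres.

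Fix an $r$-TAP $P$ (it exists by the Fact above) and take $x\in P\setminus\cL(\cT)$. By $1$-mobility there are $\eps\le r/2$, a branch point $b$ with $d(x,b)\le\eps$, and a pendent subtree $t_\eps\not\ni\rho$ rooted at $b$ with $\height(t_\eps)\in(\eps,2\eps)$. We replace $x$ by a leaf $y\in t_\eps$ attaining (or nearly attaining) the height of $t_\eps$. The steps are as follows.

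First, we check that $d(\rho,y)\ge d(\rho,x)$. Since $t_\eps$ avoids $\rho$, the root-to-$y$ path passes through $b$. This gives $d(\rho,y)= d(\rho,b)+\height(y)$, with $\height(y)>\eps\ge d(x,b)$, so $d(\rho,y)\ge d(\rho,x)$. Hence the type cannot drop.

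Second, we show that $(P\setminus\{x\})\cup\{y\}$ is still an $r$-packing. For $z\in P\setminus\{x\}$ we split according to whether the geodesic from $z$ to $y$ passes through $b$ via the branch containing $x$.
\begin{itemize}
\item If $z\notin t_\eps$, then $d(z,y)=d(z,b)+d(b,y)$. Since $d(z,b)\ge d(z,x)-\eps\ge r-\eps$ and $d(b,y)>\eps$, we get $d(z,y)> r$.
\item If $z\in t_\eps$, then $d(z,b)<2\eps\le r$ and $d(x,b)\le\eps$. We need to exclude this configuration, or treat it by a separate argument, since it is not ruled out automatically. The plan is to choose $y$ in a subtree of $b$ disjoint from all other centres. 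Because $x$ lies within $\eps$ of $b$, at most one other centre can lie in $t_\eps$, and if one does, it already occupies that branch.
\end{itemize}

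Third, we iterate. Each replacement strictly reduces the number of non-leaf centres and keeps the packing maximum. Since $P$ is finite, after finitely many steps we obtain a maximum packing whose type is at least $\type_r(\cT)$, hence an $r$-TAP, and it is leaf-supported. This contradicts the hypothesis, so some non-leaf point is not $1$-mobile. We must take care to apply mobility to the current centre at each stage; this is fine because we assumed mobility for all non-leaf points.

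The main obstacle is the second step, in the case where another centre sits in $t_\eps$ or near $b$. I would first try to choose $t_\eps$ to be a component of $\cT\setminus\{b\}$ that contains neither $\rho$ nor $x$. Branch points in $\cT$ have degree three, so such a choice is possible when $x$ is not itself inside $t_\eps$. If it is not possible, I would replace $x$ by a leaf of the component containing $x$, using the bounds $\eps<\height(t_\eps)<2\eps\le r$ to keep distances at least $r$.
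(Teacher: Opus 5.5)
Your strategy is the paper's: swap a non-leaf centre $x$ for a height-attaining leaf $y$ of $t_\eps$, and note that the type cannot drop because the root-to-$y$ geodesic passes through $b$. Then check that centres outside $t_\eps$ stay $r$-separated from $y$, and conclude because the number of non-leaf centres decreases. The paper takes an $r$-TAP minimising $\#(P\setminus\cL(\cT))$ rather than iterating, which is equivalent. These steps are correct as you wrote them, including your first bullet.

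The gap is the case you flag yourself, $z\in P\cap t_\eps$ with $z\neq x$, and none of your workarounds closes it. The claim that at most one other centre lies in $t_\eps$ is not justified. Even one such centre is fatal: if $z$ is itself the height-attaining leaf, then $y=z$ and the swap simply deletes a centre. You also cannot switch to another component of $\cT\setminus\{b\}$, because mobility controls only the height of $t_\eps$. In the case where that switch is impossible, ``the component containing $x$'' is $t_\eps$ itself, so nothing changes.

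The missing observation is that this case should be ruled out, not circumvented. Every $z\in t_\eps$ satisfies $d(x,z)\le d(x,b)+d(b,z)<\eps+2\eps=3\eps$. So as soon as $3\eps\le r$, no centre other than $x$ can lie in $t_\eps$. Then $y\notin P$, your first bullet covers all remaining centres, and the swap produces an $r$-TAP with one fewer non-leaf centre. This is how the paper disposes of the case: it asserts $P\cap t_\eps\setminus\{x\}=\emptyset$ because $y$ attains the height of $t_\eps$ and $d(x,y)<r$.

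Be aware that the definition of $1$-mobility allows $\eps$ up to $r/2$, in which case $3\eps$ may exceed $r$ and your bad configuration can genuinely occur. The paper's bound $d(x,y)\le 2\eps$ should really read $d(x,y)<3\eps$. So you should apply mobility at a scale $\eps\le r/3$. This is available because the argument of \cref{lem:fixed_point_mobile} produces suitable branch points at arbitrarily small scales $\eps$.
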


\begin{proof}
    Let $P$ be an $r$-TAP of $\cT$ that minimizes the size of $P\setminus \cL(\cT)$. Suppose that $x \in  P \setminus \cL(\cT)$. Let $\eps < r/2$, and suppose that there exists a branch point $b \in \branch(\cT)$ such that $d(x,b) \leq \eps$ and the closure of $\cT \setminus \{b\}$ contains a tree $t_\eps\not\ni\rho$ with $\height(t_\eps) \in [\eps , 2\eps]$. Let $y \in t_\eps$ be a leaf that attains the height of $t_\eps$. We claim that $(P \cup \{y\}) \setminus \{x\}$, is also an $r$-TAP of $\cT$. First, $P \cap t_\eps\setminus\{x\}$ must be empty as $y$ attains the height, and $d(x,y) \leq 2\eps < r$. Because of this, and because $x$ was in the original packing, we have
$$
\inf_{x' \in P\setminus\{x\}} d(y,x') \geq \inf_{x' \in P\setminus\{x\}} d(x,x') +d(y,b) - d(x,b) \geq r + \eps - \eps = r.
$$

For a similar reason, since $\rho\not\in t_\eps$, we can conclude that $d(\rho,y) \geq d(\rho,x)$, and thus, that $P'$ is an $r$-TAP. By construction, $\#\{P' \setminus \cL(\cT)\} < \#\{P \setminus \cL(\cT)\}$, and we have contradicted the optimality of $P$.
\end{proof}

\begin{lem}\label{lem:fixed_point_mobile}
    Let $\gamma > 0$ be fixed. Let $U$ be a uniformly chosen point in $\cT$. Almost surely, $U$ is $\gamma$-mobile.
\end{lem}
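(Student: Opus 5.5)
We need to show that a uniform point $U$ of $\cT$ is almost surely $\gamma$-mobile. In other words, almost surely there exist $\eps\le r/(2\gamma)$ and a branch point $b$ with $d(U,b)\le\eps$ such that some component of $\cT\setminus\{b\}$ not containing $\rho$ has height in $(\gamma\eps,2\gamma\eps)$.

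\textbf{Step 1: reduce to a statement at the root.} By re-rooting invariance, $(\cT,U)$ has the law of $(\cT,\mathbf 0)$. So it suffices to study the neighbourhood of the root, except that the subtree $t_\eps$ must avoid $\rho$. Equivalently, $t_\eps$ must avoid the direction from $U$ towards $\rho$. Near $U$, the path $\llbracket U,\rho\rrbracket$ plays the role of the infinite stick in the self-similar CRT $\cR$ of \cref{thm:crt_local_limit}. The pendant subtrees grafted along $\llbracket U,\rho\rrbracket$ close to $U$ are then exactly the components not containing $\rho$.

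\textbf{Step 2: the event in the self-similar CRT.} Consider $\cR$ with its Poisson process $\Xi$ of intensity $\mathrm{Leb}\otimes\psi$. Define, for $\eps>0$, the event
\[
A_\eps=\bigl\{\exists (x,\xi)\in\Xi:\ x\le \eps,\ \height(\xi)\in(\gamma\eps,2\gamma\eps)\bigr\}.
\]
On $A_\eps$, the graft point $b$ lies at distance at most $\eps$ from the base, and it is almost surely a branch point of degree three. The number of such atoms is Poisson with mean
\[
\eps\,\psi\{\height\in(\gamma\eps,2\gamma\eps)\}.
\]
By the scaling $\psi\{A\}=\sqrt{2/\pi}\int s^{-2}\psi_s\{A\}\,ds$, this mean equals a constant $c_\gamma>0$ independent of $\eps$, since $\psi\{\height>h\}\propto 1/h$. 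Hence $\prob(A_\eps)=1-e^{-c_\gamma}>0$ for every $\eps$.

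Scale invariance of $\cR$ and the Poisson independence over disjoint strips $x\in(\eps_{k+1},\eps_k]$ give a zero–one law. Take $\eps_k=2^{-k}\eps_0$ with a lacunary choice such that the events for different $k$ depend on disjoint parts of $\Xi$. For instance, use $A'_{k}$ with $x\in(\eps_k/2,\eps_k]$: it still has a constant positive probability and the $A'_k$ are independent. Then infinitely many $A'_k$ occur almost surely. Each occurrence gives a valid $\eps=\eps_k\le r/(2\gamma)$ for $k$ large.

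\textbf{Step 3: transfer back to $\cT$ (main obstacle).} Local Gromov--Hausdorff convergence alone does not transport an ``exists a branch point with pendant height in an open interval'' event cleanly. I would therefore avoid the limit and use the exact description at a uniform point.

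By the spinal decomposition of the CRT (via Bismut / Aldous: the tree seen from a uniform point is a Brownian-excursion-type spine with Poissonian pendant subtrees), conditionally on $d(\rho,U)=L>0$, the subtrees pendant to $\llbracket U,\rho\rrbracket$ are given by a Poisson process on $[0,L]\times$(excursions). For small distances $x$, its intensity is comparable to $\mathrm{Leb}\otimes\bn^+$, hence to $\mathrm{Leb}\otimes\psi$ up to a total-mass change that is absolutely continuous. This is where I expect care is needed.

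The conditioning on total mass $1$ only changes the law by a density that is bounded near small scales. To handle it, I would use \cref{lem:absolute_continuity}-style excursion-measure disintegration: work under $\bn^+$, where the spine decomposition is exactly Poissonian, run the Step 2 Borel–Cantelli argument there, and disintegrate over lifetime with Brownian scaling to return to $\cT$.

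With independence across the disjoint strips $(\eps_k/2,\eps_k]\subset[0,L]$, infinitely many strips contain a suitable pendant tree almost surely. This gives $\gamma$-mobility of $U$ almost surely.
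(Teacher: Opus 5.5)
Your argument is correct in outline, but the transfer step takes a genuinely different route from the paper's.

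\textbf{The paper's route.} The paper works with the local limit throughout. It encodes mobility as an event $A_n$, which it asserts is open in the local Gromov--Hausdorff topology. It then uses re-rooting invariance to replace $(\cT,U)$ by $(\cT,\mathbf 0)$, discarding $\{d(U,\rho)\le 3n^{-1}\}$ so that $\rho\notin t_\eps$, and rescales to $(n\cT,\mathbf 0)$. Applying \cref{thm:crt_local_limit} with the Portmanteau inequality reduces everything to $\prob\{(\cR,(0,\mathbf 0))\in A_1\}=1$. That last fact is the divergence $\Psi\{A_1'\}=\infty$, which is exactly what your disjoint strips with constant Poisson means encode.

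\textbf{Your route.} You drop $\cR$ and use Bismut's spinal decomposition under $\bn^+$ at a Lebesgue-chosen time. There the subtrees pendant to $\llbracket U,\rho\rrbracket$ are exactly Poissonian, so Borel--Cantelli applies directly. You then return to mass $1$ by disintegrating over the lifetime, as in \cref{lem:absolute_continuity}.

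\textbf{What each buys.} Your route gains exactness:
\begin{itemize}
\item No openness of an ``exists a branch point with pendant height in an open interval'' event has to be checked. This is the point you rightly flag as delicate.
\item $\rho\notin t_\eps$ is automatic, because the subtrees hang off the spine.
\item You get admissible $\eps$ at arbitrarily small scales, hence every $r$ at once.
\end{itemize}
The price is importing Bismut's decomposition, whereas the paper only uses facts it already records.

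\textbf{Remarks.}
\begin{itemize}
\item Steps 1 and 2 become superfluous once Step 3 is adopted, since Bismut already describes the tree seen from $U$.
\item The claim that the mass-$1$ conditioning has a density bounded at small scales is neither justified nor needed.
\item You should state explicitly that $\bn^+(de)\,ds\,\mathbf 1_{\{s\le R\}}$ and $\bn^+$ with a uniform point have the same null sets. The density between them is the lifetime $R>0$.
\item The disintegration transfers a null set only if that set is invariant under Brownian scaling. The event ``admissible $\eps$ exist at arbitrarily small scales'' is scale-invariant. By contrast, ``$U$ is $\gamma$-mobile'' for a fixed $r$ is not, since scaling replaces $r$ by $r/\sqrt{s}$. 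So prove the former under $\bn^+$, transfer it to mass $1$, and deduce the latter.
\end{itemize}
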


\begin{proof}
Consider the following subset of marked trees in $\T_{\operatorname{loc}}$, which we denote by $A_n$ for all $n \geq 1$: 
\[\bigcup_{\eps < n^{-1}}\left\{(t,x):\exists b \in \branch(t): d(x,b) <\eps, \ \exists t_\eps \subseteq t \setminus \{b\} \text{ with } \height(t_\eps)\in(\gamma\eps, 2\gamma \eps)\right\} .\]
It is clear that if $n \geq 2\gamma/r$, then $x$ is $\gamma$-mobile in $t$ if $(t,x)\in A_n$ and $d(x,\rho)>3n^{-1}$, where the last condition ensures that $\rho\not\in t_\eps$. Moreover, $A_n$ is an open set with respect to the local Gromov--Hausdorff topology on $\T_{\operatorname{loc}}$. Then, using the re-rooting invariance of the Brownian CRT we have,
\begin{align*}&\P{U \text{ $\gamma$-mobile in }\cT}  \geq \P{(\cT,U)\in A_n}-\P{d(U,\rho) \leq 3n^{-1}}\\
&\qquad=\P{\left(\cT,\mathbf{0}\right)\in A_n} +o(1) = \prob\left\{ \left(n\cT,\mathbf{0}\right) \in A_1 \right\}+o(1).\end{align*}
Hence, we are done if we show that $\P{(n\cT,\mathbf{0})\in A_1}\to 1$ as $n \to \infty$. By \cref{thm:crt_local_limit}, $(n\cT,\mathbf{0})$ converges to $(\mathcal{R},(0,\mathbf{0}))$ in the local Gromov--Hausdorff topology as $n \to \infty$. Since $A_1$ is open, we know that 
\[\liminf_{n\to \infty} \P{(n\cT,\mathbf{0})\in A_1}\ge \P{(\cR,(0,\mathbf{0}))\in A_1},\]
so it remains to show that $\P{(\cR,(0,\mathbf{0}))\in A_1}=1$. Using the definition of $\psi$ a simple change of variables we can get,

\begin{align*}
    \psi\{ t : \height(t) > a \} &= \sqrt{\frac{2}{\pi}}\int_0^\infty \frac{1}{s^2}\prob\left\{ \height(\cT) > \frac{a}{s} \right\} \ ds \\
    &= \sqrt{\frac{2}{\pi}}\frac{1}{a}\int_0^\infty  \prob\{\height(\cT) > u\} \ du \\
    &= \sqrt{\frac{2}{\pi}}\frac{\e[\height(\cT)]}{a},
\end{align*}
which gives, for $A_1' = \{(x,t) : 0 \leq x \leq 1, \ \height(t) \in (\gamma x , 2\gamma x)\}$,
\begin{align*}
    \Psi\{A_1'\} = \frac{\e[\height(\cT)]}{\sqrt{2\pi}}\int_0^1\frac{1}{\gamma x} \ dx = \infty.
\end{align*}
Then, recalling the Poisson construction of $\cR$ from the Poisson point process $\Xi$ we get,
\begin{align*}
    \P{(\cR,(0,\mathbf{0}))\in A_1} &\geq \prob\left\{\operatorname{Poi}(\Psi\{A_1'\}) > 0 \right\} = 1.
\end{align*}
\end{proof}

\begin{cor}\label{cor:leaves}
    Almost surely, $\cT$ has a leaf-supported $r$-TAP.
\end{cor}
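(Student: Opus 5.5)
We combine \cref{lem:non_leaf_implies_immobile} and \cref{lem:fixed_point_mobile}, taking \cref{lem:non_leaf_implies_immobile} in contrapositive form. If every non-leaf point of $\cT$ is $1$-mobile, then some $r$-TAP of $\cT$ is leaf-supported. So it suffices to show that, almost surely, every $x\in\cT\setminus\cL(\cT)$ is $1$-mobile. \cref{lem:fixed_point_mobile} with $\gamma=1$ gives this only for a single uniform point $U$. The work is to upgrade ``a uniform point is mobile'' to ``all skeleton points are mobile''.

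The plan is to use the structure of the skeleton. By fact (\ref{point:2}), $\mu(\cL(\cT))=1$, and $\cT\setminus\cL(\cT)$ is the skeleton. In the stick-breaking construction it equals $\bigcup_k \cT_k$ minus leaves, a countable union of segments. So it is enough to show that almost surely every point of each stick is $1$-mobile.

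Fix a segment $\llbracket \mathbf 0, X\rrbracket$ with $X$ a uniform point; by re-rooting invariance and the stick construction, all sticks are of this form. We exploit the slack in the definition of mobility. Note that being $\gamma$-mobile for large $\gamma$ is robust under small displacements. Suppose $y$ is $\gamma$-mobile via a branch point $b$ with $d(y,b)\le\eps$ and a pendant subtree of height in $(\gamma\eps,2\gamma\eps)$. Then any $x$ with $d(x,y)\le\eps$ satisfies $d(x,b)\le 2\eps$. The same subtree has height in $(\gamma\eps,2\gamma\eps)=(\tfrac{\gamma}{2}\cdot 2\eps,\gamma\cdot 2\eps)$, which overlaps $(2\eps,4\eps)$ when $\gamma$ is suitable. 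This is not an exact match, so I would adjust the definition slightly: for $\gamma=2$ the interval $(2\eps',4\eps')$ with $\eps'=2\eps$... Concretely, I would first take a countable dense set of points $\{U_j\}$ on the segment, obtained as projections of i.i.d.\ uniform points onto the segment. Using \cref{lem:fixed_point_mobile} along a sequence $\gamma_m\to\infty$, or via the Poisson computation $\Psi\{A_1'\}=\infty$, they are simultaneously $\gamma$-mobile at all scales $\eps<n^{-1}$. This gives infinitely many pendant subtrees with heights proportional to their distance, accumulating at each point.

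A cleaner route, which I would try first, is to argue directly along a fixed segment $\llbracket \mathbf 0,X\rrbracket$. By fact (\ref{point:5}) and the spinal decomposition, the subtrees pendant to this segment form a Poisson process with intensity $dx\otimes\psi$. For $x$ on the segment, the set of pendant trees at positions $b$ with $|b-x|\le\eps$ and height in $(\eps,2\eps)$ has $\Psi$-mass $\asymp\int$ of order constant per dyadic scale. Over infinitely many scales this mass diverges, so such trees exist at every point of a countable dense set. Uniformity over all $x$ follows by covering the segment at scale $2^{-j}$ by $2^j$ intervals. I would then choose dyadic scales with height window $(\eps,2\eps)$ for the interval midpoints, and use a Borel--Cantelli argument to control the probability that some interval has no good tree at any of the scales $j\ge J$.

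The main obstacle is exactly this uniformity: a union bound over $2^j$ intervals against an event whose probability at a fixed scale is only bounded below by a constant. I would handle it by requiring, for each interval of length $2^{-j}$, a good tree at one of the finer scales $2^{-j-1},\dots,2^{-2j}$. There are $j$ roughly independent chances, each with constant probability, so the failure probability is $e^{-cj}$. This beats $2^j$ only if $c>\log 2$, and I would arrange that by shrinking the constant windows using the $\gamma$ freedom. Finally, a union over the countably many sticks and over rational $r$ (mobility scales with $r$) gives the corollary.
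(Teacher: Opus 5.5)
\textbf{Verdict: genuine gap.} The reduction and the diagnosis of the obstacle are right, but the multiscale uniformity argument, which is the real content of the proof, runs in the wrong direction.

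Your reduction via the contrapositive of \cref{lem:non_leaf_implies_immobile} is exactly the paper's. Your first route is the paper's entire remaining argument: i.i.d.\ uniform points are a.s.\ dense, each is $2$-mobile by \cref{lem:fixed_point_mobile}, hence every point is $1$-mobile. You are right not to accept this as it stands. Your own displacement computation shows that mobility passes from $y$ to $x$ only when $d(x,y)$ is at most (a constant times) the scale $\eps_y$ at which $y$ is mobile. Density gives no lower bound on $\eps_y$ in terms of $d(x,y)$. Pointwise statements plus density cannot suffice in general. Take Poisson intervals with position intensity $db$ and length intensity $\alpha\ell^{-2}\,d\ell$ on $(0,1]$: every fixed point is a.s.\ covered for any $\alpha>0$, yet for $\alpha<1$ uncovered points exist with positive probability (Shepp).

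The gap is in your multiscale step. A good tree at scale $\eps$ only serves points within about $\eps$ of its attachment point. So asking each dyadic interval $I$ of length $2^{-j}$ (or its midpoint) to have a good tree at one of the \emph{finer} scales $2^{-j-1},\dots,2^{-2j}$ says nothing about the points of $I$ far from that tree. The event you union-bound does not imply that every point of $I$ is mobile. The natural fix is to ask for one tree serving all of $I$, which forces scales \emph{coarser} than $|I|$.

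Likewise, $c>\log 2$ is not something you can ``arrange'' via $\gamma$. The target is $\gamma=1$, and shrinking windows only lowers $c$.

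A working version:
\begin{itemize}
\item With $\gamma=1$, take a tree pendant to the segment at $b$ with height $h\le r/2$. It makes every $x$ on the segment with $d(x,b)<h$ mobile: take $\eps=\max(d(x,b),3h/4)$. So the mobile set contains the union of the intervals $(b-h,b+h)$.
\item Pendant trees have intensity $db\otimes 2h^{-2}\,dh$, since $\psi\{t:\height(t)>a\}=2/a$.
\item Fix an interval of length $2^{-j}$ at distance at least $h_0$ from the ends of the segment. The probability that it lies in no single $(b-h,b+h)$ with $2^{-j}\le h\le h_0$ is $O(2^{-4j})$. Near the ends, where only half the window is available, it is $O(2^{-2j})$.
\item A plain union bound over the $O(2^j)$ intervals of one level, followed by $j\to\infty$, finishes the argument. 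No Borel--Cantelli is needed.
\end{itemize}

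Finally, fact~(\ref{point:5}) is a three-point split and does not give the Poisson description of the trees pendant to $\llbracket\mathbf 0,X\rrbracket$. That description is also not exact for the mass-one CRT. It is exact under It\^o's measure by Bismut's decomposition, and it transfers to $\cT$ by the scaling/disintegration argument of \cref{lem:absolute_continuity}, using that mobility is monotone in $r$.
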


\begin{proof}
    Take $(X_k)_{k \geq 1}$ to be a sequence of independent uniform points in $\cT$. Since the uniform measure on $\cT$ has full support, $(X_k)_{k \geq 1}$ is almost surely dense in $\cT$, and by \cref{lem:fixed_point_mobile}, almost surely each point in $(X_k)_{k \geq 1}$ is $2$-mobile. This implies that almost surely every point in $\cT$ is $1$-mobile, and so by \cref{lem:non_leaf_implies_immobile}, implies that $\cT$ almost surely has a leaf-supported $r$-TAP.
\end{proof}

The goal now is to show that, almost surely, a leaf-supported $r$-TAP of $\cT$ can be turned into a relaxed $r$-TAP. Recall from the previous subsection that each $b \in \branch(\cT)$ almost surely has $\deg(b) = 3$. Thus, the subtree $\cT^{\uparrow b}$ consists of two trees that intersect only at $b$. We introduce the notation $\cT^{\uparrow b^-}$ and $\cT^{\uparrow b ^+}$ to denote these two subtrees, which we both view as being rooted at $b$.

The next lemma requires a bit more notation to be introduced. Let $P$ be a leaf-supported packing of $\cT$. Almost surely, $\Span(P \cup \{\mathbf{0}\})$ is isometric to some planted binary tree with edge lengths. The underlying graph structure of that tree we call the \emph{frame} of the packing $P$, and we will denote it by $\mathrm{Fr}(P)$. Note that the most recent common ancestor of any pair in $P$ naturally corresponds to a branchpoint in $\mathrm{Fr}(P)$.

\begin{lem}\label{lem:tight_pair=same_type}
    Almost surely, if $\cT$ does not admit relaxed $r$-TAP, then there exists $b \in \branch(\cT)$ such that $\type_r(\cT^{\uparrow b ^-}) = r-\type_r(\cT^{\uparrow b^+})$.
\end{lem}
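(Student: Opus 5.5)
We argue by contraposition on a leaf-supported $r$-TAP. By \cref{cor:leaves}, almost surely $\cT$ admits a leaf-supported $r$-TAP $P$. Suppose $\cT$ admits no relaxed $r$-TAP, so every $r$-TAP, and in particular every leaf-supported one, contains a tight pair $x,y\in P$ with $d(x,y)=r$. Let $b$ be the most recent common ancestor of $x$ and $y$. It is a branch point of $\mathrm{Fr}(P)$, hence of $\cT$, and almost surely has degree three. So $x$ and $y$ lie in different halves, say $x\in\cT^{\uparrow b^-}$ and $y\in\cT^{\uparrow b^+}$, and $d(x,b)+d(y,b)=r$. The plan is to show that $P\cap\cT^{\uparrow b^\pm}$ can be assumed to be $r$-TAPs of the two halves, with $x$ and $y$ attaining the types. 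Then $\type_r(\cT^{\uparrow b^-})+\type_r(\cT^{\uparrow b^+})=r$, which is the claim.

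\textbf{Step 1: replace the packing above $b$ by TAPs of the halves.} We follow the proof of \cref{lem:recursive_tap}, which only uses the metric structure of a merger at a point, so it transfers verbatim to compact $\R$-trees (existence of TAPs is the Fact above). Set $P^\pm=P\cap\cT^{\uparrow b^\pm}$ and let $Q^\pm$ be $r$-TAPs of $\cT^{\uparrow b^\pm}$. The packing $P\setminus\cT^{\uparrow b}$ interacts with $\cT^{\uparrow b}$ only through distances to $b$. Since $b$ lies below every point of $\cT^{\uparrow b}$, swapping $P^-\cup P^+$ for $\bM_r(Q^-,Q^+)$ yields an $r$-packing of $\cT$ that is at least as large and has type no smaller. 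Here we use that this merger maximizes size first and then the distance of the closest centre to $b$. Among the resulting $r$-TAPs we choose one that is also leaf-supported, by rerunning the leaf-pushing argument of \cref{lem:non_leaf_implies_immobile} inside each half. Over a sequence of choices of tight pairs this needs care; see Step 3.

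\textbf{Step 2: identify the tight pair with the types.} If $\type_r(\cT^{\uparrow b^-})+\type_r(\cT^{\uparrow b^+})>r$, the union of the two TAPs contains no pair at distance exactly $r$ through $b$. The closest centres $o^\pm$ to $b$ are at distances $\type_r(\cT^{\uparrow b^\pm})$. Any cross pair $x',y'$ has $d(x',y')\ge d(o^-,b)+d(o^+,b)>r$, so the tight pair at $b$ is destroyed. If instead the sum is $<r$, then by \cref{lem:recursive_tap} one of $o^\pm$ is removed. The remaining cross distances are then $>r$ by the triangle inequality argument of that proof, with strictness holding almost surely. This strictness is where the absence of atoms enters, and the almost-sure genericity of distances in the CRT, via the independence facts (\ref{point:5}), replaces atomlessness. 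So the tight pair at $b$ disappears unless the two types sum exactly to $r$.

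\textbf{Step 3 (main obstacle): termination.} After replacement, new tight pairs may appear at other branch points, including below $b$ between $\cT^{\uparrow b}$ and the rest. We must show that we cannot avoid every tight pair and yet never hit a branch point with complementary types. The plan is to induct on the finite frame $\mathrm{Fr}(P)$: process the branch points of the frame from the top down, as in the finite Algorithm~\ref{alg:pack}. At each frame branch point, replace the packing above it by the merger of TAPs of its two halves. By Step 2, either the types at that branch point sum exactly to $r$, in which case we are done, or no tight pair is created at it. Since frame branch points further down only see the packing above them through the single closest centre, earlier steps are not undone. After finitely many steps we obtain a relaxed $r$-TAP, a contradiction. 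The delicate points are two. First, we need the frame to stay finite and controlled, which holds because $\#P=\cP_r(\cT)<\infty$. Second, we must handle the genericity claim that non-complementary tight pairs (for example, a tight pair involving a centre placed at a non-leaf, or an exact coincidence of distances unrelated to types) occur with probability zero. For the latter we would use (\ref{point:5}) to argue that, given the masses, distances in disjoint subtrees are independent with continuous laws.
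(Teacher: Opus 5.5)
\textbf{Gap in Step 3.} This is the heart of the lemma, and your iteration does not work as described. You replace the packing above a frame branch point $b$ by $\bM_r(Q^-,Q^+)$, where $Q^\pm$ are fresh $r$-TAPs of the halves. This does not respect $\mathrm{Fr}(P)$: each half $\cT^{\uparrow b^\pm}$ contains infinitely many branch points, $Q^\pm$ have their own frames with branch points not in $\mathrm{Fr}(P)$, and nothing prevents tight pairs inside them. So an induction over the finite frame does not close.

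Processing from the leaves towards the root, as in Algorithm~\ref{alg:pack}, makes this worse. A replacement at a lower branch point throws away everything you arranged above it, so the claim that earlier steps are not undone is false. The finite algorithm only works because it starts from \emph{all} leaves and recurses through every vertex, which is impossible in the CRT.

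\textbf{The missing idea.} The paper uses an extremal choice instead of an iteration. Among leaf-supported $r$-TAPs, choose $P$ so that the smallest generation $k$ of $\mathrm{Fr}(P)$ containing the common ancestor of a tight pair is as large as possible. Among those, choose $P$ so that the number of such ancestors in generation $k$ is as small as possible. Now let $b$ be such an ancestor and suppose its types are not complementary. A single modification of $P$ inside $\cT^{\uparrow b}$ then gives a leaf-supported $r$-TAP with the following properties:
\begin{itemize}
\item $b$ no longer carries a tight pair;
\item only the part of the frame above $b$ changes;
\item the distance from $b$ to the closest centre in $\cT^{\uparrow b}$ does not decrease, so no tight pair is created at a generation $\le k$.
\end{itemize}
This contradicts the choice of $P$, and no termination argument is needed. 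This also explains why your opening plan, showing that the \emph{given} tight pair sits at complementary types, cannot work: the lemma only asserts that \emph{some} $b$ has complementary types, and the extremal choice is what selects it.

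\textbf{Step 1.} Swapping $P\cap\cT^{\uparrow b}$ for $\bM_r(Q^-,Q^+)$ is only guaranteed to be compatible with $P\setminus\cT^{\uparrow b}$ when $P\cap\cT^{\uparrow b^\pm}$ are maximum packings of the halves. Otherwise $\bM_r(Q^-,Q^+)$ may have a centre too close to $b$. The paper handles this by splitting into three cases (both, neither, or exactly one restriction is maximum) and deleting the closest centres $z^\pm$ accordingly. It uses two facts:
\begin{itemize}
\item the second-closest centre of an $r$-TAP is at distance at least $r/2$ from the root;
\item the tight pair at $b$ forces every centre outside $\cT^{\uparrow b}$ to be at distance at least $r/2$ from $b$.
\end{itemize}

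\textbf{Genericity.} Your proposed genericity argument cannot be run as a union bound. You would argue that tight pairs not coming from complementary types have probability zero because distances in disjoint subtrees have continuous laws. But there are uncountably many pairs of leaves. Reducing to the countably many branch points, where the types are atomless (\cref{prop:atomless}) and independent given the masses (\ref{point:5}), is exactly what this lemma achieves, so that step is circular.

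Your remark on strictness is correct: when the two types sum to less than $r$, strictness needs the two types at $b$ to be distinct. The paper's third case silently needs the same, and it does hold almost surely by those two facts.
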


\begin{proof}
    Let $(X_n^{1},X_n^2,X_n^3)_{n \geq 1}$ be a sequence of independent triples of independent uniform points in $\cT$. Let $B_n$ be the almost surely unique branch point in $\Span\{X_n^1,X_n^2,X_n^3\}$. Since the uniform measure on $\cT$ has full support, we know that $\{B_n : n \geq 1\} = \branch(\cT)$ almost surely. Then, combining \cref{cor:leaves} with the self similarity of $\cT$ at random branch points, we can assume that $\cT$, $\cT^{\uparrow b^-}$, and $\cT^{\uparrow b^+}$ all have a leaf-supported $r$-TAP for all $b \in \branch(\cT)$. 

     For an $r$-TAP $P$ for $\cT$ contained in $\cL(\cT)$, let $k$ be the smallest generation in $\mathrm{Fr}(P)$ that contains a greatest common ancestor of a tight pair. Suppose $P$ is chosen such that, firstly, this $k$ is maximized, and, secondly among the packings for which $k$ is maximized, the number of such greatest common ancestors in generation $k$ is minimized. This is well-defined because for any $P$, $\mathrm{Fr}(P)$ has at most $\mathcal{P}_r(\mathcal{T})$ generations and at most $\binom{\mathcal{P}_r(\mathcal{T})}{2}$ tight pairs. If $k=\infty$, we found a relaxed $r$-TAP. If not, we let $b$ be a greatest common ancestor of a tight pair in generation $k$ and we will show that $\type_r(\cT^{\uparrow b ^-}) = r-\type_r(\cT^{\uparrow b^+})$ holds. 

     Assume that $\type_r(\cT^{\uparrow b ^-}) \ne r-\type_r(\cT^{\uparrow b^+})$. We show that we can find an $r$-TAP $P_b$ contained in $\mathcal{L}(\mathcal{T})$ that violates one of the two constraints that $P$ is chosen to optimize. (I.e., either the smallest generation $k$ containing a greatest common ancestor of a tight pair will be one higher in $P_b$ than $P$, or in $P_b$ that generation $k$ will have one less greatest common ancestor than in $P$.)

      Let $P^-,P^+$ be leaf-supported $r$-TAPs of $\cT^{\uparrow b^-}$ and $\cT^{\uparrow b^+}$, and let $z^- \in P^-$ and $z^+ \in P^+$ be packing centres whose distance from $b$ equals the type of their respective packings.  We proceed by splitting into cases. We will construct $P_b$ as a subset of $P \cup P^- \cup P^+$ in each case, so that $P_b$ will always be trivially leaf supported.
    
    Suppose that both $P \cap \cT^{\uparrow b^-}$ and $P \cap \cT^{\uparrow b^+}$ are maximum packings of their respective subtrees. Then, we take $P_b = P \setminus \cT^{\uparrow b} \cup (P^- \cup P^+)$. By the definition of type, we have that $\bt(P \cap \cT^{b^-}) \leq \bt_r(\cT^{\uparrow b^-})$ and $\bt(P \cap \cT^{b^+}) \leq \bt_r(\cT^{\uparrow b^+})$, which implies that $P_b$ is an $r$-TAP. Since  $\type_r(\cT^{\uparrow b ^-}) \ne r-\type_r(\cT^{\uparrow b^+})$, we can also conclude that $\bt_r(\cT^{\uparrow b^-}) + \bt_r(\cT^{\uparrow b^+}) > r$, implying that $b$ is not the greatest common ancestor of a tight pair in $P_b$.

    Suppose that neither $P \cap \cT^{\uparrow b^-}$ nor $P \cap \cT^{\uparrow b^+}$ are maximum packings of their respective subtrees. Then, we take $P_b = P \setminus \cT^{\uparrow b} \cup (P^- \cup P^+) \setminus \{z^- , z^+\}$. The second closest centre to the root in an $r$-TAP can at worst be distance $r/2$ from the root. Thus, $\bt(P^- \setminus \{z^-\}) \wedge \bt(P^+ \setminus \{z^+\}) \geq r/2$, implying that $P_b$ is still an $r$-TAP. Since $\type_r(\cT^{\uparrow b ^-}) \ne r-\type_r(\cT^{\uparrow b^+})$ we know that $\bt(P^- \setminus \{z^-\}) \vee \bt(P^+ \setminus \{z^+\}) > r/2$, which means that $b$ is not the greatest common ancestor of a tight pair in $P_b$.

    Suppose that exactly one of $P \cap \cT^{\uparrow b^-}$ or $P \cap \cT^{\uparrow b^+}$ are a maximum packings of their respective subtrees. Assume without loss of generality that $\bt_r(\cT^{\uparrow b^-}) \leq \bt_r(\cT^{\uparrow b^+})$. Then, we take $P_b = P \setminus \cT^{\uparrow b} \cup (P^- \cup P^+) \setminus \{z^-\}$. Since the second closest packing centre to $b$ in $P^-$ can be at worst distance $r/2$ to $b$, we obtain $\bt(P^- \setminus \{z^-\}) \geq r/2$. Since one of $P \cap \cT^{\uparrow b^-}$ or $P \cap \cT^{\uparrow b^+}$ is a maximum packing, and since $\bt_r(\cT^{\uparrow b^-}) \leq \bt_r(\cT^{\uparrow b^+})$, we have that $\bt(P^+) \geq \bt(P \cap \cT^{\uparrow b^-}) \wedge \bt(P \cap \cT^{\uparrow b^+})$. This implies that $P_b$ is an $r$-TAP. Using the same assumptions we also obtain $r < \big(r - \bt_r(\cT^{\uparrow b^-})\big) + \bt_r(\cT^{\uparrow b^+})$, and thus, $r < \bt(P^- \setminus \{z^-\}) + \bt(P^+)$, which implies that $b$ is not the greatest common ancestor of a tight pair in $P_b$.


    
   In all cases, we have been able to construct a $P_b$ that violates the optimality of $P$, so, by contradiction, $\type_r(\cT^{\uparrow b ^-}) \ne r-\type_r(\cT^{\uparrow b^+})$ must hold.

\end{proof}

\begin{lem}\label{lem:bridge_height}
    For any fixed $m \geq 1$ and $z > 0$, almost surely, there does not exist $b \in f([ C_m , C_{m+1})) \cap \branch(\cT)$ such that the subtree rooted at $b$ pendent to $f([ C_m , C_{m+1}))$ has height exactly $z - \height(b)$.
\end{lem}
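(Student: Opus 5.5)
We use the stick-breaking construction and condition on the first $m+1$ sticks. Let $\cF_{m+1}$ be the $\sigma$-field generated by $(C_k)_{k\le m+1}$ and $(U_k)_{k\le m}$. This determines $\cT_{m+1}$, and in particular the segment $S:=f([C_m,C_{m+1}))$ together with the height function along it.

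Everything pendent to $\cT_{m+1}$ is built by later attachments. For $k\ge m+1$, stick $k+1$ is grafted at $f(U_kC_k)$, and $U_kC_k$ is uniform on $[0,C_k]$. Hence each branch point of $\cT$ lying on $S$ is the attachment point of a stick whose index $k$ satisfies $U_kC_k\in[C_m,C_{m+1})$. Any point of $S$ is almost surely not a branch point of $\cT_{m+1}$ itself, except possibly the endpoint at $f(C_m)$.

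The key structural fact we would use is the following. Conditionally on the set $\{U_kC_k\}$ of attachment locations on $S$ and on the whole process, the subtree hanging off a given attachment point $b=f(s)$ has a height that is a continuous random variable independent of $\height(b)=\height(f(U_mC_m))+(s-C_m)$. Indeed, $s$ is itself (a function of) a uniform variable, so $\height(b)$ has a density given everything else. Concretely, we enumerate the countably many branch points on $S$ via the indices $k\ge m+1$ with $U_kC_k\in(C_m,C_{m+1})$. For a fixed $k$, condition on all variables except $U_k$, and restrict to the event that $U_kC_k$ falls in $(C_m,C_{m+1})$.

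The delicate part is that the pendent subtree at $b_k=f(U_kC_k)$ consists of stick $k+1$ and everything later attached to it or to points of $S$ near $b_k$. So that subtree depends on $U_k$ through which later uniforms land in it. We avoid this as follows.

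1. We note that the pendent subtree at $b_k$ is the union of the descendants of stick $k+1$. It is determined by $(C_j)$ together with the relative positions of $U_jC_j$ inside stick $k+1$ and its descendants.
2. Under a reparametrisation by arclength, this subtree does not depend on the exact location of $b_k$ along $S$. The only exception is the event that some later $U_jC_j$ lands in $S$ between $b_k$ and the next point. That event does not change which points belong to the subtree hanging at $b_k$.
3. It follows that, given all the data except the position $U_kC_k$ within $(C_m,C_{m+1})$, the position $U_kC_k$ remains uniform on that interval. Hence $\height(b_k)$ has a density. Meanwhile the pendent height $H_k$ is fixed under this conditioning.
4. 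Therefore $\P{H_k=z-\height(b_k)}=0$. A countable union over $k$ gives the claim.

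The point at $f(C_m)$, where stick $m+1$ meets stick $U_m$, is handled the same way using the uniformity of $U_m$.

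The main obstacle is making step 3 rigorous. Resampling $U_k$ while keeping the later variables fixed must leave the pendent subtree unchanged, yet later attachments are defined via $U_jC_j\in[0,C_j]$, that is, via absolute positions in $\cT_j$. We would handle this by a measure-preserving change of variables. Moving $b_k$ along $S$ is a translation of part of the line $[0,C_j]$, which preserves Lebesgue measure on the later uniforms. As a fallback, we would use the Poisson/intensity description: the branch points on $S$, together with their pendent trees, form a Poisson-type process with intensity absolutely continuous in the position. A Campbell formula then gives zero expected count for the exact-height event.
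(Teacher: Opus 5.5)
Your proposal is correct and is essentially the paper's argument: condition on $\cF_{m+1}$, identify the branch points on the segment with the attachment points $f(U_kC_k)$ of later sticks, whose position is uniform on $(C_m,C_{m+1})$ and independent of the height of the pendent subtree, and take a countable union over $k$. The ``main obstacle'' you flag does not actually arise, so neither the change of variables nor the Campbell-formula fallback is needed. In the stick-breaking parametrization, the subtree pendent at $f(U_kC_k)$, rooted at its base, is a measurable function of $(C_j)_{j\ge1}$ and $(U_j)_{j>k}$ alone, because its sticks are the parameter intervals $(C_i,C_{i+1}]$, which do not move when $U_k$ changes; note also that this subtree does not contain sticks grafted at other points of the segment.
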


\begin{proof}
    Let $\cF_{m+1}$ be the $\sigma$-algebra generated by $C_1,\dots, C_{m+1},U_1,\dots, U_m$. Let $(H_k)_{k \geq 0}$ be the heights of the trees pendent to the segment $[C_m,C_{m+1})$, with $(\rho_k)_{k \geq 1}$ the their roots on $f([C_m,C_{m+1}))$. For each $k \geq 1$ set $B_k = d(\rho,\rho_k)$, and let $A_m$ be the event that $B_k = z - H_k$ for some $k \geq 1$. Note that, $(H_k)_{k \geq 1}$ and $(\rho_k)_{k \geq 1}$ are independent, as the placement of sticks is independent of their lengths. Moreover, $B_k = \height(f(C_m)) + U_k(C_{m+1} - C_m)$ for some $U_k$ uniformly distributed over $[0,1]$, independent of $(H_k)_{k \geq 1}$ and $\cF_{m+1}$. Thus,
    \begin{align*}
        \probC{A_m}{\cF_{m+1}} &= \Ce{\Cprob{ \bigcup_{k \geq 1} \left\{ B_k = z - H_k \right\}}{ (H_k)_{k \geq 1}, \cF_{m+1} }}{\cF_{m+1}}\\
        &\leq \Ce{\sum_{k \geq 1} \probC{ B_k = z - H_k}{H_k , \cF_{m+1} }}{\cF_{m+1}}.
    \end{align*}
    By the above remarks, $\probC{ B_k = z - H_k}{H_k , \cF_{m+1} } = 0$, and so $\prob\{A_m\} = 0$.
\end{proof}

\begin{prop}\label{prop:atomless}
    For any $r > 0$, the distribution of $\type_r(\cT)$ is atomless.
\end{prop}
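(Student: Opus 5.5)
We prove that $\type_r(\cT)$ is atomless by showing that, for each fixed $z\in(0,r)$, the event $\{\type_r(\cT)=z\}$ is contained in a countable union of null events. The point $z=0$ is excluded, since $\type_r$ takes values in $[0,r)$ and the empty-packing convention does not arise for compact nondegenerate $\cT$. One would also expect $\type_r(\cT)>0$ almost surely, because a packing centre at the root can be moved up a branch; this is discussed at the end.

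First, a reduction. By \cref{cor:leaves} we may work on the almost sure event that $\cT$ has a leaf-supported $r$-TAP $P$. Let $x\in P$ be a centre attaining the type, so $x$ is a leaf with $\height(x)=\type_r(\cT)=z$. The aim is to show that such a height-$z$ leaf in a type-attaining packing forces a coincidence of the kind ruled out by \cref{lem:bridge_height}.

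The key structural observation concerns $x$ and the rest of $P$. Take any branch point $b$ on the path $\llbracket \mathbf{0},x\rrbracket$, and let $\cT_b$ be the subtree pendent at $b$ that does not contain $x$.
\begin{itemize}
\item If $\height(\cT_b)>z-\height(b)$ and $\cT_b$ contains no centre of $P$ closer than $r$ to $x$, then $x$ can be slid to the highest leaf of $\cT_b$. Distances to all centres outside $\cT_b$ only increase, so the packing stays a packing, and the type strictly increases unless another centre also sits at height $z$.
\item So type-attainment forces every pendent subtree along $\llbracket \mathbf{0},x\rrbracket$ to have height at most $z-\height(b)$, unless some centre of $P$ blocks the move.
\end{itemize}
This alone does not produce an exact equality. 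The cleaner route is to expect the height-$z$ leaf $x$ to be the height-attaining leaf of some pendent subtree: a leaf-supported TAP can always be modified so that each centre is a highest point of the region available to it. In the stick-breaking construction, every leaf of $\cT$ other than the $f(C_m)$ is a limit point, which is awkward. We therefore avoid characterising $x$ directly and use a countable decomposition instead.

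The decomposition runs as follows. Almost surely $\cT=\overline{\bigcup_m f([C_m,C_{m+1}))}$, and every leaf is either an endpoint $f(C_{m})$ or lies in the closure. For the endpoints:
\begin{itemize}
\item The heights $\height(f(C_m))$ are continuous random variables given $\cF_m$, because $C_{m+1}-C_m$ has a density.
\item Hence $\P{\exists m:\height(f(C_{m+1}))=z}=0$.
\end{itemize}
For a non-endpoint leaf $x$, we claim that type-attainment plus maximality forces $x$ to be a height-attaining point of a subtree pendent to some segment $f([C_m,C_{m+1}))$ at a branch point $b$, so that that subtree has height exactly $z-\height(b)$. The argument:
\begin{itemize}
\item Choose $m$ minimal such that $x$ lies in a subtree pendent to $f([C_m,C_{m+1}))$.
\item Push $x$ to the highest leaf of that pendent subtree, using the sliding argument above.
\item If the highest leaf were strictly higher than $x$, the type would not be attained uniquely at height $z$.
\end{itemize}
Now \cref{lem:bridge_height}, applied for each $m$ and a union over $m$, gives probability zero. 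Combining the endpoint case with the pendent-subtree case yields $\P{\type_r(\cT)=z}=0$.

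The main obstacle is the pushing step. We must guarantee that the pendent subtree containing $x$ contains no other centre of $P$ that could block the slide, and we must handle ties where several centres lie at height exactly $z$. The first thing I would try is to choose $m$ large, so that the pendent subtree has height less than $r/2$ and therefore contains at most one centre. Since $\mu$ has full support and leaves are dense, for $m$ large $x$ lies in arbitrarily small pendent subtrees, which should make this work. Ties are handled by applying the argument to each centre at height $z$ separately. Finally, $z=0$ is treated by the same sliding argument: a centre at the root could be moved up a branch.
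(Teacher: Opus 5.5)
Your proposal follows essentially the paper's proof: pass to a leaf-supported $r$-TAP via \cref{cor:leaves}, discard centres at stick endpoints $f(C_m)$ by absolute continuity of their heights, push the remaining height-$z$ centres to the highest point of a subtree pendent to a stick, and rule out the resulting coincidence $\height(\text{pendent subtree})=z-\height(b)$ with \cref{lem:bridge_height} and a union over $m$. The paper fixes one $N$ such that $\cT_N$ separates all centres of $P$ and moves every centre at once, which is the correct form of your ``take $m$ large'' fix (your initial ``minimal $m$'' choice fails, since that subtree may contain other centres) and handles ties automatically, because not all pushed centres can rise without increasing the type; the paper also only treats $z>0$, and your one-line treatment of $z=0$ needs more care, since sliding the root centre straight up towards a centre at height exactly $r$ breaks the packing, so you would have to slide it into a suitable side branch, as in the mobility notion of \cref{lem:fixed_point_mobile}.
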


\begin{proof}
    Fix a sample of $\cT$ with a leaf-supported $r$-TAP that we will denote by $P$ (such a $P$ exists almost surely by \cref{cor:leaves}). Suppose for now that $P \subseteq \cL(\cT) \setminus (\cup_{n \geq 1} \cT_n)$. Since $P$ is finite and $\cT=\overline{(\cup_{n \geq 1} \cT_n)}$, we know that, almost surely, there exists some $N \geq 1$ such that for all $x,y \in P$, $\llbracket x ,y \rrbracket \cap \cT_N \ne \emptyset$. Thus, for each $x \in P$ there is a $b_x \in \cT_N$ that is closest to $x$, with $b_x\ne b_y$ if $x\ne y$. Moreover, since $x \notin \cT_N$, $b_x \in \branch(\cT)$ ($b_x$ cannot be a leaf in $\cT_N$ and must be the root of the subtree pendent to $\cT_N$ which contains $x$ as this is the closest point to $x$ in $\cT_N$). Let $v_x$ be a vertex that attains the height of the subtree rooted at $b_x$ pendent to $\cT_N$. Because $\cT_N$ separates the points of $P$, moving packing centres as far from $\cT_N$ in their pendent subtree will maintain the property of being an $r$-TAP. Thus, the set $\{v_x : x \in P\}$ is also an $r$-TAP of $\cT$. This implies that, if $\type_r(\cT) = z$ there must exist some $1 \leq m \leq N$ and some $b \in f([C_m,C_{m+1})) \cap \branch(\cT)$ such that the subtree rooted at $b$ pendent to $\cT_N$ has height exactly $z - d(\rho,b)$. For $m \geq 1$, let $A_m$ be the event that there is such a point on the path $f([C_m,C_{m+1}))$. Then, by \cref{lem:bridge_height},
    $$
    \prob\{\type_r(\cT) = z\} \leq \sum_{m \geq 1} \prob\{A_m\} = 0
    $$
    for any $z > 0$. To remove the assumption that the packing centres in $P$ are not in $\cT_n$ for any $n \geq 1$ it is enough to observe that there are finitely many leaves in $\cT_n$, and their heights are absolutely continuous random variables. Then, one can take a union bound over all $n \geq 1$, noting that the leaves have height equal to $z$ with probability zero. Then, we perform the above argument with all of the other packing centres.
\end{proof}

\cref{prop:convergence_of_the_type} follows from \cref{prop:no_tight_continuity}, \cref{lem:absolute_continuity} and the following Corollary.

\begin{cor}
    Almost surely, $\cT$ admits a relaxed $r$-TAP.
\end{cor}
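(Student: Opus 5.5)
The plan is to combine \cref{lem:tight_pair=same_type} with the atomlessness of the type from \cref{prop:atomless}, using the recursive self-similarity of the Brownian CRT at branch points. By \cref{lem:tight_pair=same_type}, outside a null set, if $\cT$ admits no relaxed $r$-TAP then there is some $b\in\branch(\cT)$ with $\type_r(\cT^{\uparrow b^-})+\type_r(\cT^{\uparrow b^+})=r$. It therefore suffices to show that, almost surely, no branch point satisfies this identity. Since $\branch(\cT)$ is countable, I will first enumerate it by random points whose laws are tractable, and then show that each enumerated point fails the identity almost surely.

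\textbf{Enumeration of branch points.} Let $(X_n,Y_n)_{n\ge1}$ be independent pairs of independent uniform points of $\cT$, and let $B_n$ be the branch point of $\Span\{\mathbf 0,X_n,Y_n\}$. Because $\mu$ has full support (\cref{point:2}), the points $X_n$ are dense. Every branch point $b$ has degree three, so it has two subtrees above it, and each of these has positive mass. Hence some pair $(X_n,Y_n)$ has one point in each of the two upper components of $b$, and then $B_n=b$. Thus $\{B_n:n\ge1\}=\branch(\cT)$ almost surely, and it is enough to prove that for each fixed $n$,
\[
\P{\type_r(\cT^{\uparrow B_n^-})+\type_r(\cT^{\uparrow B_n^+})=r}=0 .
\]

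\textbf{Law of the two subtrees.} By re-rooting invariance, $(\cT,\mathbf 0,X_n,Y_n)\eqdist(\cT,X^1,X^2,X^3)$ for three independent uniform points. Fact \ref{point:5} then applies with the root playing the role of $X^1$. The components containing $X_n$ and $Y_n$ are exactly $\cT^{\uparrow B_n^-}$ and $\cT^{\uparrow B_n^+}$, both rooted at $B_n$. Conditionally on their masses $(m^-,m^+)$, these two trees are independent and distributed as $\sqrt{m^\pm}\,\cT$, rooted at the root.

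\textbf{The identity has probability zero.} By scaling, $\type_r(\sqrt m\,\cT)=\sqrt m\,\type_{r/\sqrt m}(\cT)$. By \cref{prop:atomless}, this variable is atomless for every $m>0$. Conditionally on $(m^-,m^+)$, the two types are therefore independent and the first is atomless. By Fubini, the conditional probability that $\type_r(\cT^{\uparrow B_n^-})=r-\type_r(\cT^{\uparrow B_n^+})$ is obtained by integrating the probability of an atom of the first variable against the law of the second, and this vanishes. Integrating over the masses and taking a countable union over $n$ completes the argument.

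\textbf{Main obstacle.} The delicate step is matching the orientation in Fact \ref{point:5}. One must check that, after re-rooting, the component containing the root is the one discarded, and that the remaining two components are exactly $\cT^{\uparrow b^\pm}$ with root $B_n$. One must also justify the enumeration claim, namely that almost surely every branch point arises as some $B_n$. The probabilistic core, independence together with atomlessness, is routine once these identifications are in place.
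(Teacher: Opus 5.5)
Your proposal is correct and follows essentially the same route as the paper: you reduce via \cref{lem:tight_pair=same_type} to an identity at a branch point, enumerate $\branch(\cT)$ by branch points of spans of uniform samples, combine Fact~\ref{point:5} with Brownian scaling and the atomlessness in \cref{prop:atomless}, and conclude with a countable union bound. The only differences are cosmetic. You take the root as one of the three points, via re-rooting invariance, instead of three uniform points, which makes the identification of the two components with $\cT^{\uparrow B_n^{\pm}}$ exact. The enumeration step, which you flag yourself, is most cleanly stated as a conditional Borel--Cantelli argument: density of the $X_n$ alone does not do it, but the fact that both upper components of each branch point have positive $\mu$-mass does.
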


\begin{proof}
    Let $A$ be the event that $\cT$ does not admit relaxed $r$-TAPs. By \cref{lem:tight_pair=same_type}, we know that, on $A$, there is a branch point $b \in \branch(\cT)$, where $\type_r(\cT^{\uparrow b^-}) = r-\type_r(\cT^{\uparrow b^+})$. Let $(B_n)_{n\ge 1}$ be the random sequence of branch points from the proof of \cref{lem:tight_pair=same_type}, so that $\{B_n : n \geq 1\} = \branch(\cT)$ almost surely. For each $n \geq 1$, let $(\Delta_n^-,\Delta_n^+)$ be the mass of subtrees $\cT^{\uparrow B_n^-}$ and $\cT^{\uparrow B_n^+}$. As was noted in \eqref{point:5}, conditionally given $\Delta_n^-$ and $\Delta_n^+$, the trees $(\Delta_n^-)^{-1/2}\cT^{\uparrow B_n^-}$ and $(\Delta_n^+)^{-1/2}\cT^{\uparrow B_n^+}$ are independent Brownian CRTs. Thus, using the scaling properties of packings we get,
    \begin{align*}
        &\prob\{\type_r(\cT^{\uparrow B_n^-}) = r-\type_r(\cT^{\uparrow B_n^+})\} \\
        \leq& \e\left[ \Cprob{ \type_r\left(\cT^{\uparrow B_n^-} \right) = r-\type_r\left(\cT^{\uparrow B_n^+} \right)}{\Delta_n^- , \Delta_n^+} \right] \\
        \leq&  \e\left[ \Cprob{\type_{\frac{r}{\sqrt{\Delta_n^-}}}\left(\frac{1}{\sqrt{\Delta_n^-}}\cT^{\uparrow B_n^-} \right) = r-\type_{\frac{r}{\sqrt{\Delta_n^+}}}\left(\frac{1}{\sqrt{\Delta_n^+}}\cT^{\uparrow B_n^+} \right)}{\Delta_n^- , \Delta_n^+} \right].
    \end{align*}
    The final line is equal is zero because any two independent atomless random variables are almost surely never equal. Taking a union bound we get,
    $$
    \prob\{A\} \leq \sum_{n \geq 1} \prob\big\{\type_r(\cT^{\uparrow B_n^-}) = r-\type_r(\cT^{\uparrow B_n^+})\big\} = 0.
    $$
\end{proof}
Recalling \cref{lem:absolute_continuity} we can conclude the result for $\cT^{(\sigma,x)}$ as well, proving \cref{prop:convergence_of_the_type}. 
\section{Packing the CRT}\label{sec:covering_CRT_fringe}

In this section we prove the two main ingredients in the proof of \cref{thm:crt}, which are \cref{prop:fringe_CRT} and \cref{prop:crt_local_limit}.

\subsection{Covering the fringe of the CRT}

Let $\delta > 0$ and let $(t,\rho,d,\cX) \in \T^{(m)}$ be a compact rooted $\R$-tree with $m+1$ marked points $\cX\ni \rho$. We define a function $\fs_{\delta} : (0,\delta) \times \T^{(m)} \to [0,\infty)$ as $\fs_{\delta}(\theta,t) = \#A_{\theta,t}$ for all $(\theta,t) \in (0,\delta) \times\T^m$, where
$$
A_{\theta,t} := \{ x \in t \setminus \Span(\cX) : d(x,\Span(\cX)) = \theta  \ ( \mathrm{mod} \  \delta), \ \height(t^{\uparrow x}) > \delta \}.
$$

\begin{fact}
    For any $0 < \theta < \delta$ and $(t,\rho,d,\cX) \in \T^{(m)}$, $\fs_{\delta}(\theta,t) < \infty$.
\end{fact}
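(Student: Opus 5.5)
Fix $0<\theta<\delta$ and a compact rooted $\R$-tree $t$ with marked set $\cX$. The plan is to show that $A_{\theta,t}$ can be injected into the finite set of disjoint subtrees of $t$ of height at least $\delta$, with disjointness forcing finiteness by compactness.

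The first step is to show that distinct points of $A_{\theta,t}$ have disjoint subtrees over a set of length $\delta$. For $x\in A_{\theta,t}$, $\height(t^{\uparrow x})>\delta$, so there is $y_x\in t^{\uparrow x}$ with $d(x,y_x)>\delta$. Let $z_x$ be the point on $\llbracket x,y_x\rrbracket$ with $d(x,z_x)=\delta/2$, and set $I_x=\llbracket x,z_x\rrbracket$. Note that $x\notin\Span(\cX)$, and $\Span(\cX)$ contains $\rho$. Moving upward from $x$ away from $\rho$ increases the distance to $\Span(\cX)$ exactly by arclength. Hence points of $I_x$ have distance to $\Span(\cX)$ in $[d(x,\Span(\cX)),\,d(x,\Span(\cX))+\delta/2]$.

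Now let $x\neq x'$ both lie in $A_{\theta,t}$. If neither is an ancestor of the other, then $t^{\uparrow x}\cap t^{\uparrow x'}=\emptyset$, so $I_x\cap I_{x'}=\emptyset$. If $x'$ is a strict ancestor of $x$, then $d(x,\Span(\cX))-d(x',\Span(\cX))=d(x,x')$. Indeed, both points lie off $\Span(\cX)$ and $x'$ lies on the geodesic from $x$ to $\Span(\cX)$: the path from $x'$ to the span passes down toward $\rho$, and $x'$ is not in the span, so the component of $t\setminus\Span(\cX)$ containing $x'$ also contains $x$. Since both distances are congruent to $\theta$ mod $\delta$, we get $d(x,x')\ge\delta$. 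Therefore $x\notin I_{x'}$, and more precisely $I_{x'}$ and $I_x$ are disjoint, because $I_{x'}$ ends at distance $\delta/2$ above $x'$ while $I_x$ starts at distance at least $\delta$ above $x'$.

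The second step is to conclude finiteness. Each $I_x$ contains its midpoint $m_x$, and the ball $B(m_x,\delta/4)$ in $t$ satisfies the following: any point $m_{x'}$ with $x'\neq x$ is at distance at least $\delta/4$ from $m_x$, because the geodesic between two points lying on disjoint segments $I_x,I_{x'}$ must exit $I_x$ through an endpoint. This uses the tree structure: exiting $I_x$ other than through its ends would create a loop, and both ends are at distance $\delta/4$ from $m_x$. Thus $\{m_x:x\in A_{\theta,t}\}$ is a $\delta/4$-separated set in the compact space $t$. Such a set is finite, since $t$ is covered by finitely many $\delta/8$-balls. Hence $\fs_\delta(\theta,t)<\infty$.

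The main point needing care is the ancestor case, namely justifying that the distance to $\Span(\cX)$ grows exactly by $d(x,x')$ along ancestral lines off the span. This follows from the fact that the connected component of $t\setminus\Span(\cX)$ containing $x$ is attached to the span at a single point, which is an ancestor of $x$ because $\rho\in\cX$.
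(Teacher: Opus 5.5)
Your overall strategy is the paper's: push each $x\in A_{\theta,t}$ up into $t^{\uparrow x}$, show the resulting points are uniformly separated, and conclude by total boundedness. The paper uses a point $a_x$ with $d(x,a_x)=\delta$; you use the midpoint $m_x$ of a segment of length $\delta/2$. Your treatment of the span is correct, and it spells out what the paper leaves implicit: $d(\cdot,\Span(\cX))$ increases by arclength inside $t^{\uparrow x}$, so $d(x,x')\ge\delta$ when one point is a strict ancestor of the other.

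The gap is in the separation step. You claim that a geodesic from $m_x$ to a point off $I_x$ must leave $I_x$ through an endpoint, since otherwise it ``would create a loop''. This is false in an $\R$-tree, because branching can occur at interior points of $I_x$, and it does so in your own configuration. Suppose $x$ is a strict ancestor of $x'$, and $x'$ lies in a side branch attached at a point $b$ of $I_x$ with $0<d(x,b)<\delta/4$. This is compatible with $d(x,x')\ge\delta$ and $x'\in A_{\theta,t}$. Then the geodesic from $m_x$ to $m_{x'}$ leaves $I_x$ at $b$, which is closer than $\delta/4$ to $m_x$. More fundamentally, disjointness of two segments does not by itself separate their midpoints: a second segment can turn at a branch point arbitrarily close to $m_x$. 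So the disjoint-segments step does not deliver the separation you need.

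The conclusion is still true, and you can repair it by arguing directly with ancestry; this also makes the disjointness step superfluous.
\begin{itemize}
\item If $x,x'$ are incomparable, then $t^{\uparrow x}\cap t^{\uparrow x'}=\emptyset$. The geodesic from $m_x$ to $m_{x'}$ therefore passes through $x$ and $x'$, so $d(m_x,m_{x'})\ge d(m_x,x)+d(x',m_{x'})=\delta/2$.
\item If $x'$ is a strict ancestor of $x$, let $c$ be the most recent common ancestor of $m_x$ and $m_{x'}$. Then $\height(c)\le \height(m_{x'})=\height(x')+\delta/4$. On the other hand, $\height(m_x)=\height(x)+\delta/4\ge \height(x')+5\delta/4$. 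Hence $d(m_x,m_{x'})\ge \height(m_x)-\height(c)\ge\delta$.
\end{itemize}
With this replacement your proof is complete. It then coincides with the paper's argument, which does the same computation with $a_x$ in place of $m_x$ and obtains $\delta$-separation.
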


\begin{proof}
This is a direct consequence of compactness. We just need to observe that for each $x\in A_{\theta,t}$ there is an $a_x\in t^{\uparrow x}$ with $d(x,a_x)=\delta$, so that the $a_x$ are distinct and $\delta$-separated. An infinite $\delta$-separated set in $t$ would contradict compactness. 
\end{proof}

\begin{lem}\label{lem:weird_function_lower_semi}
    For any $\delta > 0$, the function $\fs_{\delta}$ is lower semi-continuous with respect to the product topology on $(0,\delta) \times \T^{(m)}$.
\end{lem}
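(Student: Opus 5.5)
To prove lower semi-continuity at a point $(\theta,t)$, I will take a sequence $(\theta_n,t_n)\to(\theta,t)$ in $(0,\delta)\times\T^{(m)}$ and show that $\liminf_n \fs_\delta(\theta_n,t_n)\ge \fs_\delta(\theta,t)$. Write $k=\fs_\delta(\theta,t)$, which is finite by the preceding Fact, and list $A_{\theta,t}=\{x_1,\dots,x_k\}$. The aim is to produce, for every large $n$, $k$ distinct points of $A_{\theta_n,t_n}$. The openness comes from the strict inequality $\height(t^{\uparrow x_i})>\delta$. I will choose $\eta>0$ with $\height(t^{\uparrow x_i})>\delta+4\eta$ for all $i$. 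For each $i$ I pick a leaf-side witness $a_i\in t^{\uparrow x_i}$ with $d(x_i,a_i)>\delta+4\eta$. I also record the attachment point $p_i\in\Span(\cX)$ closest to $x_i$, so that $d(x_i,p_i)=j_i\delta+\theta$ for some integer $j_i\ge 0$.

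The transfer to $t_n$ uses marked correspondences $R_n$ with $\dis(R_n)\to0$. Pick $a_i^n\in t_n$ with $(a_i,a_i^n)\in R_n$. The points of $\cX$ correspond to $\cX_n$, and every geodesic distance in $\Span(\cX)$ and from $a_i$ to $\Span(\cX)$ is a function of four-point distances among $a_i$ and the marks, via Gromov products. Hence $d(a_i^n,\Span(\cX_n))\to d(a_i,\Span(\cX))=d(a_i,p_i)$, and the attachment heights converge as well.

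Now I define $x_i^n$ as the point on the geodesic from $\Span(\cX_n)$ to $a_i^n$ at distance exactly $j_i\delta+\theta_n$ from $\Span(\cX_n)$. For large $n$ this point lies strictly between the two ends, because
$$
d(a_i^n,\Span(\cX_n))\to j_i\delta+\theta+d(x_i,a_i)>j_i\delta+\theta+\delta+4\eta.
$$
It follows that $x_i^n\in t_n\setminus\Span(\cX_n)$, that $d(x_i^n,\Span(\cX_n))\equiv\theta_n \pmod\delta$, and that $\height(t_n^{\uparrow x_i^n})\ge d(x_i^n,a_i^n)>\delta$ for large $n$. This last point uses that the subtree above $x_i^n$, seen from the root, contains $a_i^n$. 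That is true because the root $\rho_n\in\cX_n$, so $x_i^n$ lies on the geodesic from $\rho_n$ to $a_i^n$. Therefore $x_i^n\in A_{\theta_n,t_n}$.

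The main obstacle is distinctness of $x_1^n,\dots,x_k^n$. If $x_i\ne x_j$, I split into two cases.
\begin{itemize}
\item If the branch point of $a_i$ and $a_j$ off $\Span(\cX)$ lies strictly below $\min(d(x_i,\Span(\cX)),d(x_j,\Span(\cX)))$, or the two attach at different points, the Gromov products separate them by a positive margin. This margin persists under small distortion, so $x_i^n\neq x_j^n$.
\item If $x_i$ and $x_j$ lie on a common ray, then by the mod-$\delta$ constraint their distances to the span differ by a nonzero multiple of $\delta$. The same is true of $x_i^n$ and $x_j^n$, since their heights over the span are $j_i\delta+\theta_n\ne j_j\delta+\theta_n$.
\end{itemize}
In both cases the points are distinct for large $n$. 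Hence $\fs_\delta(\theta_n,t_n)\ge k$ eventually, which gives the claimed lower semi-continuity.
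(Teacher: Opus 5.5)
Your proof is correct. It shares the paper's skeleton: build an injection from $A_{\theta,t}$ into $A_{\theta_n,t_n}$ through a marked correspondence, using that $d(a,\llbracket \rho,x\rrbracket)=\tfrac12\bigl(d(a,\rho)+d(a,x)-d(\rho,x)\bigr)$ makes the distance to $\Span(\cX)$ stable under small distortion. Where you differ is in how the image points are built.

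\emph{The paper's route.} It takes a correspondent $z'$ of $a\in A_{\theta,t}$ itself and shows $z'$ is within $O(\eps)$ of the correct level. It then moves to a nearby point $a'$ on that level with a tall subtree. Injectivity comes from the metric separation $\delta^\dagger$ of the finite set $A_{\theta,t}$.

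\emph{Your route.} You transport a far witness $a_i\in t^{\uparrow x_i}$ and put $x_i^n$ on the geodesic from $\Span(\cX_n)$ to $a_i^n$ at the exact level $j_i\delta+\theta_n$. This buys two things.
\begin{enumerate}
\item The congruence condition holds by construction, with the integer offset $j_i$ tracked explicitly. The paper's write-up only spells out the case $d(a,\Span(\cX))=\theta$.
\item The condition $\height(t_n^{\uparrow x_i^n})>\delta$ is automatic, since $a_i^n$ sits above $x_i^n$ at distance tending to $d(x_i,a_i)$. In the paper's route, the claim that $z'$ has a tall subtree needs an extra Gromov-product step to see that the correspondent of a far point of $t^{\uparrow a}$ lies essentially above $z'$.
\end{enumerate}
The price is that distinctness no longer follows from separation. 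It needs your comparison of the branch height $c(a_i,a_j)=\tfrac12\bigl(d(a_i,\Span(\cX))+d(a_j,\Span(\cX))-d(a_i,a_j)\bigr)$ with the levels, which is a continuous function of distances to the marks and so passes to the limit.

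Two small points deserve a line each in a write-up.
\begin{enumerate}
\item Your displayed limit rests on the identity $d(a_i,\Span(\cX))=d(a_i,x_i)+d(x_i,\Span(\cX))$. This holds because $t^{\uparrow x_i}\cap\Span(\cX)=\emptyset$, which follows from $\rho\in\Span(\cX)$, $x_i\notin\Span(\cX)$ and convexity of $\Span(\cX)$.
\item Your two cases are exhaustive. If the attachment points agree and $c(a_i,a_j)$ is at least the smaller of the two levels, then $x_i$ and $x_j$ lie on a common geodesic issuing from that attachment point, so $x_i\neq x_j$ forces $j_i\neq j_j$. The level argument of your second case in fact works whenever $j_i\neq j_j$; the margin argument is only needed when $j_i=j_j$.
\end{enumerate}
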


\begin{proof}
    Let $(t,\rho,d,\cX = \{\rho,x_1,...,x_m\}), \ (t',\rho',d',\cX' = \{\rho',x_1',...,x_m'\}) \in \T^{(m)}$ and let $0 < \theta,\theta' < \delta$. Suppose that $\dgh^m(t,t') < \frac{\delta^* \wedge \delta^\dag \wedge \theta}{8} =: \frac{\eps}{2}$, where $\delta^* = \min_{a \in A_{\theta,t}} (\height(t^{\uparrow a})-\delta)$ and $\delta^\dag=\min_{a, a'\in A_{\theta,t}}d(a,a')$. Suppose also that $|\theta - \theta'| < \eps$. Let $R$ be a marked correspondence such that $\dis(R) < \eps $. We will argue that $\fs_{\delta}(\theta,t) \leq \fs_{\delta}(\theta',t')$ by providing an injection $f:A_{\theta,t} \to A_{\theta't'}$.

    Let $a \in A_{\theta,t}$. By definition there exists some $z' \in t'$ such that $(a,z') \in R$. Observe that
    \begin{equation}\label{eq:goofy_mgh_1}
    d'(z',\Span(\cX')) = \min_{1 \leq i \leq m} d'(z',\llbracket \rho' , x_{i}'\rrbracket),
    \end{equation}
    and
    \begin{equation}\label{eq:goofy_mgh_2}
    d(a,\Span(\cX)) = \min_{1 \leq i \leq m} d(a,\llbracket \rho , x_{i}\rrbracket).
    \end{equation}
    By partitioning the paths connecting $a$ to both $\rho$ and $x_{i}$ into the parts inside $\llbracket \rho , x_{i} \rrbracket$ and the parts outside we can see that
    $$
    d(a,\rho) + d(a,x_{i}) = 2d(a,\llbracket \rho , x_i \rrbracket) + d(\rho,x_{i}),
    $$
    which by re-arranging gives, for all $1 \leq i \leq m$,
    $$
    d(a,\llbracket \rho , x_i\rrbracket) = \frac{d(a,\rho) + d(a,x_{i}) - d(\rho,x_i)}{2}.
    $$
    For the same reason we also have, for all $1 \leq i \leq m$,
    $$
    d'(z',\llbracket \rho' , x_i' \rrbracket ) = \frac{d'(z',\rho') + d'(z',x_{i}') - d'(\rho',x_{i}')}{2}.
    $$
    By the definition of a marked correspondence we have that $(\rho,\rho'), (x_i,x_i') \in R$, and so for all $1 \leq i \leq m$ combining the previous two lines gives,
    $$
    \big| d'(z',\llbracket \rho' , x_i' \rrbracket) - d(a,\llbracket \rho , x_i \rrbracket) \big| < \frac{3\eps}{2}.
    $$
    Thus, recalling equations \eqref{eq:goofy_mgh_1} and \eqref{eq:goofy_mgh_2} we obtain
    $$
    \big| d(a,\Span(\cX)) - d'(z',\Span(\cX')) \big| < \frac{3\eps}{2}.
    $$
    Using that $d(a,\Span(\cX)) = \theta$ and $|\theta-\theta'| < \eps$ we get
    \begin{equation}\label{eq:z'theta'}
    \big| d'(z',\Span(\cX')) - \theta' \big| < \frac{5\eps}{2}.
    \end{equation}
    Moreover, since $(a,z')\in R$, $\height((t')^{\uparrow z'}) > \delta + \delta^* - \eps.$ Combined with \eqref{eq:z'theta'},  this implies that there exists some $a' \in t'$ such that:
    \begin{enumerate}
        \item $d'(a' , \Span(\cX')) = \theta' \ ( \mathrm{mod}  \ \delta)$,
        \item $d'(a',z') < 5\eps/2$
        \item $\height((t')^{\uparrow a'}) > \delta + \delta^* - \frac{7\eps}{2} > \delta$.
    \end{enumerate}
    Hence, $a' \in A_{\theta',t'}$, and we can set $f(a) = a'$. We finish by showing that $f$ is an injection.

    Suppose that $f(a_1) = f(a_2)$ for $a_1,a_2 \in A_{\theta,t}$. This would imply that there exists $z_1',z_2'$ such that $(a_1,z_1'),(a_2,z_2') \in R$, and $d'(z_1',z_2') < 5\eps$ by point (2) in the previous list. However, this gives $d(a_1,a_2) < 6\eps \leq \delta^\dag$, which implies that $a_1 = a_2$ by the fact that $\cX$ is $\delta^\dag$-separated.
\end{proof}

For the next lemma, for $S$ a Polish space, we let $\cP(S)$  denote the set of all probability measures on $S$. We equip it with the topology of weak convergence

\begin{lem}
    Let $\delta > 0$. The function $\fI_\delta:\T^{(m)} \times \cP((0,\delta)) \to [0,\infty)$ given by
    $$
    \fI_\delta(t,\nu) = \int_{0}^\delta \fs_\delta(\theta,t) \ \nu(d\theta)
    $$
    is lower semi-continuous on $\T^{(m)} \times \cP((0,\delta))$. In particular, the set $\{ (t,\nu) : \fI_\delta > x \}$ is open for any $x > 0$.
\end{lem}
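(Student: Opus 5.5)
The plan is to derive lower semi-continuity of $\fI_\delta$ from \cref{lem:weird_function_lower_semi}, using a Fatou-type argument for weakly convergent measures. Fix $(t,\nu)$ and a sequence $(t_n,\nu_n)\to(t,\nu)$ in $\T^{(m)}\times\cP((0,\delta))$. The goal is
\[
\liminf_{n\to\infty}\int_0^\delta \fs_\delta(\theta,t_n)\,\nu_n(d\theta)\ \ge\ \int_0^\delta \fs_\delta(\theta,t)\,\nu(d\theta).
\]
Both $\T^{(m)}$ and $(0,\delta)$ are metric spaces, so sequences suffice. The openness of $\{\fI_\delta>x\}$ is then the standard characterisation of lower semi-continuity.

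\emph{Step 1: reduce to bounded continuous integrands.} Define $g_n(\theta)=\fs_\delta(\theta,t_n)$ and $g(\theta)=\fs_\delta(\theta,t)$; these are non-negative. By \cref{lem:weird_function_lower_semi}, if $\theta_n\to\theta$ in $(0,\delta)$ then $\liminf_n g_n(\theta_n)\ge g(\theta)$. So the functions $g_n$ satisfy a joint (uniform-in-the-limit) lower semi-continuity with limit $g$.

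Fix $K\in\N$. Replace $g$ by $g\wedge K$, which is lower semi-continuous on $(0,\delta)$ since it is a section of a jointly lower semi-continuous function. Then write $g\wedge K$ as an increasing limit of bounded Lipschitz functions, for instance
\[
h_j(\theta)=\inf_{\theta'}\bigl\{(g(\theta')\wedge K)+j|\theta-\theta'|\bigr\}.
\]
By monotone convergence it is enough to show $\liminf_n\int g_n\,d\nu_n\ge\int h\,d\nu$ for each bounded continuous $h\le g\wedge K$.

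\emph{Step 2: the key estimate.} Let $h$ be as above and $\eta>0$. I claim that, on any compact $L\subset(0,\delta)$, eventually $g_n\ge h-\eta$ uniformly. If not, there would be points $\theta_n\in L$ with $g_n(\theta_n)<h(\theta_n)-\eta$. Pass to a subsequence with $\theta_n\to\theta\in L$. Continuity of $h$ and the joint liminf then give $g(\theta)\le h(\theta)-\eta$, which contradicts $h\le g$.

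Next, by Prokhorov's theorem the family $\{\nu_n\}$ is tight in $(0,\delta)$, so choose $L$ with $\nu_n(L^c)<\eta$ for all $n$. Since $g_n\ge0$,
\[
\int g_n\,d\nu_n\ \ge\ \int_L (h-\eta)\,d\nu_n\ \ge\ \int h\,d\nu_n-K\eta-\eta.
\]
Because $h$ is bounded and continuous, $\int h\,d\nu_n\to\int h\,d\nu$. Let $\eta\to0$ and then take the supremum over $h$ and over $K$ to conclude.

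\emph{Main obstacle.} The delicate point is that \cref{lem:weird_function_lower_semi} gives lower semi-continuity only on the open interval $(0,\delta)$, and mass of $\nu_n$ may escape towards $0$ or $\delta$. I would handle this with tightness of weakly convergent sequences in $\cP((0,\delta))$, as above, together with $\fs_\delta\ge0$. I also need $\theta\mapsto\fs_\delta(\theta,t)$ to be measurable; this follows from its lower semi-continuity.
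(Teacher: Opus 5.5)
Your proof is correct and follows the same approach as the paper. The paper combines the joint lower semi-continuity from \cref{lem:weird_function_lower_semi} with a ready-made Fatou lemma for weakly converging probabilities \cite[Theorem 1.1]{feinberg2014fatou}; you instead prove that Fatou inequality by hand, via Lipschitz approximation of $\fs_\delta(\cdot,t)\wedge K$, a uniform lower bound on compacts obtained from the joint liminf, and tightness of $(\nu_n)$. One small correction: sequences suffice because $\cP((0,\delta))$ is metrizable (e.g.\ by the L\'evy--Prokhorov metric), not merely because $(0,\delta)$ is a metric space.
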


\begin{proof}
    The lower semi-continuity of $\fs_{\delta}$ from \cref{lem:weird_function_lower_semi} and a modified version of Fatou's lemma for weakly converging probability measures (see \cite[Theorem 1.1]{feinberg2014fatou}) yields the result.
\end{proof}

We are now prepared to prove \cref{prop:fringe_CRT}, which is our continuum analogue of \cref{prop:fringe_output}.

\begin{prop}\label{prop:cover_fringe_CRT}
    Let $(\cT,\rho,\mu)$ be a Brownian CRT conditioned to have mass 1, let $(X_k)_{k \geq 1}$ be a sequence of uniform points in $\cT$ sampled from $\mu$, and for any $\delta > 0$, let $\cS^{\delta,\lambda} = \Span\{\rho,X_1,...,X_{\lfloor \lambda/\delta^2 \rfloor}\}$. Then, for any $\eps > 0$,
    $$
    \lim_{\lambda \to \infty}\limsup_{\delta \downarrow 0}\prob\left\{ \cC_{\delta}(\cT) - \cC_{(1-\eps)\delta}(\cS^{\delta,\lambda}) > \frac{\eps}{\delta^2} \right\} = 0.
    $$
\end{prop}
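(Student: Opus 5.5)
The plan mirrors the discrete argument of \cref{prop:fringe_cover}, with the function $\fs_\delta$ and its integrated version $\fI_\delta$ replacing the counting sets $\cC_k^j$. The first step is a deterministic covering bound, the analogue of the lemma following \cref{lem:prop_tall_trees}. Fix $\theta\in(0,\eps\delta/2)$ and set $\delta'=\eps\delta/2$. Every point $y\in\cT$ with $d(y,\cS^{\delta,\lambda})\ge \eps\delta$ has an ancestor $x$ satisfying
\[
d(x,\cS^{\delta,\lambda})\equiv\theta \pmod{\delta'},\qquad d(x,y)<2\delta'=\eps\delta .
\]
Then $\height(\cT^{\uparrow x})\ge \delta'$ automatically; with a harmless shift of constants one may demand $\height>\delta'$. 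Hence $\cT$ is covered by two families: the $\delta$-enlargement of a $(1-\eps)\delta$-covering of $\cS^{\delta,\lambda}$, and the balls of radius $\eps\delta\le\delta$ centred at $A_{\theta,\cT}$, computed with marks $\cX=\{\rho,X_1,\dots,X_{\lfloor\lambda/\delta^2\rfloor}\}$ and modulus $\delta'$. Averaging over $\theta$ uniform on $(0,\delta')$ gives
\[
\cC_\delta(\cT)-\cC_{(1-\eps)\delta}(\cS^{\delta,\lambda})\le \min_\theta \fs_{\delta'}(\theta,\cT)\le \frac{1}{\delta'}\int_0^{\delta'} \fs_{\delta'}(\theta,\cT)\,d\theta ,
\]
where the right-hand side equals $\fI_{\delta'}(\cT,\mathrm{Unif}(0,\delta'))$. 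Equivalently, it equals $1/\delta'$ times the total length of the set of points $x\notin\cS^{\delta,\lambda}$ with $\height(\cT^{\uparrow x})>\delta'$.

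The task is thus to show that, with probability close to one, this total length $L$ is at most $\eps\cdot\eps\delta/(2\delta^2)=\eps^2/(2\delta)$ once $\lambda$ is large. Rather than transferring the discrete bound, I would compute $\E{L}$ directly using the stick-breaking construction. By exchangeability of the mass measure, the fringe $\cT\setminus\cS^{\delta,\lambda}$ consists of subtrees pendent to a skeleton with $k=\lfloor\lambda\delta^{-2}\rfloor$ leaves. Standard facts give:
\begin{itemize}
\item the skeleton has total length $\asymp\sqrt{k}$;
\item the pendent subtree containing a fresh uniform point $U$ has mass of order $1/k$, so its height is of order $k^{-1/2}=\delta/\sqrt\lambda$.
\end{itemize}
Via the decomposition $\mathbf{(5)}$, each pendent subtree of mass $m$ is a rescaled CRT. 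The length of the $\delta'$-tall part of such a subtree is at most
\[
\sqrt m\,\Lambda\!\left(\delta'/\sqrt m\right),
\]
where $\Lambda(h)$ is the length of points with subtree height exceeding $h$ in a standard CRT. By the self-similar description, $\Lambda(h)\approx c/h$ for small $h$, and it is exponentially small once $h\gg1$ because of the height tail.

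Summing over the pendent masses, equivalently integrating along the skeleton length $\sqrt k$ at rate given by the Poisson structure of $\Psi$, gives a bound of the form
\[
\E{L}\lesssim \sqrt{k}\int \P{\height>\delta'}\,(\text{length})\lesssim \sqrt{k}\,\cdot\frac{\delta'^{-1}}{k}\cdot\delta^2\,\phi(\sqrt\lambda\,\eps),
\]
with $\phi\to0$. In scaled form this says $\delta\,\E{L}\to0$ as $\lambda\to\infty$, uniformly in small $\delta$. Markov's inequality then finishes the proof.

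The \textbf{main obstacle} is making this first-moment estimate rigorous and uniform in $\delta$. If a direct computation is too messy, I would fall back on a scaling argument. By Brownian scaling, $\delta^{-1}\cT$ restricted near the skeleton converges, via \cref{thm:crt_local_limit}, to a self-similar object. The lower semi-continuity of $\fI_\delta$ (the preceding lemma) is what allows one to control the limsup of probabilities of the events $\{\fI>x\}$ through a Portmanteau argument. The idea is to approximate $\cT$ by large discrete Bienaymé trees $T_n$ with $\gamma_n\asymp\sqrt n$, which converge in the marked Gromov--Hausdorff sense with the sampled points as marks. Since $\{\fI_{\delta'}>x\}$ is open,
\[
\P{\fI_{\delta'}(\cT,\nu)>x}\le\liminf_n\P{\fI(T_n/\gamma_n,\nu)>x},
\]
and the right-hand side is controlled by \cref{fact:non_uniform} in the case $\gamma_n=n^{1/2}$. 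The count $\fs$ on $T_n$ is bounded by the number of tall fringe vertices divided by the modulus, which is exactly what that lemma bounds. This transfer route is the step I expect to need the most care: matching uniform leaves in $T_n$ to uniform mass points, and handling the $\lfloor\lambda\rfloor$-type skeleton size in \eqref{eq:roots_fringe_sqrt}.
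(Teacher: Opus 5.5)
Your deterministic reduction is correct and matches the paper's last step. For each $\theta$, a $(1-\eps)\delta$-covering of $\cS^{\delta,\lambda}$ together with $A_{\theta,\cT}$ (modulus $\delta'=\eps\delta/2$) is a $\delta$-covering of $\cT$. So it suffices to show $L/\delta'=\fI_{\delta'}(\cT,\mathrm{Unif}(0,\delta'))\le\eps/\delta^2$ with probability close to one, uniformly in small $\delta$. The gap is in controlling $L$, and your primary route does not do it.

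Your displayed estimate is $O(\delta^2)$, which would make $\delta\,\E{L}\to0$ trivially. The correct heuristic order is $\E{L}\asymp\sqrt{k}\,\phi(\eps\sqrt\lambda/2)=\delta^{-1}\sqrt\lambda\,\phi(\eps\sqrt\lambda/2)$, so you need $\phi(a)=o(1/a)$, not merely $\phi\to0$. More seriously, the mechanism you propose fails as stated. Integrating along the skeleton at the rate given by the Poisson structure of $\Psi$ produces pendent subtrees of mass in $dm$ at rate proportional to $m^{-3/2}\,dm$ per unit length. By your own $\Lambda(h)\approx c/h$, a pendent subtree of mass $m\gg\delta'^2$ contributes about $c\,m/\delta'$ to $L$. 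The resulting first-moment bound is therefore infinite, since $\int^\infty m\cdot m^{-3/2}\,dm=\infty$.

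What makes large pendent subtrees negligible is the conditioning on total mass one, which cuts the pendent masses off exponentially beyond scale $1/k=\delta^2/\lambda$. This route would need three things, none of which is supplied:
\begin{enumerate}
\item that cutoff, proved uniformly in $\delta$;
\item conditional independence and Brownian scaling of all pendent subtrees of the $k$-point span (fact (5) in the list of CRT properties only gives the three-point decomposition);
\item bounds on $\E{\Lambda(h)}$ for both small and large $h$.
\end{enumerate}

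Your fallback is the paper's route: lower semicontinuity of $\fI$, Portmanteau along discrete approximations, and the discrete tall-subtree bound. However, it is set at the wrong scale. The discrete lemma ties three quantities to a single $\gamma_n$: the skeleton size $\lfloor\lambda n/\gamma_n^2\rfloor$, the height threshold $\eps\gamma_n/2$, and the count threshold $\eps^2n/(2\gamma_n)$. With $\gamma_n=n^{1/2}$, after dividing distances by $\sqrt n$, these become $\lfloor\lambda\rfloor$ marks, height $\eps/2$ and $\fI\le\eps$. That is only the case $\delta=1$. Substituting $\eps\delta$ and $\lambda/\delta^2$ to reach scale $\delta$ would demand $\fI_{\delta'}\le\eps\delta$, whereas for fixed $\lambda$ this quantity is genuinely of order $\delta^{-2}$.

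The paper instead takes $\gamma_n=\delta\sqrt n$ for each fixed $\delta$. This matches all three quantities: $\lfloor\lambda/\delta^2\rfloor$ marks, height $\eps\delta/2$, and a threshold of order $\eps/\delta^2$. It then uses \cref{lem:prop_tall_trees} rather than \cref{fact:non_uniform}. The point is that its $R$ is independent of the sequence $(\gamma_n)$, so a single $\lambda$ works for every $\delta\le1$. That uniformity is exactly what $\lim_{\lambda\to\infty}\limsup_{\delta\downarrow0}$ needs, and the uniform version was proved for this purpose. With these two changes your fallback becomes the paper's proof. Your concern about matching uniform leaves of $T_n$ with $\mu_n$-samples is legitimate, and the paper passes over it.
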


\begin{proof}
    Let $\cX_{\delta,\lambda} = (\rho,X_1,...,X_{\lfloor \lambda/\delta^2 \rfloor})$. The proof will consist of a careful application of \cref{lem:prop_tall_trees}. Towards that end, let $(T_n,\rho_n ,\mu_n)_{k \geq 1} \subseteq \T_{\mathrm{meas}}$ be a sequence where $T_n$ is a Bienaym\'{e} tree with a mean 1 and variance 1 offspring distribution conditioned to have size $n$ and $\mu_n$ is the uniform probability measure on the vertices of $T_n$. Indeed, by the standard theory of scaling limits for random trees we have,
    $$
    \left(\frac{1}{\sqrt{n}}T_n, \rho_n , \mu_n\right) \convd \left(\cT, \rho ,\mu\right)
    $$
    in the GHP topology, and so, if $\cX_{n,\delta,\lambda} = \left(\rho, X_1^{(n)},...,X_{\lfloor \lambda/\delta_2\rfloor}^{(n)}\right)$ for $X_1^{(n)},...,X_{\lfloor \lambda/\delta_2\rfloor}^{(n)}$ independent samples from $\mu_n$, then
    $$
    \left(\frac{1}{\sqrt{n}}T_n, \rho_n , \cX_{n,\delta,\lambda}\right) \convd \left(\cT, \rho ,\cX_{\delta,\lambda}\right)
    $$
    in the $(\lfloor\lambda/\delta^2\rfloor+1)$-marked GH topology.

    Set $\gamma_n = \delta \sqrt{n}$, and define the following probability measure on $[0,\eps\delta/2]$:
    $$
    \nu_n := \frac{1}{\lfloor \frac{\eps\gamma_n}{2}\rfloor}\sum_{i=0}^{\lfloor \frac{\eps \gamma_n}{2}\rfloor - 1} \delta_{in^{-1/2}}.
    $$
    Let $\nu$ denote the uniform probability measure on $[0,\eps\delta/2]$, and note that $\nu_n \to \nu$ weakly. Moreover, observe that
    \begin{align*}
    \frac{1}{\lfloor \frac{\eps\gamma_n}{2}\rfloor}\#\left\{ v \in F_n^\lambda : \height\left({\fforest}^{\uparrow v}\right) > \frac{\eps\gamma_n}{2}\right\}
    =\int_0^{\eps\delta/2} \fs_{\eps\delta/2}\left(\theta,\frac{1}{\sqrt{n}}T_n\right) \ \nu_n(d\theta) + \frac{X_0}{\lfloor \frac{\eps\gamma_n}{2}\rfloor},
    \end{align*}
    where $X_0$ denotes the number of roots in $F_n^\lambda$ whose corresponding tree has height more than $\frac{\eps\gamma_n}{2}$. By compactness, we know that $X_0/\lfloor \eps\gamma_n/2 \rfloor \to 0$ almost surely. Hence, we can apply the Portmanteau theorem to get
    \begin{align*}
    \liminf_{n \to \infty}\prob\left\{ \frac{1}{\lfloor \frac{\eps\gamma_n}{2}\rfloor}\#\left\{ v \in F_n^\lambda : \height\left({\fforest}^{\uparrow v}\right) > \frac{\eps\gamma_n}{2}\right\} > \frac{\eps}{2\delta^2} \right\} 
    \geq\prob\left\{ \int_{0}^{\eps\delta/2} \fs_{\eps\delta/2}(\theta,\cT) > \frac{\eps}{2\delta^2} \right\}.
    \end{align*}
    By \cref{lem:prop_tall_trees} we get
    $$
    \lim_{\lambda \to \infty}\limsup_{\delta \downarrow 0}\liminf_{n \to \infty}\prob\left\{ \frac{1}{\lfloor \frac{\eps\gamma_n}{2}\rfloor}\#\left\{ v \in F_n^\lambda : \height\left({\fforest}^{\uparrow v}\right) > \frac{\eps\gamma_n}{2}\right\} > \frac{\eps}{2\delta^2} \right\} = 0,
    $$
    and so we also get
    $$
    \lim_{\lambda \to \infty}\limsup_{\delta \downarrow 0}\prob\left\{ \int_{0}^{\eps\delta/2} \fs_{\eps\delta/2}(\theta,\cT) > \frac{\eps}{2\delta^2} \right\} = 0. 
    $$
    Since it is necessary to have some $\theta$ with $\fs_{\eps\delta/2}(\theta,\cT) \le \frac{\eps}{2\delta^2}$ in order to have $\int_{0}^{\eps\delta/2} \fs_{\eps\delta/2}(\theta,\cT) \le \frac{\eps}{2\delta^2}$, this gives
    $$
    \lim_{\lambda \to \infty}\limsup_{\delta \downarrow 0}\prob\left\{\fs_{\eps\delta/2}(\theta,\cT) > \frac{\eps}{2\delta^2} \ \forall \theta \in \left(0,\frac{\eps\delta}{2}\right) \right\} = 0. 
    $$
    From here, the proof essentially follows the same logic as the proof of \cref{prop:fringe_cover}. Fix some sample where there exists $\theta^* \in (0,\eps\delta/2)$ such that $\fs_{\eps\delta/2}(\theta^*,\cT) \leq \eps\delta/2$. If we let $C$ be a $(1-\eps)\delta$-covering of $\cS^{\delta,\lambda}$, then $C \cup A_{\theta^*,\cT}$ is a $\delta$-covering of $\cT$, and so
    $$
    \left\{ \cC_\delta(\cT) - \cC_{(1-\eps)\delta}(\cS^{\delta,\lambda}) > \frac{\eps}{\delta^2} \right\} \subseteq \left\{ \fs_{\eps\delta/2}(\theta,\cT) > \frac{\eps}{2\delta^2} \ \forall \theta \in \left(0,\frac{\eps\delta}{2}\right) \right\},
    $$
    which gives the desired result.
\end{proof}

This previous result directly implies \cref{prop:fringe_CRT}.

\begin{proof}[Proof of \cref{prop:fringe_CRT}]
    The proof follows from \cref{prop:cover_fringe_CRT} evaluated at $\delta/2$ and the relation between packing and covering, that is, $\cC_{\delta/2}(\cT)=\cP_{\delta}(\cT)$ and $\cC_{(1-\eps)\delta/2}(\skelr)=\cP_{(1-\eps)\delta}(\skelr)$.
\end{proof}

\subsection{Local convergence of the skeleton}

\cref{prop:crt_local_limit} is mostly a consequence of a result of Aldous which characterizes the distribution of the span of uniformly sampled points in a CRT. We introduce the notation $\T_{k}^{bin}$ for the set of planted leaf-labelled binary trees with exactly $k$ leaves Every $T \in \T_{k}^{bin}$ has exactly $2k-1$ edges. Moreover, for a rooted compact $\R$-tree $t$ with only finitely many branch points, we will abuse notation and also let $t$ denote the corresponding combinatorial tree with edge lengths. We will call the underlying combinatorial tree the \emph{shape} of $t$.

\begin{prop}[cf Aldous \cite{aldous1993continuum}]\label{prop:crt_finite_marginals}
    Let $k = \lambda/\delta^2$ and let $(\cT,\rho,\mu)$ be a Brownian CRT equipped with the uniform measure on the vertices. Let $(\ell_1,...,\ell_{2k-1})$ be the edges in $\cS^{\lambda,\delta}$ in some arbitrary ordering. Then $\cS^{\lambda,\delta}$ is absolutely continuous and has the following density function:
    $$
    f\left(t; \ell_1 , ... , \ell_{2k-1} \right) = se^{-s^2/2}, \ \text{where } s = \sum_{i=1}^{2k-1} \ell_i.
    $$
    In particular, the shape of $\cS^{\lambda,\delta}$ is uniformly distributed over $\T_{k}^{bin}$, and the edge lengths are exchangeable.
\end{prop}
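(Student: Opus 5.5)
The plan is to work entirely in the stick-breaking construction of \cref{subsec:def_const}'s companion section (the $\ell_1$ construction via $f$, $U$ and $C$). I would first invoke Aldous' identification \cite{aldousContinuumRandomTree1991,aldous1993continuum}: for each $k$, the tree $\cT_k=f([0,C_k])$, viewed as a planted tree with leaves labelled in order of creation, has the same law as $\Span\{\rho,X_1,\dots,X_k\}$ for i.i.d.\ $\mu$-samples $X_i$, with $X_i$ matched to the tip of the $i$-th stick. This reduces the proposition to computing the law of $\cT_k$, with $k=\lfloor\lambda/\delta^2\rfloor$. The density is with respect to counting measure on $\T_k^{bin}$ times Lebesgue measure on $(0,\infty)^{2k-1}$.

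The density of the arrival times $(C_1,\dots,C_k)$ of the Poisson process with intensity $t\,dt$ is
\[
\Big(\prod_{i=1}^k c_i\Big)e^{-c_k^2/2}\quad\text{on } 0<c_1<\dots<c_k .
\]
Conditionally on these times, the $i$-th attachment point $f(U_iC_i)$ for $1\le i\le k-1$ is uniform on $\cT_i$, which has total length $c_i$. Parametrize it by its arclength position $a_i=U_ic_i\in(0,c_i)$; its density is then $1/c_i$. The joint density of $(c_1,\dots,c_k,a_1,\dots,a_{k-1})$ is therefore
\[
\Big(\prod_{i=1}^k c_i\Big)\Big(\prod_{i=1}^{k-1}c_i^{-1}\Big)e^{-c_k^2/2}=c_ke^{-c_k^2/2}.
\]

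Next I would show that the map $(c_1,\dots,c_k,a_1,\dots,a_{k-1})\mapsto(\text{shape},\ell_1,\dots,\ell_{2k-1})$ is, up to a null set, a bijection onto $\T_k^{bin}\times(0,\infty)^{2k-1}$ that is piecewise affine with Jacobian of absolute value $1$. I would prove this by induction on $k$. At step $i+1$, the new leaf's edge has length $c_{i+1}-c_i$. The attachment point $a_i$ picks one of the $2i-1$ edges of $\cT_i$, which fixes the shape, and splits that edge's length $\ell$ into $(x,\ell-x)$. The new variables $(x,\,c_{i+1}-c_i)$ replace $(a_i,c_{i+1})$ through a unimodular affine change of coordinates. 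Conversely, a leaf-labelled planted binary tree with edge lengths is reconstructed uniquely by removing leaves in decreasing label order. Since $c_k=\sum_i\ell_i=s$, the density becomes $se^{-s^2/2}$. This function depends only on $s$, so it is symmetric in the edge lengths and constant in the shape, which gives exchangeability and a uniform shape.

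I expect the main obstacle to be the identification of $\cT_k$ with the span of $k$ uniform points, since it relies on Aldous' construction of $\mu$ as a martingale limit. That step I would cite rather than reprove, checking that the scaling convention (the tree coded by $2\mathbf{e}$, and intensity $t\,dt$) matches the one used throughout the paper. Everything else is the Jacobian bookkeeping above.
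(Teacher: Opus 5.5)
Your proposal is correct. The paper gives no argument here beyond citing Aldous, so the comparison is with treating the whole statement as a black box.

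You isolate the genuinely probabilistic input and cite it: the line-breaking tree $f([0,C_k])$, with leaves labelled in order of creation, has the law of $\Span\{\rho,X_1,\dots,X_k\}$. This is where the construction of $\mu$ and the exchangeability of the leaves live. Everything else you obtain by an explicit change of variables.

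The bookkeeping is sound, but one step deserves a more precise statement. At step $i+1$ the old length $\ell_e$ is also replaced. The map is $(\ell_e,a_i,c_{i+1})\mapsto(x,\ell_e-x,c_{i+1}-c_i)$, with all other lengths fixed and $x=a_i-L_e$. Here $L_e$ is the arclength offset of the edge $e$ along $f$; it depends on the shape and the other lengths but not on $\ell_e$. Since $c_{i+1}$ enters only the new leaf edge, the Jacobian is block-triangular with determinant $\pm1$. The $(2k-3)!!$ shapes and the identity $(2k-3)!!\int_0^\infty s^{2k-1}e^{-s^2/2}\,ds/(2k-2)!=1$ confirm that the image is all of $\T_k^{bin}\times(0,\infty)^{2k-1}$ up to null sets.

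Compared with the bare citation, your route buys three things:
\begin{itemize}
\item an explicit reference measure (counting measure on shapes times Lebesgue measure);
\item a transparent origin for $se^{-s^2/2}$, namely the density of $C_k$ after the attachment densities $c_i^{-1}$ telescope away;
\item as a by-product, the law $s\eqdist\sqrt{2G_k}$ with $G_k\sim\mathrm{Gamma}(k)$, which the paper invokes right afterwards as a ``standard fact''.
\end{itemize}
It is, however, exactly as dependent on Aldous as the paper's version, because the identification step is quoted rather than proved.
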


\begin{prop}
    We have 
    \[\frac{\cskel}{\delta/\sqrt{2\lambda}}\convp T^* \]
    in the marked local topology on trees with edge lengths, where $T^*$ is the binary Kesten tree with independent $\Exp(1)$ edge lengths.
\end{prop}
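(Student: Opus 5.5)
The plan is to reduce to the discrete machinery of \cref{sec:local_skeleton}: establish the two hypotheses of \cref{lem:steps_weak_convergence} (local convergence of the shape to the binary Kesten tree, and convergence of finitely many edge lengths to independent $\Exp(1)$ variables) for $\cskel$, rescaled by $\delta/\sqrt{2\lambda}$. Both inputs come from \cref{prop:crt_finite_marginals}. Since the density $se^{-s^2/2}$ depends only on the total length, the shape is uniform over $\T_k^{bin}$ and independent of the edge-length vector. Here $k=\lfloor\lambda\delta^{-2}\rfloor$, and we read the planted tree with root $\rho$ as a rooted tree. The edge-length vector $(\ell_1,\dots,\ell_{2k-1})$ equals $S\cdot D$, where $D$ is uniform on the simplex and $S$ has density proportional to $s^{2k-2}\cdot se^{-s^2/2}$, the factor $s^{2k-2}$ coming from the simplex volume.

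\textbf{Shape.} Forgetting leaf labels, a uniform leaf-labelled planted binary tree with $k$ leaves, with a uniform planar embedding, is a uniform plane tree with degree statistic $\dstat=(k,0,k-1,0,\dots)$ plus a planted root edge. Each unlabelled plane shape with $k$ leaves carries the same number $k!$ of labellings, so after the labels are forgotten and a uniform embedding is chosen the plane shape is uniform. The degree statistic then satisfies \eqref{eq:assumption_local_limit}: $\dstat_0/\dstat_2\to1$ and $|\dstat|/\dstat_0\to2$. The planted root vertex has degree one and changes nothing locally. \cref{lem:loc_convergence_shape} therefore gives the quenched fringe convergence \eqref{eq:local_conergence_for_topology}.

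\textbf{Edge lengths.} The density of $S$ is $\propto s^{2k-1}e^{-s^2/2}$, which concentrates: $S/\sqrt{2k}\to1$ in probability, by a standard Laplace or Gamma-type computation, since $S^2/2$ is Gamma$(k)$. A fixed set of $\ell$ coordinates of a uniform point on the $(2k-2)$-simplex, multiplied by $2k$, converges to independent $\Exp(1)$ variables. This is the continuous analogue of \cref{lem:limit_composition} (Dirichlet$(1,\dots,1)$ marginals). Hence $\ell_{e_i}\cdot 2k/S\approx \ell_{e_i}\sqrt{2k}\approx \ell_{e_i}\sqrt{2\lambda}/\delta$ converges to i.i.d.\ $\Exp(1)$. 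Exchangeability and independence of the lengths from the shape guarantee that edges chosen from the shape, independently of their lengths, behave like fixed coordinates.

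Finally apply \cref{lem:steps_weak_convergence}, with $\#\cskel=2k-1\to\infty$ in place of $n$, to conclude marked local convergence in probability to $T^*$. The count $\#\cskel=2k-1$ a.s.\ is immediate: a binary planted tree with $k$ leaves has $2k-1$ edges, and hence $2k$ vertices counting the root. The main obstacle is the bookkeeping in the first step, namely the passage from leaf-labelled non-plane binary trees to uniform plane trees with given degree statistic. This needs each shape to have equally many labellings and embeddings (true because every internal vertex is binary with distinguishable subtrees after labelling) and the planted root to be handled consistently with the marked path definition. The analytic steps are routine concentration.
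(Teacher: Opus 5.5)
Your proposal is correct and follows the paper's own proof essentially step for step. The uniform binary shape is fed into \cref{lem:loc_convergence_shape} and \cref{lem:steps_weak_convergence}, and the edge lengths are handled by showing the total length $S$ satisfies $S^2/2\sim\mathrm{Gamma}(k)$ and then using uniform-simplex spacings. The differences are cosmetic: you read the Gamma law directly off Aldous's density via the $s^{2k-2}$ simplex-volume factor, where the paper cites the stick-breaking construction, and you spell out the leaf-labelled to plane-tree counting argument that the paper leaves implicit.
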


\begin{proof}

    The shape of $\cS^{\lambda,\delta}$ is that of a uniform planted leaf-labelled binary tree, which is structurally equivalent to a size-conditioned binary Bienaym\'{e} tree which is given a new root directly below the original one. Since the offspring distribution is binary and the edge lengths are exchangeable, applying \cref{lem:loc_convergence_shape} and \cref{lem:steps_weak_convergence} we can reduce local limit problem to that of showing that, for any fixed $m \geq 0$, $\frac{\sqrt{2\lambda}}{\delta}(\ell_1,...,\ell_m) \convd (E_1,...,E_m)$ as $m\to \infty$, where $E_1,...,E_m$ are independent $\Exp(1)$ random variables to complete the proof.

    It is a standard fact following from the stick breaking construction of the Brownian CRT that $L_{2k-1} :=\sum_{i=1}^{2k-1} \ell_i \eqdist \sqrt{2G_k}$, where $G_k$ is a $\text{Gamma}(k)$ (see e.g., Remark 2.2.19 in \cite{haastert2025scaling}). Then, conditionally given $L_{2k-1}$, the vector $\frac{1}{L_{2k-1}}(\ell_1,...,\ell_{2k-1})$ is uniform over the unit simplex. Using the standard uniform spacing representation of this distribution (see e.g., \cite{devroye1986nonuniform} Section 2.1) along with the convergence in probability of $L_{2k-1}/\sqrt{2k}$ gives the desired result.

\end{proof}

\appendix

\section{Packings and coverings of $\R$-trees}\label{app:covering=packing}

In this appendix we prove \cref{prop:R_tree_packink_is_covering}. The proof is an adaptation of the proof from \cite{meir1975relations}, which gives the result for combinatorial trees without edge lengths. 
Recall that, because we consider $\R$-trees, packing and covering centres can lie on edges and that, when one writes for $x\in T$ we refer not only to branch points of $T$ but all points along the edges. 
\begin{prop}[cf Moon Meir \cite{meir1975relations}]\label{prop:packink_is_covering_closed}
    For any compact $\R$-tree $T$, $\mathcal{C}_{r}(T)=\mathcal{P}_{2r} (T)$.
\end{prop}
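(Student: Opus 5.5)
We prove the two inequalities $\mathcal{P}_{2r}(T)\le \mathcal{C}_r(T)$ and $\mathcal{C}_r(T)\le \mathcal{P}_{2r}(T)$ separately. Recall the conventions: a covering uses open balls ($d(x,C)<r$), and a packing is $2r$-separated ($d(u,v)\ge 2r$).

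The first inequality is a pigeonhole argument that does not use the tree structure. Let $C$ be an $r$-covering and $P$ a $2r$-packing. Each $p\in P$ lies in some open ball $B(c,r)$ with $c\in C$. If two distinct $p,p'\in P$ lay in the same ball, then $d(p,p')<2r$, a contradiction. So $p\mapsto c$ is injective and $\#P\le\#C$.

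For the reverse inequality we use the tree structure and a greedy construction, adapting Meir--Moon's argument to points on edges. Compactness gives that $\mathcal{P}_{2r}(T)$ is finite (a $2r$-separated set in a compact space is finite) and that maximum packings exist. Fix a root $\rho$ and run a leaf-to-root greedy procedure. Set $T_0=T$. Given $T_j$, a compact subtree containing $\rho$, pick a point $x_j\in T_j$ of maximal height, which exists by compactness. Let $y_j$ be the point on $\llbracket \rho,x_j\rrbracket$ at distance $\min(r,\height(x_j))$ below $x_j$, with a small slack $\eta$ so that open balls suffice. Make $y_j$ a covering centre, and set $T_{j+1}$ to be $T_j$ minus the open ball $B(y_j,r)$, taking the closure and the component of $\rho$, with the convention that we stop once $T_j\subseteq B(y_j,r)$.

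The key claims are:
\begin{enumerate}
\item[(a)] Every point of $T_j^{\uparrow y_j}$ is within distance $r$ of $y_j$, by maximality of the height of $x_j$.
\item[(b)] The leftover $T_{j+1}$ is again a compact subtree.
\item[(c)] The points $x_0,x_1,\dots$ are pairwise at distance $\ge 2r$. For $i<j$, $x_j$ is not in $B(y_i,r)$, and since $x_i$ was height-maximal and $y_i$ sits $r$ below $x_i$, the geodesic from $x_j$ to $x_i$ passes through $y_i$ or below it. Hence $d(x_i,x_j)\ge d(x_i,y_i)+d(y_i,x_j)\ge r+r$.
\end{enumerate}
Claim (c) means the $x_j$ form a $2r$-packing. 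The procedure therefore terminates after at most $\mathcal{P}_{2r}(T)$ steps, and it produces an $r$-covering of the same cardinality. This gives $\mathcal{C}_r(T)\le\mathcal{P}_{2r}(T)$.

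The main obstacle is the boundary behaviour of open balls versus the non-strict inequality $\ge 2r$ in the packing. Points at distance exactly $r$ from $y_j$ are not covered. In a continuum tree, a naive choice can also leave infinitely many tiny leftover pieces or make the procedure fail to terminate.

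To handle this, first run the construction with radius $r'<r$ but close to $r$. This gives an $r'$-covering (hence an $r$-covering) of size $\mathcal{P}_{2r'}(T)$. Then show $\mathcal{P}_{2r'}(T)=\mathcal{P}_{2r}(T)$ for $r'$ close enough to $r$ from below. This uses upper semicontinuity in the radius: a compactness argument as in the upper semicontinuity lemma of \cref{sec:continuity_covering}, applied to a fixed tree with varying radius, shows that a limit of $2r'$-packings with $r'\uparrow r$ yields a $2r$-packing of the same size.

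Termination is verified using compactness and the separation in (c). Each step removes a point $x_j$ that is $2r'$-separated from all earlier ones, so an infinite run would contradict total boundedness.
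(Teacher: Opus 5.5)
Your pigeonhole proof of $\cP_{2r}(T)\le\cC_r(T)$ is fine. So is your compactness argument that $\cP_{2r-\eta}(T)=\cP_{2r}(T)$ for small $\eta>0$. The gap is in the greedy construction, which does not produce a covering. You keep $T_{j+1}$ connected by retaining only the component of $\rho$ in $T_j\setminus B(y_j,r)$. That set is in general disconnected, and the discarded components contain uncovered points that no later centre can reach, because every later $y_j$ lies in $T_{j+1}$.

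For a concrete failure, let $r=1$ and let $T$ be a trunk $\llbracket\rho,w\rrbracket$ of length $10$, with two segments of lengths $3/2$ and $7/5$ glued at $w$, having tips $x$ and $u$.
\begin{itemize}
\item Then $x_0=x$ and $y_0$ lies about $1/2$ above $w$ on the first segment, so $w\in B(y_0,1)$.
\item The points of the second segment at distance at least about $1/2$ from $w$ form a component of $T\setminus B(y_0,1)$ that does not contain $\rho$, so you discard it.
\item $T_1$ is the part of the trunk at distance at least about $1/2$ below $w$. The tip $u$ is at distance about $19/10$ from $y_0$ and from every point of $T_1$, so it is never covered.
\end{itemize}
For the same reason $T_{j+1}$ is not even defined when $\rho\in B(y_j,r)$ but $T_j\not\subseteq B(y_j,r)$, e.g.\ with the two segments glued directly at $\rho$.

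The repair is to give up connectivity and track the uncovered set $U_j=T\setminus\bigcup_{i<j}B(y_i,r)$, which is closed and hence compact. Choose $x_j\in U_j$ of maximal height with respect to the fixed root $\rho$ of $T$. Put $y_j$ on $\llbracket\rho,x_j\rrbracket$ in $T$ at distance $\min(r-\eta,\height(x_j))$ from $x_j$, even if $y_j$ is already covered.

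Your claim (c) then works. For $i<j$ we have $x_j\in U_{i+1}$, so $\height(x_j)\le\height(x_i)$ and $d(y_i,x_j)\ge r$. If $x_j$ branched off $\llbracket\rho,x_i\rrbracket$ strictly above $y_i$, then $d(y_i,x_j)\le\height(x_i)-\height(y_i)<r$, a contradiction. So the branch point is at or below $y_i$, and $d(x_i,x_j)=d(x_i,y_i)+d(y_i,x_j)\ge 2r-\eta$; if $y_i=\rho$ then $U_{i+1}=\emptyset$. This gives $\cC_r(T)\le\cP_{2r-\eta}(T)$, and your left-continuity step finishes the proof.

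With the slack, the bound is $\cP_{2r-\eta}(T)$. Your sentence about an $r'$-covering of size $\cP_{2r'}(T)$ is inaccurate, since a centre exactly $r'$ below $x_j$ does not cover $x_j$ with an open ball of radius $r'$.

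Once repaired, your argument is the classical rooted greedy, which differs from the paper's route. The paper trims along a longest path $\llbracket a_0,b_0\rrbracket$ and replaces the leftover by the closed span of the far points, so connectivity is automatic. It then shows by induction that each trim lowers $\cC_r$ by at most one and $\cP_{2r}$ by at least one.
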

One direction is easy, which we immediately prove. Fix some compact $\R$-tree $T$, and let $P$ and $C$ be a maximum $2r$-packing and a minimum $r$-covering respectively.  Observe that for any $x\in C$, letting $B_r(x)=\{y\in T, \ d(x,y)< r\}$ be the $r$-open-neighbourhood of $x$ in $T$, 

$$\#\{B_r(x)\cap P\}\leq 1.$$
Because we also have $\cup_{x\in C}B_r(x)=T$, 

\begin{equation}\label{eq:pack_less_cover}
     \cC_r(T)\ge \cP_{2r}(T).
\end{equation}




The proof for the other direction relies on a recursive trimming process that iteratively reduces a compact $\R$-tree to an $\R$-tree with diameter at most $2r$, for which both the $2r$-packing number and $r$-covering number are trivially equal to $1$. Together with the property that each step of the trimming reduces the $2r$-packing number by at least $1$ and reduces the $r$-covering number by at most $1$, this proves the statement.  

We now define the trimming. Fix a compact $\R$-tree $T$ and a radius $r>0$. Suppose that $T$ has diameter at least $2r$. The recursive process is started by having $T^{(0)}=T$ and picking any longest path in $T^{(0)}$.  Call its endpoints $a_0$ and $b_0$. For any $0\leq z\leq d(a_0,b_0)$ let $x_z^{(0)}$ be the only point on the path from $a_0$ to $b_0$ at distance $z$ from $a_0$. $T^{(1)}$ is the smallest closed subtree containing all vertices $v$ such that $d(x_{r-\eps}^{(0)} ,v)\geq r$ for all $\eps>0$ so that, in particular, $b_0\in T^{(1)}$.
See \cref{fig:trimming_algo} for an illustration of the trimming process. Remark that $T^{(1)}$ is a compact $\R$-tree itself. By repeating this construction as long as $T^{(i)}$ has diameter greater than $2r$, we define $a_i$, $b_i$, $x_z^{(i)}$ and $T^{(i)}$. 

\begin{figure}[ht]
    \centering
    \begin{tikzpicture}[
        scale=0.95,
        vertex/.style={
            circle,
            draw=black,
            fill=black,
            inner sep=0pt,
            minimum size=5.5pt
        },
        greenvertex/.style={
            vertex,
            fill=green!75!black
        },
        redvertex/.style={
            vertex,
            fill=red
        },
        inside/.style={
            draw=black!65,
            line width=1.15pt
        },
        outside/.style={
            draw=green!65!black,
            line width=1.25pt
        }
    ]

    \coordinate (xa) at (-5,0);
    \coordinate (v1) at (-3,0);
    \coordinate (xr) at (-2,0);
    \coordinate (v2) at (-1.0,0);
    \coordinate (v3) at (0.2,0);
    \coordinate (v4) at (1.6,0);
    \coordinate (v5) at (2.9,0);
    \coordinate (xb) at (4.5,0);

    \draw[inside]
        (xa) -- (v1) -- (xr) -- (v2) -- (v3);

    \draw[outside]
        (v3) -- (v4) -- (v5) -- (xb);


    \coordinate (l1) at (-3,0.8);
    \coordinate (l2) at (-2.75,1.6);
    \coordinate (l3) at (-3.25,1.6);

    \draw[inside] (v1) -- (l1);
    \draw[inside] (l1) -- (l2);
    \draw[inside] (l1) -- (l3);


    \coordinate (u1) at (-1,0.8);
    \coordinate (u2) at (-1.25,1.6);
    \coordinate (u3) at (-0.75,1.6);

    \draw[inside] (v2) -- (u1);
    \draw[inside] (u1) -- (u2);
    \draw[inside] (u1) -- (u3);

    \coordinate (d0) at (0.2,0.8);
    \draw[inside] (v3) -- (d0);


    \coordinate (g1) at (0.45,-1);
    \coordinate (g2) at (1,-1.7);
    \coordinate (g3) at (1.2,-2.);
    \coordinate (g4) at (1.7,-2);
    \coordinate (g5) at (2,-1.5);

    \draw[outside] (v3) -- (g1);
    \draw[outside] (g1) -- (g2);
    \draw[outside] (g2) -- (g3);
    \draw[outside] (g2) -- (g4);
    \draw[outside] (g2) -- (g5);

    \coordinate (g6) at (1.6,0.8);
    \draw[outside] (v4) -- (g6);

    \coordinate (g7) at (2.9,0.8);
    \draw[outside] (v5) -- (g7);

    \draw[
        black!65,
        dashed,
        line width=1.1pt
    ]
        (xr) circle[radius=3];

    \draw[
        black!65,
        densely dotted,
        line width=1.15pt,
        -{Latex[length=3mm,width=2mm]}
    ]
        (xr) -- ++(-0.65,-2.95)
        node[pos=.48,right=1pt] {$r$};

    \node[redvertex] at (xa) {};
    \node[redvertex] at (xr) {};
    \node[redvertex] at (xb) {};


    \foreach \p in {v1,v2}
        \node[vertex] at (\p) {};

    \foreach \p in {v3,v4,v5}
        \node[greenvertex] at (\p) {};

    \foreach \p in {l1,l2,l3,u1,u2,u3,d0}
        \node[vertex] at (\p) {};

    \foreach \p in {g1,g2,g3,g4,g5,g6,g7}
        \node[greenvertex] at (\p) {};

    \node[below left=4pt] at (xa)
        {$a_o$};

    \node[below right=4pt] at (xr)
        {$x_0^{(r)}$};

    \node[below=4pt] at (xb)
        {$b_0$};

    \node[
        green!55!black,
        below left=5pt
    ] at ($(v4)!0.55!(v5)$)
        {$T^{(1)}$};

    \end{tikzpicture}

    \caption{A sketch of the first step of the trimming with $T^{(1)}$ in green.}
    \label{fig:trimming_algo}
\end{figure}


\begin{lem}\label{lem:trimming_stops}
    For any compact $\R$-tree $T$, the trimming process stops in a finite number of steps.
\end{lem}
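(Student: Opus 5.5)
The plan is to argue that each trimming step removes a piece of the tree that carries a definite amount of "room", measured by the packing number at a slightly smaller scale, which is finite by compactness. I will show that whenever $T^{(i)}$ has diameter greater than $2r$, the set $T^{(i)}\setminus T^{(i+1)}$ contains the point $a_i$. I will also show that every point of $T^{(i+1)}$ lies at distance at least $r$ from $x^{(i)}_{r-\eps}$ for all $\eps>0$, and hence at distance at least $r$ from $x^{(i)}_r$ by continuity. Since $\height$-type arguments are awkward here, I will instead track the removed endpoints $a_0,a_1,a_2,\dots$ and show they are pairwise far apart.

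First, I will show $d(a_i,x)\ge r$ for every $x\in T^{(j)}$ with $j>i$. Points of $T^{(i+1)}$ lie in the component of $T^{(i)}\setminus\{x^{(i)}_r\}$ away from $a_i$, or at least outside the open ball $B_r(x^{(i)}_r)$. Because $\llbracket a_i,b_i\rrbracket$ is a longest path, every point $y$ in the branch of $T^{(i)}$ hanging off $\llbracket a_i,x^{(i)}_r\rrbracket$ satisfies $d(y,x^{(i)}_z)\le z$ for the attachment point $x^{(i)}_z$. Otherwise we could lengthen the diametral path by replacing $\llbracket a_i,x^{(i)}_z\rrbracket$ with the path to $y$. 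So that whole portion lies within distance $r$ of $x^{(i)}_r$, and is therefore not in $T^{(i+1)}$ except for boundary points. The closed subtree $T^{(i+1)}$ is then contained in the part of $T^{(i)}$ beyond the ball around $x^{(i)}_r$. Any $x$ in it satisfies
$$d(a_i,x)=d(a_i,x^{(i)}_r)+d(x^{(i)}_r,x)\ge r+r=2r,$$
using that the path from $a_i$ to $x$ passes through $x^{(i)}_r$. This gives $d(a_i,a_j)\ge 2r$ for all $j>i$.

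Second, I conclude. The points $a_0,a_1,\dots$ form a $2r$-separated set in the compact space $T$, so there can only be finitely many of them, at most $\cP_{2r}(T)<\infty$. Hence the process halts after finitely many steps, and it halts precisely once some $T^{(i)}$ has diameter at most $2r$. This bound is also exactly what feeds the inequality $\cC_r\le\cP_{2r}$ afterwards.

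The main obstacle is the first step: justifying that the geodesic from $a_i$ to any point of $T^{(i+1)}$ passes through $x^{(i)}_r$. This requires controlling the side branches attached to $\llbracket a_i,x^{(i)}_r\rrbracket$ using maximality of the diametral path, and handling the closure in the definition of $T^{(i+1)}$, including boundary points at distance exactly $r$. I would handle the closure by taking limits, since the inequality $d(a_i,x)\ge 2r$ is closed.
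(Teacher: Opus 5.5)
Your overall strategy is the paper's: show that the removed endpoints $a_0,a_1,\dots$ are uniformly separated, then conclude by compactness. However, the step that gives $2r$-separation is false as stated. By definition, $T^{(i+1)}$ is the smallest closed subtree containing the points $v$ with $d(x^{(i)}_{r-\eps},v)\ge r$ for all $\eps>0$. It therefore also contains the geodesics between such points, and these can pass well inside $B_r(x^{(i)}_r)$.

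Here is a concrete counterexample. Let $T$ be a segment $\llbracket a_0,b_0\rrbracket$ of length $3r$, with a segment of length $r$, ending at a leaf $c$, glued at $x^{(0)}_{3r/2}$.
\begin{itemize}
\item $\llbracket a_0,b_0\rrbracket$ is the unique longest path, since $d(a_0,c)=d(b_0,c)=5r/2<3r$.
\item Both $b_0$ and $c$ satisfy the defining condition, so $T^{(1)}=\llbracket c,b_0\rrbracket\ni x^{(0)}_{3r/2}$.
\item Yet $d(x^{(0)}_r,x^{(0)}_{3r/2})=r/2<r$ and $d(a_0,x^{(0)}_{3r/2})=3r/2<2r$.
\end{itemize}
So both of your claims fail: it is not true that every point of $T^{(i+1)}$ is at distance at least $r$ from $x^{(i)}_r$, and it is not true that $d(a_i,x)\ge 2r$ for every $x\in T^{(j)}$ with $j>i$.

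What your side-branch maximality argument does correctly establish is weaker but sufficient. All generating points lie in the subtree $\{x: x^{(i)}_r\in\llbracket a_i,x\rrbracket\}$, which is convex, so all of $T^{(i+1)}$ lies there too. Hence $d(a_i,x)=r+d(x^{(i)}_r,x)\ge r$ for every $x\in T^{(i+1)}$. This already finishes the lemma: the $a_j$ form an $r$-separated set, so there are at most $\cP_r(T)<\infty$ of them.

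This repaired argument is exactly the paper's proof. The paper shows $d(a_i,T^{(i+1)})\ge r$ via $x^{(i)}_{r-\eps}\notin T^{(i+1)}$, and uses finiteness of $r$-packings, not $2r$-packings.

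The $2r$-separation of the endpoints $a_j$ themselves can in fact be recovered. It needs a separate argument exploiting that $a_j$ is a diametral endpoint of a tree of diameter at least $2r$, together with the maximality of $\llbracket a_i,b_i\rrbracket$ in $T^{(i)}$. It does not follow from a ball-avoidance property of $T^{(i+1)}$. It is also not needed for this lemma, and the paper's proof of $\cC_r\le\cP_{2r}$ does not use it.
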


\begin{proof}
    For some compact $\R$-tree $T$, suppose that the trimming process can be run at least $k$ times. We are going to prove that for any $i<j\leq k$

    \begin{equation}
        d(a_i,a_j)\geq r,
    \end{equation}
    constructing a $r$-packing of $T$ with $k$ elements. Because $T$ is compact, its maximum $r$-packing is finite, hence $k$ is upper bounded. The core of the argument is proving that $d(a_i,T^{(i+1)})\geq r$.

    We start by proving that for all $\eps_0>0$, $x_{r-\eps_0}^{(i)}\notin T^{(i+1)}$. If $x_{r-\eps_0}^{(i)}\in T^{(i+1)}$ then, since $b_i\in  T^{(i+1)}$, there is a path in $T^{(i+1)}$ from a leaf $y$ to $b_i$ that passes through $x_{r-\eps}^{(i)}$. Since $y$ is a leaf in $T^{(i+1)}$, $d(y,x_{r-\eps}^{(i)})\geq r$ for all $\eps>0$. Then $d(y,b_i)=d(y,x_{r-\eps_0}^{(i)})+d(x_{r-\eps_0}^{(i)},b_i)\geq 
    r+d(x_{r-\eps_0}^{(i)},b_i)=d(a_i,b_i)+\eps_0$, which contradicts the fact that $a_i$ and $b_i$ are the endpoints of a longest path in $T^{(i)}$.  
    
    As a consequence, for all $y\in T^{(i)}$, if $d(a_i,y)< r$ then $y\notin T^{(i+1)}$. Indeed, suppose by contradiction that there exists $y\in T^{(i+1)}$, with $d(a_i,y)< r$. Then the whole path from $y$ to $b_i$ is in $T^{(i+1)}$, but this path then contains $x_{r-\eps}^{(i)}$ for some $\eps$ small enough, which is a contradiction. Hence $d(a_i,T^{(i+1)})\geq r$ and by recursion, for any $j>i$
 \begin{equation}
        d(a_i,x_a^{(j)})\geq r.
    \end{equation}
\end{proof}

We now proceed to prove \cref{prop:R_tree_packink_is_covering} by recursion.

\begin{proof}[Proof of \cref{prop:R_tree_packink_is_covering}]

As observed before,  if $T$ is a compact $\R$-tree of diameter smaller than $2r$, then $P_{2r}(T)=C_{r}(T)=1$. Thus, we fix some compact $\R$-tree $T$ of diameter at least $2r$, and we are done if we show that 
\begin{align}
    \label{eq:trim_cover}
    \cC_r(T)&\leq \cC_{r}(T^{(1)})+1\\
    \label{eq:trim_packing}
    \cP_{2r}(T)&\geq \cP_{2r}(T^{(1)})+1
\end{align}

Let $P^{(1)}$ and $C^{(1)}$ be a maximum $2r$-packing and a minimum $r$-covering of  $T^{(1)}$ respectively. In the proof of \cref{lem:trimming_stops} we established that for all $\eps>0$, $x_{r-\eps}^{(0)}\notin T^{(1)}$. Hence, $\#\{C^{(1)}\cup \{x_{r-\eps}^{(0)}\}\}=\cC_{r}(T^{(1)})+1$.

Thus, to prove \eqref{eq:trim_cover}, it is sufficient to show that we can choose $\eps$ small enough such that $C^{(1)}\cup \{x_{r-\eps}^{(0)}\}$ is an $r$-covering for $T$. Firstly, since $T^{(1)}$ is closed and $C^{(1)}$ is an open covering, we have that there is some $0<\delta<r/4$ such that $C^{(1)}$ is also a $(r-2\delta)$-covering of $T^{(1)}$. So it is sufficient to show that we can choose $\eps>0$ small enough that for all $v$ with $d(v,T^{(1)})\geq \delta$, $d(v,x_{r-\eps}^{(0)})< r$.\\
By contradiction, suppose that for all $n$ there exists $v_n$ such that $d(v_n,T^{(1)})\geq \delta$ and $d(v_n,x_{r-1/n}^{(0)})\geq r$. Because $T$ is compact, $v_n$ has a converging subsequence $v_{\phi(n)}$, its limit is in $T^{(1)}$, which is a contradiction because we supposed $d(v_n,T^{(1)})\geq \delta$.





\begin{figure}[ht]
    \centering
    \begin{tikzpicture}[
        scale=0.95,
        vertex/.style={
            circle,
            draw=black,
            fill=black,
            inner sep=0pt,
            minimum size=5.5pt
        },
        greenvertex/.style={
            vertex,
            fill=green!75!black
        },
        redvertex/.style={
            vertex,
            fill=red
        },
        orangevertex/.style={
            vertex,
            fill=orange
        },
        inside/.style={
            draw=black!65,
            line width=1.15pt
        },
        outside/.style={
            draw=green!65!black,
            line width=1.25pt
        },
        BY/.style={
            draw=orange,
            line width=1.25pt
        }
    ]

    \coordinate (xa) at (-5,0);
    \coordinate (v1) at (-3,0);
    \coordinate (xr) at (-2,0);
    \coordinate (v2) at (-1.0,0);
    \coordinate (v3) at (0.2,0);
    \coordinate (v4) at (1.6,0);
    \coordinate (v5) at (2.9,0);
    \coordinate (xb) at (4.5,0);

    \draw[inside]
        (xa) -- (v1) -- (xr) -- (v2) -- (v3);

    \draw[outside]
        (g1) -- (v3) -- (v4) -- (v5) -- (xb);


    \coordinate (l1) at (-3,0.8);
    \coordinate (l2) at (-2.75,1.6);
    \coordinate (l3) at (-3.25,1.6);

    \draw[inside] (v1) -- (l1);
    \draw[inside] (l1) -- (l2);
    \draw[inside] (l1) -- (l3);


    \coordinate (u1) at (-1,0.8);
    \coordinate (u2) at (-1.25,1.6);
    \coordinate (u3) at (-0.75,1.6);

    \draw[inside] (v2) -- (u1);
    \draw[inside] (u1) -- (u2);
    \draw[inside] (u1) -- (u3);

    \coordinate (d0) at (0,0.6);
    \draw[inside] (v3) -- (d0);


    \coordinate (g1) at (0.45,-1);
    \coordinate (g2) at (1,-1.7);
    \coordinate (g3) at (1.2,-2.);
    \coordinate (g4) at (1.7,-2);
    \coordinate (g5) at (2,-1.5);

    \draw[BY] (g1) -- (g2);
    \draw[BY] (g2) -- (g3);
    \draw[BY] (g2) -- (g4);
    \draw[BY] (g2) -- (g5);

    \coordinate (g6) at (1.6,0.8);
    \draw[outside] (v4) -- (g6);

    \coordinate (g7) at (2.9,0.8);
    \draw[outside] (v5) -- (g7);

    \draw[
        black!65,
        dashed,
        line width=1.1pt
    ]
        (xr) circle[radius=3];

    \draw[
        black!65,
        densely dotted,
        line width=1.15pt,
        -{Latex[length=3mm,width=2mm]}
    ]
        (xr) -- ++(-0.65,-2.95)
        node[pos=.48,right=1pt] {$r$};

    \node[redvertex] at (xa) {};
    \node[redvertex] at (xr) {};
    \node[redvertex] at (xb) {};
    \node[greenvertex] at (v3) {};


    \foreach \p in {v1,v2}
        \node[vertex] at (\p) {};

    \foreach \p in {v4,v5}
        \node[greenvertex] at (\p) {};

    \foreach \p in {l1,l2,l3,u1,u2,u3,d0}
        \node[vertex] at (\p) {};

    \foreach \p in {g6,g7}
        \node[greenvertex] at (\p) {};

    \foreach \p in {g1,g2,g3,g4,g5}
        \node[orangevertex] at (\p) {};

    \node[below left=4pt] at (xa)
        {$a_o$};

    \node[below right=4pt] at (xr)
        {$x_r^{(0)}$};

    \node[below=4pt] at (xb)
        {$b_0$};

    \node[orange, below left=4pt] at (g1)
        {$y$};

    \node[orange, below right=4pt] at (g3)
        {$B_y$};


    \end{tikzpicture}

    \caption{A sketch of the situation where a maximum packing of $T^{(1)}$ has a packing centre $y$ at distance less than $r$ from $x_r^{(0)}$. In orange the tree $B_y$, while $T^{(1)}$ is the union of the orange and green trees.}
    \label{fig:trimming_for_packing}
\end{figure}


\noindent  
To prove \eqref{eq:trim_packing}, we establish that there exists a maximum $2r$-packing $P^{(1)}$ of $T^{(1)}$ with all vertices at distance at least $2r$ from $a_0$, so that $P^{(1)}\cup \{a_0\}$ is a $2r$-packing for $T$.  Let $P^{(1)}$ be a maximum $2r$-packing of $T^{(1)}$. It suffices to show that all vertices of $P^{(1)}$ are at distance at least $r$ from $x^{(0)}_r$. Suppose that there exists $y\in P^{(1)}$ such that $d(y,x_r^{(0)})< r$. Then, there exists $\eps_0>0$ such that $d(y,x_{r-\eps_0}^{(0)})< r$. We let $B_y$ be the subtree spanned by all vertices $x \in T^{(1)}$ such that $y$ lies on the path from $x$ to $b_0$ (see \cref{fig:trimming_for_packing} for an illustration of this construction). Suppose that there exists $w\neq y$ with $w\in B_y\cap P^{(1)}$. Because $w$ and $y$ are in the packing, $d(w,y)\geq 2r$. But then,
$$d(w,b_0)=d(w,y)+d(y,b_0)\geq 2r+d(y,b_0).$$
By the triangle inequality 

$$d(y,b_0)\geq d(x_{r-\eps_0}^{(0)},b_0)-d(y,x_{r-\eps_0}^{(0)})\geq d(a_0,b_0)-2r+\eps_0,$$
leading to

$$d(w,b_0)>d(a_0,b_0),$$
which contradicts the definitions of $a_0$ and $b_0$. Thus, we established that $B_y\cap P^{(1)}=\{y\}$. Because simultaneously $d(y,x_{r-\eps_0}^{(0)})< r$ and $y\in T^{(1)}$ we know that there exists $v\in T$ with $d(v,x_r^{(0)})\geq r$ and $y$ lies on the path from $v$ to $b_0$. Then $v$ is in $B_y$ and is such that, replacing $y$ by $v$ in $P^{(1)}$ we still have a maximum $2r$-packing of $T^{(1)}$. Hence, without loss of generality, we can suppose that the maximal packing $P^{(1)}$ is such that for any $x\in P^{(1)}$, $d(x,x_0^{(r)})\geq r$. Then, $d(x,a_0)\geq 2r$ and so $P^{(1)}\cup \{ a_0 \}$ is a $2r$-packing of $T$. Because $x_0^{(0)}\notin T^{(1)}$ we have $\#\{P^{(1)}\cup \{ x_0^{(0)} \}\}=\cP_{2r}^{(1)}+1$, and so

\begin{equation}
    \cP_{2r} \geq \cP_{2r}^{(1)}+1,
\end{equation}
which concludes the proof of the proposition.

\end{proof}


\section{Height of continuous binary Bienaymé tree}\label{app:bien}

\begin{lem}\label{lem:height_continuous_Bienayme_tree}
For $B$ a planted Bienaymé tree with offspring distribution $\tfrac{1}{2}\delta_0+\tfrac{1}{2}\delta_2$ and $\operatorname{Exp}(1)$ edge length, it holds that 
\[
\P{\height(B)>t}=\frac{2}{2+t}.
\]
\end{lem}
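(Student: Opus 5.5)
We condition on the first edge and derive an integral equation for the tail $u(t):=\P{\height(B)>t}$, which we then solve.

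Since $B$ is planted, it consists of a root edge of length $E\sim\Exp(1)$ leading to a vertex $w$. With probability $1/2$ the vertex $w$ is a leaf. With probability $1/2$ it has two children, and the two subtrees above $w$ are independent copies of $B$ (each planted at $w$, with its own $\Exp(1)$ edge). The height of $B$ is therefore $E$ in the first case and $E+\max(H_1,H_2)$ in the second, where $H_1,H_2$ are i.i.d.\ copies of $\height(B)$ independent of $E$. Writing $q(t)=1-u(t)=\P{\height(B)\le t}$ and conditioning on $E=x$ gives
\[
q(t)=\int_0^t e^{-x}\Big(\tfrac12+\tfrac12 q(t-x)^2\Big)\,dx .
\]
Substituting $y=t-x$ gives $q(t)=e^{-t}\int_0^t e^{y}\big(\tfrac12+\tfrac12 q(y)^2\big)\,dy$. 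This shows $q$ is continuous and then differentiable, and $q$ satisfies the ODE
\[
q'(t)=-q(t)+\tfrac12+\tfrac12 q(t)^2=\tfrac12\,(1-q(t))^2,\qquad q(0)=0 .
\]
The initial value holds because $\height(B)\ge E>0$ almost surely.

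In terms of $u=1-q$ this reads $u'=-u^2/2$ with $u(0)=1$. Then $(1/u)'=1/2$, so $1/u(t)=1+t/2$ and $u(t)=2/(2+t)$, which is the claim.

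The only point needing care is uniqueness, that is, that the tail of the actual height must be this ODE solution. The right-hand side $\tfrac12(1-q)^2$ is locally Lipschitz, and $q$ takes values in $[0,1]$, so Picard--Lindelöf gives uniqueness directly. Alternatively, one can check that the fixed-point map $q\mapsto \int_0^t e^{-x}(\tfrac12+\tfrac12 q(t-x)^2)\,dx$ is a contraction on bounded $[0,1]$-valued functions on $[0,T]$ for small $T$, and then iterate in time. Almost-sure finiteness of $B$, from criticality, is not even needed, because the recursion holds for $\P{\height(B)\le t}$ whether or not $B$ is finite.
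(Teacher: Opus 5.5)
Your proof is correct and is essentially the paper's argument: both condition on the first branching (the end of the initial $\operatorname{Exp}(1)$ edge), obtain an integral equation, differentiate it to get $q'=\tfrac12(1-q)^2$ with $q(0)=0$, and solve to find $q(t)=t/(2+t)$. The only difference is cosmetic. The paper goes through the generating function $G(t,s)=\mathbf{E}[s^{Z_t}]$ of the associated continuous-time branching process and then sets $s=0$, whereas you work with the distribution function of the height directly and also justify the differentiability and uniqueness steps that the paper leaves implicit.
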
 

\begin{proof}
     The height of $B$ is the extinction time of a continuous branching process $Z$, defined as follows. $Z$ is started from a single particle, so $Z_0=1$. Each particle lives for an Exp$(1)$ time and is then replaced by $\xi$ particles, where $\xi$ follows the distribution $\tfrac{1}{2}\delta_0+\tfrac{1}{2}\delta_2$ and is independent of everything else. Hence,

$$\P{\height(B)>t}=\P{Z_t>0}.$$
We start by introducing $f(s)=\E{s^{\xi}}=\tfrac{1}{2}(1+s^2)$, the offspring probability generating function. We also introduce the probability generating function for $Z$

$$G(t,s)=\E{s^{Z_t}}.$$
First, let $\tau$ be the first branching of the process $Z$. Then $\tau\sim \text{Exp}(1)$ and conditioning on $\tau$

\begin{equation*}
    Z_t=
    \begin{cases}
    1, & \text{if } \tau>t,\\
    \sum_{i=1}^{\xi} Z_{t-\tau}^{(i)}, & \text{if } \tau\le t,
    \end{cases}
\end{equation*}
where the $Z^{(i)}$ are iid copies of $Z$. Plugging this relation into the definition of $G$ yields

\begin{equation}
\begin{aligned}
    G(t,s)
    =&
    \E{s^{Z_t}\I{\tau>t}+s^{Z_t}\I{\tau\le t}}
    = se^{-t}+\int_{0}^t dr e^{-r} \E{\E{s^{Z_{t-r}}}^{\xi}}\\
    =&
    se^{-t}+\int_{0}^t dr e^{-r} f\left( G(t-r,s)\right).
\end{aligned}
\end{equation}
The change of variable $u=t-r$ followed by a derivation on $t$ leads to

$$G(t,s)=se^{-t}+\int_{0}^t du e^{-(t-u)} f\left( G(u,s)\right)$$
and 

\begin{equation}
\begin{aligned}
    \frac{\partial G(t,s)}{\partial t}
    =&
    -se^{-t}
    -\int_{0}^t du e^{-(t-u)} f\left( G(u,s)\right)
    +f\left( G(u,s)\right) \\
    =&
    f\left( G(u,s)\right) -G(t,s).
\end{aligned}
\end{equation}
We want to solve this differential equation for $s=0$. For clarity introduce $u(t)=G(t,0)$. After checking that $G(0,0)=0$ and recalling that $f(s)=\tfrac{1}{2}(1+s^2)$ we get that

\begin{equation*}
\begin{cases}
    u'=\frac{1}{2}+\frac{1}{2}u^2-u=\frac{1}{2}(1-u)^2,\\
    u(0)=0.
\end{cases}
\end{equation*}
This differential equation has as its only solution

$$t\mapsto \frac{t}{2+t}.$$
It remains to recall that $u(t)=G(t,0)=\P{Z_t=0}$ to conclude the proof of the lemma.
\end{proof}


\section{Consistency of the limiting constant}\label{app:consistency_limit}

Here we prove the following fact that implies the result in \cref{thm:unscaled_local_packing_number}.

\begin{fact}\label{fact:basic_prob}
    Let $(X_{n,k})_{n,k \geq 1}$ be a collection of random variables. Let $C > 0$, and let $C_k > 0$ for all $k \geq 1$. Suppose that, for any $(k_n)_{n \geq 1}$ with $k_n \to \infty$, we have that $X_{n,k_n} \convp C$. Moreover, suppose that $X_{n,k} \convp C_k$ as $n \to \infty$ for any fixed $k$. Then, $C_k \to C$ as $k \to \infty$.
\end{fact}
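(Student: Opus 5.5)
We argue by contradiction with a diagonal construction. Suppose $C_k \not\to C$. Then there exist $\eta>0$ and a strictly increasing sequence $(k_j)_{j\ge1}$ with $k_j\to\infty$ and $|C_{k_j}-C|\ge 2\eta$ for all $j$. The goal is to build a single sequence $(k'_n)_{n\ge1}$ with $k'_n\to\infty$ along which $X_{n,k'_n}$ stays away from $C$ with non-vanishing probability. This contradicts the first hypothesis.

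For each fixed $j$, the second hypothesis gives $X_{n,k_j}\convp C_{k_j}$. Hence there is $N_j$ such that
\[
\P{|X_{n,k_j}-C_{k_j}|<\eta}>\tfrac12 \qquad \text{for all } n\ge N_j .
\]
On this event, $|X_{n,k_j}-C|>\eta$, by the triangle inequality. We may choose the $N_j$ strictly increasing. Now define $k'_n=k_j$ for $N_j\le n<N_{j+1}$, and set $k'_n=k_1$ for $n<N_1$. Then $k'_n\to\infty$, because every $k_j$ is used only on a finite block and the blocks exhaust $\N$. Moreover, for every $n\ge N_1$,
\[
\P{|X_{n,k'_n}-C|>\eta}>\tfrac12 .
\]

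This contradicts $X_{n,k'_n}\convp C$, so $C_k\to C$. The only point needing care is that $N_j$ is chosen for the fixed index $k_j$ and is then used for every $n\ge N_j$. This is exactly what convergence in probability at fixed $k$ provides, so no uniformity in $k$ is needed. There is no real obstacle here.

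To deduce \cref{thm:unscaled_local_packing_number}, take $X_{n,k}=\frac{\sigma^2k^2}{n}\cP_k(T_n)$. The first hypothesis is \cref{thm:bien} applied to $\gamma_n=k_n$, after first reducing to $k_n\to\infty$ with $k_n\ll n^{1/2}$. This reduction is valid because any $k'_n\le k_n$ that still tends to infinity can be used in the diagonal argument above. The second hypothesis is the fixed-$r$ law of large numbers. Together these give $r^2\sigma^2C(r,\mu)\to\fC$, at least along integer $r$.
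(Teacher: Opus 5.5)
Your proof is correct and uses the same diagonal-sequence contradiction as the paper. The paper controls $X_{n_k,k}$ only at the single time $n_k$ and evaluates there, whereas you use all $n\ge N_j$; this makes no difference. Your remark that the $N_j$ can be enlarged so that the diagonal sequence grows as slowly as desired is worth keeping, because \cref{thm:bien} supplies the first hypothesis only for $k_n\ll n^{1/2}$, and the paper's application of the fact passes over this restriction.
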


\begin{proof}
Suppose for contradiction that $C_k \not\to C$. Then, upon taking a subsequence if needed, there exists $\eps > 0$
such that $|C_{k} - C| \geq \eps$ for all $k \geq 1$. Since $X_{n,k} \convp C_k$ for each fixed $k$, we may choose an $n_k$ large enough that
$$
    \prob\left\{\left|X_{n_k,k} - C_k\right| \geq \frac{\eps}{4}\right\} \leq \frac{1}{4}.
$$
Now define a sequence $(k_n)_{n \geq 1}$ by setting $k_n = k$ for
$n \in [n_k, n_{k+1})$. We can suppose without loss of generality that $k_n \to \infty$, and so by assumption we have
$X_{n,k_n} \convp C$. In particular, for all sufficiently large $n$,
$$
    \prob\left\{\left|X_{n,k_n} - C\right| \geq \frac{\eps}{4}\right\} \leq \frac{1}{4}.
$$
By the triangle inequality, since $|C-C_k| \geq \eps$, we have that
$$
\prob\left\{\left|X_{n_k,k_{n_k}} - C_k\right| \geq \frac{\eps}{4}\right\} \bigcup \left\{\left|X_{n_k,k_{n_k}} - C\right| \geq \frac{\eps}{4}\right\} =1.
$$
Therefore, by taking $k$ large we can get,
$$
    1 \leq \prob\left\{\left|X_{n_{k},k_{n_k}} - C_{k_{n_k}}\right| \geq \frac{\eps}{4}\right\}
    +
    \prob\left\{\left|X_{n_{k},k_{n_k}} - C\right| \geq \frac{\eps}{4}\right\} \leq \frac{1}{4} + \frac{1}{4} = \frac{1}{2},
$$
which is a contradiction.
\end{proof}

\section{Postponed Proofs from \cref{sec:covering_fringe}}\label{sec:postponed}
\subsection{Stochastic domination}
We prove \cref{cor:stochastic_order_forest} by first showing that the partial order in \cref{dfn:partial_order} induces a stochastic ordering on the height of a random fringe subtree of uniformly random trees with a given degree statistic and fixed root degree:

\begin{thm}\label{thm:stochastic_order_tree}
Fix $r\geq 1$ and let $\preceq$ be the partial order on degree statistics as in \cref{dfn:partial_order}.
Take $\dstat=(\dstat_i)_{i\geq 0}$, $\dstat'=(\dstat_i')_{i\geq 0}$ to be degree statistics corresponding to one component and have the same size.  Suppose $\dstat_r,\ \dstat_r' \geq 1$, and let $\rtree_\dstat, \rtree_{\dstat'}$ be uniformly selected trees from $\dsTree(\dstat),\ \dsTree(\dstat')$ respectively, such that the roots of $\rtree_\dstat$ and $\rtree_{\dstat'}$ have $r$ children. Lastly, take $W, W'$ to be uniformly chosen vertices from $\rtree_\dstat$ and $\rtree_{\dstat'}$ . Then, 
    \[\dstat\preceq\dstat' \implies \height\left(\rtree_\dstat^{\uparrow W}\right) \preceq_{\mathrm{st}}\height\left(\rtree_{\dstat'}^{\uparrow W'}\right).\]
\end{thm}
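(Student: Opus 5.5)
It suffices to prove the implication when $\dstat'$ covers $\dstat$ in the sense of \cref{dfn:partial_order}, since $\preceq_{\mathrm{st}}$ is transitive. We may also restrict to covering relation (2), because both statistics have the same size and number of components. So fix $1\le k\le \ell$ with
\[\dstat_i' = \dstat_i + \I{i=k} - \I{i=k-1} + \I{i=\ell} - \I{i=\ell+1}.\]
Thus $\dstat'$ is obtained from $\dstat$ by replacing one vertex of out-degree $k-1$ and one of out-degree $\ell+1$ with a vertex of out-degree $k$ and one of out-degree $\ell$. The natural plan follows the coupling strategy of \citet{height} and \citet{donderwinkel2026tight}. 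We build a (possibly randomized) map $\Phi$ from pairs $(\rtree,w)$, with $\rtree\in\dsTree(\dstat)$ having root degree $r$ and $w\in\rtree$, to pairs $(\rtree',w')$ of the same kind for $\dstat'$. The map should push the uniform measure to the uniform measure and satisfy $\height(\rtree'^{\uparrow w'})\ge \height(\rtree^{\uparrow w})$ almost surely. Because all trees of a given degree statistic have the same number of vertices, a uniform marked pair is the same as a uniform tree together with a uniform vertex, so such a coupling gives the claimed stochastic domination.

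\textbf{Step 1: the elementary move.} Let $u$ be a vertex of out-degree $\ell+1$ and $u'$ a vertex of out-degree $k-1$. Detach one child subtree of $u$ (say the last child, with its fringe tree) and regraft it as a new last child of $u'$. The result has degree statistic $\dstat'$. The move is not a bijection on its own, so I would count it as a bipartite multigraph between $\dsTree(\dstat)$ and $\dsTree(\dstat')$.

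\textbf{Step 2: check biregularity.} From a $\dstat$-tree, the number of available moves is $\dstat_{\ell+1}\dstat_{k-1}$ (an ordered choice of the pair $(u,u')$, with a fixed child to move). From a $\dstat'$-tree, the number of reverse moves is $\dstat'_\ell\dstat'_k$, choosing a vertex of degree $k$ whose last child is moved to a vertex of degree $\ell$. These counts are constant, so the multigraph is biregular. Choosing a uniform move from a uniform $\dstat$-tree therefore yields a uniform $\dstat'$-tree. The marked vertex $w$ is carried along by identity, since the vertex set is unchanged.

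Two issues have to be handled here. First, the root degree $r$ must be preserved. This is done by forbidding the root as $u$ or $u'$ (when $r\in\{k-1,\ell+1\}$) and recounting non-root vertices, which are still constant counts. Second, the moved subtree must not contain $u'$, or the regrafting creates a cycle. This is the subtle point. As in \cite{height}, I would allow the moved subtree to be one whose root is a descendant-free choice relative to $u'$, or swap roles by moving a subtree from $u'$'s side. Alternatively, I would use the \L ukasiewicz/cycle-lemma encoding, where the move is a transposition of increments and never creates cycles.

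\textbf{Step 3: monotonicity of the marked height.} This is the main obstacle. The edge-move is not monotone for every $w$: if $w$ is an ancestor of $u$ but not of $u'$, the height of $\rtree^{\uparrow w}$ can drop. The fix I would try first follows \cite{donderwinkel2026tight}. Since $k-1<\ell+1$, work in the encoding by the \L ukasiewicz path of \cref{lem:bridge_shift} type. The subtree height at a uniform vertex is a functional of the excursion started at a uniform time of the cyclically shifted walk, and the cover relation corresponds to replacing two increments $k-2$ and $\ell$ by $k-1$ and $\ell-1$, a ``less skewed'' pair. Averaging over the uniform shift, one shows that the law of the first excursion's height increases under this smoothing, by pairing each configuration with its image in which the swapped increments are ordered favourably. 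The comparison of excursions reduces to the fact that raising an earlier increment and lowering a later one delays the first passage below the starting level and can only lengthen the excursion's height profile. I expect this pairing argument, and making it respect the fixed root degree $r$, to be the delicate part of the proof.
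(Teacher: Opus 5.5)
Your reduction to a single cover of type (2) matches the paper. After that, the argument has genuine gaps.

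\textbf{Step 2 (biregularity) fails.} A transplant $(u,u')$ is admissible only if $u'$ does not lie in the moved subtree of $u$. The number of admissible ordered pairs in a $\dstat$-tree therefore depends on the ancestral relations between its degree-$(k-1)$ and degree-$(\ell+1)$ vertices. The same holds for reverse moves in a $\dstat'$-tree. Hence a uniform admissible move does not push the uniform law on $\dsTree(\dstat)$ to the uniform law on $\dsTree(\dstat')$. The patches you list are not carried out.

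\textbf{Step 3 rests on a false principle.} Raising an earlier \L ukasiewicz increment and lowering a later one does delay the first passage below the starting level. But that controls the size of the encoded tree, not its height. For example, the depth-first degree sequence $(2,0,2,0,1,1,0)$ encodes a tree of height $4$. Raising the second degree and lowering the third gives $(2,1,1,0,1,1,0)$. This is a cover with $k=\ell=1$ that keeps root degree $2$, and it encodes two paths of length $3$ hanging from the root, of height $3$. So no pairing of the kind you describe can give an almost-sure inequality. Step 3, which is the real content of the theorem, is therefore not established.

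\textbf{The missing idea} is to compare conditional \emph{laws} within classes rather than individual configurations. The paper marks two non-root vertices $\{u,v\}$ of out-degrees $k-1,\ell+1$ in $\rtree_\dstat$ (resp.\ $k,\ell$ in $\rtree_{\dstat'}$), together with the uniform vertex $w$. It then groups marked trees into classes: detach \emph{all} fringe subtrees of $u$ and $v$ that contain no red mark, and regraft them arbitrarily at $u,v$. By \cite[Proposition A.1]{donderwinkel2026tight}, and because the number of admissible blue positions is constant on a class, both laws give every class the same mass. Within a class, the tree is produced by a uniform random split of the $k+\ell$ free subtrees ($k+\ell-1$ if $u,v$ are ancestrally related). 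The comparison then splits into three cases:
\begin{itemize}
\item If $w$ is an ancestor of neither red vertex, the marked height is deterministic.
\item If $w$ is an ancestor of both, the comparison reduces to the comparison of total heights in \cite[Proposition A.2]{donderwinkel2026tight}.
\item If $w$ is an ancestor of exactly one, say $x$, the marked height is governed by $d(w,x)+\max_{i\in A}(1+\height(t_i))$. Here $A$ is a uniform subset of size in $\{k-1,\ell+1\}$ versus $\{k,\ell\}$, and the eggs-in-one-basket lemma \cite[Lemma 29]{height} gives the domination.
\end{itemize}
The third case is exactly the one you flagged as non-monotone. It is handled because the inequality holds only after averaging over the random split, not configuration by configuration. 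Summing over classes then yields the theorem.
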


\begin{figure}[h!]    
\centering

\begin{subfigure}{\linewidth}
\centering
\includegraphics[width=0.342\linewidth]{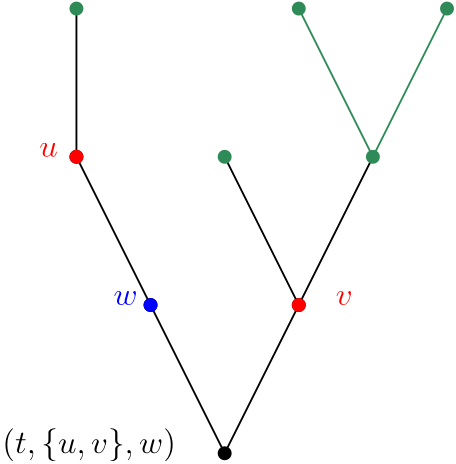}
\hfill
\includegraphics[width=0.315\linewidth]{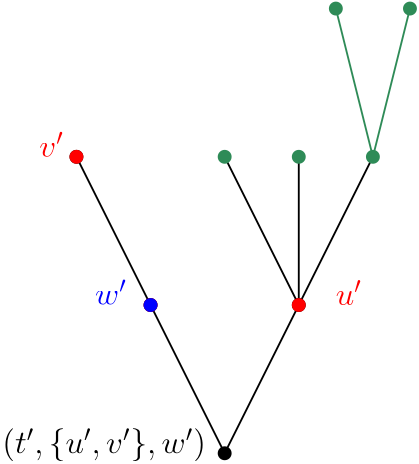}
\hfill
\includegraphics[width=0.315\linewidth]{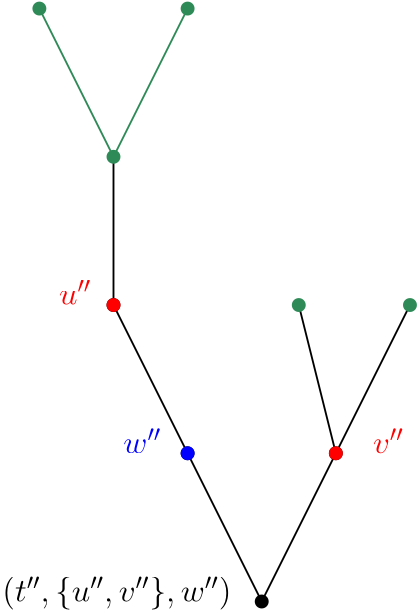}
\caption{Three marked trees in the same equivalence class. The root of each tree is the vertex at the bottom. In this example, $u$ and $v$ are not ancestrally related, and the marked vertex $w$ has exactly one of $u$ or $v$ as a descendant. The green subtrees are the fringe subtrees that are detached and reordered.}
\end{subfigure}
\vspace{0.5cm}
\begin{subfigure}{\linewidth}
\centering
\includegraphics[width=0.235\linewidth]{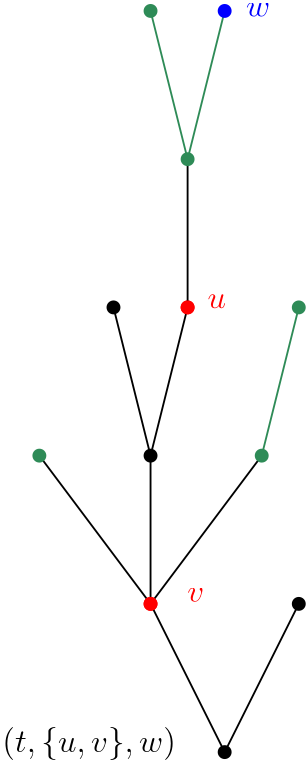}
\hfill
\includegraphics[width=0.292\linewidth]{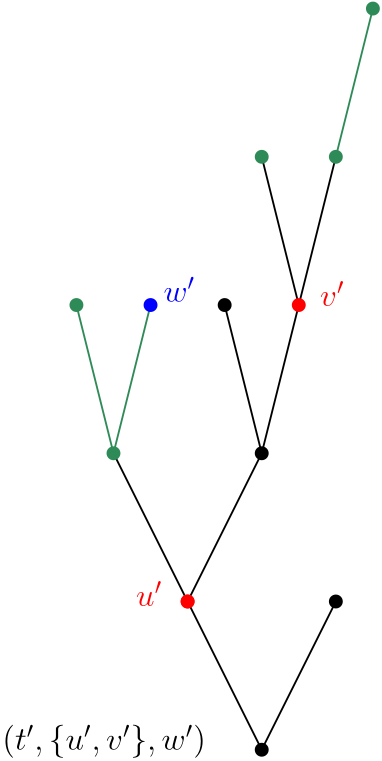}
\hfill
\includegraphics[width=0.327\linewidth]{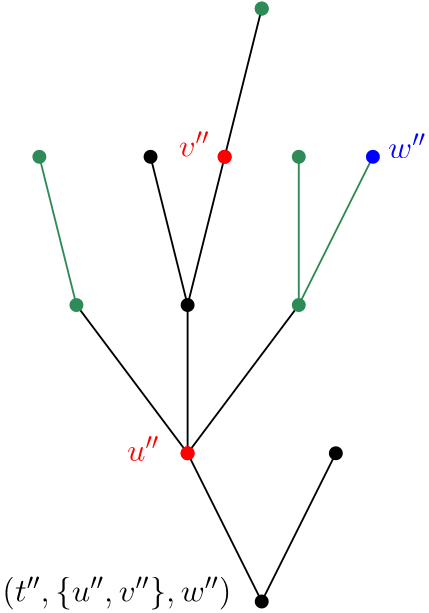}
\caption{Three marked trees in the same equivalence class. The root of each tree is the vertex at the bottom. In this example, $u$ and $v$ are ancestrally related. The marked vertex $w$ has neither $u$ nor $v$ as a descendant. The green subtrees are the fringe subtrees that are detached and reordered.}
\end{subfigure}
\caption{Examples of elements in the same equivalence class. These figures were adapted from \cite{donderwinkel2026tight} with permission of the authors.}
\label{fig:equiv_class}
\end{figure}

To prove this statement, it is enough to consider $\dstat'$ that covers $\dstat$. Then, the more general statement can be obtained by considering a chain of covering relations between $\dstat$ and $\dstat'$ yielding a corresponding chain of inequalities that proves that $\height\left(\rtree_\dstat^{\uparrow W}\right) \preceq_{\mathrm{st}}\height\left(\rtree_{\dstat'}^{\uparrow W'}\right)$. Thus, we may assume there are a $k\le \ell$ such that for each $i$
\begin{equation}\label{eq:cover}
\mathbf{n}_i'=\mathbf{n}_i+\I{i=k}-\I{i=k-1}+\I{i=\ell}-\I{i=\ell+1}.
\end{equation}
We will closely follow the proof of Theorem 5.5 in \cite{donderwinkel2026tight}, which in turn is based on the proof of Theorem 9 in \cite{height}. 

 Recall that in this paper, by tree we mean plane tree, and let $\leq$ be the ordering on the vertices. We will use an equivalence relation between trees with three marked vertices that, informally, can be described as follows. Let $t$ be a plane tree and $u,v,w$ vertices in $t$ with $u\ne v$ and let $t'$ be a plane tree and $u',v',w'$ with $u'\ne v'$ vertices in $t'$.
Imagine placing red marks on $u$ and $v$ in $t$ and on $u'$ and $v'$ in $t'$, and a blue mark on $w$ in $t$ and on $w'$ in $t'$. Thus, it is possible that a vertex has both a red and a blue mark. Then, $(t,\{u,v\},w)\sim (t',\{u',v'\},w')$ if we can obtain marked $t$ from marked $t'$ by detaching some fringe subtrees (that do not contain a red mark themselves) of the red marked vertices and reattaching each at a red marked vertex. For a formal definition see \cite[Appendix A]{donderwinkel2026tight} (therein, there is no additional vertex $w$, but the adaptation is straightforward). Also see \cref{fig:equiv_class}. 

Observe that $(t,\{u,v\},w)\sim (t',\{u',v'\},w')$ in particular implies that the sum of the out-degrees of $u$ and $v$ in $t$ and of $u'$ and $v'$ in $t'$ is equal. 

The key feature of this equivalence relation is that it allows us to define a coupling between $(\rtree_\dstat,W)$ and $(\rtree_{\dstat'},W')$ under \eqref{eq:cover} by letting both trees be a sample from the same equivalence class under $\sim$. This is possible because of the following proposition. 

Let $U$ be a uniformly random vertex of out-degree $k-1$ unequal to the root in $\rtree_\dstat$, let $V$ be a uniformly random vertex of out-degree $\ell+1$ unequal to the root in $\rtree_\dstat$, and let $W$ be a uniformly random vertex in $\rtree_\dstat$. Similarly, let $U'$ be a uniformly random vertex of out-degree $k$ unequal to the root in $\rtree_{\dstat'}$,  let $V'$ be a uniformly random vertex of out-degree $\ell$ unequal to the root and to $U'$ in $\rtree_{\dstat'}$, and let $W'$ be a uniformly random vertex in $\rtree_{\dstat'}$. 

\begin{prop}
    For any $\dstat$, $\dstat'$ as in \eqref{eq:cover} and any equivalence class $\cC$ under $\sim$,  \[\P{(\rtree_\dstat,\{U,V\},W)\in \cC}=\P{(\rtree_{\dstat'},\{U',V'\},W')\in \cC}.\]
\end{prop}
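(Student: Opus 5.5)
The plan is a double-counting argument for each fixed class $\cC$. Since $\rtree_\dstat$ is uniform on the trees in $\dsTree(\dstat)$ with root degree $r$, and $(U,V,W)$ is then chosen uniformly, $\P{(\rtree_\dstat,\{U,V\},W)\in \cC}$ equals
\[
\frac{\#\{(t,u,v,w)\in\cC\text{ with }t\in\dsTree(\dstat)\}}{\#\{t\in\dsTree(\dstat):\text{root degree } r\}\cdot(\dstat_{k-1}-\I{k-1=r})\,(\dstat_{\ell+1}-\I{\ell+1=r})\cdot n},
\]
where the marked quadruples are counted with the role constraints: $u$ has out-degree $k-1$, $v$ has out-degree $\ell+1$, and neither is the root. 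The analogous formula holds for $\dstat'$, except that $V'\neq U'$ is imposed, which matters when $k=\ell$.

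\textbf{Denominators.} By the root-degree version of the formula $\#\mathbb{T}_{\dstat}=(|\dstat|-1)!/\prod_i\dstat_i!$ from \cite{pitman2006combinatorial}, the number of trees with degree statistic $\dstat$ and root degree $r$ is $\frac{r\dstat_r}{n-1}\cdot\frac{(n-1)!}{\prod_i\dstat_i!}$. I would check that it then suffices to show that, after multiplying by the number of admissible choices of $(u,v)$, the two denominators agree. Removing the root reduces this to the multinomial count with $\dstat_r$ replaced by $\dstat_r-1$. The change $\dstat\to\dstat'$ replaces a factor $\dstat_{k-1}\dstat_{\ell+1}$ by $(\dstat_{k-1}-1)$ and $(\dstat_{\ell+1}-1)$ in the factorials, and the factors $\dstat'_k\dstat'_\ell$ (or $\dstat'_k(\dstat'_k-1)$ when $k=\ell$) cancel exactly against the number of ordered choices of $(u',v')$. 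The cases where $k-1$, $k$, $\ell$ or $\ell+1$ equals $r$ are handled by carrying the root correction through this computation.

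\textbf{Numerators.} It remains to show that $\cC$ contains equally many elements of each type. By the definition of $\sim$, every member of $\cC$ is obtained from one reference configuration by redistributing a fixed multiset of $m=k+\ell$ red-attached fringe subtrees (those not containing a red mark) between the two red vertices, subject to fixed plane orderings. A member in which $u$ gets $j$ subtrees and $v$ gets $m-j$ subtrees has out-degrees $(j,m-j)$, up to the fixed red-containing children. So it suffices to build an explicit bijection between arrangements with $u$ of degree $k-1$ and $v$ of degree $\ell+1$, and arrangements with degrees $(k,\ell)$. The bijection I would use moves the last detached subtree of $v$ to become the last child of $u$, adapting the interleaving bijection of \cite[Appendix A]{donderwinkel2026tight}. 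The blue mark $w$ travels with whichever subtree contains it, or stays fixed if it lies outside the movable subtrees. The class therefore records $w$'s position consistently, and $\#\cC$ splits equally.

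\textbf{Main obstacle.} The hardest part is the numerator bijection: making it well defined when $u$ and $v$ are ancestrally related, when $k=\ell$, and when the movable subtrees are indistinguishable as plane trees. I would first try to avoid the explicit map by counting. The number of ways to distribute the $m$ labelled movable subtrees, with plane orderings, so that $u$ receives exactly $j$ of them is $\binom{m}{j}\,j!\,(m-j)!=m!$ times the interleavings with the fixed children, independent of $j$ once the fixed children are accounted for. Both counts would then reduce to the same expression, as in \cite{donderwinkel2026tight}.
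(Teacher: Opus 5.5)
\textbf{Gap in the ancestrally related case.} Your denominator computation is correct. With $\mathbf m=\dstat-e_r$, both sides equal $\frac{r(n-2)!}{\prod_i\mathbf m_i!}\mathbf m_{k-1}\mathbf m_{\ell+1}$ once $(u',v')$ are counted as ordered pairs.

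When the two red vertices $x,y$ are not ancestrally related, your numerator argument also works. All $k+\ell$ subtrees are movable, and the number of red-marked members with a prescribed split does not depend on the split. Your map ``move the last subtree of $v$ to the end of $u$'' is then a bijection that carries the blue mark.

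The gap is in the ancestrally related case, which you flag but then resolve with a false claim. If $x$ is an ancestor of $y$, only $k+\ell-1$ subtrees are movable. The child of $x$ leading to $y$ stays at $x$, but it can occupy any of the $\deg(x)$ positions among the children of $x$. So the number of red-marked members with $\deg(x)=d$ is $d\,(k+\ell-1)!/\prod_\tau c_\tau!$, where $c_\tau$ are the multiplicities of identical movable subtrees. This is \emph{not} independent of the split. Correspondingly, your role-preserving map is not injective. For $k=\ell=1$, both orders of the two children of $x$ are sent to the same tree, and the ordered configuration with $u'=x$, $v'=y$ is never hit.

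\textbf{How to repair it.} The statement survives, but only after summing over the two ways of assigning the degree pair to $\{x,y\}$. For $\dstat$, the patterns $(\deg(x),\deg(y))=(\ell+1,k-1)$ and $(k-1,\ell+1)$ have total weight $(\ell+1)+(k-1)$. For $\dstat'$, the ordered patterns $(\ell,k)$ and $(k,\ell)$ have total weight $k+\ell$; when $k=\ell$ this is one pattern counted twice. You also need that the number of admissible blue positions per red-marked member is a class constant: it is the multiplicity of the component containing $w$ after cutting the out-edges of $x$ and $y$. With that, the numerators are equal.

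The paper sidesteps all of this. It uses that $W$ is uniform given the red-marked tree and that this blue multiplicity $m_{\hat\cC}$ is constant on $\hat\cC$. That reduces the claim to the two-mark identity \cite[Proposition A.1]{donderwinkel2026tight}. Your route amounts to reproving that proposition by counting, which is fine. But it needs the weighted computation above, not a split-independent count or a role-by-role bijection.
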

\begin{proof}
    First, we may sample $(\rtree_\dstat,\{U,V\},W)$ by first sampling $(\rtree_\dstat,\{U,V\})$ and then sampling $W\in\rtree_\dstat$, since every tree in $\dsTree(\dstat)$ has the same number of vertices. Similarly, $(\rtree_{\dstat'},\{U',V'\},W')$ is sampled by first sampling $(\rtree_{\dstat'},\{U',V'\})$ and then  sampling $W'\in\rtree_{\dstat'}$. Under assumption \eqref{eq:cover}, $\rtree_{\dstat'}$ has the same size as $\rtree_\dstat$, let this size be $n$.
Fix an equivalence class $\mathcal{C}$ for the relation with two red marks and one blue mark, and let $\hat{\mathcal{C}}$ be the class obtained by forgetting the blue-marked vertex, which is the equivalence class for the original relation in \cite[Appendix A]{donderwinkel2026tight}. For $(t,\{u,v\})\in\hat{\mathcal{C}}$, let 
    \[m(t,\{u,v\}) =\#\set{w\in t: (t,\{u,v\},w)\in\mathcal{C}}.\]
    Note that every element of $\cC$ produces the same marked multiset of trees upon deleting the out-edges of the red-marked vertices, and $m(t,\{u,v\})$ equals the multiplicity of the tree containing the blue mark in this multiset. Consequently,  $m(t,\{u,v\})$ is constant over $\hat{\cC}$, and we denote this fixed value by $m_{\hat{\cC}}$.

    Thus combining with the fact that $W$ is uniform conditional on the tree and the red marks, then
    \begin{align*}
    &\P{(\rtree_\dstat,\{U,V\},W)\in \cC} \\
    &=\P{(\rtree_\dstat,\{U,V\}) \in\hat{\cC}}\probC{(\rtree_\dstat,\{U,V\},W)\in \cC}{(\rtree_\dstat,\{U,V\}) \in\hat{\cC}}\\
    &=\frac{m_{\hat{\cC}}}{n}\P{(\rtree_\dstat,\{U,V\}) \in\hat{\cC}}=\frac{m_{\hat{\cC}}}{n}\P{(\rtree_{\dstat'},\{U',V'\})\in \hat{\cC}}\\
&=\P{(\rtree_{\dstat'},\{U',V'\},W')\in \cC},
    \end{align*}
    where the penultimate equality follows from \cite[Proposition A.1]{donderwinkel2026tight}.
\end{proof}

The previous proposition implies that we may sample  $(\rtree_\dstat,\{U,V\},W)$ and\\
$(\rtree_{\dstat'},\{U',V'\},W')$ by first sampling a random equivalence class and then sampling both conditional on lying in this class. 

 We now describe this conditional sampling. Let $\cC$ be an equivalence class with\\
 $\P{(\rtree_\dstat,\{U,V\},W)\in \cC}>0$. Let $(t,\{x,y\},w)$ be some element of $\cC$ (so that the sum of the out-degrees of $x$ and $y$ in $t$ must be $k+\ell$). Place a red mark on $x$ and $y$ and a blue mark on $w$ in $t$. 

If $x$ and $y$ are not ancestrally related, then we first sample a uniformly random ordering of the $k+\ell$ fringe subtrees of $x$ and $y$. Then sample a uniformly chosen random subset $A$ of $[k+\ell]$ of size $k-1$ or size $\ell+1$. If we now detach all fringe subtrees of $x$ and $y$, attach those with an index (in the random ordering) in $A$ at the first (in plane order) red marked vertex (respecting the random order we sampled) and attach the remaining at the other red marked vertex (respecting the random order we sampled), then, writing $\{U,V\}$ for the red vertices and $W$ for the blue vertex, and $T$ for the resulting tree, we obtain a sample of $(T,\{U,V\},W)$ conditional on $(\rtree_\dstat,\{U,V\},W)\in \cC$. (Indeed, it is easy to see that this sampling procedure assigns equal mass to all trees with degree statistic $\dstat$ in $\cC$.)

If $x$ and $y$ are ancestrally related, there are only $k+\ell-1$ fringe subtrees of $x$ and $y$ that do not contain a red marked vertex. Sample a uniformly random ordering of these fringe subtrees. Then sample a uniformly random subset $B$ of $[k+\ell-1]$ of size $k-1$ or size $\ell+1$. If we now detach all fringe subtrees without red marks of $x$ and $y$, attach those with an index (in the random ordering) in $B$ at the descendant red marked vertex (respecting the random order we sampled) and attach the remaining at the other red marked vertex (respecting the random order we sampled, and inserting the tree that we did not detach in a random position), then, writing $\{U,V\}$ for the red vertices and $W$ for the blue vertex, and $T$ for the resulting tree, then we obtain a sample of $(T,\{U,V\},W)$ conditional on $(\rtree_\dstat,\{U,V\},W)\in \cC$. 

Sampling $(T',\{U',V'\},W')$ conditional on $(\rtree_{\dstat'},\{U',V'\},W')\in \cC$ goes analogous, replacing the possible sizes of $A$ (resp. $B$) by $k$ and  $\ell$, and denoting these sets by $A'$ (resp. $B'$).

We can use this conditional sampling procedure to prove the following proposition. 

\begin{prop}
For any $z$, any $\dstat$, $\dstat'$ as in \eqref{eq:cover} and for any equivalence class $\cC$ under $\sim$, \[\p\left\{\height\left(\rtree_{\dstat}^{\uparrow W}\right)>z\mid (\rtree_\dstat,\{U,V\},W)\in \cC\right\}
\le
\p\left\{\height\left(\rtree_{\dstat'}^{\uparrow W'}\right)>z\mid (\rtree_{\dstat'},\{U',V'\},W')\in \cC\right\}\]
\end{prop}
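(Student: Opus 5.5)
Within a fixed class $\cC$ I would couple the two conditional samples directly, using the sampling procedure described above. Fix a representative $(t,\{x,y\},w)\in\cC$. Use the same uniform ordering of the detachable fringe subtrees in both samples. Then couple the random index set so that the first red vertex receives the same subtrees, up to the change in number. Concretely, sample $A'$ (of size $k$ or $\ell$) and $A$ (of size $k-1$ or $\ell+1$) so that one is obtained from the other by moving a single index. When the first red vertex gets $k-1$ versus $k$ subtrees, $A\subset A'$ with one added index. When it gets $\ell+1$ versus $\ell$, $A'\subset A$ with one removed index. The ancestrally related case is analogous with $B,B'$. The two choices ``first red vertex receives the small set'' and ``receives the large set'' have matching probabilities in both samples, since $\binom{m}{k-1}/\binom{m}{\ell+1}$ and $\binom{m}{k}/\binom{m}{\ell}$ weight the options. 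I would check the marginal laws carefully and, if needed, use a symmetrised choice of which red vertex gets the smaller load.

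Under this coupling, the trees $T$ and $T'$ differ only in that one fringe subtree $S$ is moved. In $T$ it sits at the red vertex carrying $\ell+1$ children, and in $T'$ it sits at the one carrying $k$ children. I would then compare $\height(T^{\uparrow W})$ with $\height(T'^{\uparrow W'})$ by cases on the position of the blue vertex, with $W$ and $W'$ identified through the class.

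If $w$ lies inside a detached fringe subtree, or is unrelated to both red vertices, the subtree above the blue vertex is identical in $T$ and $T'$. If $w$ is an ancestor of both red vertices (or of the lower one in the related case), then $T^{\uparrow W}$ and $T'^{\uparrow W'}$ contain the same subtrees at the same depths, so the heights agree. The only delicate case is when $w$ is an ancestor of exactly one red vertex, or lies between them in the related case. There, moving one subtree changes the height: the height can drop in one realisation and rise in another.

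This is exactly where $k\le\ell$ has to enter, and I expect it to be the main obstacle. A pointwise inequality fails in this case, so I would instead prove the inequality after averaging over the random ordering and index set, following Theorem 5.5 of \cite{donderwinkel2026tight} and Theorem 9 of \cite{height}. Condition on the multiset of $k+\ell$ detachable subtrees and on which red vertex is below $w$. The event that the height exceeds $z$ is then the event that the vertex below $w$ receives at least one subtree whose height exceeds a threshold determined by $z$ and the depth. Its probability is
\[1-\frac{\binom{m-j}{s}}{\binom{m}{s}},\]
where $j$ is the number of tall subtrees and $s$ is the load of the vertex below $w$. This probability is increasing in $s$.

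The load $s$ is a random variable. In the $\dstat$-sample it takes the values $k-1$ and $\ell+1$, and in the $\dstat'$-sample it takes the values $k$ and $\ell$, with weights given by the selection probabilities. Since $\psi(s):=1-\binom{m-j}{s}/\binom{m}{s}$ is concave in $s$, replacing the more spread pair $\{k-1,\ell+1\}$ by the less spread pair $\{k,\ell\}$ with equal means increases its expectation. This gives the claimed inequality. The related case differs only because one subtree is never detached, which shifts $m$ by one and leaves the concavity argument unchanged. The step I would verify most carefully is that the selection weights in the two samples produce equal mean loads, so that the concavity comparison applies.
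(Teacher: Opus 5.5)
The overall strategy is viable and more self-contained than the paper's, which cites Proposition A.2 of \cite{donderwinkel2026tight} and Lemma 29 of \cite{height}. You condition on the multiset of detachable subtrees, reduce to the probability $\psi(s)=1-\binom{m-j}{s}/\binom{m}{s}$ that a basket of load $s$ receives a tall subtree, and use concavity of $\psi$; for $j\ge1$ consecutive increments have ratio $\frac{m-j-s}{m-s-1}\le1$, so $\psi$ is concave. However, two steps fail as written.

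\emph{The common-ancestor case is not trivial.} You claim that when $w$ is an ancestor of both red vertices the heights agree. This is false. In your coupling the subtree $S$ moves from one red vertex to the other. If $a=d(w,x)\neq b=d(w,y)$, its depth above the blue vertex changes, and $\height(T^{\uparrow W})\neq\height({T'}^{\uparrow W'})$ in general. For example, take $a=1$, $b=5$ and $S$ taller than everything else. This is the whole-tree comparison itself, and the paper handles it via \cite[Proposition A.2]{donderwinkel2026tight}. Your tool covers it only after a reduction you have to supply. Take $a\le b$ and $h_i=1+\height(t_i)$. If the fixed part of $T^{\uparrow w}$ already exceeds $z$, or some detachable $h_i>z-a$, the event is certain in both samples. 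Otherwise it is exactly the event that the deeper red vertex $y$ receives one of the $j$ subtrees with $h_i\in(z-b,z-a]$, whose probability is $\psi(\text{load of }y)$.

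\emph{The mean loads do not match in the ancestrally related case.} The non-detached subtree containing $y$ can be inserted in $\deg(x)$ positions among the children of $x$. Each configuration is therefore weighted proportionally to $\deg(x)$, not uniformly. In the $\dstat$-sample the load of $y$ is $k-1$ or $\ell+1$ with probabilities $\frac{\ell+1}{k+\ell}$ and $\frac{k-1}{k+\ell}$. In the $\dstat'$-sample it is $k$ or $\ell$ with probabilities $\frac{\ell}{k+\ell}$ and $\frac{k}{k+\ell}$, or a point mass at $k$ if $k=\ell$. The means are $\frac{2(k-1)(\ell+1)}{k+\ell}<\frac{2k\ell}{k+\ell}$; for $k=\ell=1$ the loads are $0$ versus $1$. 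So ``mean-preserving spread plus concavity'' does not apply here. What works is that $\psi$ is also nondecreasing. Writing $\Delta(s)=\psi(s+1)-\psi(s)$, one gets
$(k+\ell)\bigl(\e\psi(L')-\e\psi(L)\bigr)=\ell\Delta(k-1)-(k-1)\Delta(\ell)+\psi(\ell)-\psi(k-1)\ge0$,
using $\Delta(\ell)\le\Delta(k-1)$ and $\psi(\ell)\ge\psi(k-1)$. In the unrelated case your argument is fine, but the weights are not the binomial ratios from your first paragraph. Each of the two configurations carries $m!$ trees, so the loads are $\{k-1,\ell+1\}$ versus $\{k,\ell\}$ with probability $1/2$ each.
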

\begin{proof}

    Since we assume that $(\rtree_\dstat,\{U,V\},W)\in \cC$, we can fix tree $t$ equipped with two distinct vertices $x,y\in t$ so that $x<y$, and $w\in t$ not necessarily distinct so that $(t, \{x,y\}, w)\in \cC$. We look at cases, depending on the ancestral relations of $x$ and $y$ to $w$.

    First assume that $w$ is an ancestor of both $x$ and $y$, and in particular $w$ does not lie in any of the fringe subtrees that get detached and rearranged. 
    
    Write $\hat{\cC}$ for the equivalence class obtained from $\cC$ by forgetting the blue mark. Since the rearrangement of the detached fringe subtrees does not affect the ancestral path from the root to $w$, then the distance from the root to the blue-marked vertex is preserved. Thus, conditioning on $(\rtree_\dstat,\{U,V\},W)\in \cC$, the subtree rooted at $W$ satisfies
    \[\height\left(\rtree_{\dstat}^{\uparrow W}\right) = \height\left(\rtree_{\dstat}\right)-\height(W).\]
    Analogously, conditioning on $(\rtree_{\dstat'},\{U',V'\},W')\in \cC$ gives the above identity for $\rtree_{\dstat'}$ and $W'$. Furthermore, in $\cC$,  $\height(W)=\height(W')=h$ for some $h$.
    
    Applying \cite[Proposition A.2]{donderwinkel2026tight} to the underlying two-red-marked equivalence class $\hat{\cC}$, we get 
    \begin{align*}
    \probC{\height\left(\rtree_{\dstat}^{\uparrow W}\right)>z}{(\rtree_\dstat,\{U,V\},W)\in \cC}&=\probC{\height\left(\rtree_{\dstat}\right)>z+h}{(\rtree_\dstat,\{U,V\})\in \hat{\cC}}\\
    &\leq\probC{\height\left(\rtree_{\dstat'}\right)>z+h}{(\rtree_{\dstat'},\{U',V'\})\in \hat{\cC}}\\
    &=\probC{\height\left(\rtree_{\dstat'}^{\uparrow W'}\right)>z}{(\rtree_{\dstat'},\{U',V'\},W')\in \cC}.
    \end{align*}

    Now consider the case where neither $x$ or $y$ are descendants of $w$. Then for the fixed equivalence class $\cC$, the blue marked vertex has a deterministic maximal distance from its descendants, so
    \begin{align*}
    &\probC{\height\left(\rtree_{\dstat}^{\uparrow W}\right)>z}{(\rtree_\dstat,\{U,V\},W)\in \cC}\\
    &=\probC{\height\left(\rtree_{\dstat'}^{\uparrow W'}\right)>z}{(\rtree_{\dstat'},\{U',V'\},W')\in \cC}\in\{0,1\}.
    \end{align*}

    For the case where exactly one of $x$ or $y$ are a descendant of $w$, we use the ``eggs-in-one-basket lemma" \cite[Lemma 29]{height}. We use the conditional sampling procedure of $(\rtree_\dstat,\{U,V\},W)$ described above and the notation introduced therein.

    Suppose $x<y$ but are not ancestrally related. Let $t_1,\ldots t_{k+\ell}$ be the $k+\ell$ fringe subtrees of $x$ and $y$, and let $A$ be a uniformly chosen random subset of $[k+\ell]$ of size $k-1$ or size $\ell+1$. Since $A$ and $[k+\ell]\setminus A$ have the same distribution, then without loss of generality, we may assume that $x$ is a descendent of $w$. Thus, the set of fringe subtrees attached at $x$ has the same law as $\set{t_i:i\in A}$. Let $h_i= 1+\height(t_i)$ for $i\in[k+\ell]$, then under the conditioning that $(\rtree_\dstat,\{U,V\},W)\in \cC$, we have
    \[\height\left(\rtree_{\dstat}^{\uparrow W}\right)\overset{d}{=} d_t(w,x) + \max\set{h_i: i\in A}.\]
    Similarly, for a uniformly chosen random subset $A'$ of $[k+\ell]$ of size $k$ or size $\ell$, and noting that $[k+\ell]\setminus A'$ has the same distribution, then we again assume that $x$ is a descendent of $w$. Thus, the set of fringe subtrees attached at $x$ in $\rtree_{\dstat'}$ conditionally on $(\rtree_{\dstat'},\{U',V'\},W')\in \cC$ has the same law as $\set{t_i:i\in A'}$, so under this conditioning we have
        \[\height\left(\rtree_{\dstat'}^{\uparrow W'}\right)\overset{d}{=} d_t(w,x) + \max\set{h_i: i\in A'}.\]
    Thus for any $z>0$, applying the first part of \cite[Lemma 29]{height} we have
    \begin{align*}
    \probC{\height\left(\rtree_{\dstat}^{\uparrow W}\right)>z}{(\rtree_\dstat,\{U,V\},W)\in \cC}
    &=\P{ d_t(w,x) + \max\set{h_i: i\in A}>z}\\
    &\leq\P{ d_t(w,x) + \max\set{h_i: i\in A'}>z}\\
    &=\probC{\height\left(\rtree_{\dstat'}^{\uparrow W'}\right)>z}{(\rtree_{\dstat'},\{U',V'\},W')\in \cC} .
    \end{align*}

    Lastly, suppose $x<y$ with $x$ an ancestor of $y$, so $y$ is a descendant of $w$ and $w$ is a descendent of $x$. We argue similarly as above, and let $t_1,\ldots t_{k+\ell-1}$ be the $k+\ell-1$ fringe subtrees of $x$ and $y$. Then for  a uniformly chosen random subset $B$ of $[k+\ell-1]$ of size $k-1$ or size $\ell+1$, the set of fringe subtrees attached at $y$ in $\rtree_{\dstat}$ conditional on $(\rtree_{\dstat},\{U,V\},W)\in \cC$ has the same law as $\set{t_i:i\in B}$. Again letting $h_i=1+\height(t_i)$ for $i\in[k+\ell-1]$, then under this conditioning we have
        \[\height\left(\rtree_{\dstat}^{\uparrow W}\right)\overset{d}{=} d_t(w,y) + \max\set{h_i: i\in B}.\]
     Similarly, for a uniformly chosen random subset $B'$ of $[k+\ell-1]$ of size $k$ or size $\ell$, the set of fringe subtrees attached at $y$ in $\rtree_{\dstat'}$ conditionally on $(\rtree_{\dstat'},\{U',V'\},W')\in \cC$ has the same law as $\set{t_i:i\in B'}$, so under this conditioning we have
        \[\height\left(\rtree_{\dstat'}^{\uparrow W'}\right)\overset{d}{=} d_t(w,y) + \max\set{h_i: i\in B'}.\]
    Thus for any $z>0$, and applying the second part of \cite[Lemma 29]{height} we have
    \begin{align*}
    \probC{\height\left(\rtree_{\dstat}^{\uparrow W}\right)>z}{(\rtree_\dstat,\{U,V\},W)\in \cC}
    &=\P{ d_t(w,y) + \max\set{h_i: i\in B}>z}\\
    &\leq\P{ d_t(w,y) + \max\set{h_i: i\in B'}>z}\\
    &=\probC{\height\left(\rtree_{\dstat'}^{\uparrow W'}\right)>z}{(\rtree_{\dstat'},\{U',V'\},W')\in \cC}.
    \end{align*}
\end{proof}

Finally, \cref{thm:stochastic_order_tree} follows from the previous two propositions by the law of total probability, partitioning the probability space into the different equivalence classes under $\sim$. See the proof of Theorem 5.5 in \cite{donderwinkel2026tight} for the details.

\subsection{Technical step of \cref{lem:height_binary_ds}}

The next lemma is a technical computation used in \cref{lem:height_binary_ds}. We present it before the proof of \cref{lem:first_exc_cbridge} because some estimates proved here are later used in the proof of \cref{lem:first_exc_cbridge}.

\begin{lem}\label{lem:meas_change_const}
Let $R$ be a positive constant, $\alpha\in [0,1)$, and $1\ll\gamma_n\leq\sqrt{n}$. Set $p_n=\frac{1}{2}\big(1-\frac{R}{(1-\alpha)\gamma_n}\big)$ and

$$
\rho_n(t,k)=
\binom{t}{\tfrac{t+k}{2}}
p_n^{(t+k)/2}
(1-p_n)^{(t-k)/2},$$
so that, for $Y$ as in \cref{lem:height_binary_ds},
$$\P{Y_{n(1-\alpha)}=-\frac{Rn}{\gamma_n}}=\rho_n\big(n(1-\alpha),-\tfrac{Rn}{\gamma_n}\big).$$
For all $n$ large enough that $p_n\in(1/4,3/4)$ and all $\ell\leq \frac{n(1-\alpha)}{2}$,

\[ \frac{\rho_n\big(n(1-\alpha)-\ell,-\tfrac{Rn}{\gamma_n}+1\big)}{\rho_n\big(n(1-\alpha),-\tfrac{Rn}{\gamma_n}\big)}\leq 4e^8.\]
\end{lem}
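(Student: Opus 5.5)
We compare two binomial point probabilities directly. Set $N=n(1-\alpha)$, $K=Rn/\gamma_n$ and $p=p_n$. The walk $Y$ has steps $+1$ with probability $p$ and $-1$ with probability $1-p$. So $Y_t=k$ means exactly $(t+k)/2$ up-steps, and for the denominator $\rho_n(N,-K)$ the number of up-steps is $u=(N-K)/2$.

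The plan is to write the ratio as
\[\frac{\rho_n(N-\ell,-K+1)}{\rho_n(N,-K)}=\frac{\binom{N-\ell}{u'}}{\binom{N}{u}}\,p^{u'-u}(1-p)^{d'-d},\]
where $u'=(N-\ell-K+1)/2$, $d'=(N-\ell+K-1)/2$, $d=(N+K)/2$, and we assume $\ell$ has the right parity (otherwise the numerator is $0$ and there is nothing to prove). The key observation is that $p$ is tilted so that the drift matches the conditioning. Indeed,
\[N(1-2p)=N\cdot\frac{R}{(1-\alpha)\gamma_n}=\frac{Rn}{\gamma_n}=K,\]
so $-K$ is exactly the mean of $Y_N$. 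This means $\rho_n(N,-K)$ is a central binomial probability, of order $(Np(1-p))^{-1/2}\asymp N^{-1/2}$ by the local CLT or Stirling, uniformly in $n$ once $p\in(1/4,3/4)$.

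For the numerator, I would bound $\rho_n(N-\ell,-K+1)$ by the maximal point mass of a $\mathrm{Bin}(N-\ell,p)$ variable. A standard bound gives that this is at most $C/\sqrt{(N-\ell)p(1-p)}$, with an explicit constant, e.g. via Stirling with explicit error terms $\sqrt{2\pi m}(m/e)^m\le m!\le e\sqrt m(m/e)^m$. Since $\ell\le N/2$, we have $N-\ell\ge N/2$. The ratio is then at most an absolute constant times $\sqrt{N/(N-\ell)}\le\sqrt2$, which is the advertised uniformity in $\ell$.

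The main obstacle is making the lower bound on the denominator explicit with a clean constant, since the target is $4e^8$. I would use the explicit Stirling bounds on $\binom{N}{u}$, together with
\[\log\frac{\binom{N}{u}p^u(1-p)^{d}}{\text{Gaussian}}\ge -O(1).\]
This works because $u$ is within $O(1)$ of $Np$ and $Np(1-p)\ge 3N/16$, so the entropy deficit $N\,\mathrm{KL}(u/N\|p)$ is $O(1/N)$. The loose constant $e^8$ comfortably absorbs the Stirling errors, the rounding of $K$ and $N$ to integers, and the factors $p^{\pm1}$ and $(1-p)^{\pm1}\le 4$ coming from the $+1$ shift. Combining the numerator and denominator bounds then gives the claimed $4e^8$ for all $n$ large enough that $p_n\in(1/4,3/4)$.
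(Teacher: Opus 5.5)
Your proposal is correct and follows essentially the same route as the paper. The paper also uses that $n(1-\alpha)p_n$ is exactly the number of up-steps, so that the denominator is a central binomial mass, which it bounds below via Robbins' Stirling bounds by $2e^{-4}/\sqrt{n(1-\alpha)}$. It then bounds the numerator by the maximal point mass $4e^4/\sqrt{t}$ of a $\mathrm{Bin}(t,p_n)$ variable with $t=n(1-\alpha)-\ell\ge n(1-\alpha)/2$, which gives $4e^8$. Your initial ratio factorization and the $p^{\pm1}$, $(1-p)^{\pm1}$ factors are unnecessary once you bound numerator and denominator separately, but they do no harm.
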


\begin{proof}
    Start by recalling Robbins' version of Stirling's formula \cite{robbins1955remark}: for $t$ a positive integer,
    \[\sqrt{2\pi t}\pran{\frac{t}{e}}^t\exp\left\{\frac{1}{12t+1}\right\} 
    \leq 
    t! 
    \leq 
    \sqrt{2\pi t}\pran{\frac{t}{e}}^t\exp\left\{\frac{1}{12t}\right\}.\]
    Applying this formula we get, for any $t\geq 2$ and $1\leq k\leq t-1$,

    \begin{equation}\label{eq:best_binom_bound}
    \begin{aligned}
        e^{-4}\sqrt{\frac{t}{k(t-k)}}\cdot\frac{t^t}{k^k(t-k)^{t-k}}
        \leq
        \binom{t}{k}
        \leq 
        e^4\sqrt{\frac{t}{k(t-k)}}\cdot\frac{t^t}{k^k(t-k)^{t-k}}.
    \end{aligned}
    \end{equation}
    Also since $\gamma_n\to\infty$, then for $n$ large enough we have $p_n\in(1/4,3/4)$. In the remainder of the proof, suppose that $n$ is that large. Noting that,
    $$
    n(1-\alpha)p_n
    =
    \frac{n(1-\alpha)-Rn/\gamma_n}{2},
    $$
    we may write
    $$\rho_n\big(n(1-\alpha),-\tfrac{Rn}{\gamma_n}\big)
    =
    \binom{n(1-\alpha)}{n(1-\alpha)p_n}
    p_n^{n(1-\alpha)p_n}
    (1-p_n)^{n(1-\alpha)(1-p_n)}$$
    and \eqref{eq:best_binom_bound} evaluated at $t=n(1-\alpha)$, $k=n(1-\alpha)p_n$ yields

    \begin{equation}\label{eq:best_LB_rho_n}
        \rho_n\big(n(1-\alpha),-\tfrac{Rn}{\gamma_n}\big)
        \geq
        e^{-4}\frac{1}{\sqrt{n(1-\alpha)p_n(1-p_n)}}
        \geq
        2e^{-4}\frac{1}{\sqrt{n(1-\alpha)}}.
    \end{equation}
    On the other hand,

    $$\frac{
    \binom{t}{k+1}p_n^{k+1}(1-p_n)^{t-k-1}
    }
    {
    \binom{t}{k}p_n^{k}(1-p_n)^{t-k}
    }
    =
    \frac{t-k}{k+1}\cdot\frac{p_n}{1-p_n},$$
    which is at least $1$ if and only if $k\leq tp_n +p_n-1$. For brevity, suppose that $\binom{t}{k}p_n^{k}(1-p_n)^{t-k}$ is maximized exactly at an integer $k=tp_n$, then we can apply \eqref{eq:best_binom_bound} to get that

    \begin{align*}
        \sup_{0\leq k\leq t} \binom{t}{k}p_n^{k}(1-p_n)^{t-k}
        = &
        \binom{t}{tp_n}p_n^{tp_n}(1-p_n)^{t(1-p_n)}\leq \frac{e^4}{\sqrt{tp_n(1-p_n)}}
        \leq
        4e^4 /\sqrt{t},
    \end{align*}
    where we used that $p_n\in(1/4,3/4)$. If the maximizer $k^*$ is not exactly equal to $tp_n$, an easy computation yields the same result using $p_n\in(1/4,3/4)$ and $|k^*-tp_n|\leq 1$. Using the above with $t=n(1-\alpha)-\ell\geq n(1-\alpha)/2$ yields

    \begin{equation}\label{eq:best_UB_rho_n}
        \rho_n\big(n(1-\alpha)-\ell,-\tfrac{Rn}{\gamma_n}+1\big)
        \leq
        \frac{4e^4}{\sqrt{n(1-\alpha)-\ell}}
        \leq
        \frac{8e^4}{\sqrt{n(1-\alpha)}}.
    \end{equation}
    Combining \eqref{eq:best_LB_rho_n} and \eqref{eq:best_UB_rho_n} concludes the proof of the lemma.
\end{proof}

\subsection{Proof for \cref{lem:first_exc_cbridge}}

In this section we give the proof of \cref{lem:first_exc_cbridge}, which we recall below.

\firstexcbridge*

\begin{proof}
    Since $\alpha$ is bounded away from $1$, up to changing $n$ to $n(1-\alpha)$ and $\gamma_n$ to $(1-\alpha)\gamma_{n/(1-\alpha)}=O(\sqrt{n})$ it suffices to prove that for any $\eps>0$, there is $R$ large enough so that

    $$\probC{\tau_{-1}(X)>\left\lceil\frac{n}{2}\right\rceil}{ X_{n}=-\frac{Rn}{\gamma_n}}\leq \eps n^{-1/2}$$
    for $n$ large enough. Set $k=\lceil \tfrac{n}{2}\rceil$ and $p_n=\frac{1}{2}\big(1-\frac{R}{\gamma_n} \big)$. Note that for $n$ large enough, $p_n\in(1/4,3/4)$. In the remainder of the proof, suppose that $n$ is that large. Let $Y$ be the simple random walk starting at $0$ and for any $j\in \N$, $\P{Y_{j+1}=Y_j+1}=p_n$ and $\P{Y_{j+1}=Y_j-1}=1-p_n$. Conditionally on their endpoint being $-Rn/\gamma_n$ at time $n$, $X$ and $Y$ have the same law. Indeed, they both have the exact same number of up steps and down steps, which are ordered at random. However, note this conditioning is an exponentially low probability event for $X$, but not for $Y$. By the Markov property,

    \begin{align*}
        \probC{\tau_{-1}(X) > k}{X_n=-\frac{Rn}{\gamma_n}}
        =&
        \sum_{j\geq 0} \P{\tau_{-1}(Y) > k, \ Y_k=j}
        \frac{\P{Y_{n-k}=-\frac{Rn}{\gamma_n}-j}}{\P{Y_n=-\frac{Rn}{\gamma_n}}}.
    \end{align*}
    We recall from \cref{lem:meas_change_const}, with $\alpha=0$, that $\P{Y_t=k}=\rho_n(t,k)$ and that we are working with $n$ large enough so that $p_n\in(1/4,3/4)$. Then \eqref{eq:best_LB_rho_n} gives

    $$\P{Y_{n}=-\frac{Rn}{\gamma_n}}
        \geq 
        \frac{2e^{-4}}{\sqrt{n}},$$
    while \eqref{eq:best_UB_rho_n} at $\ell=k$ gives

    $$\P{Y_{n-k}=-\frac{Rn}{\gamma_n}-j}
    \leq 
    \frac{4e^4}{\sqrt{n-k}}.$$
    Recalling that $k=\lceil n/2 \rceil$, we get

    \begin{equation}\label{eq:UB_cond_hit}
        \probC{\tau_{-1}(X) > k}{X_n=-\frac{Rn}{\gamma_n}}
        \leq 
        4e^8 \P{\tau_{-1}(Y)\geq k},
    \end{equation}
    which removes the conditioning. Next, we directly relate $\P{\tau_{-1}(Y)\geq k}$ to $\P{\tau_{-1}(X)\geq k}$ by first noting that
    $$
    \frac{\P{Y_k=j}}{\P{X_k=j}}
    =
    (2p_n)^{\frac{k+j}{2}}\big(2(1-p_n)\big)^{\frac{k-j}{2}}
    =
    \big(4p_n(1-p_n)\big)^{k/2}\left(\frac{p_n}{1-p_n}\right)^{j/2}.
    $$
    By construction we have $p_n\leq \frac{1}{2}$, and so $p_n/(1-p_n)\leq 1$. Summing over all paths such that $\tau_{-1}(\cdot)\geq k$ for which $j\geq0$ yields

    $$\P{\tau_{-1}(Y)\geq k}
    \leq 
    \big(4p_n(1-p_n)\big)^{k/2}
    \P{\tau_{-1}(X)\geq k}.
    $$
    Thus combining with \eqref{eq:UB_cond_hit},
    \begin{equation}\label{eq:hitting_time_goal_split}
        \probC{\tau_{-1}(X) > k}{X_n=-\frac{Rn}{\gamma_n}}
        \leq 
        4e^8 \big(4p_n(1-p_n)\big)^{k/2}
        \P{\tau_{-1}(X)\geq k}.
    \end{equation}
    Note that 

    \begin{equation}\label{eq:hitting_time_step1}
        \big(4p_n(1-p_n)\big)^{k/2}
        =
        \left(1-\frac{R^2}{\gamma_n^2}\right)^{k/2}
        \leq
        \exp\left(-\frac{R^2 k}{2\gamma_n^2}\right).
    \end{equation}
    To bound $\P{\tau_{-1}(X)\geq k}$, simply use Kemperman's formula \cite[(6.3)]{pitman2006combinatorial},

    \begin{align*}
        \P{\tau_{-1}(X)\geq k}
        =
        \sum_{j\geq k}\frac{1}{j}\P{X_j=-1}.
    \end{align*}
    Record that $\P{X_{2j}=-1}=0$ and use \eqref{eq:best_binom_bound} to get

    $$\P{X_{2j+1}=-1}
    \leq
    e^4 2^{-2j-1}\sqrt{\frac{2j+1}{j(j+1)}}\cdot\frac{(2j+1)^{2j+1}}{j^j(j+1)^{j+1}}
    \leq
    \frac{e^7}{\sqrt{j}}
    $$
    to get

    \begin{equation}\label{eq:best_stopping_time}
    \begin{aligned}
        \P{\tau_{-1}(X)\geq k}
        \leq&
        \sum_{j\geq k/2-1}\frac{1}{2j+1}\cdot\frac{e^7}{\sqrt{j}}\leq 
        \frac{e^8}{\sqrt{k}}
    \end{aligned}
    \end{equation}




    \noindent Plugging \eqref{eq:hitting_time_step1} and \eqref{eq:best_stopping_time} into \eqref{eq:hitting_time_goal_split} and recalling that $k=\lceil n/2\rceil $ leads to 

    $$
    \probC{\tau_{-1}(X) > k }{X_n=-\frac{Rn}{\gamma_n}}
    \leq
    \frac{4e^{16}}{\sqrt{n}}
    \exp\left(-\frac{R^2 n}{4\gamma_n^2}\right).
    $$
    Because $n/\gamma_n^2$ is lower bounded by a constant, then we obtain our desired bound for sufficiently large $R$.
\end{proof}

\section{Postponed Proofs from \cref{sec:local_skeleton}}\label{app:skeleton_local_conv}

We give the proofs of \cref{lem:limit_composition} and \cref{lem:steps_weak_convergence} that we recall below.

\compositionconv*

\begin{proof}
    The number of weak composition of $n$ into $k$ is given by $\binom{n+k-1}{k-1}$. This fact alone is enough for the proof of the lemma. We start with the case $\ell=1$. For any $N$, any $K<N/4$ and any $j<N/2$, one has 

    \begin{equation*}
    \begin{aligned}
        \P{E_1=j}
        =& \
        \binom{N-j+K-2}{K-2}\binom{N+K-1}{K-1}^{-1}\\
        =& \
        \frac{K-1}{N+K-j-1}\prod_{i=0}^{j-1}\left(1+\frac{K-1}{N-i}\right)^{-1},
    \end{aligned}
    \end{equation*}
    elementary bounds lead to

    \begin{equation*}
        \frac{K-1}{N+K}\exp\left( -j \tfrac{K}{N-j}-j\left(\tfrac{K}{N-j}\right)^2 \right)
        \leq
        \P{E_1=j}
        \leq
        \frac{K}{N-j}\exp\left( -j \tfrac{K-1}{N+K} \right)
    \end{equation*}
    and later to

    \begin{equation*}
        \frac{K}{N}e^{-j\tfrac{K}{N}}\left( 1-40\tfrac{N+K^2+Kj}{N^2} \right)
        \leq
        \P{E_1=j}
        \leq
        \frac{K}{N}e^{-j\tfrac{K}{N}}\left( 1+40\tfrac{N+K^2+Kj}{N^2} \right).
    \end{equation*}
    Letting $K$ and $j$ depend on $N$, if $K=o(N)$ and $j=o(N)$ then, for any $\lambda>0$

    $$ \lim_{N\to\infty} 
    \P{\frac{K}{N}E_1 \leq \lambda}
    =
    1-e^{-\lambda},
    $$
    concluding the proof of the lemma for $\ell=1$. As the $E_i$ are identically distributed, it only remains to check that they are independent in the limit. The same computation as above can easily be extended for fixed $\ell>1$, resulting in

    \begin{equation}
    \begin{aligned}
        \P{(E_1,\dots,E_{\ell})=(j_1,\dots,j_\ell)}
        =&
        \binom{N-\sum j_i+K-\ell-1}{K-\ell-1}\binom{N+K-1}{K-1}^{-1}\\
        =&
        \left(\frac{K}{N}\right)^{\ell} e^{-\sum j_i\tfrac{K}{N}}\big(1+o(1)\big)
    \end{aligned}
    \end{equation}
    and translating to 

    $$ \lim_{N\to\infty} 
    \P{ \bigcap_{i=1}^{\ell} \left\{ \frac{K}{N}E_i \leq \lambda_i \right\}}
    =
    \prod_{i=1}^{\ell}(1-e^{-\lambda_i}),
    $$
    concluding the proof of the lemma.
\end{proof}

\weakconvergencesteps*

\begin{proof}
In this proof appear both the combinatorial fringe, defined
using graph distance from the metric fringe, defined
using edge lengths. It is clear from the argument of $F_k(.)$ which is used. Conditionally on $T_n$, let $O_n$ and $V_n$ be independent
uniform vertices of $T_n$. For a finite combinatorial fringe $t$, set

$$
q_{n,k}(t)
=
\P{F_k(O_n,T_n)=t\mid T_n},
\qquad
q_k(t)
=
\P{F_k(\rho,K)=t}.
$$
By assumption, $q_{n,k}(t)\xrightarrow{P} q_k(t)$ and conditional
independence of $O_n$ and $V_n$ gives, for any finite fringes $t,t'$,

$$
\P{
F_k(O_n,T_n)=t,\,
F_k(V_n,T_n)=t'
}
=
\E{q_{n,k}(t)q_{n,k}(t')}
\xrightarrow{P} q_k(t)q_k(t').
$$
Since every fixed combinatorial fringe of $K$ is finite almost
surely, it follows that
\begin{equation}\label{eq:pair_combinatorial_fringes}
\bigl(
F_k(O_n,T_n),
F_k(V_n,T_n)
\bigr)
\xrightarrow{d}
\bigl(
F_k(\rho',K'),
F_k(\rho'',K'')
\bigr),
\end{equation}
where $K'$ and $K''$ are independent copies of $K$.

We next include the edge lengths. First, the two fringes on the
left-hand side of \eqref{eq:pair_combinatorial_fringes} are disjoint
with probability tending to one. Indeed, a non-empty intersection implies
that one sampled vertex belongs to the fringe of the other.
Consequently, for every $M\geq1$,

$$
\begin{aligned}
&\P{
F_k(O_n,T_n)
\cap F_k(V_n,T_n)\neq\varnothing
}\leq
2\P{\#F_k(O_n,T_n)>M}
+
2\frac{M}{n}.
\end{aligned}
$$
Letting first $n\to\infty$ and then $M\to\infty$ proves the claim because the $\#F_k(O_n,T_n)$ are tight.

On each fixed pair of disjoint finite fringe shapes, the edge-length assumption gives joint convergence of
all their edge lengths to independent $\operatorname{Exp}(1)$ variables. Restricting first to a finite
collection of shapes with arbitrarily large probability, we obtain
\begin{equation}\label{eq:pair_length_marked_fringes}
\begin{aligned}
\bigl(
(F_k(O_n,T_n),\ell_n),
(F_k(V_n,T_n),\ell_n)
\bigr)\xrightarrow{d}
\bigl(
(F_k(\rho',K'),\ell'),
(F_k(\rho'',K''),\ell'')
\bigr),
\end{aligned}
\end{equation}
where the two limiting weighted-rooted trees are independent.

It remains to pass from combinatorial fringes to metric fringes. We recall that $T^*$ is the binary-Kesten tree with edge length, and that $K$ is its underlying combinatorial tree.
Let $E_1,E_2,\ldots$ be the successive edge lengths along the
marked spine of $T^*$, and write

$$
S_k=\sum_{j=1}^k E_j.
$$
Since the $E_j$ are independent $\operatorname{Exp}(1)$ variables,
$S_k\to\infty$ almost surely. Hence, for every $r>0$,
\begin{equation}\label{eq:spine_nonexplosion}
\P{S_k\leq r}\longrightarrow0
\qquad\text{as }k\to\infty.
\end{equation}

On the event $\{S_k>r\}$, the metric fringe $F_r(\rho,T^*)$
is determined by the combinatorial fringe
$F_k(\rho,K)$. This operation is
continuous whenever no spine vertex lies at distance exactly $r$, which holds almost surely.

For the approximating trees,
\eqref{eq:pair_length_marked_fringes} implies convergence of the
first $k$ spine lengths. Thus, for either sampled root, the
probability that its metric $r$-fringe is not determined by the combinatorial $k$-fringe has limiting upper bound at most $2\P{S_k\leq r}$.

Applying the continuous mapping theorem to
\eqref{eq:pair_length_marked_fringes}, and then letting
$k\to\infty$ using \eqref{eq:spine_nonexplosion}, proves joint
convergence of the two metric $r$-fringes. The same argument
applies simultaneously to any finite collection of radii.
Since the limiting metric fringes are finite almost surely,
these convergences characterize convergence in the marked local
topology. Therefore,

\begin{equation}
\begin{aligned}
\Bigl(
(\cT_n,P_n(O_n),\ell_n,O_n),
(\cT_n,P_n(V_n),\ell_n,V_n)
\Bigr)\xrightarrow{d}
\Bigl(
(K',P',\ell',\rho'),
(K'',P'',\ell'',\rho'')
\Bigr),
\end{aligned}
\end{equation}
with independent limiting copies. This is convergence in probability in the marked local topology.
\end{proof}

\bibliographystyle{plainnat}
\bibliography{burn}
\end{document}